\let\oldtocsection=\tocsection
\let\oldtocsubsection=\tocsubsection
\let\oldtocsubsubsection=\tocsubsubsection
\renewcommand{\tocsection}[2]{\hspace{0em}\oldtocsection{#1}{#2}}
\renewcommand{\tocsubsection}[2]{\hspace{1em}\oldtocsubsection{#1}{#2}}
\renewcommand{\tocsubsubsection}[2]{\hspace{2em}\oldtocsubsubsection{#1}{#2}}
\newtheorem{definition}{Definition}[section]
\newtheorem{proposition}[definition]{Proposition}
\newtheorem{theorem}[definition]{Theorem}
\newtheorem{corollary}[definition]{Corollary}
\newtheorem{lemma}[definition]{Lemma}
\numberwithin{equation}{section}
\newcommand{\comment}[1]{}
\newcommand{\N}{\mathbb N}
\newcommand{\R}{\mathbb R}
\newcommand{\es}{\emptyset}
\newcommand{\cl}{{\rm cl}}
\newcommand{\cA}{\mathcal{A}}
\newcommand{\cB}{\mathcal{B}}
\newcommand{\cE}{\mathcal{E}}
\newcommand{\cF}{\mathcal{F}}
\newcommand{\cH}{\mathcal{H}}
\newcommand{\cK}{\mathcal{K}}
\newcommand{\cL}{\mathcal{L}}
\newcommand{\cO}{\mathcal{O}}
\newcommand{\cP}{\mathcal{P}}
\newcommand{\cQ}{\mathcal{Q}}
\newcommand{\cR}{\mathcal{R}}
\newcommand{\bP}{\mathbf{P}}
\newcommand{\bF}{\mathbf{F}}
\newcommand{\bT}{\mathbf{T}}
\newcommand{\bR}{\mathbf{R}}
\newcommand{\bI}{\mathbf{I}}
\newcommand{\ba}{\mathbf{a}}
\newcommand{\bb}{\mathbf{b}}
\newcommand{\bd}{\mathbf{d}}
\newcommand{\bx}{\mathbf{x}}
\newcommand{\by}{\mathbf{y}}
\newcommand{\bz}{\mathbf{z}}
\newcommand{\bw}{\mathbf{w}}
\newcommand{\bu}{\mathbf{u}}
\newcommand{\dist}{{\rm dist}}
\newcommand{\sA}{\mathscr{A}}
\newcommand{\sB}{\mathscr{B}}
\newcommand{\sG}{\mathscr{G}}
\newcommand{\sH}{\mathscr{H}}
\newcommand{\sO}{\mathscr{O}}
\newcommand{\sP}{\mathscr{P}}
\newcommand{\sQ}{\mathscr{Q}}
\newcommand{\Pro}{\mathbb{P}}
\newcommand{\kk}{{(k)}}
\newcommand{\inj}{{\rm inj}}
\newcommand{\cut}{{\rm cut}}
\newcommand{\maxcut}{{\rm maxcut}}
\newcommand{\bPr}{\mathbf{Pr}}
\newcommand{\norm}[1]{\|#1\|_\infty}
\renewcommand{\epsilon}{\varepsilon}
\newcommand{\sm}{\setminus}
\newcommand{\sub}{\subseteq}
\newcommand{\COMMENT}[1]{}
\title{A characterization of testable hypergraph properties}
\author[F.~Joos]{Felix Joos}
\address{Institut f\"ur Informatik, Universit\"at Heidelberg, Germany}
\email{joos@informatik.uni-heidelberg.de}
\author[J.~Kim]{Jaehoon Kim}
\address{Department of Mathematical Sciences, KAIST, South Korea 34141}
\email{jaehoon.kim@kaist.ac.kr}
\author[D.~K\"uhn]{Daniela K\"uhn}
\author[D.~Osthus]{Deryk Osthus}
\address{School of Mathematics, University of Birmingham, Edgbaston, Birmingham, B15 2TT, United Kingdom}
\email{d.kuhn@bham.ac.uk, d.osthus@bham.ac.uk}
\thanks{
 The research leading to these results was partially supported by the EPSRC, grant nos. EP/M009408/1 (F.~Joos, D.~K\"uhn and D.~Osthus), 
and EP/N019504/1 (D.~K\"uhn).
The research was  also partially supported by the European Research Council under the European Union's Seventh Framework Programme (FP/2007--2013) / ERC Grant 306349 (J.~Kim and D.~Osthus).\\
An extended abstract of this work appeared in  \textit{FOCS 2017}, 859--867.
}
\date{\today}
\begin{document}

\begin{abstract}
We provide a combinatorial characterization of all testable properties of $k$-uniform hypergraphs ($k$-graphs for short).
Here, a $k$-graph property $\bP$ is testable if there is a randomized algorithm which makes a bounded number of edge queries and 
distinguishes with probability $2/3$ between $k$-graphs that satisfy $\bP$ and those that are far from satisfying $\bP$.
For the $2$-graph case, such a combinatorial characterization was obtained by Alon, Fischer, Newman and Shapira.
Our results for the $k$-graph setting are in contrast to those of Austin and Tao, who showed that for the somewhat stronger concept of local repairability,
the testability results for graphs do not extend to the $3$-graph setting.
\end{abstract}
\maketitle



\section{Introduction}
The universal question in the area of property testing is the following:
By considering a small (random) sample $S$ of a combinatorial object $\cO$,
can we distinguish (with high probability) whether $\cO$ has a specific property $\bP$
or whether it is far from satisfying $\bP$?
In this paper we answer this question for $k$-uniform hypergraphs,
where a hypergraph $H$ is \emph{$k$-uniform} if all edges of $H$ have size  $k\in \N$.
For brevity, we usually refer to $k$-uniform hypergraphs as \emph{$k$-graphs}  (so $2$-graphs are graphs).

We now formalize the notion of testability (throughout, we consider only properties $\bP$ which are decidable).
For this, we say that two $k$-graphs $G$ and $H$ on vertex set $V$ with $|V|=n$ are \emph{$\alpha$-close} 
if $|G\triangle H| \leq \alpha \binom{n}{k}$, 
and \emph{$\alpha$-far} otherwise\footnote{We identify hypergraphs with their edge set and for two sets~$A,B$ we denote by $A\triangle B$ the symmetric difference of $A$ and $B$.}. 
We say that $H$ is \emph{$\alpha$-close to satisfying a property $\bP$} 
if there exists a $k$-graph $G$ that satisfies $\bP$ and is $\alpha$-close to $H$, 
and we say that $H$ is \emph{$\alpha$-far from satisfying $\bP$} otherwise.

\begin{definition}[Testability]\label{def: testable property}
Let $k\in \mathbb{N}\setminus\{1\}$ be fixed and let $q_{k}:(0,1)\rightarrow \mathbb{N}$ be a function.
A $k$-graph property $\bP$ is \emph{testable with query complexity at most $q_k$} if 
for every $n\in \mathbb{N}$ and every $\alpha \in (0,1)$, 
there are an integer $q'_k= q'_k(n,\alpha)\leq q_k(\alpha)$ and a randomized algorithm $\bT=\bT(n,\alpha)$ 
that can distinguish with probability at least $2/3$ between $n$-vertex $k$-graphs satisfying $\bP$ and $n$-vertex $k$-graphs that are $\alpha$-far from satisfying $\bP$, while making $q'_k$ edge queries:
\begin{enumerate}[label=\rm (\roman*)]
\item if $H$ satisfies $\bP$, 
then $\bT$ accepts $H$ with probability at least $2/3$,
\item if $H$ is $\alpha$-far from satisfying $\bP$, 
then $\bT$ rejects $H$ with probability at least $2/3$.
\end{enumerate} 
In this case, we say $\bT$ is a \emph{tester}, or an \emph{$(n,\alpha)$-tester} for $\bP$.
We also say that $\bT$ has query complexity $q'_k$. 
The property $\bP$ is \emph{testable} if it is testable with query complexity at most $q_k$ for some function $q_{k}: (0,1)\rightarrow \mathbb{N}$. 
\end{definition}

Property testing was introduced by Rubinfeld and Sudan~\cite{RS96}.
In the graph setting, the earliest systematic results were obtained in a seminal paper of Goldreich, Goldwasser and Ron~\cite{GGR98}. 
These included
$k$-colourability, max-cut and more general graph partitioning problems. 
(In fact, these results are preceded by the famous triangle removal lemma of Ruzsa and Szemer\'edi~\cite{RS78}, which can be rephrased in terms of testability of triangle-freeness.)
This list of problems was greatly extended (e.g.~via a description in terms of first order logic by Alon, Fischer, Krivelevich, and Szegedy~\cite{AFKS00})
and generalized first to monotone properties (which are closed under vertex and edge deletion) by Alon and Shapira~\cite{AS08a} 
and then to hereditary properties  (which are closed under vertex deletion), again by Alon and Shapira~\cite{AS08}.
Examples of non-testable properties include some properties which are closed under edge deletion~\cite{GT03}
and the property of being isomorphic to a given graph $G$~\cite{AFNS09,Fish05}, provided the local structure of $G$ is sufficiently `complex'
(e.g.~$G$ is obtained as a binomial random graph).
This sequence of papers culminated in the result of Alon, Fischer, Newman and Shapira~\cite{AFNS09} 
who obtained a combinatorial characterization of all testable graph properties.
This solved a problem posed already by~\cite{GGR98}, which was regarded as one of the main open problems in the area.

The characterization proved in~\cite{AFNS09}  states that a $2$-graph property $\bP$ is testable if and only if it is `regular reducible'.
Roughly speaking, the latter means that $\bP$ can be characterized by being close to one of a bounded number of 
(weighted) Szemer\'edi-partitions (which arise from an application of Szemer\'edi's regularity lemma).
Our main theorem (Theorem~\ref{thm:main}) shows that this can be extended to hypergraphs of higher uniformity.
Our characterization is based on the concept of (strong) hypergraph regularity,
which was introduced in the ground-breaking work of R\"odl together with Frankl, Schacht, and Skokan~\cite{FR92,RS07count,RS07,RS04}, Gowers~\cite{Gow07}, see also Tao~\cite{Tao06}.
We defer the precise definition of regular reducibility for $k$-graphs to Section~\ref{sec: regularity instances},
as the concept of (strong) hypergraph regularity involves additional features compared to the graph setting
(in particular, one needs to consider an entire (suitably nested) family of regular partitions, one for each $j \in [ k ]$).
Accordingly, our argument relies on the so-called `regular approximation lemma' due to R\"odl and Schacht~\cite{RS07}, which can be viewed as a powerful variant of the hypergraph regularity lemma.
In turn, we derive a strengthening of this result in the companion paper \cite[Lemma~6.1]{JKKO2}.
This is used in~\cite{JKKO2} to prove a random subhypergraph sampling result (Lemma~\ref{lem: random choice2}) which is a key tool in the proof of our main result.

Instead of testing whether $H$ satisfies $\bP$ or is $\alpha$-far from $\bP$,
it is natural to consider the more general task of estimating the distance between $H$ and $\bP$:
given $\alpha>\beta>0$,
is $H$ $(\alpha-\beta)$-close to satisfying $\bP$ or is $H$ $\alpha$-far from satisfying $\bP$?
In this case we refer to $\bP$ as being \emph{estimable}.
The formal definition is as follows.
\begin{definition}[Estimability]\label{def: estimable property def}
Let $k\in \mathbb{N}\setminus\{1\}$ be fixed and let $q_k:(0,1)^2\rightarrow \mathbb{N}$ be a function.
A $k$-graph property $\bP$ is \emph{estimable with query complexity at most $q_k$} 
if for every $n\in \mathbb{N}$ and all $\alpha,\beta \in (0,1)$ with $0<\beta<\alpha$ there are an integer $q'_k=q'_k(n,\alpha,\beta) \leq q_k(\alpha,\beta)$ and a randomized
algorithm $\bT=\bT(n,\alpha,\beta)$ that can distinguish with probability $2/3$ 
between $n$-vertex $k$-graphs that are $(\alpha-\beta)$-close to satisfying $\bP$ 
and $n$-vertex $k$-graphs that are $\alpha$-far from satisfying $\bP$ 
while making $q'_k$ edge queries:  
\begin{itemize}
\item if $H$ is $(\alpha-\beta)$-close to satisfying $\bP$, 
then $\bT$ accepts $H$ with probability at least $2/3$, 
\item if $H$ is $\alpha$-far from satisfying $\bP$, 
then $\bT$ rejects $H$ with probability at least $2/3$. 
\end{itemize}
In this case, we say $\bT$ is an \emph{estimator}, or \emph{$(n,\alpha,\beta)$-estimator} for $\bP$. 
We also say that $\bT$ has query complexity $q'_k$. 
The property $\bP$ is \emph{estimable} if it is estimable with query complexity at most $q_k$ for some function $q_{k}: (0,1)^2\rightarrow \mathbb{N}$. 
\end{definition}

We show that testability and estimability are in fact equivalent. 
For graphs this goes back to Fischer and Newman~\cite{FN07}.

\begin{theorem}\label{thm:main}
Suppose $k\in\N\sm\{1\}$ and suppose $\bP$ is a $k$-graph property.
Then the following three statements are equivalent:
\begin{itemize}
\item[\rm (a)] $\bP$ is testable.
\item[\rm (b)] $\bP$ is estimable.
\item[\rm (c)] $\bP$ is regular reducible. 
\end{itemize}
\end{theorem}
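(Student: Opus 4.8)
The plan is to establish the cycle of implications (a) $\Rightarrow$ (b) $\Rightarrow$ (c) $\Rightarrow$ (a), since (b) $\Rightarrow$ (a) is immediate (an estimator with $\beta := \alpha$ in a suitable rescaling gives a tester, or one simply observes that distinguishing $(\alpha-\beta)$-close from $\alpha$-far is at least as hard as distinguishing ``satisfies $\bP$'' from $\alpha$-far). So the substantive content is to show (c) $\Rightarrow$ (a) $\Rightarrow$ (b), together with (a) $\Rightarrow$ (c).

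First I would prove (c) $\Rightarrow$ (a): if $\bP$ is regular reducible, then to test $H$ we take a random vertex sample of bounded size, approximately read off its (strong, nested) regular partition structure for each $j \in [k]$, and compare the resulting family of cluster-densities to the bounded list of regularity instances that characterize $\bP$. The key technical input here is the regular approximation lemma of R\"odl and Schacht, or rather the strengthening the authors announce in the introduction; one needs that the empirical regular partition of a large random subsample is, with high probability, close in the relevant (cut-type) metric to that of $H$ itself, so that sampling preserves the property of being close to some instance on the list. This is the hypergraph analogue of the sampling arguments in \cite{AFNS09}, and the nesting of the partitions across uniformities $j=1,\dots,k$ is exactly the extra bookkeeping that makes this harder than the graph case.

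Next, (a) $\Rightarrow$ (b): given a tester for $\bP$ with bounded query complexity, one boosts it to an estimator. The idea, following Fischer and Newman \cite{FN07} in the graph case, is that testability already forces $\bP$ (or rather its distance function) to be ``locally determined'': a bounded sample determines, up to small error, the distance of $H$ from $\bP$. Concretely, I would pass through a regularity-type argument — take a regular partition of $H$, observe that membership in (or distance to) $\bP$ depends, up to $o(1)$, only on the bounded-size weighted reduced hypergraph, and then estimate that reduced hypergraph by sampling. Equivalently, one can argue through the equivalence with regular reducibility: if (a) holds then (c) holds, and a regular-reducible property is estimable by essentially the same sampling procedure as in the (c) $\Rightarrow$ (a) step, now reading off the minimum distance over the list. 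Either route requires knowing that the distance to $\bP$ is a continuous function of the regularity instance, which again leans on the regular approximation lemma.

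Finally, the hardest part, (a) $\Rightarrow$ (c): one must extract from the mere existence of a bounded-query tester a bounded list of regularity instances whose neighbourhoods capture $\bP$. The strategy is to show that if $\bP$ is testable then $\bP$ is closed, up to small perturbation, under ``regularization'': applying the regular approximation lemma to an $H$ satisfying $\bP$ produces an $H'$ that is close to $H$, hence (since the tester cannot reliably distinguish them) morally also accepted, so that $\bP$ can be described by the regularity instances arising this way; finiteness of the list comes from the bound on the number of parts in the regularity lemma, which depends only on the error parameters and hence only on the tester's query complexity. The genuine obstacle is that a tester only guarantees a probabilistic gap on inputs that satisfy $\bP$ or are $\alpha$-far — it says nothing directly about inputs in the ``grey zone'' — so one has to run the tester against carefully constructed blow-ups / random models of regularity instances and use a compactness or averaging argument (in the spirit of graph limits, or of the ``approximating by a bounded structure'' theme) to pin down which instances must be accepted. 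Managing this grey zone, and doing so simultaneously for the nested partitions at all uniformities $1,\dots,k$ with mutually compatible error parameters, is where the bulk of the work and the need for the strengthened regular approximation lemma lies.
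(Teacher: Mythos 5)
Your high-level plan is close in spirit, but there is a concrete missing idea in your treatment of $(a)\Rightarrow(c)$, and your proposed route there diverges from the paper's. The paper does not argue via compactness, graph limits, or running the tester against blow-ups of candidate regularity instances. Instead, it first invokes the canonicalization theorem of Goldreich and Trevisan (Lemma~\ref{lem: canonical}): any bounded-query tester can be converted into one that samples a uniformly random vertex set $Q$ of bounded size $q$, looks at the isomorphism class of $H[Q]$, and decides deterministically. This step is essential and is absent from your sketch. Once the tester is canonical, the acceptance probability is exactly the induced density $\mathbf{Pr}(\cQ,H)$ of the family $\cQ$ of accepted $q$-vertex $k$-graphs, and the induced counting lemma (Corollary~\ref{cor: counting collection}) relates this quantity, up to $o(1)$, to a functional $IC(\cQ,d_{\ba,k})$ of the density function of any good enough regularity instance satisfied by (an approximation of) $H$. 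The family $\cR$ is then simply the finite set of discretized regularity instances $R$ with $IC(\cQ,d_{\ba,k})\geq 1/2$; the ``grey zone'' you worry about is handled by placing that threshold strictly between $1/3$ and $2/3$. Without canonicalization there is no single density statistic to compare against a threshold, and your compactness-style argument would also sacrifice the effective bounds the paper emphasizes.

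Two further points. First, for $(c)\Rightarrow(a)$ your sketch correctly identifies the key technical input --- that a regularity partition of $H$ transfers to a random sample $H[Q]$ and back --- but underestimates its content: the crucial feature of Lemma~\ref{lem: random choice2} is that the transfer incurs only an \emph{additive} loss $\delta$ in the regularity parameter (and works in both directions), and its proof requires a genuine strengthening (Lemma~\ref{lem:refreg}) of the regular approximation lemma together with a three-partition interpolation argument (Lemma~\ref{lem: similar}); this is the bulk of the paper. Second, the paper's logical organization differs from your cycle: it proves $(a)\Rightarrow(c)$, $(a)\Leftrightarrow(b)$, and $(c)\Rightarrow(a)$, where $(c)\Rightarrow(a)$ is deduced from the fact that \emph{each individual} regularity instance is not merely testable but estimable (combining Theorem~\ref{thm: regularity instance is testable} and Theorem~\ref{thm: estimable}), after which a bounded union bound over $\cR$ gives a tester for $\bP$. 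Your idea of routing $(a)\Rightarrow(b)$ through $(a)\Rightarrow(c)$ and then estimability of regular-reducible properties is reasonable, but again the canonicalization step and the detailed bookkeeping in Theorem~\ref{thm: estimable} (two applications of the partition version of the regular approximation lemma, a refinement step, and a density-matching argument) are what make it work; the sketch as written does not yet reach these.
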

In Section~\ref{sec:application}, we illustrate how Theorem~\ref{thm:main} can be used to prove testability of a given property:
firstly to test the injective homomorphism density of a given subgraph (which includes the classical example of $H$-freeness)
and secondly to test the size of a maximum $\ell$-way cut (which includes testing $\ell$-colourability).\COMMENT{
First sample a random set of vertices in order to estimate $c$ such that $e(H)=c\binom{n}{k}$.
Then use max $\ell$-way cut tester to test whether $\maxcut_\ell\geq c-\alpha/2$.}
The proof of the latter relies also on our strengthening of the regular approximation lemma~\cite[Lemma~5.1]{JKKO2}.

Previously, the most general result on hypergraph property testing was the testability of hereditary properties, which was 
proved by R\"odl and Schacht~\cite{RS07STOC,RS09}, based on deep results on hypergraph regularity.
In fact, they showed that hereditary $k$-graph properties can be even tested with one-sided error
 (which means that the `2/3' is replaced by `1' in Definition~\ref{def: testable property}(i)).
 This generalized earlier results in~\cite{ARS07,KNR02}.

The result of Alon and Shapira on the testability of hereditary graph properties was strengthened by Austin and Tao~\cite{AT10} in another direction:
 they showed that hereditary properties of graphs are not only testable with  one-sided error,
but they are also \emph{locally repairable}\footnote{
Suppose $\bP$ is a hereditary graph property and $\epsilon>0$.
We say that a graph $G$ is locally $\delta$-close to $\bP$ if a random sample $S$ satisfies $\bP$ with probability at least $1-\delta$.
A result of Alon and Shapira~\cite{AS08}
shows that whenever $G$ is locally $\delta$-close to $\bP$ for some $\delta(\epsilon)>0$,
then $G$ is $\epsilon$-close to $\bP$.
The concept of being locally repairable strengthens this by requiring a
rule that generates $G'\in \bP$ only based on $S$ such that $|G\triangle G'|<\epsilon n^2$
with probability at least $1-\delta$.
}
(one may think of this as a strengthening of testability).
On the other hand, they showed that hereditary properties of $3$-graphs are not necessarily locally repairable.
Note that this is in contrast to Theorem~\ref{thm:main}.%

An intimate connection between property testing and graph limits was established by 
Borgs, Chayes, Lov\'asz, S\'os, Szegedy and Vesztergombi~\cite{BCLSSV06}.
In particular, they showed that a graph property $\bP$ is testable if and only if
for all sequences $(G_n)$ of graphs with $|V(G_n)|\to \infty$
and $\delta_\square(G_n,\bP)\to 0$,
we have $d_1(G_n,\bP)\to 0$.
Here $\delta_\square(G,\bP)$ denotes the cut-distance of $G$ and the closest graph satisfying $\bP$
and $d_1(G,\bP)$ is the normalized edit-distance between $G$ and $\bP$ (see also~\cite{Lov12} for more background and discussion on this).
Another characterization (in terms of localized samples) using the graph limit framework was given by Lov\'asz and Szegedy~\cite{LS10}.
Similarly, the result of R\"odl and Schacht~\cite{RS07STOC} on testing hereditary hypergraph properties 
was reproven via hypergraph limits by Elek and Szegedy~\cite{ES12} 
as well as Austin and Tao~\cite{AT10}. 
The latter further extended this to directed pre-coloured hypergraphs
(none of these results however yield  effective bounds on the query complexity).

Lov\'asz and Vesztergombi~\cite{LV13} introduced the notion of `non-deterministic' property testing,  
where the tester also has access to a `certificate' for the property $\bP$.
By considering the graph limit setting, they proved the striking result that any non-deterministically testable graph property is also deterministically testable 
(one could think of their result as the graph property testing analogue of proving that $\text{P}=\text{NP}$). 
Karpinski and Mark\'o~\cite{KM15} generalized the Lov\'asz-Vesztergombi result to hypergraphs, also via the notion of \mbox{(hyper-)graph} limits.
However, these proofs do not give explicit bounds on the query complexity -- this was achieved by Gishboliner and Shapira~\cite{GS14}
for graphs and Karpinski and Mark\'o~\cite{KM15a}
for hypergraphs.

Another direction of research concerns \emph{easily testable properties}, where
we require that the size of the sample is bounded from above by a polynomial in $1/\alpha$.
(The bounds coming from Theorem~\ref{thm:main} can be made explicit but are quite large, as the approach via the (hyper-)graph regularity lemma incurs at least a tower-type dependence on  $1/\alpha$, see~\cite{Gow97}.)
For $k$-graphs, Alon and Shapira~\cite{AS05} as well as Alon and Fox~\cite{AF15} obtained positive and negative results for the property of containing a given $k$-graph as an (induced) subgraph.
For an approach via a `polynomial' version of the regularity lemma see~\cite{FPS16}.

More recent progress on property testing includes many questions beyond the hypergraph setting.
Instances include property testing of matrices~\cite{AB16}, 
Boolean functions~\cite{AB10,AS03}, geometric objects~\cite{ADPR03},
and algebraic structures~\cite{BCLR08,FPS16,FIS05}.
Moreover, property testing in the sparse (graph) setting gives rise to many interesting results and questions (see e.g.~\cite{BSS10,NS13}).
Little is known for hypergraphs in this case (see~\cite{EJKO:19} for partial results on testing for sub(hyper-)graph-freeness in (hyper-)graphs which are not dense).
For much more details on property testing, we refer to~\cite{Gol:17}.

\medskip

The paper is organized as follows.
In the next section,
we outline the main steps of the argument.
In Section~\ref{sec:concepts}, we explain the relevant concepts of hypergraph regularity and collect a number of tools related to hypergraph regularity. 
In particular, we introduce the regular approximation lemma of R\"odl and Schacht (Theorem~\ref{thm: RAL}) and we describe a suitable `induced' version of the hypergraph counting lemma.
In Section~\ref{sec:counting}, we prove and derive a number of tools related to hypergraph regularity.
In Section~\ref{sec:testred},
we use this counting lemma to show that testable properties are regular reducible.
In Section~\ref{sec:regtest},
we show how our random sampling result from~\cite{JKKO2} (Lemma~\ref{lem: random choice2}) implies that satisfying a given regularity instance is testable.
In Section~\ref{sec:testest},
we then show that estimability is equivalent to testability.
In Section~\ref{sec:regredtest},
we combine the previous results to show that regular reducible properties are testable.
Finally, in Section~\ref{sec:application} we discuss applications of our main result and illustrate in detail how to apply Theorem~\ref{thm:main}.

\section{Proof sketch}\label{sec:proofsketch}

In the following,
we describe the main steps leading to the proof of Theorem~\ref{thm:main}.
While the general strategy emulates that of~\cite{AFNS09},
the hypergraph setting leads to many additional challenges.

\subsection{Testable properties are regular reducible}

We first discuss the implication (a)$\Rightarrow$(c) in Theorem~\ref{thm:main}.
(Note that the statement of (c) is formalized in Section~\ref{sec: regularity instances}.) 
The detailed proof is given in Section~\ref{sec:testred}.
The argument involves the following concepts.
A regularity instance $R=(\epsilon,\ba,d_{\ba,k})$ consists of a regularity parameter $\epsilon$, 
a vector $\ba\in \N^{k-1}$ determining the `address space' of $R$, 
 and a density function $d_{\ba,k}$
on the address space described by $\ba$.
(In the graph case, 
$\ba$ equals the number of parts of the regularity partition and the address space consists of all pairs of parts.)
We say a $k$-graph $H$ satisfies $R$ if there is a family of partitions $\sP=\{\sP^{(i)}\}_{i=1}^{k-1}$
(where $\sP^{(1)}$ is a partition of $V(H)$ and $\sP^{(i)}$ is a partition of all those $i$-sets which `cross' $\sP^{(1)}$)
so that $\sP$ is an $\epsilon$-equitable partition of $H$ with density function $d_{\ba,k}$.
(In the graph case this means that $\sP=\sP^{(1)}$ is a vertex partition so that all pairs of partition classes induce $\epsilon$-regular bipartite graphs.)
Then a property $\bP$ is regular reducible if there is a bounded size set $\cR$ of regularity instances so that $H$ is close to satisfying some $R\in \cR$
if and only if $H$ is close to satisfying $\bP$ (see Definition~\ref{def: regular reducible}).

Goldreich and Trevisan~\cite{GT03} proved that every testable graph property
is also testable in some canonical way (and their results translate to the hypergraph setting in a straightforward way).
Thus we may restrict ourselves to such canonical testers.
More precisely, an $(n,\alpha)$-tester $\bT=\bT(n,\alpha)$ is \emph{canonical}
if, given an $n$-vertex $k$-graph $H$, it chooses
a set $Q$ of $q_k' = q_k'(n,\alpha)$ vertices of $H$ uniformly at random, 
queries all $k$-sets in $Q$,
and then accepts or rejects $H$ (deterministically) according to (the isomorphism class of) $H[Q]$. 
In particular, $\bT$ has query complexity $\binom{q_k'}{k}$. 
Moreover, every canonical tester is non-adaptive.

Let $\bP$ be a testable $k$-graph property. 
Thus there exists a function $q_k:(0,1)\rightarrow \mathbb{N}$ such that for every $n\in \mathbb{N}$ and $\alpha \in (0,1)$,
there exists a canonical $(n,\alpha)$-tester $\bT = \bT(n,\alpha)$ for $\bP$ with query complexity at most $q_k(\alpha)$. 
So $\bT$ samples a set $Q$ of $q\leq q_k(\alpha)$ vertices, 
considers $H[Q]$, and then deterministically accepts or rejects $H$ based on $H[Q]$.
Let $\cQ$ be the set of all the $k$-graphs on $q$ vertices such that $\bT$ accepts $H$ if and only if there is $Q'\in \cQ$ that is isomorphic to $H[Q]$.

Now let $\bPr(\cQ,H)$ denote the `density' of copies of $k$-graphs  $Q\in \cQ$ in $H$ (see Section~\ref{sec: basic notation}).
As $\bT$ is an $(n,\alpha)$-tester,
$\bPr(\cQ,H)\geq 2/3$ if $H$ satisfies $\bP$ and $\bPr(\cQ,H)\leq 1/3$ if $H$ is $\alpha$-far from $\bP$.
The strategy is now to 
apply a suitable `induced' version (Corollary~\ref{cor: counting collection}) of the hypergraph counting lemma (Lemma~\ref{lem: counting}).
Corollary~\ref{cor: counting collection} shows that $\bPr(\cQ,H)$ can be approximated by a function $IC(\cQ,d_{\ba,k})$,
where $d_{\ba,k}$ is the density function of an equitable partition $\sP$ of $H$.
Accordingly, 
for a suitable small $\epsilon>0$ and all $\ba\in \N^{k-1}$ in a specified range (in terms of $\alpha$, $q_k(\alpha)$ and $k$),
we define a `discretized' set $\bI$ of regularity instances $(\epsilon,\ba,d_{\ba,k})$ such that $d_{\ba,k}(\cdot)$ only attains a bounded number of possible values.
Now setting $\cR(n,\alpha) := \{ R \in \bI :  IC(\cQ,d_{\ba,k}) \geq 1/2\}$ leads to the desired result,
as Corollary~\ref{cor: counting collection} 
implies $IC(\cQ,d_{\ba,k})\approx \bPr(\cQ,H)$ if $H$ satisfies $(\epsilon,\ba,d_{\ba,k})$.
(In the actual argument,
we consider some $k$-graph $G$ obtained from the regular approximation lemma (Theorem~\ref{thm: RAL}) rather than $H$ itself.)

\subsection{Satisfying a regularity instance is testable}\label{subsec:regtestable}

In this subsection we sketch how we prove that the property of satisfying a particular regularity instance is testable.
This forms the main part of the proof of Theorem~\ref{thm:main} and is described in Section~\ref{sec:regtest}.
Suppose $H$ is a $k$-graph and $Q$ is a subset of the vertices chosen uniformly at random.
First we show that if $H$ satisfies a regularity instance $R$,
then with high probability $H[Q]$ is close to satisfying $R$.
Also the converse is true:
if $H$ is far from satisfying $R$,
then with high probability $H[Q]$ is also far from satisfying $R$.

The main tool for this is Lemma~\ref{lem: random choice2} (which is proven in \cite{JKKO2}). 
Roughly speaking,
it states the following.
\begin{equation*}
\begin{minipage}[c]{0.90\textwidth}\em
Suppose $H$ is a $k$-graph and $Q$ a random subset of $V(H)$.
Then with high probability, the following hold  (where $\delta \ll \epsilon_0$).
\begin{itemize}
	\item If $\sO_1$ is an $\epsilon_0$-equitable partition of $H$ with density function $d_{\ba,k}$,
	then there is an $(\epsilon_0+\delta)$-equitable partition of $H[Q]$ with the same density function~$d_{\ba,k}$.
	\item If $\sO_2$ is an $\epsilon_0$-equitable partition of $H[Q]$ with density function $d_{\ba,k}$,
	then there is an $(\epsilon_0+\delta)$-equitable partition of $H$ with the same density function~$d_{\ba,k}$.
\end{itemize}
\end{minipage}
\end{equation*}
The key point here is that the transfer between $H$ and $H[Q]$ incurs only an additive increase in the regularity parameter $\epsilon_0$.
This additive increase can then be eliminated by slightly adjusting $H$ (or $H[Q]$).

It is not difficult to deduce from Lemma~\ref{lem: random choice2} that satisfying a given regularity instance is testable (Theorem~\ref{thm: regularity instance is testable}).

\subsection{The final step}
We now aim to use Theorem~\ref{thm: regularity instance is testable} to show that (c)$\Rightarrow$(a) in Theorem~\ref{thm:main},
i.e.~to prove that a regular reducible property $\bP$ is also testable (see Section~\ref{sec:regredtest}).
As $\bP$ is regular reducible,
we can decide whether $H$ satisfies $\bP$ if we can test whether $H$ is close to some regularity instance in a certain set $\cR$.
To achieve this,
we strengthen Theorem~\ref{thm: regularity instance is testable} to show that the property of satisfying a given regularity instance $R$ is actually estimable
(the equivalence (a)$\Leftrightarrow$(b) is a by-product of this argument, see Section~\ref{sec:testest}).
Having proved this, it is straightforward to construct a tester for $\bP$
by appropriately combining $|\cR|$ estimators which estimate the distance of $H$ and a given $R\in \cR$.

\section{Concepts and tools}\label{sec:concepts}

In this section we introduce the main concepts and tools (mainly concerning hypergraph regularity partitions) which form the basis of our approach. The constants in the hierarchies used to state our results have to be chosen from right to left. More precisely, if we claim that a result holds whenever $1/n \ll a \ll b \leq 1$ (where $n\in \N$ is typically the number of vertices of a hypergraph), 
then this means that there are non-decreasing functions $f : (0, 1] \rightarrow (0, 1]$ and $g : (0, 1] \rightarrow (0, 1]$ such that the result holds for all $0 < a, b \leq 1 $ and all $n \in \mathbb{N}$ with $a \leq f(b)$ and $1/n \leq g(a)$.
For a vector $\bx =(\alpha_1,\dots, \alpha_\ell)$, we let $\bx_*:= \{ \alpha_1,\dots, \alpha_\ell\}$ and write $\norm{\bx} = \max_{i\in [\ell]} \alpha_i$. We say a set $E$ is an \emph{$i$-set} if $|E|=i$.
Unless stated otherwise, in the partitions considered in this paper, we allow some of the parts to be empty.

\subsection{Hypergraphs}\label{sec: basic notation}

In the following we introduce several concepts about a hypergraph $H$.
We typically refer to $V=V(H)$ as the vertex set of $H$
and usually let $n:=|V|$.
We often identify $H$ with the collection $E(H)$ of its edges and write $|H|$ to denote the number of edges of $H$.
Given a hypergraph $H$ and a set $Q\subseteq V(H)$, we denote by $H[Q]$ the hypergraph induced on $H$ by $Q$.
For two $k$-graphs $G,H$ on the same vertex set, we often refer to $|G\triangle H|$  as the \emph{distance} between $G$ and $H$.
If the vertex set of $H$  has a partition $\{V_1,\ldots,V_\ell\}$,
we simply refer to $H$ as a hypergraph on $\{V_1,\ldots,V_\ell\}$.

A partition $\{V_1,\ldots,V_\ell\}$ of $V$ is an \emph{equipartition} if $|V_i|=|V_j|\pm 1$ for all $i,j\in [\ell]$. For a partition $\{V_1,\ldots,V_\ell\}$ of $V$ and $k\in [\ell]$,
we denote by $K^{(k)}_\ell(V_1,\dots, V_\ell)$ the \emph{complete $\ell$-partite} $k$-graph with vertex classes $V_1,\ldots, V_\ell$.
Let $0\leq \lambda<1$.
If $|V_i| = (1\pm \lambda)m$ for every $i\in [\ell]$, 
then an \emph{$(m,\ell,k,\lambda)$-graph} $H$ on $\{V_1, \dots, V_{\ell}\}$ 
is a spanning subgraph of $K^\kk_{\ell}(V_1,\dots, V_{\ell})$.
For notational convenience, we consider the vertex partition $\{V_1,\ldots ,V_\ell\}$ as an $(m,\ell,1,\lambda)$-graph. 
If $|V_i| \in \{m,m+1\}$, 
we drop $\lambda$ and simply refer to $(m,\ell,k)$-graphs.
Similarly, if the value of $\lambda$ is not relevant, then we say $H\subseteq K^\kk_{\ell}(V_1,\dots, V_{\ell})$ is an $(m,\ell,k,*)$-graph.

Given an $(m,\ell,k,*)$-graph $H$ on $\{V_1,\dots, V_\ell\}$, an integer $k\leq i\leq \ell$ and a set $\Lambda_i \in \binom{[\ell]}{i}$, 
we set $H[\Lambda_i] := H[\bigcup_{\lambda'\in \Lambda_i} V_{\lambda'}]$. If $2\leq k\leq i \leq \ell$ and $H$ is an $(m,\ell,k,*)$-graph,
we denote by $\cK_i(H)$ the family of all $i$-element subsets $I$ of $V(H)$ for which $H[I]\cong K^{\kk}_i$,
where $K^{\kk}_i$ denotes the complete $k$-graph on $i$ vertices.

If $H^{(1)}$ is an $(m,\ell,1,*)$-graph and $i\in [\ell]$, 
we denote by $\cK_i(H^{(1)})$ the family of all $i$-element subsets $I$ of $V(H^{(1)})$ which `cross' the partition $\{V_1,\dots,V_\ell\}$; 
that is, $I\in \cK_i(H^{(1)})$ if and only if $|I\cap V_s|\leq 1$ for all $s\in [\ell]$.

We will consider hypergraphs of different uniformity on the same vertex set.
Given an $(m,\ell,k-1,\lambda)$-graph $H^{(k-1)}$ and an $(m,\ell,k,\lambda)$-graph $H^\kk$ on the same vertex set, 
we say $H^{(k-1)}$ \emph{underlies} $H^\kk$ if $H^\kk\subseteq \cK_k(H^{(k-1)})$;
that is, for every edge $e\in H^\kk$ and every $(k-1)$-subset $f$ of $e$,
we have $f\in H^{(k-1)}$.
If we have an entire cascade of underlying hypergraphs we refer to this as a complex.
More precisely, 
let $m\geq 1$ and $\ell\geq k\geq 1$ be integers. 
An \emph{$(m,\ell,k,\lambda)$-complex}  $\cH$ on $\{V_1,\ldots,V_\ell\}$ 
is a collection of $(m,\ell,j,\lambda)$-graphs $\{H^{(j)}\}_{j=1}^{k}$ on $\{V_1,\ldots,V_\ell\}$ such that $H^{(j-1)}$ underlies $H^{(j)}$ for all $j\in [k]\sm\{1\}$, that is, $H^{(j)} \subseteq \cK_{j}(H^{(j-1)})$.
Again, if $|V_i| \in \{m,m+1\}$, then we simply drop $\lambda$ and refer to such a complex as an $(m,\ell,k)$-complex. 
If the value of $\lambda$ is not relevant, then we say that $\{H^{(j)}\}_{j=1}^{k}$  is an $(m,\ell,k,*)$-complex. A collection of hypergraphs is a \emph{complex} if it is an $(m,\ell,k,*)$-complex for some integers $m,\ell,k$.

When $m$ is not of primary concern, we refer to $(m,\ell,k,\lambda)$-graphs and $(m,\ell,k,\lambda)$-complexes simply as 
$(\ell,k,\lambda)$-graphs and $(\ell,k,\lambda)$-complexes, respectively.
Again, we also omit $\lambda$ if $|V_i|\in \{m,m+1\}$ and refer to $(\ell,k)$-graphs and $(\ell,k)$-complexes and we write the symbol `$*$' instead of $\lambda$ if $\lambda$ is not relevant.

Note that there is no ambiguity between an $(\ell,k,\lambda)$-graph and an $(m,\ell,k)$-graph (and similarly for complexes) as $\lambda<1$.

Suppose $n\geq \ell\geq k$ and
suppose $H$ is an $n$-vertex $k$-graph and $F$ is an $\ell$-vertex $k$-graph.
We define $\mathbf{Pr}(F,H)$ 
such that $\mathbf{Pr}(F,H)\binom{n}{\ell}$ equals the number of induced copies of $F$ in~$H$. 
For a collection $\cF$ of $\ell$-vertex $k$-graphs, 
we define $\mathbf{Pr}(\cF, H)$ 
such that $\mathbf{Pr}(\cF,H)\binom{n}{\ell}$ equals the number of induced $\ell$-vertex $k$-graphs $F$ in $H$ such that $F\in \cF$.
Note that the following proposition holds.
\begin{proposition}\label{prop: mathbf Pr doesn't change much}
Suppose $n,k,q\in \mathbb{N}$ with $k\leq q\leq n$ and $G$ and $H$ are $n$-vertex $k$-graphs on vertex set $V$ and $\cF$ is a collection of $q$-vertex $k$-graphs.
If $|G \triangle H| \leq \nu \binom{n}{k}$, then 
$$\mathbf{Pr}(\cF, G) = \mathbf{Pr}(\cF, H) \pm q^k \nu.$$
\end{proposition}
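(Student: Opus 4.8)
The plan is to estimate, for each $q$-vertex $k$-graph $F\in\cF$, how much the count of induced copies of $F$ can change when we pass from $H$ to $G$, and then sum over $\cF$. Fix a $q$-set $Q\subseteq V$. The induced subgraphs $H[Q]$ and $G[Q]$ differ only if $Q$ contains at least one edge of $G\triangle H$; if no $k$-subset of $Q$ lies in $G\triangle H$, then $H[Q]=G[Q]$ as labelled $k$-graphs, so in particular their isomorphism types agree and $Q$ contributes identically to $\mathbf{Pr}(\cF,G)$ and $\mathbf{Pr}(\cF,H)$. Hence
\[
\Big|\,\#\{Q\in\tbinom{V}{q}: G[Q]\in\cF\} - \#\{Q\in\tbinom{V}{q}: H[Q]\in\cF\}\,\Big| \le \#\Big\{Q\in\tbinom{V}{q}: \exists\, e\in G\triangle H,\ e\subseteq Q\Big\}.
\]

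The plan for bounding the right-hand side is a straightforward union bound over edges of the symmetric difference: the number of $q$-sets $Q$ containing a \emph{fixed} $k$-set $e$ is $\binom{n-k}{q-k}$, so the count of "bad" $q$-sets is at most $|G\triangle H|\cdot\binom{n-k}{q-k} \le \nu\binom{n}{k}\binom{n-k}{q-k}$. Now I would use the identity $\binom{n}{k}\binom{n-k}{q-k} = \binom{n}{q}\binom{q}{k}$ to rewrite this as $\nu\binom{q}{k}\binom{n}{q}$, and finally bound $\binom{q}{k}\le q^k$. Dividing through by $\binom{n}{q}$ gives $|\mathbf{Pr}(\cF,G)-\mathbf{Pr}(\cF,H)|\le q^k\nu$, as claimed.

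There is no real obstacle here — the statement is a routine counting estimate. The only point requiring a modicum of care is the observation that $Q$ contributes equally to both counts \emph{whenever} $G[Q]=H[Q]$ as labelled graphs (not merely isomorphic), which is immediate since $\mathbf{Pr}(\cF,\cdot)$ counts $q$-sets whose induced subgraph is isomorphic to a member of $\cF$, and isomorphism type is determined by the labelled graph. After that, everything is the binomial identity and the trivial bound $\binom{q}{k}\le q^k$. One could present the whole thing in three or four lines.
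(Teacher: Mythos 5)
Your proof is correct and follows essentially the same approach as the paper's (the paper bounds the change per flipped edge by $\binom{n-k}{q-k}$ and uses $\binom{n}{k}\binom{n-k}{q-k}\le q^k\binom{n}{q}$; your union bound over $q$-sets containing some edge of $G\triangle H$ is the same count, just organised from the perspective of $q$-sets rather than edges). The observation that $G[Q]=H[Q]$ whenever $Q$ contains no $k$-set of $G\triangle H$, together with the binomial identity $\binom{n}{k}\binom{n-k}{q-k}=\binom{n}{q}\binom{q}{k}$ and $\binom{q}{k}\le q^k$, closes the argument exactly as in the paper.
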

\COMMENT{
Note that adding or removing an edge can decrease the number of induced copies of members of $\cF$ by at most $\binom{n-k}{q-k}$. (Note that the size of $\cF$ is irrelevant as the $k$-vertices in the added/removed edge with $q-k$ other vertices can form at most one graph in $\cF$.) Thus adding or removing $\nu \binom{n}{k}$ edges can decrease the number of induced copies of the members of $\cF$ by at most $\nu \binom{n}{k}\binom{n-k}{q-k} \leq \nu q^k \binom{n}{q}$.
}

\subsection{Probabilistic tools}
For $m,n,N\in \mathbb{N}$ with $m,n<N$ the \emph{hypergeometric distribution} with parameters $N$, $n$ and $m$ is the distribution of the random variable $X$ defined as follows. Let $S$ be a random subset of $\{1,2, \dots, N\}$ of size $n$ and let $X:=|S\cap \{1,2,\dots, m\}|$. We will use the following bound, which is a simple form of Chernoff-Hoeffding's inequality.

\begin{lemma}
[See {\cite[Remark 2.5, Theorem 2.8 and Theorem 2.10]{JLR00}}]  \label{lem: chernoff} 
Suppose $X_1,\dots, X_n$ are independent random variables such that $X_i\in \{0,1\}$ for all $i\in [n]$. 
Let $X:= X_1+\dots + X_n$. Then for all $t>0$, $\mathbb{P}[|X - \mathbb{E}[X]| \geq t] \leq 2e^{-2t^2/n}$. 
Suppose $Y$ has a hypergeometric distribution with parameters $N,n,m$,
then
$\mathbb{P}[|Y - \mathbb{E}[Y]| \geq t] \leq 2e^{-2t^2/n}$.
\end{lemma}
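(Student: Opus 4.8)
The plan is to prove both inequalities by the standard exponential-moment (Chernoff--Cram\'er) method, together with Hoeffding's comparison between sampling with and without replacement for the hypergeometric case; since all of this is classical and recorded in~\cite[Section~2]{JLR00}, in the paper we simply cite it.

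For the first statement, fix $\lambda>0$ and set $Z_i:=X_i-\mathbb{E}[X_i]$, so each $Z_i$ is mean zero and takes values in an interval of length $1$. Hoeffding's lemma gives $\mathbb{E}[e^{\lambda Z_i}]\leq e^{\lambda^2/8}$: for a mean-zero variable $Z$ supported on $[a,b]$, the function $\psi(\lambda):=\log\mathbb{E}[e^{\lambda Z}]$ satisfies $\psi(0)=\psi'(0)=0$ and $\psi''(\lambda)=\mathrm{Var}_{\lambda}(Z)\leq (b-a)^2/4$, where $\mathrm{Var}_\lambda$ is the variance under the exponentially tilted law (still supported on $[a,b]$), so $\psi(\lambda)\leq \lambda^2(b-a)^2/8$ by Taylor expansion. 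By independence, $\mathbb{E}[e^{\lambda(X-\mathbb{E}X)}]=\prod_i\mathbb{E}[e^{\lambda Z_i}]\leq e^{n\lambda^2/8}$, so Markov's inequality yields $\mathbb{P}[X-\mathbb{E}X\geq t]\leq e^{-\lambda t+n\lambda^2/8}$; choosing $\lambda=4t/n$ gives the bound $e^{-2t^2/n}$. Applying the same estimate to the variables $1-X_i$ controls the lower tail, and a union bound gives $\mathbb{P}[|X-\mathbb{E}X|\geq t]\leq 2e^{-2t^2/n}$.

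For the hypergeometric statement, let $c_1,\dots,c_N\in\{0,1\}$ have exactly $m$ ones, and write $Y=\sum_{j=1}^n c_{\pi(j)}$ where $\pi$ is a uniformly random injection $[n]\to[N]$; this is precisely a hypergeometric variable with parameters $N,n,m$. Let $Y'=\sum_{j=1}^n c_{\sigma(j)}$ with $\sigma(1),\dots,\sigma(n)$ i.i.d.\ uniform on $[N]$, so that $Y'$ is a sum of $n$ i.i.d.\ Bernoulli$(m/N)$ variables and $\mathbb{E}[Y']=\mathbb{E}[Y]=nm/N$. Hoeffding's comparison theorem states that $\mathbb{E}[\phi(Y)]\leq\mathbb{E}[\phi(Y')]$ for every convex $\phi:\mathbb{R}\to\mathbb{R}$; the cleanest proof rewrites the without-replacement sum as an expectation, over an auxiliary random permutation, of averages of with-replacement sums, and then applies Jensen's inequality. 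Taking $\phi(x)=e^{\pm\lambda x}$ and running the Chernoff bound from the previous paragraph on $Y'$ (whose summands lie in $\{0,1\}$) gives $\mathbb{P}[Y-\mathbb{E}Y\geq t]\leq e^{-\lambda t}\mathbb{E}[e^{\lambda(Y-\mathbb{E}Y)}]\leq e^{-\lambda t}\mathbb{E}[e^{\lambda(Y'-\mathbb{E}Y')}]\leq e^{-2t^2/n}$, and similarly for the lower tail; a union bound finishes the proof.

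I expect the main obstacle to be the justification of Hoeffding's with/without-replacement comparison $\mathbb{E}[\phi(Y)]\leq\mathbb{E}[\phi(Y')]$ for convex $\phi$; the argument in Hoeffding's original paper (reducing to Jensen via a permutation representation) is the standard route. Alternatively one can bypass it entirely by applying the Azuma--Hoeffding bounded-differences inequality directly to the Doob martingale of $Y$, which yields the same exponent since revealing one coordinate of the sampling procedure changes $Y$ by at most $1$. Everything else --- Hoeffding's lemma and the optimization over $\lambda$ --- is routine.
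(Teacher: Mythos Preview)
Your proof is correct and follows the standard Chernoff--Cram\'er/Hoeffding route; there is nothing to compare against, since the paper does not prove this lemma but simply cites it from \cite[Remark 2.5, Theorem 2.8 and Theorem 2.10]{JLR00}. You yourself anticipated this in your first paragraph.
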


\COMMENT{For the proof of Lemma~\ref{lem: random subset edge size} we need to introduce the concept of martingales.
A sequence $X_0,\dots, X_N$ of random variables is a {\em martingale} 
if $X_0$ is a fixed real number and $\mathbb{E}[X_{n}\mid X_0,\dots,X_{n-1}] = X_{n-1}$ for all $n\in [N]$. 
We say that the martingale $X_0,\dots, X_N$ is {\em $c$-Lipschitz} if $|X_{n}-X_{n-1}| \leq c$ holds for all $n\in[N]$. 
We apply Azuma's inequality with martingales of the form $X_i:=\mathbb{E}[X\mid Y_1,\dots, Y_i]$, where $X$ and $Y_1,\dots,Y_i$ are some previously defined random variables.
\begin{theorem}[Azuma's inequality \cite{Azu67, Hoe63}]
Suppose that $\lambda, c >0$ and that $X_0,\dots, X_N$ is a $c$-Lipschitz martingale. 
Then 
\begin{align*}
\mathbb{P}[\left|X_N-X_0\right|\geq \lambda]\leq 2e^{\frac{-\lambda^2}{2Nc^2}}.
\end{align*}
\end{theorem}
}

The next lemma is easy to show, e.g.~using Azuma's inequality.
We omit the proof.

\begin{lemma}\label{lem: random subset edge size}
Suppose $0<1/n \leq  1/q \ll 1/k \leq 1/2$ and $1/q \ll \nu$.
Let $H$ be an $n$-vertex $k$-graph on vertex set $V$. Let $Q\in \binom{V}{q}$ be a $q$-vertex subset of $V$ chosen uniformly at random. Then 
$$\mathbb{P}\left[|H[Q]| = \frac{q^{k}}{n^{k}}|H| \pm \nu \binom{q}{k} \right] \geq 1- 2 e^{\frac{-\nu^2q}{8k^2}}.$$
\end{lemma}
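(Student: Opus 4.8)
The plan is to prove Lemma~\ref{lem: random subset edge size} by expressing $|H[Q]|$ as a sum of indicator random variables indexed by the edges of $H$ and controlling the resulting (dependent) sum via a martingale, then invoking Azuma's inequality in the form recorded in the commented-out block above. Concretely, for each $e\in H$ let $X_e := \1[e\subseteq Q]$, so that $|H[Q]| = \sum_{e\in H} X_e$. Since $Q$ is a uniformly random $q$-subset of an $n$-set, $\Pro[e\subseteq Q] = \binom{n-k}{q-k}/\binom{n}{q}$, and a short computation gives $\binom{n-k}{q-k}/\binom{n}{q} = \binom{q}{k}/\binom{n}{k}$. Hence
\[
\Exp[|H[Q]|] = |H|\cdot\frac{\binom{q}{k}}{\binom{n}{k}} = \frac{q^k}{n^k}|H| \pm O\!\left(\frac{|H|}{n}\binom{q}{k}\right),
\]
where the error term comes from comparing $\binom{q}{k}/\binom{n}{k}$ with $q^k/n^k$; since $|H|\le \binom{n}{k}$ and $1/n\ll 1/q \ll \nu$, this error is at most, say, $(\nu/2)\binom{q}{k}$, so it suffices to show concentration of $|H[Q]|$ around its mean within $(\nu/2)\binom{q}{k}$.

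For the concentration, I would reveal the $q$ elements of $Q$ one at a time: let $v_1,\dots,v_q$ be the random ordering in which the elements of $Q$ are exposed (equivalently, $Q$ is built as a uniformly random injection $[q]\to V$), and set $Z_i := \Exp[\,|H[Q]|\mid v_1,\dots,v_i\,]$ for $0\le i\le q$, a Doob martingale with $Z_0 = \Exp[|H[Q]|]$ and $Z_q = |H[Q]|$. The key point is that this martingale is $c$-Lipschitz with $c = \binom{q-1}{k-1}$: changing a single one of the exposed vertices can alter the count $|H[Q]|$ by at most the maximum number of edges of $H[Q]$ through a fixed vertex, which is at most $\binom{q-1}{k-1}$. (To make this precise one uses the standard coupling argument: fixing $v_1,\dots,v_{i-1}$ and comparing two choices of $v_i$, the symmetric difference of the induced hypergraphs is contained in the stars at the two candidate vertices, each of size at most $\binom{q-1}{k-1}$ — actually one shows $|Z_i - Z_{i-1}|\le \binom{q-1}{k-1}$ directly from the bounded-difference property of $f(Q) = |H[Q]|$ under swapping one coordinate.) Azuma's inequality then yields
\[
\Pro\!\left[\,\bigl|\,|H[Q]| - \Exp[|H[Q]|]\,\bigr| \ge \lambda\,\right] \le 2\exp\!\left(\frac{-\lambda^2}{2q\,\binom{q-1}{k-1}^2}\right).
\]

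Finally I would substitute $\lambda = (\nu/2)\binom{q}{k}$. Using $\binom{q}{k} = \frac{q}{k}\binom{q-1}{k-1}$, the exponent becomes
\[
\frac{-\lambda^2}{2q\binom{q-1}{k-1}^2} = \frac{-(\nu^2/4)(q/k)^2\binom{q-1}{k-1}^2}{2q\binom{q-1}{k-1}^2} = \frac{-\nu^2 q}{8k^2},
\]
which is exactly the bound claimed. Combining this with the estimate $\Exp[|H[Q]|] = \frac{q^k}{n^k}|H| \pm (\nu/2)\binom{q}{k}$ and a union/triangle-inequality step gives $|H[Q]| = \frac{q^k}{n^k}|H| \pm \nu\binom{q}{k}$ with probability at least $1 - 2e^{-\nu^2 q/(8k^2)}$, as desired. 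The only genuinely delicate point is the verification of the Lipschitz constant $\binom{q-1}{k-1}$ for the Doob martingale — i.e.\ that re-exposing one vertex of $Q$ changes $|H[Q]|$ by at most the size of a vertex-star in a $q$-vertex $k$-graph — but this is a routine bounded-differences argument; everything else is a direct computation exploiting the hierarchy $1/n \ll 1/q \ll 1/k, \nu$ to absorb lower-order error terms.
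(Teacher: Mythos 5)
Your proposal follows exactly the route the paper has in mind (it states only "easy to show, e.g.~using Azuma's inequality" and omits the proof, but a commented-out version uses the same vertex-exposure martingale): you expose the $q$ vertices of $Q$ one at a time, verify the Doob martingale is Lipschitz, and apply Azuma, then absorb the small discrepancy between $\binom{q}{k}/\binom{n}{k}$ and $q^k/n^k$ via the hierarchy $1/n\le 1/q\ll 1/k,\nu$. Your accounting is in fact slightly tighter than the paper's sketch — you use the sharp Lipschitz constant $\binom{q-1}{k-1}$ rather than $\binom{q}{k-1}$ and explicitly split the $\nu$-budget in half to cover the expectation error, which makes the constant $8k^2$ in the exponent come out exactly — so the argument is correct and complete.
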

\COMMENT{
\begin{proof}
Suppose we reveal $q$ vertices $v_1,\dots, v_q$ of $V$ one by one and let $Q:=\{v_1,\dots, v_q\}$.
Consider an exposure martingale $X_0,\dots, X_q$ such that 
$X_i:=\mathbb{E}[ |H[Q]| \mid v_1,\dots, v_i]$. 
Then it is easy to check that this forms a $\binom{q}{k-1}$-Lipschitz martingale. Moreover, $X_0 = \mathbb{E}[|H[Q]|] = \binom{q}{k}|H|/\binom{n}{k}$.
Therefore, by Azuma's inequality, we obtain
$$
\mathbb{P}\left[|H[Q]| 
= \frac{q^{k}}{n^{k}}|H| \pm \nu \binom{q}{k} \right] 
\geq 1 - 2e^{-\nu^2 \binom{q}{k}^2 /(3\binom{q}{k-1}^2 q) } \geq 1- 2 e^{-\nu^2q/(8k^2)}.$$
\end{proof}
}

\subsection{Hypergraph regularity}\label{sec: 2 hypergraph regularity}
In this subsection we introduce $\epsilon$-regularity for hypergraphs.
Suppose $\ell \geq k\geq 2$ and $V_1,\dots, V_{\ell}$ are pairwise disjoint vertex sets.
Let $H^\kk$ be an $(\ell,k,*)$-graph on $\{V_1,\dots, V_{\ell}\}$, 
let $\{i_1,\dots, i_{k} \} \in \binom{[\ell]}{k}$, 
and let $H^{(k-1)}$ be a $(k,k-1,*)$-graph on $\{V_{i_1},\dots, V_{i_k}\}$. 
We define the \emph{density of $H^\kk$ with respect to $H^{(k-1)}$} as
$$d(H^{(k)} \mid H^{(k-1)}) := \left\{ \begin{array}{ll} \frac{|H^{(k)}\cap \cK_k(H^{(k-1)})|}{|\cK_k(H^{(k-1)})| } & \text{ if } |\cK_k(H^{(k-1)})|>0, \\
0 & \text{otherwise.} 
\end{array}\right.$$
Suppose $\epsilon>0$ and $d\geq 0$.
We say $H^{(k)}$ is \emph{$(\epsilon,d)$-regular with respect to $H^{(k-1)}$} 
if for all $Q^{(k-1)}\subseteq H^{(k-1)}$ with  
$$| \cK_k(Q^{(k-1)})| \geq \epsilon |\cK_k(H^{(k-1)})|, 
\text{ we have } |H^{(k)}\cap \cK_k(Q^{(k-1)})| = (d\pm \epsilon) |\cK_k(Q^{(k-1)})|.$$\COMMENT{We don't want to write $d(H^{(k)}\mid Q^{(k-1)}) = d\pm \epsilon$ here since we later want that if $\cK_{k}(H^{(k-1)})=\emptyset$ then any $H^{(k)}$  is $(\epsilon,d)$-regular with respect to $H^{(k-1)}$ for all $d,\epsilon \in [0,1]$.}
Note that if $H^{(k)}$ is $(\epsilon,d)$-regular with respect to $H^{(k-1)}$ and $H^{(k-1)}\neq \emptyset$, then we have $d(H^{(k)} \mid H^{(k-1)}) = d\pm \epsilon$. We say $H^{(k)}$ is \emph{$\epsilon$-regular with respect to $H^{(k-1)}$} if it is $(\epsilon,d)$-regular with respect to $H^{(k-1)}$ for some $d\geq 0$.

We say an $(\ell,k,*)$-graph $H^{(k)}$ 
on $\{V_1,\dots, V_\ell\}$ is \emph{$(\epsilon,d)$-regular with respect to an $(\ell,k-1,*)$-graph $H^{(k-1)}$} 
on $\{V_1,\dots, V_\ell\}$
if for every $\Lambda \in \binom{[\ell]}{k}$ 
$H^{(k)}$ is $(\epsilon,d)$-regular with respect to the restriction $H^{(k-1)}[\Lambda]$. 

Let $\bd=(d_2,\dots, d_{k})\in \R^{k-1}_{\geq 0}$.
We say an $(\ell,k,*)$-complex $\cH = \{H^{(j)}\}_{j=1}^{k}$ is \emph{$(\epsilon,\bd)$-regular} if $H^{(j)}$ is $(\epsilon,d_j)$-regular with respect to $H^{(j-1)}$ for every $j\in [k]\sm \{1\}$. 
We sometimes simply refer to a complex as being $\epsilon$-regular if it is $(\epsilon,\bd)$-regular for some vector $\bd$.

\subsection{Partitions of hypergraphs and the regular approximation lemma}\label{sec: partitions of hypergraphs and RAL}

The regular approximation lemma of R\"odl and Schacht implies that 
for all $k$-graphs $H$, there exists a $k$-graph $G$
which is very close to $H$ and so that $G$ has a very `high quality' partition into $\epsilon$-regular subgraphs.
To state this formally we need to introduce further concepts involving partitions of hypergraphs.

Suppose $A\supseteq B$ are finite sets, 
$\sA$ is a partition of $A$, 
and $\sB$ is a partition of $B$.
We say $\sA$ \emph{refines} $\sB$ and write $\sA \prec \sB$ 
if for every $\cA\in \sA$ there either exists $\cB \in \sB$ such that $\cA \subseteq \cB$ or $\cA \subseteq A\setminus B$.
The following definition concerns `approximate' refinements.
Let $\nu\geq 0$.
We say that $\sA$ \emph{$\nu$-refines} $\sB$ and write $\sA \prec_{\nu} \sB$ 
if there exists a function $f: \sA \rightarrow \sB\cup \{A\setminus B\}$ such that 
$$\sum_{\cA \in \sA}|\cA \setminus f(\cA)| \leq \nu |A|.$$
We make the following observations.
\begin{equation}\label{eq: prec triangle}
\begin{minipage}[c]{0.9\textwidth}\em
\begin{itemize}
\item  $\sA\prec \sB$ if and only if $\sA\prec_{0} \sB$. 
\item Suppose $\sA,\sA',\sA''$ are partitions of $A,A',A''$ respectively and $A''\subseteq A' \subseteq A$. 
If $\sA \prec_{\nu} \sA'$ and $\sA' \prec_{\nu'} \sA''$, 
then $\sA\prec_{\nu+\nu'}\sA''$.
\end{itemize} 
\end{minipage}\ignorespacesafterend 
\end{equation}

We now introduce the concept of a polyad.
Roughly speaking, given a vertex partition $\sP^{(1)}$, an $i$-polyad is an $i$-graph which arises from a partition $\sP^{(i)}$ of the complete partite $i$-graph $\cK_i(\sP^{(1)})$.
The $(i+1)$-cliques spanned by all the $i$-polyads give rise to a partition $\sP^{(i+1)}$ of $\cK_{i+1}(\sP^{(1)})$ (see Definition~\ref{def: family of partitions}).
Such a `family of partitions' then provides a suitable framework for describing a regularity partition (see Definition~\ref{def: equitable family of partitions}).

Suppose we have a vertex partition $\sP^{(1)} = \{V_1,\ldots,V_\ell\}$ and $\ell\geq k$.
For integers $k\leq \ell'\leq \ell$, we say that a hypergraph $H$ is an \emph{$(\ell',k,*)$-graph with respect to $\sP^{(1)}$} if it is an $(\ell',k,*)$-graph on $\{ V_i : i\in \Lambda\}$ for some $\Lambda \in \binom{[\ell]}{\ell'}$.

Recall that $\cK_{j}(\sP^{(1)})$ is the family of all crossing $j$-sets with respect to $\sP^{(1)}$.
Suppose that for all  $i\in[k-1]\sm \{1\}$, 
we have partitions $\sP^{(i)}$ of $\cK_{i}(\sP^{(1)})$ such that each part of $\sP^{(i)}$ is an $(i,i)$-graph with respect to $\sP^{(1)}$.
By definition,
for each $i$-set $I\in \cK_{i}(\sP^{(1)})$, 
there exists exactly one $P^{(i)}=P^{(i)}(I)\in \sP^{(i)}$ so that $I\in P^{(i)}$.
Consider $j\in [\ell]$ and any $J\in \cK_j(\sP^{(1)})$.
For each $i\in[ \max\{j,k-1\}]$,
the \emph{$i$-polyad} $\hat{P}^{(i)}(J)$ of $J$ is defined by
\begin{align}\label{def:polyad}
	\hat{P}^{(i)}(J) := \bigcup\left\{P^{(i)}(I) : I\in \binom{J}{i}\right\}.
\end{align}
Thus $\hat{P}^{(i)}(J)$ is a $(j,i)$-graph with respect to $\sP^{(1)}$.
Moreover, let
\begin{align}\label{eq: hat cP}
\hat{\cP}(J):=\left\{\hat{P}^{(i)}(J)\right\}_{i=1}^{\max\{j,k-1\}},\end{align}
\COMMENT{Until now, this does not have to be complex, since we do not know that $\cK_{i}(\hat{P}^{(i-1)})$ underlies $P^{(i)} \in \sP^{(i)}$ or not.
Because since $\sP$ is arbitrary, $\hat{P}^{(i)}(J) = \hat{P}^{(i)}(J')$ while $\hat{P}^{(i-1)}(J) \neq \hat{P}^{(i-1)}(J')$ might happen.
}
and for $j\in [k-1]$, let
\begin{align}\label{eq:sP}
	\hat{\sP}^{(j)} := \left\{\hat{P}^{(j)}(J): J\in \cK_{j+1}(\sP^{(1)})\right\}.
\end{align}
We note that $\hat{\sP}^{(1)}$ is the set consisting of all $(2,1)$-graphs with vertex classes $V_s, V_t$ (for all distinct $s,t\in [\ell]$).
Moreover, note that if $\hat{P}^{(j)} \in \hat{\sP}^{(j)}$, it follows that there is a set $J\in \cK_{j+1}(\sP^{(1)})$ such that $\hat{P}^{(j)}=\hat{P}^{(j)}(J)$. Since $J\in \cK_{j+1}(\hat{P}^{(j)}(J))$, we obtain that $\cK_{j+1}(\hat{P}^{(j)})\neq \emptyset$ for any $\hat{P}^{(j)}\in \hat{\sP}^{(j)}$.

The above definitions apply to arbitrary partitions $\sP^{(i)}$ of $\cK_i(\sP^{(1)})$.
However, it will be useful to consider partitions with more structure.
\begin{definition}[Family of partitions]\label{def: family of partitions}
Suppose $k\in \N\sm \{1\}$ and $\ba=(a_1,\dots, a_{k-1})\in \N^{k-1}$.
We say $\sP= \sP(k-1,\ba)= \{\sP^{(1)},\dots, \sP^{(k-1)}\}$ is a \emph{family of partitions on $V$} 
if it satisfies the following for each $j\in [k-1]\setminus \{1\}$:
\begin{enumerate}[label={ \rm(\roman*)}]
\item $\sP^{(1)}$ is a partition of $V$ into $a_1 \geq k$ nonempty classes,
\item $\sP^{(j)}$ is a partition of $\cK_{j}(\sP^{(1)})$ into nonempty\COMMENT{Here, I added the word `nonempty' because of the injectivity of $\ba$-labelling $\phi^{(j)}$ we use in section 2.6.1. Since the parts of $\sP^{(k-1)}$ are non-empty this also implies that $a_1\geq k-1$.} 
$j$-graphs such that
\begin{itemize}
	\item $\sP^{(j)}\prec \{\cK_j(\hat{P}^{(j-1)}): \hat{P}^{(j-1)}\in \hat{\sP}^{(j-1)}\}$ and
	\item $|\{P^{(j)}\in \sP^{(j)} : P^{(j)} \subseteq \cK_j(\hat{P}^{(j-1)})\}|=a_j$ for every $\hat{P}^{(j-1)}\in \hat{\sP}^{(j-1)}$.
\end{itemize}
\end{enumerate}
\end{definition}
We say $\sP = \sP(k-1,\ba)$ is \emph{$T$-bounded} if $\norm{\ba}\leq T$. 
For two families of partitions $\sP = \sP(k-1,\ba^{\sP})$ and $\sQ =\sQ(k-1,\ba^{\sQ})$, we say \emph{$\sP \prec \sQ$} if $\sP^{(j)} \prec \sQ^{(j)}$ for all $j\in [k-1]$. We say \emph{$\sP \prec_{\nu} \sQ$} if $\sP^{(j)} \prec_{\nu} \sQ^{(j)}$ for all $j\in [k-1]$.

As the concept of polyads is central to this paper,
we emphasize the following:
\begin{proposition}\label{prop: (i,i)-hypergraph}
Let $k\in \mathbb{N}\setminus \{1\}$, $\ba\in \mathbb{N}^{(k-1)}$ and $\sP = \sP(k-1,\ba)$ be a family of partitions. Then for all $i\in [k-1]$ and $j\in [a_1]$, the following hold.
\begin{enumerate}[label={ \rm(\roman*)}]
\item if $i>1$, then $\sP^{(i)}$ is a partition of $\cK_{i}(\sP^{(1)})$ into $(i,i,*)$-graphs with respect to $\sP^{(1)}$, 
\item each $\hat{P}^{(i)}\in \hat{\sP}^{(i)}$ is an $(i+1,i,*)$-graph with respect to $\sP^{(1)}$,
\item for each $j$-set $J\in \cK_j(\sP^{(1)})$, $\hat{\cP}(J)$ as defined in \eqref{eq: hat cP} is a complex.
\end{enumerate}
\end{proposition}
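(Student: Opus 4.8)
The plan is to unwind the definitions of a family of partitions, of polyads, and of a complex, and to verify each of the three claims by induction on $i$. For part (i): by Definition~\ref{def: family of partitions}(ii), $\sP^{(i)}$ is a partition of $\cK_i(\sP^{(1)})$ that refines $\{\cK_i(\hat{P}^{(i-1)}) : \hat{P}^{(i-1)} \in \hat{\sP}^{(i-1)}\}$, and each part of $\sP^{(i)}$ is by hypothesis an $i$-graph; I would argue that each such part $P^{(i)}$, lying inside some $\cK_i(\hat{P}^{(i-1)})$, is a crossing $i$-graph with respect to $\sP^{(1)}$, so its vertices meet $i$ distinct classes $V_{s}$, hence it is an $(i,i,*)$-graph with respect to $\sP^{(1)}$. (For $i=2$ this is already noted in the text after \eqref{eq:sP}; the general case follows since $\cK_i(\sP^{(1)})$ consists of crossing $i$-sets by definition.)

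For part (ii): fix $\hat{P}^{(i)} \in \hat{\sP}^{(i)}$. By \eqref{eq:sP} there is $J \in \cK_{i+1}(\sP^{(1)})$ with $\hat{P}^{(i)} = \hat{P}^{(i)}(J)$, and by \eqref{def:polyad} this is a union of parts $P^{(i)}(I)$ over the $(i+1)$ many $i$-subsets $I$ of $J$. Each such part is, by part (i), an $(i,i,*)$-graph whose vertex set is contained in $\bigcup_{s \in J} V_s$, where here I identify $J$ with the $(i+1)$-set of class indices it meets; taking the union over all $I \in \binom{J}{i}$ gives a hypergraph on exactly these $i+1$ classes, i.e.~an $(i+1,i,*)$-graph with respect to $\sP^{(1)}$, as claimed. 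The remark after \eqref{eq:sP} that $\cK_{i+1}(\hat{P}^{(i)}) \neq \emptyset$ confirms all $i+1$ classes are genuinely used.

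For part (iii): fix $J \in \cK_j(\sP^{(1)})$ and recall $\hat{\cP}(J) = \{\hat{P}^{(i)}(J)\}_{i=1}^{\max\{j,k-1\}}$ from \eqref{eq: hat cP}. I must check the underlying (complex) condition: for each relevant $i \geq 2$, $\hat{P}^{(i)}(J) \subseteq \cK_i(\hat{P}^{(i-1)}(J))$. Take an edge $e \in \hat{P}^{(i)}(J)$; then $e$ lies in some part $P^{(i)}(I)$ with $I \in \binom{J}{i}$, and by Definition~\ref{def: family of partitions}(ii) this part refines $\{\cK_i(\hat{P}^{(i-1)}) : \hat{P}^{(i-1)} \in \hat{\sP}^{(i-1)}\}$, so $P^{(i)}(I) \subseteq \cK_i(\hat{P}^{(i-1)}(I))$. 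It therefore suffices to show that every $(i-1)$-subset $f$ of $e$ lies in $\hat{P}^{(i-1)}(J)$: since $f \subseteq e \subseteq \bigcup\binom{J}{i}$-sets, $f$ is a crossing $(i-1)$-set whose index set is contained in that of $J$, so $f \in \binom{J}{i-1}$ up to identification and $P^{(i-1)}(f) \subseteq \hat{P}^{(i-1)}(J)$ by \eqref{def:polyad}; one then invokes the nesting $P^{(i)}(I) \subseteq \cK_i(\hat{P}^{(i-1)}(I))$ together with the fact that $\hat{P}^{(i-1)}(I)$ is built from exactly the parts $P^{(i-1)}(f)$ with $f \in \binom{I}{i-1} \subseteq \binom{J}{i-1}$, all of which sit inside $\hat{P}^{(i-1)}(J)$. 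Hence $e \in \cK_i(\hat{P}^{(i-1)}(J))$, giving the complex property; combined with parts (i) and (ii) (which identify the $\hat{P}^{(i)}(J)$ as $(j,i,*)$-graphs with respect to $\sP^{(1)}$) this shows $\hat{\cP}(J)$ is an $(m,j,\max\{j,k-1\},*)$-complex. The main obstacle is purely bookkeeping: keeping straight the distinction between an $i$-set $I \subseteq J$ of vertices and the set of $\sP^{(1)}$-classes it crosses, and making sure the refinement hypothesis in Definition~\ref{def: family of partitions}(ii) is applied to the correct polyad $\hat{P}^{(i-1)}(I)$ (an $i$-polyad's worth of data) rather than to $\hat{P}^{(i-1)}(J)$ directly; once the indexing is set up carefully, each implication is immediate from the definitions.
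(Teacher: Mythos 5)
Your overall plan (induction on $i$, unwinding the definitions, and handling (iii) by verifying the underlying condition edge by edge) matches the paper's approach, and your arguments for (ii) and (iii) are essentially correct — in (iii) you correctly reduce to showing $\hat{P}^{(i-1)}(I)\subseteq\hat{P}^{(i-1)}(J)$ via $\binom{I}{i-1}\subseteq\binom{J}{i-1}$, which is what the paper does.

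However, the justification you give for part (i) contains a genuine gap. You write that each part $P^{(i)}$ ``is a crossing $i$-graph with respect to $\sP^{(1)}$, so its vertices meet $i$ distinct classes $V_s$, hence it is an $(i,i,*)$-graph,'' and in the parenthetical you assert that ``the general case follows since $\cK_i(\sP^{(1)})$ consists of crossing $i$-sets by definition.'' This last justification is not sufficient: the fact that every \emph{edge} of $P^{(i)}$ crosses $i$ distinct classes does not, by itself, imply that \emph{all} edges of $P^{(i)}$ cross the \emph{same} $i$ classes, which is what being an $(i,i,*)$-graph with respect to $\sP^{(1)}$ requires. What actually makes the argument work is that $P^{(i)}\subseteq\cK_i(\hat{P}^{(i-1)})$ for a \emph{fixed} polyad $\hat{P}^{(i-1)}$, and that this polyad is an $(i,i-1,*)$-graph with respect to $\sP^{(1)}$ — which is precisely part (ii) at level $i-1$. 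In other words, (i) at level $i$ depends on (ii) at level $i-1$, and this is why the paper proves (i) and (ii) \emph{simultaneously} by induction on $i$ (base case $i=1$): first one deduces that $\cK_{j+1}(\hat{P}^{(j)})$ is a $(j+1,j+1,*)$-graph from the induction hypothesis that $\hat{P}^{(j)}$ is a $(j+1,j,*)$-graph, which gives (i) at level $j+1$; then one uses (i) at level $j+1$ to conclude that a polyad $\hat{P}^{(j+1)}(J)=\bigcup_{i}P^{(j+1)}(J\setminus\{v_i\})$ is a $(j+2,j+1,*)$-graph, giving (ii) at level $j+1$. You do mention $\cK_i(\hat{P}^{(i-1)})$, so the right object is in your field of view, but you never invoke the key fact that $\hat{P}^{(i-1)}$ is itself an $(i,i-1,*)$-graph, and the explicit justification you offer instead is incorrect. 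Making the joint induction on (i) and (ii) explicit would close the gap.
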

\COMMENT{
\begin{proof}
Let (a) be the following statement:
the set $\sP^{(i)}$ is a partition of $\cK_{i}(\sP^{(1)})$ into $(i,i,*)$-graphs with respect to $\sP^{(1)}$.\newline
Also consider the following.\newline
(b) Each element $\hat{P}^{(i)}\in \hat{\sP}^{(i)}$ is an $(i+1,i,*)$-graph with respect to $\sP^{(1)}$.\newline
Note that both (a)--(b) hold for $i=1$. Assume that (a) and (b) both hold for $i=j$. Then each $\hat{P}^{(j)} \in \hat{\sP}^{(j)}$ is a $(j+1,j,*)$-graph with respect to $\sP^{(1)}$, thus $\cK_{j+1}( \hat{P}^{(j)})$ is a $(j+1,j+1,*)$-graph with respect to $\sP^{(1)}$.
Hence, Definition~\ref{def: family of partitions}(ii) implies that each $P^{(j+1)}\in \sP^{(j+1)}$ is a subgraph of $\cK_{j+1}( \hat{P}^{(j)})$ for some $\hat{P}^{(j)} \in \hat{\sP}^{(j)}$. Thus 
every $P^{(j+1)}\in \sP^{(j+1)}$ is a $(j+1,j+1,*)$-graph with respect to $\sP^{(1)}$. Thus (a) holds for $j+1$. \newline
For each $\hat{P}^{(j+1)} \in \hat{\sP}^{(j+1)}$, there exists a $(j+2)$-set $J = \{v_1,\dots, v_{j+2}\}\in \cK_{j+2}(\sP^{(1)})$ such that $\hat{P}^{(j+1)}= \hat{P}^{(j+1)}(J)$. Let $v_i \in V_{\alpha_i}$ for each $i\in [j+2]$, then $\alpha_{i}\neq \alpha_{i'}$ for $i\neq i' \in [j+2]$.
For each $i\in [j+2]$, let $J_i := J\setminus \{v_{i}\}$, then the definition of polyad imply 
$$\hat{P}^{(j+1)}(J)= \bigcup_{i=1}^{j+2} P^{(j)}(J_i).$$
Since (a) holds for $j+1$, each $P^{(j)}(J_i)$ is a $(j+1,j+1,*)$-graph with respect to partition $\{V_{\alpha_{1}},\dots, V_{\alpha_{j+2}}\}\setminus \{V_{\alpha_i}\}$. Thus we conclude that $\hat{P}^{(j+1)}=\hat{P}^{(j+1)}(J)$ is a $(j+2,j+1,*)$-graph with respect to $\sP^{(1)}$. Thus (b) holds for $j+1$. Thus inductively, we obtain (a) and (b) hold for all $i\in [k-1]$, thus we get (i) and (ii).
To show (iii), we need to show that for each $i\leq \max\{j,k-1\}$, $\hat{P}^{(i-1)}(J)$ underlies $\hat{P}^{(i)}(J)$. For this it suffices to show that for every $I\in \hat{P}^{(i)}(J)$, we have $\binom{I}{i-1} \subseteq \hat{P}^{(i-1)}(J)$.
Note that 
$$\hat{P}^{(i)}(J) = \bigcup\left\{P^{(i)}(J') : J'\in \binom{J}{i}\right\}.$$
Thus if $I\in \hat{P}^{(i)}(J)$, then there exists $J'\in \binom{J}{i}$ such that $P^{(i)}(I) = P^{(i)}(J')\in \sP^{(i)}$ because $\sP^{(i)}$ is a partition of $\cK_{i}(\sP^{(1)})$.\newline
By Definition~\ref{def: family of partitions}(ii), there exists a $\hat{P}^{(i-1)}\in \hat{\sP}^{(i-1)}$ such that $P^{(i)}(I) = P^{(i)}(J')\subseteq \cK_{i}(\hat{P}^{(i-1)})$.
Thus we obtain that $\binom{I}{i-1}\subseteq \hat{P}^{(i-1)}$ as well as $\binom{J'}{i-1} \subseteq \hat{P}^{(i-1)}$.
Since $\binom{J'}{i-1} \subseteq \hat{P}^{(i-1)}$, 
we conclude that $\hat{P}^{(i-1)}=\hat{P}^{(i-1)}(J') \subseteq \hat{P}^{(i-1)}(J)$ from the definition of $\hat{P}^{(i-1)}(J)$. 
Thus we get  $\binom{I}{i-1} \subseteq \hat{P}^{(i-1)}(J)$, which shows that $\hat{\cP}(J)$ is a complex.
\end{proof}}

We now extend the concept of $\epsilon$-regularity to families of partitions.
\begin{definition}[Equitable family of partitions]\label{def: equitable family of partitions}
Let $k\in \N\sm \{1\}$.
Suppose $\eta>0$ and $\ba=(a_1,\dots, a_{k-1})\in \N^{k-1}$.
Let $V$ be a vertex set of size $n$.
We say a family of partitions $\sP= \sP(k-1,\ba)$ on $V$ is \emph{$(\eta,\epsilon,\ba,\lambda)$-equitable} if it satisfies the following:
\begin{enumerate}[label={ \rm(\roman*)}]
\item $a_1 \geq \eta^{-1}$,
\item $\sP^{(1)} = \{V_i : i\in [a_1]\}$ satisfies $|V_i|= (1\pm \lambda)n/a_1$ for all $i\in[a_1]$, and
\item if $k\geq 3$, then for every $k$-set $K\in \cK_{k}(\sP^{(1)})$ 
the collection $\hat{\cP}(K)=\{\hat{P}^{(j)}(K)\}_{j=1}^{k-1}$ is an $(\epsilon,\bd)$-regular $(k,k-1,*)$-complex, where $\bd= (1/a_2,\dots, 1/a_{k-1})$. 
\end{enumerate}
\end{definition}
As before we drop $\lambda$ if $|V_i| \in \{ \lfloor n/a_1\rfloor, \lfloor n/a_1\rfloor +1\}$ and say $\sP$ is $(\eta,\epsilon,\ba)$-equitable.
Note that for any $\lambda\leq 1/3$, every $(\eta,\epsilon,\ba,\lambda)$-equitable family of partitions $\sP$ satisfies
\begin{align}\label{eq: eta a1}
\left|\binom{V}{k}\setminus \cK_{k}(\sP^{(1)})\right| \leq k^2 \eta \binom{n}{k}.
\end{align}
\COMMENT{A $k$-set $K$ is not in $\cK_{k}(\sP^{(1)})$ if $|K\cap V_i|\geq 2$ for some $i\in [a_1]$.
Thus we choose $i$ with $\leq a_1$ ways, and choose two vertices in $\binom{V_i}{2}$ with $\leq \binom{(1+\lambda)n/a_1}{2}$ ways, and choose $k-2$ other vertices arbitrarily with $\leq \binom{n}{k-2}$ ways.
Thus we have 
$$\left|\binom{V}{k}\setminus \cK_{k}(\sP^{(1)})\right| \leq a_1 \cdot \binom{(1+\lambda)n/a_1}{2} \cdot \binom{n}{k-2} \leq  \frac{1}{2 a_1} (1+\lambda)^2k(k-1)\binom{n}{k} \leq k^2 \eta \binom{n}{k}.$$
}

We next introduce the concept of perfect $\epsilon$-regularity with respect to a family of partitions.
\begin{definition}[Perfectly regular]
Suppose $\epsilon>0$ and $k\in \N\sm\{1\}$. 
Let $H^{(k)}$ be a $k$-graph with vertex set $V$ and let $\sP= \sP(k-1,\ba)$ be a family of partitions on $V$. 
We say $H^{(k)}$ is perfectly $\epsilon$-regular with respect to $\sP$ 
if for every $\hat{P}^{(k-1)}\in \hat{\sP}^{(k-1)}$ the graph $H^{(k)}$ is $\epsilon$-regular with respect to $\hat{P}^{(k-1)}$.
\end{definition}

Having introduced the necessary notation, we are now ready to state the regular approximation lemma due to R\"odl and Schacht.
It states that for every $k$-graph $H$, 
there is a $k$-graph $G$ that is close to $H$ and that has very good regularity properties.

\begin{theorem}[Regular approximation lemma~\cite{RS07}]\label{thm: RAL}
Let $k\in \N\sm\{1\}$. 
For all $\eta,\nu>0$ and every function $\epsilon: \mathbb{N}^{k-1}\rightarrow (0,1]$, 
there are integers $t_0:= t_{\ref{thm: RAL}}(\eta,\nu,\epsilon)$ and $n_0:=n_{\ref{thm: RAL}}(\eta,\nu,\epsilon)$ 
so that the following holds:

For every $k$-graph $H$ on at least $n\geq n_0$ vertices,
there exists a $k$-graph $G$ on $V(H)$ and a family of partitions $\sP=\sP(k-1,\ba^{\sP})$ on $V(H)$ so that 
\begin{enumerate}[label={\rm (\roman*)}]
\item $\sP$ is $(\eta,\epsilon(\ba^{\sP}),\ba^{\sP})$-equitable and $t_0$-bounded,
\item $G$ is perfectly $\epsilon(\ba^{\sP})$-regular with respect to $\sP$, and
\item $|G\triangle H|\leq \nu \binom{n}{k}$.
\end{enumerate}
\end{theorem}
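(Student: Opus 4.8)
The plan is to derive the lemma from the strong hypergraph regularity lemma of R\"odl and Skokan, and independently Gowers~\cite{RS04,Gow07}, exploiting the freedom that we are allowed to perturb $H$ in order to finish with a short ``clean-up''. Fix an auxiliary function $\delta\colon\N^{k-1}\to(0,1]$ with $\delta(\ba)\le\epsilon(\ba)$ for all $\ba$, put $\delta_k:=\nu$, and fix a lower bound $t_1:=\max\{k,\lceil\eta^{-1}\rceil\}$ for the number of vertex classes. The strong regularity lemma then yields integers $T_0=T_0(\eta,\nu,\epsilon)$ and $n_0=n_0(\eta,\nu,\epsilon)$ such that every $k$-graph $H$ on $n\ge n_0$ vertices has a $T_0$-bounded family of partitions $\sP=\sP(k-1,\ba^\sP)$ with at least $t_1$ vertex classes that is $(\eta,\delta(\ba^\sP),\ba^\sP)$-equitable in the sense of Definition~\ref{def: equitable family of partitions}, and such that $H$ is $\delta(\ba^\sP)$-regular with respect to every $(k-1)$-polyad $\hat P^{(k-1)}\in\hat{\sP}^{(k-1)}$ outside a set $\cB$ of \emph{exceptional} polyads with $\sum_{\hat P^{(k-1)}\in\cB}|\cK_k(\hat P^{(k-1)})|\le\delta_k\binom nk$. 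We then set $t_{\ref{thm: RAL}}(\eta,\nu,\epsilon):=T_0$ and $n_{\ref{thm: RAL}}(\eta,\nu,\epsilon):=n_0$.

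For the clean-up, I would use that $\{\cK_k(\hat P^{(k-1)}):\hat P^{(k-1)}\in\hat{\sP}^{(k-1)}\}$ is a partition of $\cK_k(\sP^{(1)})$. Define $G$ by replacing, for each exceptional polyad $\hat P^{(k-1)}\in\cB$, the set $H\cap\cK_k(\hat P^{(k-1)})$ by all of $\cK_k(\hat P^{(k-1)})$, and keeping $G=H$ on every $k$-set lying in a non-exceptional polyad as well as on every non-crossing $k$-set. Then $|G\triangle H|\le\sum_{\hat P^{(k-1)}\in\cB}|\cK_k(\hat P^{(k-1)})|\le\nu\binom nk$, giving~(iii). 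Since $\sP$ is $T_0$-bounded, has at least $\eta^{-1}$ vertex classes, and $(\delta(\ba^\sP),\bd)$-regularity of a complex trivially implies $(\epsilon(\ba^\sP),\bd)$-regularity, $\sP$ is $(\eta,\epsilon(\ba^\sP),\ba^\sP)$-equitable, giving~(i). For~(ii), let $\hat P^{(k-1)}\in\hat{\sP}^{(k-1)}$: if $\hat P^{(k-1)}\notin\cB$ then $G\cap\cK_k(\hat P^{(k-1)})=H\cap\cK_k(\hat P^{(k-1)})$ is $\delta(\ba^\sP)$-regular and hence $\epsilon(\ba^\sP)$-regular with respect to $\hat P^{(k-1)}$; if $\hat P^{(k-1)}\in\cB$ then $G\cap\cK_k(\hat P^{(k-1)})=\cK_k(\hat P^{(k-1)})$, so $G$ is $(\epsilon(\ba^\sP),1)$-regular with respect to $\hat P^{(k-1)}$ trivially. (Here we use that the clean-up alters $G$ only inside the clique sets of exceptional polyads and that these are disjoint from the clique sets of the non-exceptional ones, so no non-exceptional polyad is disturbed.) Hence $G$ is perfectly $\epsilon(\ba^\sP)$-regular with respect to $\sP$, as required.

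All the difficulty sits in the black box, and I expect that to be the only real obstacle. The strong regularity lemma is proved by a multi-level index-increment (``energy increment'') argument, and it is the hypergraph version of that argument — rather than the clean-up above, which is the single new ingredient needed to pass from ``regularity'' to ``approximation'' — that is hard. One maintains an equitable family of partitions together with a mean-square-density index in $[0,1]$; while $H$ is irregular with respect to a family of $(k-1)$-polyads that is non-negligible when counted by $k$-cliques, one must refine the \emph{entire} family $\sP^{(1)},\dots,\sP^{(k-1)}$ so as to simultaneously absorb the irregularity witnesses, keep the refined family equitable (i.e.\ keep every lower-level polyad complex regular with the prescribed densities), and increase the index by an amount bounded below in terms of the current regularity parameter; since the index is bounded this terminates. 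Arranging these three requirements at once — in particular, propagating the irregularity information downwards through all $k-1$ levels while preserving the nested polyad structure — is the technical heart of hypergraph regularity, and is responsible for the (at least tower-type, and in fact Ackermann-type) dependence of $t_{\ref{thm: RAL}}$ on the parameters. I would not attempt to reprove it here; this is, in essence, also the route taken by R\"odl and Schacht~\cite{RS07}.
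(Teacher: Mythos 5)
Your derivation does not go through as written, and the gap is located exactly where the content of the regular approximation lemma lives. The strong hypergraph regularity lemma in the R\"odl--Skokan and Gowers formulations does make the \emph{lower}-level structure $(\delta(\ba^{\sP}),\bd)$-regular with $\delta$ a function of the output vector $\ba^{\sP}$, but the regularity of $H$ itself with respect to the non-exceptional $(k-1)$-polyads is only controlled by a \emph{constant} $\delta_k$ fixed in advance, not by $\delta(\ba^{\sP})$. The black-box statement you invoke, in which $H$ is $\delta(\ba^{\sP})$-regular with respect to every polyad outside $\cB$, is strictly stronger than what those lemmas give. Consequently your clean-up only helps on the exceptional polyads, where you replace $H\cap\cK_k(\hat{P}^{(k-1)})$ by all of $\cK_k(\hat{P}^{(k-1)})$; on the non-exceptional polyads, which by Proposition~\ref{prop: hat relation}(vi) partition the remaining crossing $k$-sets and carry almost all of them, $G=H$ is still only $\delta_k$-regular. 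You would obtain a $G$ that is perfectly $\delta_k$-regular with respect to $\sP$, not perfectly $\epsilon(\ba^{\sP})$-regular, and since one needs $\epsilon(\ba^{\sP})\ll\norm{\ba^{\sP}}^{-1}$ in applications (as the paper stresses in the paragraph following Theorem~\ref{thm: RAL}), this is a qualitatively weaker conclusion.

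One cannot repair this by choosing $\delta_k$ small enough, because of a circular dependence: the bound $T_0$ on $\norm{\ba^{\sP}}$ grows as $\delta_k\to 0$, while the clean-up would require $\delta_k\le\min_{\ba\in[T_0]^{k-1}}\epsilon(\ba)$, which cannot be arranged in advance since $\epsilon$ may decay arbitrarily fast. The actual proof in~\cite{RS07} resolves this by wrapping a step like your clean-up inside an additional outer iteration: one repeatedly applies the hypergraph regularity lemma, at each stage modifies the current $k$-graph to become perfectly regular with a strictly smaller error parameter, and ensures that the successive modifications are geometrically decreasing so that the total stays below $\nu\binom{n}{k}$. That outer iteration is the new ingredient of the regular approximation lemma and is responsible for its bound on $t_0$ being even larger than that of the regularity lemma itself; so, contrary to your closing remark, what you have sketched is not in essence the route taken by R\"odl and Schacht, but rather one stage of it.
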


The crucial point here is that in applications we may apply Theorem~\ref{thm: RAL} with a function $\epsilon$ such that $\epsilon(\ba^{\sP})\ll \norm{\ba^\sP}^{-1}$.
This is in contrast to other versions (see e.g.~\cite{Gow07,RS04,Tao06}) where (roughly speaking) in (iii) we have $G=H$ but in (ii) we have an error parameter $\epsilon'$ which may be large compared to $\norm{\ba^\sP}^{-1}$.

We next state a generalization (Lemma~\ref{RAL(k)}) of the regular approximation lemma
which was also proved by R\"odl and Schacht (see Lemma~25 in~\cite{RS07}).
Lemma~\ref{RAL(k)} 
has two additional features in comparison to Theorem~\ref{thm: RAL}.
Firstly, we can prescribe a family of partitions $\sQ$ and obtain a refinement $\sP$ of $\sQ$,
and secondly, we are not only given one $k$-graph $H$ but a collection of $k$-graphs $H_i$ that partitions the complete $k$-graph.
Thus we may view Lemma~\ref{RAL(k)} as a `partition version' of Theorem~\ref{thm: RAL}.

\begin{lemma}[R\"odl and Schacht~\cite{RS07}]\label{RAL(k)}
For all $o,s\in \N$, $k\in \N\sm \{1\}$, all $\eta,\nu>0$, 
and every function $\epsilon : \mathbb{N}^{k-1}\rightarrow (0,1]$, 
there are $\mu=\mu_{\ref{RAL(k)}}(k,o,s,\eta,\nu,\epsilon)>0$ 
and $t=t_{\ref{RAL(k)}}(k,o,s,\eta,\nu,\epsilon)\in \N$ and $n_0=n_{\ref{RAL(k)}}(k,o,s,\eta,\nu,\epsilon)\in\N$ such that the following hold. 
Suppose
\begin{itemize}
\item[{\rm (O1)$_{\ref{RAL(k)}}$}] $V$ is a set and $|V|=n\geq n_0$,
\item[{\rm (O2)$_{\ref{RAL(k)}}$}] $\sQ=\sQ(k,\ba^{\sQ})$ is a $(1/a_1^{\sQ},\mu,\ba^{\sQ})$-equitable $o$-bounded family of partitions on $V$,
\item[{\rm (O3)$_{\ref{RAL(k)}}$}] $\sH^{(k)}=\{H^{(k)}_1,\dots,H^{(k)}_{s}\}$ is a partition of $\binom{V}{k}$ so that $\sH^{(k)} \prec \sQ^{(k)}$.
\end{itemize}
Then there exist a family of partitions $\sP = \sP(k-1,\ba^{\sP})$ and a partition $\sG^{(k)}= \{ G^{(k)}_1,\dots, G^{(k)}_s\}$ of $\binom{V}{k}$ satisfying the following for every $i\in [s]$ and $j\in [k-1]$.
\begin{itemize}
\item[{\rm (P1)$_{\ref{RAL(k)}}$}] $\sP$ is a $t$-bounded $(\eta,\epsilon(\ba^{\sP}),\ba^{\sP})$-equitable family of partitions, and $a^{\sQ}_j$ divides $a^{\sP}_j$, 
\item[{\rm (P2)$_{\ref{RAL(k)}}$}] $\sP \prec \{\sQ^{(j)}\}_{j=1}^{k-1}$,
\item[{\rm (P3)$_{\ref{RAL(k)}}$}] $G^{(k)}_i$ is perfectly $\epsilon(\ba^{\sP})$-regular with respect to $\sP$,
\item[{\rm (P4)$_{\ref{RAL(k)}}$}] $\sum_{i=1}^{s} |G^{(k)}_i\triangle H^{(k)}_i| \leq \nu \binom{n}{k}$, and
\item[{\rm (P5)$_{\ref{RAL(k)}}$}] $\sG^{(k)} \prec \sQ^{(k)}$ and if $H^{(k)}_i \subseteq \cK_k(\sQ^{(1)})$, then $G_i^{(k)} \sub \cK_k(\sQ^{(1)})$.
\end{itemize}
\end{lemma}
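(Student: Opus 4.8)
Since the statement is essentially Lemma~25 in \cite{RS07}, I would simply cite it from there; for completeness let me indicate how one obtains it by adapting the proof of Theorem~\ref{thm: RAL}. That proof already delivers a \emph{perfectly} $\epsilon(\ba^{\sP})$-regular $k$-graph $G$ at edit distance at most $\nu\binom nk$ from $H$; the two additional features needed here --- refining a prescribed family $\sQ$, and simultaneously approximating a whole partition $\sH^{(k)}=\{H^{(k)}_1,\dots,H^{(k)}_s\}$ of $\binom Vk$ rather than a single $k$-graph --- are obtained by running that argument \emph{relative to} the $s$-coloured complete $k$-graph encoded by $\sH^{(k)}$ and by initialising the regularity iteration from a refinement of $\sQ$ instead of from the trivial partition.

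First I would handle the initialisation and the divisibility clause of (P1)$_{\ref{RAL(k)}}$: starting from $\sQ$, split each class of $\sQ^{(1)}$ into the same number of almost-equal pieces and, for each $j\geq2$, split each part of $\sQ^{(j)}$ lying above a given polyad into the same number of pieces; a routine choice of these multiplicities keeps the family equitable and makes $a^{\sQ}_j$ divide the resulting $a^{\sP}_j$. From this initial family I would iterate the usual refinement step of the hypergraph regularity lemma, but driven by an index functional that sums the weighted squared densities $|\cK_k(\hat P^{(k-1)})|\,d(H^{(k)}_i\mid\hat P^{(k-1)})^2$ over \emph{all} colours $i\in[s]$ and polyads $\hat P^{(k-1)}$ (together with the analogous lower-level contributions). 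As long as some $H^{(k)}_i$ is $\epsilon(\ba^{\sP})$-irregular on a collection of polyads of non-negligible $|\cK_k(\cdot)|$-mass, the refinement step produces a finer family --- refining polyads at \emph{every} level, so that $\sP\prec\sQ$ is preserved --- on which the index strictly increases by a fixed positive amount; since the (normalised) index lies in $[0,1]$, the iteration stops after a bounded number of rounds, yielding a $t$-bounded $(\eta,\epsilon(\ba^{\sP}),\ba^{\sP})$-equitable family $\sP\prec\{\sQ^{(j)}\}_{j=1}^{k-1}$ for which the total $|\cK_k(\cdot)|$-mass of polyads on which some $H^{(k)}_i$ is $\epsilon(\ba^{\sP})$-irregular is at most $\nu\binom nk$. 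Here the usual point that $\epsilon$ is a function evaluated at the vector $\ba^{\sP}$, which is only determined at the end, is absorbed by the standard bootstrapping in the iteration.

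It then remains to clean up, exactly as in the proof of Theorem~\ref{thm: RAL} but colour by colour: on every polyad $\hat P^{(k-1)}$ on which \emph{all} of the $H^{(k)}_i$ are $\epsilon(\ba^{\sP})$-regular set $G^{(k)}_i:=H^{(k)}_i\cap\cK_k(\hat P^{(k-1)})$; on each of the remaining (few) polyads re-partition the cliques in $\cK_k(\hat P^{(k-1)})$ among $G^{(k)}_1,\dots,G^{(k)}_s$ in any way making each $G^{(k)}_i$ perfectly $\epsilon(\ba^{\sP})$-regular there (for instance by assigning all of them to a single colour, which is trivially regular); and on $\binom Vk\setminus\cK_k(\sP^{(1)})$ set $G^{(k)}_i:=H^{(k)}_i$. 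This gives (P3)$_{\ref{RAL(k)}}$ at once; (P4)$_{\ref{RAL(k)}}$ holds because the only $k$-sets whose colour changes lie in bad polyads and so contribute at most $\nu\binom nk$ in total; (P1)$_{\ref{RAL(k)}}$ and (P2)$_{\ref{RAL(k)}}$ come from the construction of $\sP$; and (P5)$_{\ref{RAL(k)}}$ holds since every re-assignment stays inside a single $\sP$-polyad cell, which by $\sP\prec\sQ$ lies inside a single $\sQ$-polyad cell, so $\sG^{(k)}\prec\sQ^{(k)}$ follows from $\sH^{(k)}\prec\sQ^{(k)}$, while $H^{(k)}_i\subseteq\cK_k(\sQ^{(1)})$ implies $G^{(k)}_i\subseteq\cK_k(\sQ^{(1)})$ because $\cK_k(\sP^{(1)})\subseteq\cK_k(\sQ^{(1)})$ and no $k$-set is moved out of $\cK_k(\sQ^{(1)})$. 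The main obstacle is the middle step: making the colour-relative regularity iteration push the \emph{combined} irregular mass over all $s$ colour classes below $\nu\binom nk$ for the prescribed functional $\epsilon(\cdot)$, while maintaining $\sP\prec\sQ$ and the divisibility at every refinement --- this is precisely the heart of the proof of the regular approximation lemma in \cite{RS07}; the cleaning step and the verification of (P1)--(P5) are comparatively routine once it is available.
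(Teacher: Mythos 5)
The paper does not prove this lemma: it is stated as a black-box citation of Lemma~25 in R\"odl--Schacht~\cite{RS07}, and the text around it explicitly points there. Your primary move --- cite it --- is therefore exactly the paper's approach, and the brief comparison ends there.

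That said, a remark on your optional sketch: the cleaning step as you describe it is slightly too cavalier about (P5)$_{\ref{RAL(k)}}$. Assigning \emph{all} of $\cK_k(\hat P^{(k-1)})$ on a bad polyad to one colour makes each $G^{(k)}_i$ perfectly regular there, but it need not keep $\sG^{(k)}\prec\sQ^{(k)}$: you only have $\sP\prec\{\sQ^{(j)}\}_{j=1}^{k-1}$, so $\cK_k(\hat P^{(k-1)})$ sits inside a single $\sQ$-polyad $\cK_k(\hat Q^{(k-1)})$ but can meet \emph{several} cells of $\sQ^{(k)}$ (there are $a_k^{\sQ}$ of them inside each $\cK_k(\hat Q^{(k-1)})$); dumping them all into one $G^{(k)}_i$ then puts $G^{(k)}_i$ across more than one $\sQ^{(k)}$-cell, contradicting $\sG^{(k)}\prec\sQ^{(k)}$. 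The fix is to re-assign separately inside each $\cK_k(\hat P^{(k-1)})\cap Q^{(k)}$ (choosing, within each such piece, a colour $i$ with $H^{(k)}_i\subseteq Q^{(k)}$); perfect regularity and (P5) then both survive. But again, this is only a comment on your supplementary outline --- the proof the paper relies on is the citation.
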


In Lemma~\ref{RAL(k)} we may assume without loss of generality that $1/\mu,t,n_0$ are non-decreasing in $k,o,s$ and non-increasing in $\eta, \nu$.

\subsection{The address space}\label{sec: address space}
Later on, we will need to explicitly refer to the densities arising, for example, in Theorem~\ref{thm: RAL}(ii).
For this (and other reasons) it is convenient to consider the `address space'.
Roughly speaking the address space consists of a collection of vectors where each vector identifies a polyad.

For $a,s\in \N$, 
we recursively define 
$[a]^{s}$ by $[a]^{s} := [a]^{s-1}\times [a]$ and $[a]^{1}:= [a]$. 
To define the address space,
let us write $\binom{[a_1]}{\ell}_{<} := \{(\alpha_1,\dots, \alpha_\ell)\in[a_1]^\ell: \alpha_1 < \dots <\alpha_\ell \}$.

Suppose $k',\ell,p\in \N$, $\ell\geq k'$, and $p\geq\max\{ k'-1,1\}$, and $\ba=(a_1,\dots,a_p )\in \N^{p}$. 
We define 
$$\hat{A}(\ell,k'-1,\ba):= 
\binom{[a_1]}{\ell}_{<} \times \prod_{j=2}^{k'-1}[a_j]^{\binom{\ell}{j}}$$ to be the \emph{$(\ell,k')$-address space}. 
Observe that $\hat{A}(1,0,\ba)=[a_1]$ and $\hat{A}(2,1,\ba)=\binom{[a_1]}{2}_{<}$. 
Recall that for a vector $\bx$, the set $\bx_*$ was defined at the beginning of Section~\ref{sec:concepts}.
Note that if $k'>1$, then each $\hat{\bx}\in \hat{A}(\ell,k'-1,\ba)$ can be written as $\hat{\bx} = (\bx^{(1)},\dots, \bx^{(k'-1)})$, 
where $\bx^{(1)}\in \binom{[a_1]}{\ell}_{<}$ and $\bx^{(j)} \in [a_j]^{\binom{\ell}{j}}$ for each $j\in [k'-1]\sm\{1\}$. 
Thus each entry of the vector $\bx^{(j)}$ corresponds to (i.e.~is indexed by) an element of $\binom{[\ell]}{j}$. 
We order the elements of both $\binom{[\ell]}{j}$ and $\binom{\bx_*^{(1)}}{j}$ lexicographically and consider the bijection $g: \binom{\bx_*^{(1)}}{j} \rightarrow \binom{[\ell]}{j}$ which preserves this ordering.
For each $\Lambda \in \binom{\bx^{(1)}_*}{j}$ and $j\in [k'-1]$, 
we denote by $\bx^{(j)}_{\Lambda}$ the entry of $\bx^{(j)}$ which corresponds to the set $g(\Lambda)$.

\subsubsection{Basic properties of the address space}\label{sec: Basic properties of the address space}
Let $k\in\N\sm\{1\}$ and let $V$ be a vertex set of size $n$. 
Let $\sP(k-1,\ba)$ be a family of partitions on $V$. 
For each crossing $\ell$-set $L\in \cK_{\ell}(\sP^{(1)})$, 
the address space allows us to identify (and thus refer to) the set of polyads `supporting' $L$. 
We will achieve this by defining a suitable operator $\hat{\bx}(L)$ which maps $L$ to the address space.

To do this, write $\sP^{(1)}= \{V_i: i\in [a_1]\}$.  
Recall from Definition~\ref{def: family of partitions}(ii) that for $j\in[k-1]\setminus\{1\}$, we partition $\cK_j(\hat{P}^{(j-1)})$ of every $(j-1)$-polyad $\hat{P}^{(j-1)}\in\hat{\sP}^{(j-1)}$ into $a_j$ nonempty parts in such a way that $\sP^{(j)}$ is the collection of all these parts.
Thus, there is a labelling $\phi^{(j)}:\sP^{(j)} \rightarrow [a_j]$ such that for every polyad $\hat{P}^{(j-1)} \in \hat{\sP}^{(j-1)}$, the restriction of $\phi^{(j)}$ to $\{P^{(j)} \in \sP^{(j)}: P^{(j)}\subseteq \cK_j(\hat{P}^{(j-1)})\}$ is injective.\COMMENT{ To make this true, we added the work `nonempty' in Definition~\ref{def: family of partitions} which ensures that every element of $\sP^{(j)}$ are all distinct. }
The set $\mathbf{\Phi} := \{\phi^{(2)},\dots, \phi^{(k-1)}\}$ is called an \emph{$\ba$-labelling} of $\sP(k-1,\ba)$.
For a given set $L\in \cK_{\ell}(\sP^{(1)})$, we denote $\cl(L):= \{ i: V_i\cap L \neq \emptyset\}.$

Consider any $\ell \in [a_1]$. 
Let $j':=\min\{k-1,\ell-1\}$  and let $j'':=\max\{j',1\}$. 
For every $\ell$-set $L\in \cK_{\ell}(\sP^{(1)})$ we define an integer vector $\hat{\bx}(L) = (\bx^{(1)}(L),\dots, \bx^{(j'')}(L))$ by
\begin{equation}\label{eq: bx J def}
\begin{minipage}[c]{0.9\textwidth}\em
\begin{itemize}
\item $\bx^{(1)}(L) := (\alpha_1,\ldots,\alpha_\ell)$, where  $\alpha_1<\ldots<\alpha_\ell$ and $L \cap V_{\alpha_i}=\{v_{\alpha_i}\}$,
\item and for $i\in [j']\setminus\{1\}$ we set
$$\bx^{(i)}({L}) := \left(\phi^{(i)}(P^{(i)}): \{v_{\lambda} :\lambda \in \Lambda\}\in P^{(i)}, P^{(i)}\in \sP^{(i)}\right)_{\Lambda \in \binom{\cl(L)}{i}}.$$
\end{itemize}
\end{minipage}
\end{equation}
Here, we order $\binom{\cl(L)}{i}$ lexicographically. In particular, $\bx^{(i)}(L)$ is a vector of length $\binom{\ell}{i}$.

By definition, $\hat{\bx}(L) \in \hat{A}(\ell,j',\ba)$ for every $L\in \cK_{\ell}(\sP^{(1)})$ with $\ell, j'$ as above. 
Our next aim is to define an operator $\hat{\bx}(\cdot)$ which maps the set $\hat{\sP}^{(j-1)}$ of $(j-1)$-polyads injectively into the address space $\hat{A}(j,j-1,\ba)$ (see \eqref{eq: hat bx maps}). We will then extend this further into a bijection between elements of the address spaces and their corresponding hypergraphs.
However, before we can define $\hat{\bx}(\cdot)$, we need to introduce some more notation.

Suppose $j\in [k'-1]$. 
For $\hat{\bx} \in \hat{A}(\ell,k'-1,\ba)$ and $J\in \cK_{j}(\sP^{(1)})$ with $\cl(J) \subseteq \bx^{(1)}_{*}$, we define $\bx^{(j)}_{J} := \bx^{(j)}_{\cl(J)}$. Thus from now on, we may refer to the entries of $\bx^{(j)}$ either by an index set $\Lambda \in \binom{\bx^{(1)}_*}{j}$ or by a set $J \in \cK_{j}(\sP^{(1)})$.

Next we introduce a relation on the elements of (possibly different) address spaces.
Consider $\hat{\bx}= (\bx^{(1)}, \bx^{(2)},\dots,\bx^{(k'-1)}) \in \hat{A}(\ell,k'-1,\ba)$ with $\ell'\leq \ell$ and $k''\leq k'$.
We define $\hat{\by} \leq_{\ell',k''-1} \hat{\bx}$ if
\begin{itemize}
\item $\hat{\by} = (\by^{(1)}, \by^{(2)},\dots,\by^{(k''-1)})\in \hat{A}(\ell', k''-1,\ba)$,
\item $\by^{(1)}_*\subseteq \bx^{(1)}_*$ and 
\item $\bx^{(j)}_{\Lambda}=\by^{(j)}_{\Lambda}$ for any $\Lambda \in \binom{\by^{(1)}_*}{j}$ and $j\in [k''-1]\sm\{1\}$.
 \end{itemize}
Thus any $\hat{\by}\in \hat{A}(\ell', k''-1,\ba)$ with $\hat{\by} \leq_{\ell',k''-1} \hat{\bx}$ 
can be viewed as the restriction of $\hat{\bx}$ to an $\ell'$-subset of the $\ell$-set $\bx^{(1)}_*$.
Hence for $\hat{\bx} \in \hat{A}(\ell,k'-1,\ba)$, there are exactly $\binom{\ell}{\ell'}$ distinct integer vectors $\hat{\by}\in \hat{A}(\ell',k''-1,\ba)$ such that $\hat{\by} \leq_{\ell',k''-1} \hat{\bx}.$ 
Also it is easy to check the following properties.
\begin{proposition}\label{eq: I subset J then leq}
Suppose $\sP= \sP(k-1,\ba)$ is a family of partitions, $i\in [a_1]$ and $i':= \min\{i,k\}$. 
\begin{enumerate}[label={\rm (\roman*)}]
	\item Whenever $I\in \cK_{i}(\sP^{(1)})$ and $J\in \cK_{j}(\sP^{(1)})$ with $I\subseteq J$, then $\hat{\bx}(I) \leq_{i,i'-1} \hat{\bx}(J)$.
	\item If $J \in \cK_j(\sP^{(1)})$ and $\hat{\by} \leq_{i,i'-1} \hat{\bx}(J)$, then there exists a unique $I\in \binom{J}{i}$ such that $\hat{\by} = \hat{\bx}(I)$.
\end{enumerate}
\end{proposition}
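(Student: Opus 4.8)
The plan is to prove both parts by simply unwinding the definition of $\hat{\bx}(\cdot)$ in \eqref{eq: bx J def} and of the relation $\leq_{\ell',k''-1}$; the only genuine content is that a crossing set which is known to be a subset of a fixed crossing set $J$ is completely determined by the set of indices of the vertex classes it meets, and that the $\phi^{(m)}$-labels recorded in an address depend only on the crossing set in question and not on any ambient set containing it (note the $\phi^{(m)}$ need only be injective within a single polyad, but $\bx^{(m)}(L)_\Lambda$ is defined via the unique part of the \emph{partition} $\sP^{(m)}$ containing a given crossing $m$-set, so equal sets land in equal parts anyway). First I would fix notation: writing $\sP^{(1)}=\{V_1,\dots,V_{a_1}\}$, for a crossing set $L$ and $\alpha\in\cl(L)$ let $v^L_\alpha$ be the unique vertex of $L$ in $V_\alpha$, and for $\Lambda\subseteq\cl(L)$ put $M^L_\Lambda:=\{v^L_\alpha:\alpha\in\Lambda\}$. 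Then \eqref{eq: bx J def} says: $\bx^{(1)}(L)$ is the increasing enumeration of $\cl(L)$, so $\bx^{(1)}(L)_*=\cl(L)$; and for $2\leq m\leq\min\{k-1,|L|-1\}$ and $\Lambda\in\binom{\cl(L)}{m}$ the entry $\bx^{(m)}(L)_\Lambda$ is $\phi^{(m)}(P^{(m)})$ for the unique part $P^{(m)}\in\sP^{(m)}$ with $M^L_\Lambda\in P^{(m)}$. The key elementary fact to isolate is: if $I\subseteq J$ are both crossing and $\Lambda\subseteq\cl(I)$, then $M^I_\Lambda=M^J_\Lambda$, since for $\alpha\in\Lambda$ the vertex $v^I_\alpha$ lies in $J\cap V_\alpha=\{v^J_\alpha\}$.

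For part (i), $I\subseteq J$ gives $\cl(I)\subseteq\cl(J)$, i.e.~$\bx^{(1)}(I)_*\subseteq\bx^{(1)}(J)_*$; and for $2\leq m\leq i'-1=\min\{i-1,k-1\}$ and $\Lambda\in\binom{\cl(I)}{m}$ the key fact gives $M^I_\Lambda=M^J_\Lambda$, so $\bx^{(m)}(I)_\Lambda$ and $\bx^{(m)}(J)_\Lambda$ are both the $\phi^{(m)}$-label of the unique part of $\sP^{(m)}$ containing this common set, hence equal. These are exactly the conditions defining $\hat{\bx}(I)\leq_{i,i'-1}\hat{\bx}(J)$.

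For part (ii), I would prove existence by exhibiting the candidate directly: put $S:=\by^{(1)}_*$, so $|S|=i$ and $S\subseteq\bx^{(1)}(J)_*=\cl(J)$, and let $I:=\{v^J_\alpha:\alpha\in S\}\in\binom{J}{i}$, which has $\cl(I)=S$. Then $\bx^{(1)}(I)$ is the increasing enumeration of $S$, which equals $\by^{(1)}$; and for $2\leq m\leq i'-1$ and $\Lambda\in\binom{S}{m}$ one has $\bx^{(m)}(I)_\Lambda=\bx^{(m)}(J)_\Lambda$ by part (i) and $\bx^{(m)}(J)_\Lambda=\by^{(m)}_\Lambda$ by the definition of $\hat{\by}\leq_{i,i'-1}\hat{\bx}(J)$, so $\hat{\bx}(I)=\hat{\by}$. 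Uniqueness is then immediate: if $I,I'\in\binom{J}{i}$ both have address $\hat{\by}$ then $\cl(I)=\by^{(1)}_*=\cl(I')$, and since $I,I'\subseteq J$ with $J$ crossing, each of $I,I'$ contains exactly $v^J_\alpha$ in $V_\alpha$ for every $\alpha\in\cl(I)$, so $I=I'$.

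The mathematics is routine throughout, so the step I expect to need the most care is the index bookkeeping rather than any real argument: one must check that $i'-1=\min\{i-1,k-1\}$ never exceeds $\min\{k-1,j-1\}$ (which follows from $i\leq j$), so that the assertion $\hat{\bx}(I)\leq_{i,i'-1}\hat{\bx}(J)$ is even well-formed, and one should separately note the degenerate cases $i\in\{1,2\}$, where there are no higher coordinates and only the first, purely set-theoretic, condition remains.
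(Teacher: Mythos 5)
Your proof is correct, and it is the natural unwinding of definitions that Proposition~\ref{eq: I subset J then leq} requires: the two observations you isolate (a subset of a crossing set $J$ is determined by the clusters it meets, and the $\phi^{(m)}$-label recorded at $\Lambda$ depends only on the crossing $m$-set $M^L_\Lambda$, not on the ambient set $L$) are precisely what makes both directions work. The paper offers no proof here, asserting only that the proposition ``is easy to check,'' so there is nothing to compare against; your argument supplies exactly that routine verification, including the necessary bookkeeping check that $\leq_{i,i'-1}$ is well-formed.
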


Now we are ready to introduce the promised bijection between the elements of address spaces and their corresponding hypergraphs.

Consider $j\in [k]\setminus\{1\}$. Recall that for every $j$-set $J\in \cK_{j}(\sP^{(1)})$, 
we have $\hat{\bx}(J) \in \hat{A}(j,j-1,\ba)$. 
Moreover, recall that $\cK_{j}(\hat{P}^{(j-1)})\neq \emptyset$ for any $\hat{P}^{(j-1)}\in \hat{\sP}^{(j-1)}$, and note that $\hat{\bx}(J) = \hat{\bx}(J')$ for all $J,J' \in \cK_{j}(\hat{P}^{(j-1)})$ and all $\hat{P}^{(j-1)}\in \hat{\sP}^{(j-1)}$.
Hence, for each $\hat{P}^{(j-1)} \in \hat{\sP}^{(j-1)}$ we can define 
\begin{align}\label{eq: hat bx maps}
\hat{\bx}(\hat{P}^{(j-1)}):= \hat{\bx}(J)\text{ for some } J\in \cK_{j}(\hat{P}^{(j-1)}).
\end{align}
Let 
\begin{align*}
\hat{A}(j,j-1,\ba)_{\neq \emptyset} &:= \{ \hat{\bx}\in \hat{A}(j,j-1,\ba): \exists \hat{P}^{(j-1)}\in \hat{\sP}^{(j-1)} \text{ such that }\hat{\bx}(\hat{P}^{(j-1)}) = \hat{\bx}\}\\
&= \{ \hat{\bx} \in \hat{A}(j,j-1,\ba) : \exists J\in \cK_j(\sP^{(1)}) \text{ such that } \hat{\bx} = \hat{\bx}(J)\},\\
\hat{A}(j,j-1,\ba)_{\emptyset}&:= \hat{A}(j,j-1,\ba)\setminus \hat{A}(j,j-1,\ba)_{\neq \emptyset}.
\end{align*}
Clearly \eqref{eq: hat bx maps} gives rise to a bijection between $\hat{\sP}^{(j-1)}$ and $\hat{A}(j,j-1,\ba)_{\neq \emptyset}$.
Thus for each $\hat{\bx} \in \hat{A}(j,j-1,\ba)_{\neq \emptyset}$, we can define the polyad  $\hat{P}^{(j-1)}(\hat{\bx})$ of $\hat{\bx}$ by
\begin{align}\label{eq: hat P def}
\hat{P}^{(j-1)}(\hat{\bx}):= \hat{P}^{(j-1)} \text{ such that }\hat{P}^{(j-1)} \in \hat{\sP}^{(j-1)} \text{ with } \hat{\bx}= \hat{\bx}(\hat{P}^{(j-1)}).
\end{align}
Note that for any $J\in \cK_j(\sP^{(1)})$, we have $\hat{P}^{(j-1)}(\hat{\bx}(J)) = \hat{P}^{(j-1)}(J).$ 

We will frequently make use of an explicit description of a polyad in terms of the partition classes it contains (see \eqref{eq:hatPconsistsofP(x,b)}). For this, we proceed as follows.
For each $b\in [a_1]$, let $P^{(1)}(b,b):= V_b$.
For each $j\in [k-1]\setminus\{1\}$ and $(\hat{\bx},b)  \in \hat{A}(j,j-1,\ba)_{\neq\emptyset}\times [a_j]$, we let 
\begin{align}\label{eq: Pj operator define}
	P^{(j)}(\hat{\bx},b):=P^{(j)}\in \sP^{(j)} \text{ such that } \phi^{(j)}(P^{(j)})=b \text{ and } P^{(j)} \subseteq \cK_{j}(\hat{P}^{(j-1)}(\hat{\bx})). 
\end{align}
Using Definition~\ref{def: family of partitions}(ii), 
we conclude that so far $P^{(j)}(\hat{\bx},b)$ is well-defined for each $(\hat{\bx},b)  \in \hat{A}(j,j-1,\ba)_{\neq\emptyset}\times [a_j]$ and all $j \in [k-1]\setminus\{1\}$.

For convenience we now extend the domain of the above definitions to cover the `trivial' cases.
For $(\hat{\bx},b) \in  \hat{A}(j,j-1,\ba)_{\emptyset}\times [a_j]$, we let
\begin{align}\label{eq: Pj operator define empty}
P^{(j)}(\hat{\bx},b) := \emptyset.
\end{align}
We also let $P^{(1)}(a,b):= \emptyset$ for all $a,b\in [a_1]$ with $a\neq b$.
For all $j\in [k-1]$ and $\hat{\bx}\in \hat{A}(j+1,j,\ba)_{\emptyset}$, we define
\begin{align}\label{eq: hat P P relations emptyset}
\hat{P}^{(j)}(\hat{\bx}):= 
\bigcup_{\hat{\by}\leq_{j,j-1} \hat{\bx}} P^{(j)}(\hat{\by},\bx^{(j)}_{\by^{(1)}_* }).
\end{align}

To summarize, given a family of partitions $\sP=\sP(k-1,\ba)$ and an $\ba$-labelling $\mathbf{\Phi}$, 
for each $j\in [k-1]$, this defines $ P^{(j)}(\hat{\bx},b)$ for $\hat{\bx}\in \hat{A}(j,j-1,\ba)$ and $b\in [a_j]$ and $\hat{P}^{(j)}(\hat{\bx})$ for all $\hat{\bx}\in \hat{A}(j+1,j,\ba)$.
For later reference, we collect the relevant properties of these objects below.
For each $j\in [k-1]\setminus \{1\}$, it will be convenient to extend the domain of the $\ba$-labelling $\phi^{(j)}$ of $\sP^{(j)}$ to all $j$-sets $J\in \cK_j(\sP^{(1)})$ by
setting $\phi^{(j)}(J) := \phi^{(j)}(P^{(j)})$, where $P^{(j)}\in \sP^{(j)}$ is the unique $j$-graph that contains $J$. The proof of the following proposition is provided in \cite{JKKO2}.

\begin{proposition}\label{prop: hat relation}
For a given family of partitions $\sP=\sP(k-1,\ba)$ and an $\ba$-labelling~$\mathbf{\Phi}$, the following hold for all $j\in [k-1]$.
\begin{enumerate}[label={\rm (\roman*)}]
\item $\hat{P}^{(j)}(\cdot):\hat{A}(j+1,j,\ba)_{\neq\emptyset}\to \hat{\sP}^{(j)}$ is a bijection.
\item For $j\geq 2$, the restriction of $P^{(j)}(\cdot,\cdot)$ onto $\hat{A}(j,j-1,\ba)_{\neq \emptyset} \times [a_j]$ is a bijection onto $\sP^{(j)}$.

\item $\hat{\bx} \in \hat{A}(j+1,j,\ba)_{\neq\emptyset}$ if and only if $\cK_{j+1}(\hat{P}^{(j)}(\hat{\bx}))\neq \emptyset$.

\item Each $\hat{\bx}\in \hat{A}(j+1,j,\ba)$ satisfies
\begin{align}\label{eq:hatPconsistsofP(x,b)}
\hat{P}^{(j)}(\hat{\bx})= 
\bigcup_{\hat{\by}\leq_{j,j-1} \hat{\bx}} P^{(j)}(\hat{\by},\bx^{(j)}_{\by^{(1)}_* }).
\end{align}
\item $\{P^{(j)}(\hat{\bx},b) : \hat{\bx}\in \hat{A}(j,j-1,\ba), b\in [a_j]\}$ forms a partition of $\cK_{j}(\sP^{(1)})$.

\item $\{\cK_{j+1}(\hat{P}^{(j)}(\hat{\bx})):\hat{\bx}\in \hat{A}(j+1,j,\ba)\}$ forms a partition of $\cK_{j+1}(\sP^{(1)})$.
\item $\{P^{(j+1)}(\hat{\bx},b) : \hat{\bx}\in \hat{A}(j+1,j,\ba), b\in [a_{j+1}]\}\prec \{\cK_{j+1}(\hat{P}^{(j)}(\hat{\bx})):\hat{\bx}\in \hat{A}(j+1,j,\ba)\}$.
\item If $\sP(k-1,\ba)$ is $T$-bounded, then $|\hat{\sP}^{(j)}|\leq |\hat{A}(j+1,j,\ba)|\leq T^{2^{j+1}-1}$ and $|\sP^{(j)}| \leq T^{2^{j}}$.

\item If $\cK_{j+1}(\hat{P}^{(j)}(\hat{\bx}))\neq \emptyset$ for all $\hat{\bx}\in \hat{A}(j+1,j,\ba)$, then $\hat{P}^{(j)}(\cdot):\hat{A}(j+1,j,\ba)\to \hat{\sP}^{(j)}$ is a bijection and, if in addition $j<k-1$, then $P^{(j+1)}(\cdot,\cdot):\hat{A}(j+1,j,\ba)\times [a_{j+1}]\to \sP^{(j+1)}$ is also a bijection.

\item $\hat{A}(j,j-1,\ba)_{\emptyset} = \emptyset$ for all $j\in [2]$ and thus $\hat{P}^{(1)}(\cdot)$ and $P^{(2)}(\cdot,\cdot)$ are always bijections.

\item If $\sP \prec \sQ$ for another family of partitions $\sQ=\sQ(k-1,\ba^{\sQ})$, then $\{ \cK_{j+1}(\hat{P}^{(j)}(\hat{\bx})) : \hat{\bx}\in \hat{A}(j+1,j,\ba) \} \prec \{ \cK_{j+1}(\hat{Q}^{(j)}(\hat{\bx})) : \hat{\bx}\in \hat{A}(j+1,j,\ba^{\sQ})\}$.
\end{enumerate}
\end{proposition}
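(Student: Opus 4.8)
The plan is to prove the twelve statements essentially in the order (x), (i), (ii), (iii), (iv), (v), (vi), (vii), (viii), (ix), (xi), deriving each from the definitions in Section~\ref{sec: address space} together with Definition~\ref{def: family of partitions}, Proposition~\ref{prop: (i,i)-hypergraph} and Proposition~\ref{eq: I subset J then leq}. The backbone of the argument is an induction on $j$: most of the bijectivity claims for level $j$ follow once one knows the level-$j$ partition structure, and the level-$j$ partition structure follows from the level-$(j-1)$ polyad structure. First I would record the base cases: $\hat A(2,1,\ba)=\binom{[a_1]}{2}_<$ has no empty addresses since every pair of classes spans a crossing $2$-set, and similarly $\hat A(1,0,\ba)=[a_1]$, which gives (x); then (i) for $j=1$ is immediate from \eqref{eq:sP} (the operator $\hat P^{(1)}$ sends $(\alpha,\beta)$ to the $(2,1)$-graph with classes $V_\alpha,V_\beta$), and $P^{(2)}(\cdot,\cdot)$ is a bijection because each $\cK_2(\hat P^{(1)})$ is partitioned into exactly $a_2$ nonempty labelled parts by Definition~\ref{def: family of partitions}(ii).

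For the inductive step I would argue as follows. For (iii): by \eqref{eq: hat P def} the polyad $\hat P^{(j)}(\hat\bx)$ with $\hat\bx$ nonempty is $\hat P^{(j)}(J)$ for some $J\in\cK_{j+1}(\sP^{(1)})$, and as noted right after \eqref{eq:sP} we have $J\in\cK_{j+1}(\hat P^{(j)}(J))$, so $\cK_{j+1}(\hat P^{(j)}(\hat\bx))\neq\emptyset$; conversely if $\hat\bx\in\hat A(j+1,j,\ba)_\emptyset$ then by \eqref{eq: hat P P relations emptyset} and the inductive identification \eqref{eq:hatPconsistsofP(x,b)} at level $j$, $\hat P^{(j)}(\hat\bx)$ is a union of parts $P^{(j)}(\hat\by,\cdot)$ that cannot all be the `restrictions' of a single crossing $(j+1)$-set (since no $J$ has $\hat\bx(J)=\hat\bx$), so it spans no $(j+1)$-clique — here one uses Proposition~\ref{eq: I subset J then leq}(ii) to see that a crossing $(j+1)$-clique would force consistency of the $\hat\by$'s and hence nonemptiness of $\hat\bx$. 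Statement (iv) for the nonempty case is exactly \eqref{def:polyad} rewritten via the $\ba$-labelling and \eqref{eq: Pj operator define}, and for the empty case it is the definition \eqref{eq: hat P P relations emptyset}. Then (i) and (ii) at level $j$ follow: injectivity of $\hat P^{(j)}(\cdot)$ on $\hat A(j+1,j,\ba)_{\neq\emptyset}$ holds because $\hat\bx$ can be recovered from $\hat P^{(j)}(\hat\bx)$ by reading off the classes $V_\alpha$ it meets and the labels $\phi^{(i)}$ of the parts it contains (using (iv) and injectivity of $\phi^{(i)}$ on each $\cK_i$-fibre), and surjectivity onto $\hat\sP^{(j)}$ is \eqref{eq:sP} together with the fact, from the level-$j$ version of (v) below, that every element of $\hat\sP^{(j)}$ has nonempty-address form; (ii) is the analogous statement for $P^{(j)}(\cdot,\cdot)$, combining surjectivity from Definition~\ref{def: family of partitions}(ii) with the same recovery argument.

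For the partition statements I would proceed by a clean induction. Statement (v) at level $j$: by Proposition~\ref{prop: (i,i)-hypergraph}(i) the sets $P^{(j)}\in\sP^{(j)}$ partition $\cK_j(\sP^{(1)})$; by (ii) (nonempty case) these are exactly the $P^{(j)}(\hat\bx,b)$ with $\hat\bx$ nonempty, while the extra terms with $\hat\bx\in\hat A(j,j-1,\ba)_\emptyset$ are empty by \eqref{eq: Pj operator define empty}, so the indexed family in (v) is a partition. Statement (vi): the $\cK_{j+1}(\hat P^{(j)}(\hat\bx))$ for $\hat\bx$ nonempty are pairwise disjoint because distinct parts $P^{(j+1)}\in\sP^{(j+1)}$ lie over distinct polyads and $\sP^{(j+1)}$ is a partition of $\cK_{j+1}(\sP^{(1)})$ (Definition~\ref{def: family of partitions}(ii)); they cover $\cK_{j+1}(\sP^{(1)})$ because every crossing $(j+1)$-set $J$ lies in $\cK_{j+1}(\hat P^{(j)}(J))=\cK_{j+1}(\hat P^{(j)}(\hat\bx(J)))$; and the empty-address terms contribute empty sets by (iii). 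Statement (vii) is then Definition~\ref{def: family of partitions}(ii) reindexed via (v), (vi). Statement (viii) is a direct cardinality count: $|\hat A(j+1,j,\ba)|=\binom{a_1}{j+1}\prod_{i=2}^{j}a_i^{\binom{j+1}{i}}\le T^{\,j+1+\sum_{i=2}^{j}\binom{j+1}{i}}=T^{2^{j+1}-1}$, and $|\sP^{(j)}|\le|\hat A(j,j-1,\ba)|\cdot a_j\le T^{2^j-1}\cdot T\le T^{2^j}$, using (i), (ii). Statement (ix) is immediate: if all addresses are nonempty then $\hat A(j+1,j,\ba)_\emptyset=\emptyset$, so the bijections of (i), (ii) are defined on the whole address space, and the claim about $P^{(j+1)}(\cdot,\cdot)$ for $j<k-1$ follows from (ii) applied at level $j+1$. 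Finally (xi): if $\sP\prec\sQ$ then $\sP^{(j)}\prec\sQ^{(j)}$ for all $j$, so each crossing clique set refines the corresponding one for $\sQ$; concretely, given $\hat\bx$ for $\sP$, each part $P^{(j)}(\hat\by,\cdot)$ appearing in \eqref{eq:hatPconsistsofP(x,b)} is contained in a unique $\sQ^{(j)}$-part, these fit together (using Proposition~\ref{eq: I subset J then leq}) into a single $\sQ$-polyad $\hat Q^{(j)}(\hat\bz)$, and then $\cK_{j+1}(\hat P^{(j)}(\hat\bx))\subseteq\cK_{j+1}(\hat Q^{(j)}(\hat\bz))$ by monotonicity of $\cK_{j+1}(\cdot)$ under inclusion of the underlying $j$-graph.

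The routine bookkeeping — tracking which index sets $\Lambda$ correspond to which $\cl(\cdot)$, and checking that the lexicographic bijection $g$ in the address-space definition makes $\hat\bx(\cdot)$ well-defined on polyads — is tedious but mechanical. The one genuinely delicate point, and the main obstacle, is the `only if' direction of (iii): showing that an empty-address polyad $\hat P^{(j)}(\hat\bx)$ really has no $(j+1)$-clique. This is where one must use Proposition~\ref{eq: I subset J then leq}(ii) carefully — a crossing $(j+1)$-clique in $\hat P^{(j)}(\hat\bx)$ would have all of its $j$-subsets lying in the prescribed parts $P^{(j)}(\hat\by,\bx^{(j)}_{\by^{(1)}_*})$, and one must check that such a system of $\binom{j+1}{j}$ mutually compatible parts can only come from an honest crossing $(j+1)$-set, forcing $\hat\bx=\hat\bx(J)$ for that $J$ and contradicting $\hat\bx\in\hat A(j+1,j,\ba)_\emptyset$; the subtlety is that \emph{a priori} the parts of $\sP^{(j)}$ over \emph{different} polyads need not be `aligned', which is precisely why the argument has to go through the crossing clique rather than purely combinatorially on labels.
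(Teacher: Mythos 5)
Your proposal takes essentially the same approach as the paper: observe that (i), (ii) and (x) follow directly from the construction of the address maps, prove (iii) by showing $J\in\cK_{j+1}(\hat P^{(j)}(\hat\bx))$ forces $\hat\bx=\hat\bx(J)$, derive (iv) from \eqref{def:polyad} and the labelling, deduce the partition statements (v)--(vii) from (i)--(iii) together with Definition~\ref{def: family of partitions}(ii), count for (viii), and prove (xi) by comparing polyads over a common crossing $(j+1)$-set. Two small slips: in (viii) the exponent $j+1+\sum_{i=2}^j\binom{j+1}{i}$ equals $2^{j+1}-2$, not $2^{j+1}-1$ (the bound still holds since $T^{2^{j+1}-2}\le T^{2^{j+1}-1}$); and in the hard direction of (iii), Proposition~\ref{eq: I subset J then leq}(ii) is not the right tool (it presupposes $\hat\bx=\hat\bx(J)$, which is exactly what one is trying to establish) -- the alignment of the $\hat\by$'s is instead proved by an induction on $j$, using that each $I\in\binom{J}{j}$ lying in some nonempty $P^{(j)}(\hat\by,\cdot)\subseteq\cK_j(\hat P^{(j-1)}(\hat\by))$ forces $\hat\by=\hat\bx(I)$ by the lower-level statement, after which the labels $\bx^{(i)}_\Lambda$ match those of $\hat\bx(J)$ level by level.
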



 We remark that the counting lemma (see Lemma~\ref{lem: counting}) will enable us to restrict our attention to families of partitions as in Proposition~\ref{prop: hat relation}(ix). This is formalized in Lemma~\ref{lem: maps bijections}.

For $j\in [k-1]$, $\ell \geq j+1$ and for each $\hat{\bx} \in \hat{A}(\ell,j,\ba)$, 
we define the \emph{polyad of $\hat{\bx}$} by 
\begin{align}\label{eq: larger polyad def}
\hat{P}^{(j)}(\hat{\bx}) 
:= \bigcup_{\hat{\by}\leq_{j+1,j} \hat{\bx}} \hat{P}^{(j)}(\hat{\by})
\stackrel{(\ref{eq:hatPconsistsofP(x,b)})}{=}\bigcup_{\hat{\bz}\leq_{j,j-1} \hat{\bx}} P^{(j)}(\hat{\bz},\bx^{(j)}_{\bz^{(1)}_* }).
\end{align}\COMMENT{Here, we have this because
$\hat{\bz}\leq_{j,j-1} \hat{\by} \leq_{j+1,j} \hat{\bx}$ implies $\hat{\bz}\leq_{j,j-1}\hat{\bx}$.}
(Note that this generalizes the definition made in \eqref{eq: hat P def} for the case $\ell=j+1$.)
The following fact follows easily from the definition.
\begin{proposition}\label{prop:unique}
Let $\sP= \sP(k-1,\ba)$ be a family of partitions. 
Let $j\in [k-1]$ and $\ell\geq j+1$. Then for every $L\in \cK_{\ell}(\sP^{(1)})$, 
there exists a unique $\hat{\bx}\in \hat{A}(\ell,j,\ba)$ such that $L\in \cK_{\ell}(\hat{P}^{(j)}(\hat{\bx}))$.
\end{proposition}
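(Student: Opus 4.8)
The plan is to exhibit the required $\hat{\bx}$ explicitly and then check existence and uniqueness separately. Let $\hat{\bx}^{*}\in\hat{A}(\ell,j,\ba)$ be the ``level-$j$ restriction'' of $\hat{\bx}(L)$ from \eqref{eq: bx J def}, i.e.\ $(\bx^{*})^{(1)}:=\bx^{(1)}(L)$ and $(\bx^{*})^{(i)}_{\Lambda}:=\bx^{(i)}(L)_{\Lambda}$ for all $\Lambda\in\binom{\cl(L)}{i}$ and $i\in[j]\sm\{1\}$; this makes sense since $\hat{\bx}(L)\in\hat{A}(\ell,j',\ba)$ with $j'=\min\{k-1,\ell-1\}\geq j$ (as $j\leq k-1$ and $\ell\geq j+1$). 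First I would dispose of the case $j=1$: then \eqref{eq: larger polyad def} gives $\hat{P}^{(1)}(\hat{\bx})=\bigcup_{b\in\bx^{(1)}_{*}}V_{b}$, so $L\in\cK_{\ell}(\hat{P}^{(1)}(\hat{\bx}))$ holds if and only if $\cl(L)=\bx^{(1)}_{*}$, i.e.\ $\hat{\bx}=\hat{\bx}^{*}$. So from now on assume $2\leq j\leq k-1$.

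For existence I would fix an arbitrary $j$-subset $J$ of $L$ and show $J\in\hat{P}^{(j)}(\hat{\bx}^{*})$; since $L$ is crossing, doing this for all $j$-subsets of $L$ yields $L\in\cK_{\ell}(\hat{P}^{(j)}(\hat{\bx}^{*}))$. Let $P^{(j)}(J)\in\sP^{(j)}$ be the part containing $J$. By Definition~\ref{def: family of partitions}(ii), $P^{(j)}(J)\subseteq\cK_{j}(\hat{P}^{(j-1)})$ for some $(j-1)$-polyad $\hat{P}^{(j-1)}$, and since the clique sets of the $(j-1)$-polyads partition $\cK_{j}(\sP^{(1)})$ (Proposition~\ref{prop: hat relation}(vi) applied with $j-1$ in place of $j$), this polyad must be the unique one having $J$ as a clique, namely $\hat{P}^{(j-1)}(\hat{\bx}(J))=\hat{P}^{(j-1)}(J)$ (by the remark following \eqref{eq: hat P def}). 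As $\phi^{(j)}(P^{(j)}(J))=\phi^{(j)}(J)=\bx^{(j)}(L)_{\cl(J)}=(\bx^{*})^{(j)}_{\cl(J)}$ (the middle equality by \eqref{eq: bx J def}), \eqref{eq: Pj operator define} identifies $P^{(j)}(J)=P^{(j)}(\hat{\bx}(J),(\bx^{*})^{(j)}_{\cl(J)})$. A routine check from the definitions gives $\hat{\bx}(J)\leq_{j,j-1}\hat{\bx}^{*}$ (indeed $\cl(J)\subseteq\cl(L)=(\bx^{*})^{(1)}_{*}$, and for $i\in[j-1]\sm\{1\}$ and $\Lambda\in\binom{\cl(J)}{i}$ both $\bx^{(i)}(J)_{\Lambda}$ and $(\bx^{*})^{(i)}_{\Lambda}$ equal $\phi^{(i)}$ of the part of $\sP^{(i)}$ containing $\{v_{\lambda}:\lambda\in\Lambda\}$). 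Hence the term indexed by $\hat{\bz}=\hat{\bx}(J)$ in \eqref{eq: larger polyad def} for $\hat{P}^{(j)}(\hat{\bx}^{*})$ equals $P^{(j)}(J)\ni J$, so $J\in\hat{P}^{(j)}(\hat{\bx}^{*})$.

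For uniqueness, suppose $L\in\cK_{\ell}(\hat{P}^{(j)}(\hat{\bx}))$ for some $\hat{\bx}\in\hat{A}(\ell,j,\ba)$; I would show $\hat{\bx}=\hat{\bx}^{*}$. Fix a $j$-subset $J$ of $L$. Since $J\in\hat{P}^{(j)}(\hat{\bx})$, \eqref{eq: larger polyad def} provides $\hat{\bz}\leq_{j,j-1}\hat{\bx}$ with $J\in P^{(j)}(\hat{\bz},\bx^{(j)}_{\bz^{(1)}_{*}})$, so in particular $J\in\cK_{j}(\hat{P}^{(j-1)}(\hat{\bz}))$. Using the same partition property (Proposition~\ref{prop: hat relation}(vi) with $j-1$ for $j$) together with $J\in\cK_{j}(\hat{P}^{(j-1)}(\hat{\bx}(J)))$, this forces $\hat{\bz}=\hat{\bx}(J)$, hence $\bz^{(1)}_{*}=\cl(J)$. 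Then, since $J\in P^{(j)}(\hat{\bz},\bx^{(j)}_{\cl(J)})$, \eqref{eq: Pj operator define} gives $\phi^{(j)}(J)=\bx^{(j)}_{\cl(J)}$, so $\bx^{(j)}_{\cl(J)}=\bx^{(j)}(L)_{\cl(J)}=(\bx^{*})^{(j)}_{\cl(J)}$ by \eqref{eq: bx J def}; and $\hat{\bx}(J)=\hat{\bz}\leq_{j,j-1}\hat{\bx}$ gives $\cl(J)\subseteq\bx^{(1)}_{*}$ and $\bx^{(i)}_{\Lambda}=\bx^{(i)}(J)_{\Lambda}=(\bx^{*})^{(i)}_{\Lambda}$ for $\Lambda\in\binom{\cl(J)}{i}$, $i\in[j-1]\sm\{1\}$. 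Since $\ell\geq j+1$, every $i$-subset of $\cl(L)$ with $i\leq j$ lies inside $\cl(J)$ for some $j$-subset $J$ of $L$; letting $J$ run over all $j$-subsets of $L$ then forces $\bx^{(1)}_{*}\supseteq\cl(L)$ (hence $\bx^{(1)}=\bx^{(1)}(L)$, both sets having size $\ell$) and $\bx^{(i)}_{\Lambda}=(\bx^{*})^{(i)}_{\Lambda}$ for all $\Lambda\in\binom{\cl(L)}{i}$, $i\in[j]\sm\{1\}$, i.e.\ $\hat{\bx}=\hat{\bx}^{*}$.

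I expect the only genuine difficulty to be bookkeeping: keeping the various restriction operators on the address spaces straight and verifying relations such as $\hat{\bx}(J)\leq_{j,j-1}\hat{\bx}^{*}$ (which amount to unwinding the definitions in Section~\ref{sec: Basic properties of the address space}). The single substantive input is that at each level the clique sets of the polyads partition the crossing sets of one higher uniformity (Proposition~\ref{prop: hat relation}(vi)); this is exactly what pins down $\hat{\bz}$, and hence $\hat{\bx}$, in the uniqueness step.
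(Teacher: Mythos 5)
Your proof is correct and follows essentially the same route as the paper's: build the candidate address vector by amalgamating the level-$j$ data from the $\hat{\bx}(J)$ for small subsets $J$ of $L$, and pin it down via the fact that the cliques of the polyads at the previous level partition the crossing sets (Proposition~\ref{prop: hat relation}(vi)). The only cosmetic difference is that the paper's (sketch of a) proof amalgamates over $(j+1)$-subsets $J\in\binom{L}{j+1}$, treating $\hat{\bx}(J)\in\hat{A}(j+1,j,\ba)$ as the unique vector with $J\in\cK_{j+1}(\hat{P}^{(j)}(\hat{\bx}(J)))$, whereas you work one level lower with $j$-subsets of $L$ and the partition property of the $(j-1)$-polyads; you also separate out $j=1$ and spell out uniqueness in full, both of which the paper leaves implicit. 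Neither of these is a substantive divergence — the same underlying inputs (\eqref{eq: bx J def}, \eqref{eq: larger polyad def}, \eqref{eq: Pj operator define}, and Proposition~\ref{prop: hat relation}(vi)) do all the work in both arguments.
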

\COMMENT{
\begin{proof}
Indeed, recall that for every $J\in \binom{L}{j+1}$, 
the vector $\hat{\bx}(J)$ is the unique element of $\hat{A}(j+1,j,\ba)$ which satisfies $J\in \cK_{j+1}(\hat{P}^{(j)}(\hat{\bx}(J)))$. 
Let $\hat{\bx}\in \hat{A}(\ell,j,\ba)$ be the vector which satisfies $\bx^{(i)}_{I'}= (\bx^{(i)}(J))_{I'}$ for every $I'\subsetneq J$ with $|I'|=i\leq j$ and every $J\in \binom{L}{j+1}$.\COMMENT{Here, subscript $I'$ denotes the coordinate of the vector.}
Then it is easy to see that the vector $\hat{\bx}= (\bx^{(1)},\dots, \bx^{(j)})$ is well-defined and $\hat{\bx}(J)\leq_{j+1,j} \hat{\bx}$ for all $J\in \binom{L}{j+1}$.
Thus by \eqref{eq: larger polyad def}, every $J\in \binom{L}{j+1}$ satisfies $J\in \cK_{j+1}(\hat{P}^{(j)}(\hat{\bx}))$, and hence $L\in \cK_{\ell}(\hat{P}^{(j)}(\hat{\bx}))$. 
This proves our claim. 
\end{proof}
}
Note that \eqref{eq: Pj operator define} and \eqref{eq: larger polyad def} together imply that, for all $j\in [k-1]$ and $\hat{\bx}\in \hat{A}(j+1,j,\ba)$,
\begin{align}\label{eq: complex definition by address}
\hat{\cP}(\hat{\bx}) := \left\{ \bigcup_{\hat{\by}\leq_{j+1,i} \hat{\bx}} \hat{P}^{(i)}(\hat{\by})\right\}_{i\in [j]}
\end{align} is a $(j+1,j)$-complex.\COMMENT{
If it is a complex, then it is easy to see that it is a $(j+1,j)$-complex.
Suppose $\hat{\by} \leq_{j+1,i} \hat{\bx}$ for $i\leq j$.
It suffice to show that if $J\in \hat{P}^{(i)}(\hat{\by})$, then $J\in \cK_{i}( \hat{P}^{(i-1)}(\hat{\bz}))$ for some $\hat{\bz}$ with $\hat{\bz} \leq_{j+1,i-1} \hat{\bx}$. \newline
Suppose $J\in \hat{P}^{(i)}(\hat{\by})$.
By \eqref{eq: larger polyad def}, $J\in P^{(i)}(\hat{\bz}, \by^{(i)}_{\bz^{(1)}_*})$ for some $\hat{\bz}$ with $\hat{\bz}\leq_{i,i-1} \hat{\by}$.
By the definition of $ P^{(i)}(\hat{\bz}, \by^{(i)}_{\bz^{(1)}_*})$, $J$ belongs to $\cK_{i}(\hat{P}^{(i-1)}(\hat{\bz}))$. Thus we get what we want.
}
Moreover, using Proposition~\ref{prop: hat relation}(iii) it is easy to check that for each $\hat{\bx}\in \hat{A}(j+1,j,\ba)$ with $\cK_{j+1}(\hat{P}^{(j)}(\hat{\bx}))\neq \emptyset$, we have (for $\hat{\cP}(J)$ as defined in~\eqref{eq: hat cP})
\begin{align}\label{eq: hat cP bx is hat cP J is for some J}
\hat{\cP}(\hat{\bx}) = \hat{\cP}(J)\text{ for some } J\in \cK_{j+1}(\sP^{(1)}).
\end{align}

\medskip

\subsubsection{Constructing families of partitions using the address space}
On several occasions we will construct $P^{(j)}(\hat{\bx},b)$ and $\hat{P}^{(j)}(\hat{\bx})$ first and 
then  show that they actually give rise to a family of partitions
for which we can use the properties listed in Proposition~\ref{prop: hat relation}.
The following lemma,
which can easily be proved by induction, 
provides a criterion to show that this is indeed the case.

\begin{lemma}\label{lem: family of partitions construction}
Suppose $k\in \N\sm\{1\}$ and $\ba\in \N^{k-1}$. Suppose $\sP^{(1)}= \{V_1,\dots, V_{a_1}\}$ is a partition of a vertex set $V$.
Suppose that for each $j\in [k-1]\setminus\{1\}$ and each $(\hat{\bx},b)\in \hat{A}(j,j-1,\ba) \times [a_j]$, 
we are given a $j$-graph $P'^{(j)}(\hat{\bx},b)$, and
for each $j\in [k]\setminus \{1\}$ and $\hat{\bx}\in \hat{A}(j,j-1,\ba)$,
we are given a $(j-1)$-graph $\hat{P'}^{(j-1)}(\hat{\bx})$. 
Let 
\begin{align*} P'^{(1)}(b,b)&:=V_b \text{ for all } b\in [a_1], \text{ and }\\ 
   \sP^{(j)}&:= \{P'^{(j)}(\hat{\bx},b) :(\hat{\bx},b)\in \hat{A}(j,j-1,\ba) \times [a_j]\} \text{ for all } j \in [k-1]\setminus\{1\}.
   \end{align*}
Suppose the following conditions hold: 
\begin{enumerate}[label=\rm (FP\arabic*)]
\item\label{item:FP1} $P'^{(1)}(b,b) \neq \emptyset$ for each $b\in [a_1]$; moreover for each $j\in [k-1]\setminus \{1\}$ and each $(\hat{\bx},b)\in \hat{A}(j,j-1,\ba) \times [a_j]$, we have $P'^{(j)}(\hat{\bx},b)\neq \emptyset$.

\item\label{item:FP2} For each $j\in [k-1]\setminus\{1\}$ and $\hat{\bx}\in \hat{A}(j,j-1,\ba)$,
the set $\{P'^{(j)}(\hat{\bx},b):b\in [a_j]\}$ has size $a_j$\COMMENT{
Need this to get that the $P'^{(j)}(\hat{\bx},b)$ are distinct.} and forms a partition of $\cK_j(\hat{P'}^{(j-1)}(\hat{\bx}))$.
\item\label{item:FP3} For each $j\in [k-1]$ and $\hat{\bx}\in \hat{A}(j+1,j,\ba)$, we have 
$$\hat{P'}^{(j)}(\hat{\bx})= 
\bigcup_{\hat{\by}\leq_{j,j-1} \hat{\bx}} P'^{(j)}(\hat{\by},\bx^{(j)}_{\by^{(1)}_* }).$$
\end{enumerate}
Then the $\ba$-labelling $\mathbf{\Phi} = \{\phi^{(i)}\}_{i=2}^{k-1}$ given by $\phi^{(i)}(P'^{(i)}(\hat{\bx},b))=b$ for each $(\hat{\bx},b)\in \hat{A}(i,i-1,\ba)\times[a_i]$ is well-defined and satisfies the following:
\begin{enumerate}[label={\rm (FQ\arabic*)}]
\item\label{item:FQ1} $\sP = \{\sP^{(i)}\}_{i=1}^{k-1}$ is a family of partitions on $V$.
\item\label{item:FQ2} The maps $P^{(j)}(\cdot,\cdot)$ and $\hat{P}^{(j)}(\cdot)$ defined in \eqref{eq: hat P def}--\eqref{eq: hat P P relations emptyset} for $\sP$, $\mathbf{\Phi}$ satisfy that for each $j\in [k-1]\setminus\{1\}$ and $(\hat{\bx},b)\in \hat{A}(j,j-1,\ba) \times [a_j]$, we have 
$$P^{(j)}(\hat{\bx},b) = P'^{(j)}(\hat{\bx},b),$$ 
and for each $j \in [k-1]$ and $\hat{\bx} \in \hat{A}(j+1,j,\ba)$ we have
$$\hat{P}^{(j)}(\hat{\bx}) = \hat{P'}^{(j)}(\hat{\bx}).$$
\end{enumerate}
\end{lemma}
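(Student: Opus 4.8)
The plan is to prove Lemma~\ref{lem: family of partitions construction} by induction on $k$, or equivalently by an induction on the level $j$ running from $1$ up to $k-1$, showing at each stage that the partitions $\sP^{(1)},\dots,\sP^{(j)}$ together with the auxiliary polyads $\hat{P'}^{(i)}$ behave exactly like the canonical objects attached to a genuine family of partitions. The key observation is that all the objects appearing in Definition~\ref{def: family of partitions} and in the address-space machinery of Section~\ref{sec: Basic properties of the address space} are defined purely in terms of the $\sP^{(i)}$ and the labelling $\mathbf{\Phi}$ via the recursion \eqref{def:polyad}, \eqref{eq: hat bx maps}--\eqref{eq: hat P P relations emptyset}; so once we know that the given $P'^{(i)}(\hat{\bx},b)$ and $\hat{P'}^{(i)}(\hat{\bx})$ satisfy the \emph{same} recursion (which is precisely what~\ref{item:FP3} asserts for the polyads, and what we must verify for the labelled parts), the two systems of objects must coincide.

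First I would check that $\mathbf{\Phi}$ is well-defined: for this one needs that for fixed $j$, the parts $P'^{(j)}(\hat{\bx},b)$ are pairwise distinct across all $(\hat{\bx},b)$, so that assigning the label $b$ to $P'^{(j)}(\hat{\bx},b)$ is unambiguous. Distinctness within a fixed $\hat{\bx}$ is exactly \ref{item:FP2} (the set has size $a_j$); distinctness across different $\hat{\bx}$ follows because \ref{item:FP2} also says $\{P'^{(j)}(\hat{\bx},b):b\in[a_j]\}$ partitions $\cK_j(\hat{P'}^{(j-1)}(\hat{\bx}))$, and the sets $\cK_j(\hat{P'}^{(j-1)}(\hat{\bx}))$ are pairwise disjoint for distinct $\hat{\bx}\in\hat A(j,j-1,\ba)$ — this disjointness is a consequence of the inductive hypothesis that $\hat{P'}^{(j-1)}(\cdot)$ agrees with the canonical polyad operator $\hat{P}^{(j-1)}(\cdot)$, together with Proposition~\ref{prop:unique} (or Proposition~\ref{prop: hat relation}(vi)). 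Next, to establish \ref{item:FQ1}, I would verify Definition~\ref{def: family of partitions}(i) (immediate, since $\sP^{(1)}=\{V_1,\dots,V_{a_1}\}$ with $a_1\geq k$ nonempty parts by \ref{item:FP1}) and Definition~\ref{def: family of partitions}(ii) for each $j\geq 2$: that $\sP^{(j)}$ consists of nonempty $j$-graphs (from \ref{item:FP1}), that $\sP^{(j)}$ is a partition of $\cK_j(\sP^{(1)})$ refining $\{\cK_j(\hat{P}^{(j-1)}):\hat{P}^{(j-1)}\in\hat{\sP}^{(j-1)}\}$, and that exactly $a_j$ parts of $\sP^{(j)}$ lie in each $\cK_j(\hat{P}^{(j-1)})$. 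The refinement and cardinality statements are precisely \ref{item:FP2}, once we know (inductively) that the $\hat{P'}^{(j-1)}(\hat{\bx})$ range exactly over $\hat{\sP}^{(j-1)}$; that $\sP^{(j)}$ covers all of $\cK_j(\sP^{(1)})$ follows because by the inductive hypothesis $\{\cK_j(\hat{P'}^{(j-1)}(\hat{\bx})):\hat{\bx}\}$ partitions $\cK_j(\sP^{(1)})$ (Proposition~\ref{prop: hat relation}(vi) applied at level $j-1$).

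For \ref{item:FQ2} I would then run the same induction again, now comparing the canonical operators $P^{(j)}(\cdot,\cdot)$, $\hat{P}^{(j)}(\cdot)$ built from $\sP$ and $\mathbf{\Phi}$ via \eqref{eq: hat P def}--\eqref{eq: hat P P relations emptyset} against the primed objects. At level $j=1$ both equal $V_b$ by definition. Assuming agreement up to level $j-1$: for $P^{(j)}(\hat{\bx},b)$ with $\hat{\bx}\in\hat A(j,j-1,\ba)_{\neq\emptyset}$, the definition \eqref{eq: Pj operator define} picks the unique $P^{(j)}\in\sP^{(j)}$ with $\phi^{(j)}(P^{(j)})=b$ and $P^{(j)}\subseteq\cK_j(\hat{P}^{(j-1)}(\hat{\bx}))$; by the inductive hypothesis $\hat{P}^{(j-1)}(\hat{\bx})=\hat{P'}^{(j-1)}(\hat{\bx})$, and by construction of $\mathbf{\Phi}$ the part $P'^{(j)}(\hat{\bx},b)$ has label $b$ and (by \ref{item:FP2}) lies in $\cK_j(\hat{P'}^{(j-1)}(\hat{\bx}))$, so uniqueness forces $P^{(j)}(\hat{\bx},b)=P'^{(j)}(\hat{\bx},b)$; the empty case $\hat{\bx}\in\hat A(j,j-1,\ba)_\emptyset$ is handled by \eqref{eq: Pj operator define empty} together with the observation that $\hat{P'}^{(j-1)}(\hat{\bx})$ then has empty $\cK_j$, hence empty partition, forcing $P'^{(j)}(\hat{\bx},b)=\emptyset$ by \ref{item:FP2}. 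Finally, the identity $\hat{P}^{(j)}(\hat{\bx})=\hat{P'}^{(j)}(\hat{\bx})$ follows immediately by comparing \eqref{eq: hat P P relations emptyset} (equivalently \eqref{eq:hatPconsistsofP(x,b)}) with \ref{item:FP3}: both express $\hat{P}^{(j)}(\hat{\bx})$ as $\bigcup_{\hat{\by}\leq_{j,j-1}\hat{\bx}}P^{(j)}(\hat{\by},\bx^{(j)}_{\by^{(1)}_*})$, and we have just shown the $P^{(j)}$-terms agree. The main obstacle, I expect, is the bookkeeping around the $\emptyset$ versus $\neq\emptyset$ cases of the address space — one must be careful that the primed polyads $\hat{P'}^{(j-1)}(\hat{\bx})$ for $\hat{\bx}\in\hat A(j,j-1,\ba)_\emptyset$ really do have empty clique complex $\cK_j$, so that the primed system reproduces the "trivial" conventions \eqref{eq: Pj operator define empty}--\eqref{eq: hat P P relations emptyset}; this is forced by \ref{item:FP3} and an inductive argument but is the one place where the statement's hypotheses interact with the conventions in a non-obvious way. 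Everything else is a routine unwinding of definitions, which is why the lemma is stated as "easily proved by induction".
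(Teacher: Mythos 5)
Your overall strategy --- induction on the level $j$, verifying Definition~\ref{def: family of partitions} level by level and simultaneously establishing the agreement of the labelling-map operators with the primed objects --- matches the paper's, and most of the details you identify are the right ones. However, the one step you flagged as delicate is in fact where your argument goes wrong. You propose to handle $\hat{\bx}\in\hat{A}(j,j-1,\ba)_\emptyset$ by deducing that $\cK_j(\hat{P'}^{(j-1)}(\hat{\bx}))=\emptyset$ forces $P'^{(j)}(\hat{\bx},b)=\emptyset$ via \ref{item:FP2}. But this directly contradicts \ref{item:FP1}, which requires $P'^{(j)}(\hat{\bx},b)\neq\emptyset$ for \emph{every} $(\hat{\bx},b)\in\hat{A}(j,j-1,\ba)\times[a_j]$. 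Moreover, if empty parts were allowed, your well-definedness argument for $\mathbf{\Phi}$ (which uses pairwise disjointness of the $\cK_j(\hat{P'}^{(j-1)}(\hat{\bx}))$ to get distinctness of the parts across different $\hat{\bx}$) would also break: several pairs $(\hat{\bx},b)$ would be sent to $\emptyset$, making the labelling ambiguous.

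The correct and simpler observation --- which is what the paper's argument hinges on --- is that \ref{item:FP1}, \ref{item:FP2}, and the inductive identification $\hat{P}^{(j-1)}(\hat{\bx})=\hat{P'}^{(j-1)}(\hat{\bx})$ together show $\cK_j(\hat{P}^{(j-1)}(\hat{\bx}))\neq\emptyset$ for every $\hat{\bx}\in\hat{A}(j,j-1,\ba)$, since this set contains the nonempty $P'^{(j)}(\hat{\bx},1)$; hence $\hat{A}(j,j-1,\ba)_\emptyset=\emptyset$ by Proposition~\ref{prop: hat relation}(iii). This removes the ``trivial'' conventions \eqref{eq: Pj operator define empty}--\eqref{eq: hat P P relations emptyset} from consideration entirely and gives for free that $\hat{P'}^{(j-1)}(\cdot)$ ranges bijectively over $\hat{\sP}^{(j-1)}$, which is exactly what you need to run the rest of your argument. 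With this substituted for the ``empty case'' discussion, the remainder of your proof goes through.
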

\COMMENT{
We proceed by induction on $k$. If $k=2$, then clearly \ref{item:FQ1} holds. Let $P^{(1)}(\cdot,\cdot)$ and $\hat{P}^{(1)}(\cdot)$ be the maps defined in \eqref{eq: hat P def}--\eqref{eq: hat P P relations emptyset} for $\sP=\{\sP^{(1)}\}$. Then for each $\hat{\bx}= (a,b) \in \hat{A}(2,1,\ba)$, we have
$$\hat{P'}^{(1)}(\hat{\bx}) = \bigcup_{\hat{\by} \leq_{1,0} \hat{\bx} } P'^{(1)}( \hat{\by}, \bx^{(1)}_{\by^{(1)}_*}) = P'^{(1)}(a,a) \cup P'^{(1)}(b,b) = P^{(1)}(a,a) \cup P^{(1)}(b,b) = \hat{P}^{(1)}(\hat{\bx}).$$
Note that we obtain the final equality since $\hat{P}^{(1)}(\cdot)$ satisfies \eqref{eq:hatPconsistsofP(x,b)}. Thus \ref{item:FQ2} holds.\newline
Now we assume that $k\geq 3$ and the lemma holds for $k-1$. 
By the induction hypothesis, for each $j\in [k-2]\setminus\{1\}$, 
the map $\phi^{(j)}$ given by $\phi^{(j)}(P'^{(j)}(\hat{\bx},b))=b$ for  $(\hat{\bx},b)\in \hat{A}(j,j-1,\ba) \times [a_j]$ is well-defined.
Furthermore, $\{\sP^{(i)} \}_{i=1}^{k-2}$ forms a family of partitions, and this together with $\{\phi^{(i)}\}_{i=2}^{k-2}$ defines maps $P^{(j)}(\cdot,\cdot)$ and $\hat{P}^{(j)}(\cdot)$ for all $j\in [k-2]$.
Moreover, for each $j\in [k-2]\setminus \{1\}$ and $(\hat{\bx},b)\in \hat{A}(j,j-1,\ba) \times [a_j]$, we have 
\begin{align}\label{eq: fpc induction hypothesis 1}
P^{(j)}(\hat{\bx},b) = P'^{(j)}(\hat{\bx},b)
\end{align}
 and for each $j \in [k-2]$ and $\hat{\bx} \in \hat{A}(j+1,j,\ba)$, we have
\begin{align}\label{eq: fpc induction hypothesis 2}
\hat{P}^{(j)}(\hat{\bx}) = \hat{P'}^{(j)}(\hat{\bx}).
\end{align}
Since $\{\sP^{(i)} \}_{i=1}^{k-2}$ is a family of partitions, Proposition~\ref{prop: hat relation}(vi) implies that  $\{\cK_{k-1}(\hat{P}^{(k-2)}(\hat{\bx})) : \hat{\bx} \in \hat{A}(k-1,k-2,\ba) \}$ is a partition of $\cK_{k-1}(\sP^{(1)})$. 
Condition \ref{item:FP2} with \eqref{eq: fpc induction hypothesis 2} implies that for each $\hat{\bx}\in \hat{A}(k-1,k-2,\ba)$, 
\begin{equation}\label{eq: size ak-1 and forms a partition}
\begin{minipage}[c]{0.8\textwidth}\em
the set $\{P'^{(k-1)}(\hat{\bx},b): b\in [a_{k-1}] \}$ has size $a_{k-1}$ and forms a partition of $\cK_{k-1}(\hat{P'}^{(k-2)}(\hat{\bx})) = \cK_{k-1}(\hat{P}^{(k-2)}(\hat{\bx}))$. 
\end{minipage}
\end{equation}
In particular, together with \ref{item:FP1} this shows that $\cK_{k-1}(\hat{P}^{(k-2)}(\hat{\bx}))\neq \emptyset$ for each $\hat{\bx} \in \hat{A}(k-1,k-2,\ba)$, and thus $\hat{A}(k-1,k-2,\ba)_{\neq\emptyset} = \hat{A}(k-1,k-2,\ba)$ by Proposition~\ref{prop: hat relation}(iii).
(Here and below the subscript $\neq \emptyset$ is interpreted with respect to $\{\sP^{(i)} \}_{i=1}^{k-2}$.)
Let $\hat{\sP}^{(k-2)}$ be as defined in \eqref{eq:sP} for the family of partitions $\{\sP^{(i)}\}_{i=1}^{k-2}$. Then
\begin{align*}
\hat{\sP}^{(k-2)} = \{\hat{P}^{(k-2)}(\hat{\bx})  : \hat{\bx} \in \hat{A}(k-1,k-2,\ba)_{\neq\emptyset}  \} \stackrel{(\ref{eq: fpc induction hypothesis 2})}{=} \{ \hat{P'}^{(k-2)}(\hat{\bx}) : \hat{\bx} \in \hat{A}(k-1,k-2,\ba) \} .\end{align*}
Thus by \eqref{eq: size ak-1 and forms a partition}, we conclude that $\sP^{(k-1)}$ is a partition of  $\cK_{k-1}(\sP^{(1)})$, and
$$\sP^{(k-1)} = \{P'^{(k-1)}(\hat{\bx},b): (\hat{\bx},b)\in \hat{A}(k-1,k-2,\ba)\times [a_{k-1}]\} \prec \{ \cK_{k-1}(\hat{P}^{(k-2)}): \hat{P}^{(k-2)} \in \hat{\sP}^{(k-2)}\}.$$
Thus Definition~\ref{def: family of partitions}(ii) holds for $j=k-1$. 
This with the induction hypothesis implies that $\sP$ is a family of partitions, so \ref{item:FQ1} holds.
On the other hand, together with \ref{item:FP1} this implies that $P'^{(k-1)}(\hat{\bx},b)$ are pairwise disjoint nonempty sets for different $(\hat{\bx},b)\in \hat{A}(k-1,k-2,\ba)\times [a_{k-1}]$. 
Thus $\phi^{(k-1)}$ is well-defined and $\{\phi^{(i)}\}_{i=2}^{k-1}$ is trivially\COMMENT{Injectivity when restricted is trivial from the definition.} an $\ba$-labelling.  Thus the definitions given in \eqref{eq: hat P def}--\eqref{eq: hat P P relations emptyset} applied to 
$\{\sP^{(i)}\}_{i=1}^{k-1}$ and $\{\phi^{(i)}\}_{i=2}^{k-1}$ yield $P^{(k-1)}(\hat{\bx},b)$  for each $(\hat{\bx},b)\in \hat{A}(k-1,k-2,\ba) \times [a_{k-1}]$ and $\hat{P}^{(k-1)}(\hat{\bx})$ for each $\hat{\bx} \in \hat{A}(k,k-1,\ba)$.\newline
Also, for each $(\hat{\bx},b)\in \hat{A}(k-1,k-2,\ba) \times [a_{k-1}]$, 
the $(k-1)$-graph $P^{(k-1)}(\hat{\bx},b)$ is, by definition, the $(k-1)$-graph lying in $\cK_{k-1}(\hat{P}^{(k-2)}(\hat{\bx})) \stackrel{\eqref{eq: fpc induction hypothesis 2}}{=} \cK_{k-1}(\hat{P'}^{(k-2)}(\hat{\bx}))$ which satisfies $\phi^{(k-1)}(P^{(k-1)}(\hat{\bx},b) ) = b$.
Thus the definition of $\phi^{(k-1)}$ together with \ref{item:FP2} implies that 
 \begin{align}\label{eq: first one obtained}
 P^{(k-1)}(\hat{\bx},b) = P'^{(k-1)}(\hat{\bx},b).
 \end{align}
Since $\{\sP^{(i)} \}_{i=1}^{k-1}$ is a family of partitions and $\{\phi^{(i)}\}_{i=2}^{k-1}$ is an $\ba$-labelling, \eqref{eq:hatPconsistsofP(x,b)} holds. 
Hence for each $\hat{\bx} \in \hat{A}(k,k-1,\ba)$, we have
$$\hat{P}^{(k-1)}(\hat{\bx}) \stackrel{\eqref{eq:hatPconsistsofP(x,b)}}{=}
\bigcup_{\hat{\by}\leq_{k-1,k-2} \hat{\bx}} P^{(k-1)}(\hat{\by},\bx^{(k-1)}_{\by^{(1)}_* })
\stackrel{\eqref{eq: first one obtained}}{=}\bigcup_{\hat{\by}\leq_{k-1,k-2} \hat{\bx}} P'^{(k-1)}(\hat{\by},\bx^{(k-1)}_{\by^{(1)}_* })\stackrel{{\rm \ref{item:FP3}}}{ =}  \hat{P'}^{(k-1)}(\hat{\bx})  
.$$
This with \eqref{eq: first one obtained} implies \ref{item:FQ2} and proves the lemma.
}

\subsubsection{Density functions of address spaces}\label{sec: subsub density function}

For $k\in \N\sm\{1\}$ 
and $\ba\in \N^{k-1}$,
we say a function $d_{\ba,k}:\hat{A}(k,k-1,\ba)\rightarrow[0,1]$ is a \emph{density function} of $\hat{A}(k,k-1,\ba)$.
For two density functions $d^1_{\ba,k}$ and $d^2_{\ba,k}$,
we define the \emph{distance} between $d^1_{\ba,k}$ and $d^2_{\ba,k}$ by
$$\dist(d^1_{\ba,k}, d^2_{\ba,k}) 
:= k!\prod_{i=1}^{k-1} a_i^{-\binom{k}{i}}\sum_{\hat{\bx}\in \hat{A}(k,k-1,\ba)} |d^1_{\ba,k}(\hat{\bx})-d^2_{\ba,k}(\hat{\bx})| .$$
Since $|\hat{A}(k,k-1,\ba)| = \binom{a_1}{k} \prod_{i=2}^{k-1} a_i^{\binom{k}{i}}$, we always have that $\dist(d^1_{\ba,k}, d^2_{\ba,k})\leq 1.$
Suppose we are given a density function $d_{\ba,k}$, a real $\epsilon>0$, and a $k$-graph $H^\kk$.
\begin{equation*}
\begin{minipage}[c]{0.9\textwidth}\em 	
We say a family of partitions $\sP=\sP(k-1,\ba)$ on $V(H^{(k)})$ is an \emph{$(\epsilon,d_{\ba,k})$-partition} of $H^{(k)}$ 
if for every $\hat{\bx}\in \hat{A}(k,k-1,\ba)$ the $k$-graph
$H^{(k)}$ is $(\epsilon,d_{\ba,k}(\hat{\bx}))$-regular with respect to $\hat{P}^{(k-1)}(\hat{\bx})$. 
If $\sP$ is also $(1/a_1,\epsilon,\ba)$-equitable (as specified in Definition~\ref{def: equitable family of partitions}), 
we say $\sP$ is an \emph{$(\epsilon,\ba,d_{\ba,k})$-equitable partition} of $H^{(k)}$.	
\end{minipage}
\end{equation*}
 Note that
\begin{equation}\label{eq: perfectly regular is regular}
\begin{minipage}[c]{0.9\textwidth}\em 
 if $\hat{P}^{(k-1)}(\cdot): \hat{A}(k,k-1,\ba)\rightarrow \hat{\sP}^{(k-1)}$ is a bijection, then $H^{(k)}$ is perfectly $\epsilon$-regular with respect to $\sP$ if and only if there exists a density function $d_{\ba,k}$ such that $\sP$ is an $(\epsilon,d_{\ba,k})$-partition of $H^{(k)}$.
\end{minipage}
\end{equation}

\subsection{Regularity instances}\label{sec: regularity instances}
A regularity instance $R$ encodes an address space, an associated density function and a regularity parameter.
Roughly speaking,
a regularity instance can be thought of as encoding a weighted `reduced multihypergraph' obtained from an application of the regularity lemma for hypergraphs.
To formalize this, we fix a function $\epsilon_{\ref{def: regularity instance}}(\cdot,\cdot) : \mathbb{N} \times \mathbb{N} \rightarrow (0,1]$ which satisfies the following.
\begin{itemize}
\item $\epsilon_{\ref{def: regularity instance}}( \cdot, k)$ is a decreasing function for any fixed  $k\in \mathbb{N}$ with $\lim_{t\rightarrow \infty} \epsilon_{\ref{def: regularity instance}}(t, k) = 0$, 
\item $\epsilon_{\ref{def: regularity instance}}( t, \cdot)$ is a decreasing function for any fixed $t\in \mathbb{N}$,
\item $\epsilon_{\ref{def: regularity instance}}(t,k) < t^{- 4^k} \epsilon_{\ref{lem: counting}}(1/t,1/t,k-1,k)/4$, where $\epsilon_{\ref{lem: counting}}
$ is defined in Lemma~\ref{lem: counting}.
\end{itemize}\COMMENT{Here, note that we divide $\epsilon_{\ref{lem: counting}}(1/t,1/t,k-1,k)$ by $4 t^{4^k}$. There are two reason for this. We divide it by three because
since we want to apply Lemma~\ref{lem: counting} even when $(\epsilon,\ba,d_{\ba,k})$ is not a regularity instance but  $(\epsilon/3,\ba,d_{\ba,k})$ is a regularity instance. For this purposes, in some lemmas we assume $(\epsilon/3, \ba, d_{\ba,k})$ is a regularity instance, instead of $(\epsilon,\ba,d_{\ba,k})$ is a regularity instance.
We also divide it by $t^{4^{k}}$ to make sure that $\epsilon $ is much less than $1/t$.
}
In fact, there are many choices of such functions, we fix one such function $\epsilon_{\ref{def: regularity instance}}$ throughout the paper. In particular, if we write $0<\epsilon \ll 1/k, 1/t$, it is often implied that $\epsilon < \epsilon_{\ref{def: regularity instance}}(t,k)$. With this fixed choice of the function, we define regularity instances as follows.

\begin{definition}[Regularity instance]\label{def: regularity instance}
A \emph{regularity instance} $R= (\epsilon,\ba,d_{\ba,k})$ 
is a triple, where
$\ba = (a_1,\dots, a_{k-1})\in \N^{k-1}$ with $0<\epsilon \leq \epsilon_{\ref{def: regularity instance}}(\|\ba\|_\infty,k)$,\COMMENT{Here, the condition that $\epsilon \ll \epsilon_{\ref{def: regularity instance}}(t)$ is required. If $\epsilon$ is too big compare to $1/t$, we lose the tack of $|\cK_{j}(\hat{P}(\hat{\bx}))|$ for each $\hat{\bx}\in \hat{A}(j,j-1,\ba)$. Then it causes a problem in the proof of Lemma~\ref{lem: slightly different partition regularity} } 
and $d_{\ba,k}$ is a density function of $\hat{A}(k,k-1,\ba)$. 
A $k$-graph $H$ satisfies 
the regularity instance $R$ if there exists a family of partitions $\sP=\sP(k-1,\ba)$ such that 
$\sP$ is an $(\epsilon,\ba,d_{\ba,k})$-equitable partition of~$H$.
The \emph{complexity} of $R$ is $1/\epsilon$.
\end{definition}

Since $\epsilon_{\ref{def: regularity instance}}(\norm{\ba},k)$ depends only on $\norm{\ba}$ and $k$, it follows that for given $r$ and fixed $k$, the number of vectors $\ba$ which could belong to a regularity instance $R$ with complexity $r$ is bounded by a function of $r$.

Note that in the above definition, we have chosen $\epsilon_{\ref{def: regularity instance}}(t,k)$ to be sufficiently small so that we can apply several lemmas to the hypergraph $H$ whenever it satisfies a regularity instance. In fact, in many cases, these applications remain valid even if $\epsilon$ is slightly larger. For example, Lemmas~\ref{lem: maps bijections}, \ref{lem: improve regularity partition}, and~\ref{lem: slightly different partition regularity} allow for the case where $\epsilon \leq 3\cdot \epsilon_{\ref{def: regularity instance}}(t,k)$—that is, when $R = (\epsilon, \ba, d_{\ba,k})$ is close to a regularity instance, but may not exactly be one. Later, we will modify a hypergraph that satisfies a regularity instance $(\epsilon, \ba, d_{\ba,k})$, resulting in a new hypergraph that satisfies $(\epsilon', \ba, d_{\ba,k})$ with a larger $\epsilon'$. Although this modified triple may no longer be a regularity instance, the buffer provided by our choice of $\epsilon_{\ref{def: regularity instance}}(t,k)$ ensures that the lemmas can still be applied.

\begin{definition}[Regular reducible]\label{def: regular reducible}
A $k$-graph property $\bP$ is \emph{regular reducible} if for any $\beta>0$, 
there exists an $r=r_{\ref{def: regular reducible}}(\beta,\bP)$ such that 
for any integer $n\geq k$, 
there is a family $\cR=\cR(n,\beta,\bP)$ of at most $r$ regularity instances, each of complexity at most $r$,
such that the following hold for every $\alpha>\beta$ and every $n$-vertex $k$-graph $H$:
\begin{itemize}
\item If $H$ satisfies $\bP$, then there exists $R\in \cR$ such that $H$ is $\beta$-close to satisfying $R$.
\item If $H$ is $\alpha$-far from satisfying $\bP$, then for any $R\in \cR$  the $k$-graph $H$ is $(\alpha-\beta)$-far from satisfying $R$.
\end{itemize}
\end{definition}

Thus a property is regular reducible if it can be (approximately) encoded by a bounded number of regularity instances of bounded complexity.
We will often make use of the fact that if we apply the regular approximation lemma (Theorem~\ref{thm: RAL}) to a $k$-graph $H$
to obtain $G$ and $\sP$,
then $\ba^\sP$ together with the densities of $G$ with respect to the polyads in $\hat{\sP}^{(k-1)}$ naturally give rise to a regularity instance $R$
where $G$ satisfies $R$ and $H$ is close to satisfying $R$.

Note that different choices of $\epsilon_{\ref{def: regularity instance}}$ lead to a different definition of regularity instances and thus might lead to a different definition of being regular reducible. However, our main result implies that for \emph{any} appropriate choice of $\epsilon_{\ref{def: regularity instance}}$, being regular reducible and testability are equivalent. In particular, if a property is regular reducible for an appropriate choice of $\epsilon_{\ref{def: regularity instance}}$, then it is regular reducible for all appropriate choices of $\epsilon_{\ref{def: regularity instance}}$, and so `regular reducibility' is well defined.

\section{Hypergraph regularity: counting lemmas and approximation}\label{sec:counting}

In this section we present several results about hypergraph regularity. 
The first few results are simple observations which follow either from the definition of $\epsilon$-regularity or can be easily proved by standard probabilistic arguments.
We omit the proofs.
In Section~\ref{sec:3.2}
we introduce an induced version of the `counting lemma' that is suitable for our needs (see Lemma~\ref{lem: counting hypergraph}).

In Section~\ref{sec:3.3} we show that for every $k$-graph $H$, 
there is a $k$-graph $G$ that is close to $H$ and has better regularity parameters.
As a qualitative statement of this is trivial, the crucial point of our statement is the exact relation of the parameters. In Section~\ref{sec:3.4} we make two simple observations on refinements of partitions and in Section~\ref{sec:3.5} we consider small perturbations of a given family of partitions. In Section~\ref{sec:3.6} we relate the distance between two $k$-graphs and the distance of their density functions.

\subsection{Hypergraph regularity results}
We will use the following results which follow easily from the definition of hypergraph regularity (see Section~\ref{sec: 2 hypergraph regularity}).

\begin{lemma}\label{lem: simple facts 1}
Suppose $m \in \mathbb{N}$,  $0< \epsilon \leq \alpha^2 <1$ and $d\in [0,1]$.
Suppose $H^{(k)}$ is an $(m,k,k,1/2)$-graph which is $(\epsilon,d)$-regular  with respect to an $(m,k,k-1,1/2)$-graph $H^{(k-1)}$.
Suppose $Q^{(k-1)} \subseteq H^{(k-1)}$ and $H'^{(k)}\subseteq H^{(k)}$ such that $|\cK_k(Q^{(k-1)})| \geq \alpha |\cK_k(H^{(k-1)})|$ and $H'^{(k)}$ is $(\epsilon,d')$-regular with respect to $H^{(k-1)}$ for some $d'\leq d$.  Then 
\begin{enumerate}[label=\rm(\roman*)]
\item $\cK_k(H^{(k-1)}) \setminus H^{(k)}$ is $(\epsilon,1-d)$-regular with respect to $H^{(k-1)}$, 
\item $H^{(k)}$ is $(\epsilon/\alpha,d)$-regular with respect to $Q^{(k-1)}$, and 
\item $H^{(k)}\setminus H'^{(k)}$ is $(2\epsilon,d-d')$-regular with respect to $H^{(k-1)}$.
\end{enumerate}
\end{lemma}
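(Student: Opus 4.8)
The plan is to verify each of the three statements directly from the definition of $(\epsilon,d)$-regularity (Section~\ref{sec: 2 hypergraph regularity}), recalling that $H^{(k)}$ is $(\epsilon,d)$-regular with respect to $H^{(k-1)}$ means that for every $Q^{(k-1)} \subseteq H^{(k-1)}$ with $|\cK_k(Q^{(k-1)})| \geq \epsilon |\cK_k(H^{(k-1)})|$ one has $|H^{(k)} \cap \cK_k(Q^{(k-1)})| = (d \pm \epsilon)|\cK_k(Q^{(k-1)})|$. Throughout, I may assume $\cK_k(H^{(k-1)}) \neq \emptyset$, since otherwise all three claims are vacuously true (any $k$-graph is $(\epsilon,d')$-regular with respect to $H^{(k-1)}$ for every $d',\epsilon$).

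For part~(i): fix any $Q^{(k-1)} \subseteq H^{(k-1)}$ with $|\cK_k(Q^{(k-1)})| \geq \epsilon |\cK_k(H^{(k-1)})|$. Since $\cK_k(Q^{(k-1)}) \subseteq \cK_k(H^{(k-1)})$, we have the identity $(\cK_k(H^{(k-1)}) \setminus H^{(k)}) \cap \cK_k(Q^{(k-1)}) = \cK_k(Q^{(k-1)}) \setminus (H^{(k)} \cap \cK_k(Q^{(k-1)}))$, so its size is $|\cK_k(Q^{(k-1)})| - |H^{(k)} \cap \cK_k(Q^{(k-1)})| = |\cK_k(Q^{(k-1)})| - (d \pm \epsilon)|\cK_k(Q^{(k-1)})| = (1-d \pm \epsilon)|\cK_k(Q^{(k-1)})|$, which is exactly $(\epsilon, 1-d)$-regularity.

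For part~(ii): here the subtlety is that we are testing regularity of $H^{(k)}$ with respect to the \emph{smaller} $(k-1)$-graph $Q^{(k-1)}$, so the threshold $\epsilon$ becomes $\epsilon/\alpha$. Fix $R^{(k-1)} \subseteq Q^{(k-1)}$ with $|\cK_k(R^{(k-1)})| \geq (\epsilon/\alpha)|\cK_k(Q^{(k-1)})|$. Since $|\cK_k(Q^{(k-1)})| \geq \alpha|\cK_k(H^{(k-1)})|$, we get $|\cK_k(R^{(k-1)})| \geq \epsilon|\cK_k(H^{(k-1)})|$; also $R^{(k-1)} \subseteq H^{(k-1)}$, so $(\epsilon,d)$-regularity of $H^{(k)}$ with respect to $H^{(k-1)}$ applies and gives $|H^{(k)} \cap \cK_k(R^{(k-1)})| = (d \pm \epsilon)|\cK_k(R^{(k-1)})| = (d \pm \epsilon/\alpha)|\cK_k(R^{(k-1)})|$ (using $\epsilon \leq \epsilon/\alpha$ as $\alpha < 1$). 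For part~(iii): fix $Q^{(k-1)} \subseteq H^{(k-1)}$ with $|\cK_k(Q^{(k-1)})| \geq 2\epsilon|\cK_k(H^{(k-1)})| \geq \epsilon|\cK_k(H^{(k-1)})|$; then both $(\epsilon,d)$-regularity of $H^{(k)}$ and $(\epsilon,d')$-regularity of $H'^{(k)}$ apply, and since $H'^{(k)} \subseteq H^{(k)}$ we have $|(H^{(k)} \setminus H'^{(k)}) \cap \cK_k(Q^{(k-1)})| = |H^{(k)} \cap \cK_k(Q^{(k-1)})| - |H'^{(k)} \cap \cK_k(Q^{(k-1)})| = (d - d' \pm 2\epsilon)|\cK_k(Q^{(k-1)})|$, which is $(2\epsilon, d-d')$-regularity. (Note that $d - d' \geq 0$ since $d' \leq d$, so this is a legitimate density.)

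None of the three parts is genuinely difficult; the only point requiring a little care is part~(ii), where one must track that the hypothesis $|\cK_k(Q^{(k-1)})| \geq \alpha|\cK_k(H^{(k-1)})|$ is precisely what converts a relative-threshold $\epsilon/\alpha$ test set inside $Q^{(k-1)}$ into an absolute-threshold $\epsilon$ test set inside $H^{(k-1)}$; the hypothesis $\epsilon \leq \alpha^2$ is not even needed for the argument as I have set it up, though it is harmless and presumably used elsewhere in the paper. The unused hypotheses ($m \in \mathbb{N}$, the $(m,k,k,1/2)$-graph structure, $H'^{(k)}$ being $(\epsilon,d')$-regular rather than merely a subgraph in the right density range) are there for convenient invocation in later sections and play no role in the proof itself, so I would simply not mention them.
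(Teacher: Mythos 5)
Your proof is correct. The paper does not actually give a proof of this lemma (it is stated as one of several ``results which follow easily from the definition of hypergraph regularity'' with ``we omit the proofs''), and your direct verification from the definition is exactly the standard argument one would supply: part~(i) by complementation inside $\cK_k(Q^{(k-1)})$, part~(ii) by converting a relative-threshold test set in $Q^{(k-1)}$ into an absolute-threshold test set in $H^{(k-1)}$ using $|\cK_k(Q^{(k-1)})| \geq \alpha|\cK_k(H^{(k-1)})|$, and part~(iii) by subtracting the two regularity estimates. Two tiny remarks: the hypothesis $\epsilon \le \alpha^2$ does ensure $\epsilon/\alpha \le \alpha < 1$, which keeps the output regularity parameter in the usual range and is mildly used whenever the lemma is invoked downstream (so it is not quite as superfluous as ``not even needed''); and your closing parenthetical listing the $(\epsilon,d')$-regularity of $H'^{(k)}$ among the \emph{unused} hypotheses is misphrased, since that regularity is precisely what you (correctly) invoke in part~(iii) --- but this is just a slip in the commentary, not in the argument.
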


\begin{lemma}\label{lem: simple facts 2}
Suppose $m \in \mathbb{N}$, $0< \epsilon \leq 1/100$, $d\in [0,1]$ and $\nu \leq \epsilon^{10}$.
Suppose $H^{(k)}$ and $G^{(k)}$ are $(m,k,k,1/2)$-graphs on $\{V_1,\dots, V_k\}$, and
$H^{(k-1)}$ and $G^{(k-1)}$ are $(m,k,k-1,1/2)$-graphs on $\{V_1,\dots, V_k\}$.
Suppose $H^{(k)}$ is $(\epsilon,d)$-regular with respect to $H^{(k-1)}$. 
If $|\cK_k(H^{(k-1)})| \geq \nu^{1/2} m^{k}$, $|H^{(k)}\triangle G^{(k)}| \leq \nu m^k$ and $|H^{(k-1)}\triangle G^{(k-1)}| \leq \nu m^{k-1}$, then $G^{(k)}$ is $(\epsilon+ \nu^{1/3}, d)$-regular with respect to $G^{(k-1)}$. 
\end{lemma}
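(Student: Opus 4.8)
\smallskip

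The plan is to verify the definition of $(\epsilon+\nu^{1/3},d)$-regularity of $G^{(k)}$ with respect to $G^{(k-1)}$ directly: given an admissible $Q^{(k-1)}\subseteq G^{(k-1)}$, intersect it with $H^{(k-1)}$ to obtain a suitable $\tilde Q\subseteq H^{(k-1)}$, apply the assumed $(\epsilon,d)$-regularity of $H^{(k)}$ there, and transfer the resulting count back to $G^{(k)}$. The one combinatorial ingredient is that replacing a $k$-partite $(k-1)$-graph $A$ on $\{V_1,\dots,V_k\}$ by another such graph $B$ changes the $k$-clique family only a little: any $I\in\cK_k(A)\triangle\cK_k(B)$ is a clique of $A$ or of $B$, so all its $(k-1)$-subsets are edges of $A$ or of $B$ and hence cross $\{V_1,\dots,V_k\}$, which for $k\ge3$ forces $I$ itself to cross; moreover $I$ contains some $f\in A\triangle B$, and then $I=f\cup\{v\}$ with $v$ lying in the unique part missed by the crossing $(k-1)$-set $f$, so there are at most $\max_i|V_i|\le\tfrac32m$ choices of $I$ for each such $f$. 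Hence
$$|\cK_k(A)\triangle\cK_k(B)|\le\tfrac32m\,|A\triangle B|.$$
(For $k=2$ we have $H^{(1)}=G^{(1)}$ and this inequality is trivial; the claim is then the standard perturbation fact for $\epsilon$-regular bipartite graphs.) Assume from now on that $k\ge3$.

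Fix any $Q^{(k-1)}\subseteq G^{(k-1)}$ with $|\cK_k(Q^{(k-1)})|\ge(\epsilon+\nu^{1/3})|\cK_k(G^{(k-1)})|$; we must show $|G^{(k)}\cap\cK_k(Q^{(k-1)})|=(d\pm(\epsilon+\nu^{1/3}))|\cK_k(Q^{(k-1)})|$. Set $\tilde Q:=Q^{(k-1)}\cap H^{(k-1)}\subseteq H^{(k-1)}$. Since $Q^{(k-1)}\subseteq G^{(k-1)}$, we have $|Q^{(k-1)}\triangle\tilde Q|=|Q^{(k-1)}\setminus H^{(k-1)}|\le|G^{(k-1)}\triangle H^{(k-1)}|\le\nu m^{k-1}$, so the displayed inequality gives both $|\cK_k(Q^{(k-1)})\triangle\cK_k(\tilde Q)|\le\tfrac32\nu m^k$ and $|\cK_k(G^{(k-1)})\triangle\cK_k(H^{(k-1)})|\le\tfrac32\nu m^k$. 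As $|\cK_k(H^{(k-1)})|\ge\nu^{1/2}m^k$ and $\nu\le\epsilon^{10}\le10^{-20}$, this yields $|\cK_k(G^{(k-1)})|\ge\tfrac12\nu^{1/2}m^k$, and therefore
$$|\cK_k(Q^{(k-1)})|\ \ge\ \tfrac12(\epsilon+\nu^{1/3})\nu^{1/2}m^k\ \ge\ \tfrac12\epsilon\nu^{1/2}m^k\ \ge\ \tfrac12\nu^{3/5}m^k,$$
where the last step is exactly where the hypothesis $\nu\le\epsilon^{10}$ (i.e.\ $\epsilon\ge\nu^{1/10}$) is used. Also $|\cK_k(\tilde Q)|\ge(\epsilon+\nu^{1/3})|\cK_k(H^{(k-1)})|-\tfrac92\nu m^k\ge\epsilon|\cK_k(H^{(k-1)})|$, using $\nu m^k\le\nu^{1/2}|\cK_k(H^{(k-1)})|$ and $\nu^{1/3}\ge\tfrac92\nu^{1/2}$. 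So $\tilde Q$ witnesses an admissible instance of the $(\epsilon,d)$-regularity of $H^{(k)}$ with respect to $H^{(k-1)}$, giving $|H^{(k)}\cap\cK_k(\tilde Q)|=(d\pm\epsilon)|\cK_k(\tilde Q)|$.

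To conclude, transfer this back. Since $|G^{(k)}\triangle H^{(k)}|\le\nu m^k$ and $|\cK_k(Q^{(k-1)})\triangle\cK_k(\tilde Q)|\le\tfrac32\nu m^k$, we have $|G^{(k)}\cap\cK_k(Q^{(k-1)})|=|H^{(k)}\cap\cK_k(\tilde Q)|\pm\tfrac52\nu m^k$; and since $|\cK_k(\tilde Q)|=|\cK_k(Q^{(k-1)})|\pm\tfrac32\nu m^k$ and $d\pm\epsilon\le2$, we get $(d\pm\epsilon)|\cK_k(\tilde Q)|=(d\pm\epsilon)|\cK_k(Q^{(k-1)})|\pm3\nu m^k$. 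Combining the last two estimates yields $|G^{(k)}\cap\cK_k(Q^{(k-1)})|=(d\pm\epsilon)|\cK_k(Q^{(k-1)})|\pm6\nu m^k$. Finally, by the lower bound on $|\cK_k(Q^{(k-1)})|$ above,
$$6\nu m^k=12\nu^{2/5}\cdot\bigl(\tfrac12\nu^{3/5}m^k\bigr)\le12\nu^{2/5}\,|\cK_k(Q^{(k-1)})|\le\nu^{1/3}\,|\cK_k(Q^{(k-1)})|,$$
the last inequality because $12\nu^{1/15}\le1$ when $\nu\le10^{-20}$. Hence $|G^{(k)}\cap\cK_k(Q^{(k-1)})|=(d\pm(\epsilon+\nu^{1/3}))|\cK_k(Q^{(k-1)})|$, as required.

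The step needing the most care is the exponent bookkeeping in $\nu$: all additive errors are of order $\nu m^k$, while $|\cK_k(Q^{(k-1)})|$ may be as small as order $\epsilon\nu^{1/2}m^k$, so one must make sure $\nu m^k$ is negligible against $\nu^{3/5}m^k$; this is precisely what $\nu\le\epsilon^{10}$ (hence $\epsilon\ge\nu^{1/10}$) delivers, with plenty of slack. Everything else is routine once the clique-perturbation inequality $|\cK_k(A)\triangle\cK_k(B)|\le\tfrac32m|A\triangle B|$ is in hand.
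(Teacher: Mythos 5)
Your proof is correct and follows essentially the same route as the paper's own argument: intersect the test set $Q^{(k-1)}$ with $H^{(k-1)}$, use the bound $|\cK_k(A)\triangle\cK_k(B)|\le\tfrac32m\,|A\triangle B|$ (the paper uses the identical factor $(1+\tfrac12)m$), invoke $(\epsilon,d)$-regularity of $H^{(k)}$ on $Q^{(k-1)}\cap H^{(k-1)}$, and transfer the count back to $G^{(k)}$ with the same normalisation $\nu m^k\le\nu^{1/2}|\cK_k(H^{(k-1)})|$. The only cosmetic difference is that the paper normalises its errors against $|\cK_k(Q^{(k-1)}\cap H^{(k-1)})|$ while you normalise directly against $|\cK_k(Q^{(k-1)})|$, which changes no idea and only shifts the exponent bookkeeping slightly.
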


\begin{lemma}\label{lem: union regularity}
Suppose $0<\epsilon \ll 1/k,1/s.$
Suppose that $H^{(k)}_1,\dots, H^{(k)}_s$ are edge-disjoint $(k,k,*)$-graphs such that each $H^{(k)}_i$ is $\epsilon$-regular with respect to a $(k,k-1,*)$-graph $H^{(k-1)}$. 
Then $\bigcup_{i=1}^{s} H^{(k)}_i$ is $s\epsilon$-regular with respect to $H^{(k-1)}$.
\end{lemma}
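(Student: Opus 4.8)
The plan is to verify the definition of $s\epsilon$-regularity directly, by reducing the statement for $\bigcup_{i=1}^s H_i^{(k)}$ to the individual regularity statements for each $H_i^{(k)}$ and adding up the errors. Write $H^{(k)} := \bigcup_{i=1}^s H_i^{(k)}$ and let $d_i$ be such that $H_i^{(k)}$ is $(\epsilon, d_i)$-regular with respect to $H^{(k-1)}$. Since the $H_i^{(k)}$ are edge-disjoint, the natural candidate density for $H^{(k)}$ is $d := \sum_{i=1}^s d_i$; note $d \leq 1 + s\epsilon$ (and if $d > 1$ there is nothing to check in the interesting range, but we keep it as is since the conclusion allows $(s\epsilon, d)$-regularity with $d$ possibly slightly above $1$, or one simply observes $d\le 1$ automatically when $\cK_k(H^{(k-1)})\ne\emptyset$). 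First I would fix an arbitrary $Q^{(k-1)} \subseteq H^{(k-1)}$ with $|\cK_k(Q^{(k-1)})| \geq s\epsilon\, |\cK_k(H^{(k-1)})| \geq \epsilon\, |\cK_k(H^{(k-1)})|$ (here we use $s \geq 1$).

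The key step: since $|\cK_k(Q^{(k-1)})| \geq \epsilon\, |\cK_k(H^{(k-1)})|$, the $(\epsilon, d_i)$-regularity of $H_i^{(k)}$ with respect to $H^{(k-1)}$ applies to this very set $Q^{(k-1)}$, giving
\[
|H_i^{(k)} \cap \cK_k(Q^{(k-1)})| = (d_i \pm \epsilon)\,|\cK_k(Q^{(k-1)})|
\]
for each $i \in [s]$. Because the $H_i^{(k)}$ are pairwise edge-disjoint, $|H^{(k)} \cap \cK_k(Q^{(k-1)})| = \sum_{i=1}^s |H_i^{(k)} \cap \cK_k(Q^{(k-1)})|$, and summing the above over $i$ yields
\[
|H^{(k)} \cap \cK_k(Q^{(k-1)})| = \Bigl(\sum_{i=1}^s d_i\Bigr)|\cK_k(Q^{(k-1)})| \pm s\epsilon\,|\cK_k(Q^{(k-1)})| = (d \pm s\epsilon)\,|\cK_k(Q^{(k-1)})|.
\]
Since $Q^{(k-1)}$ was an arbitrary subset of $H^{(k-1)}$ with $|\cK_k(Q^{(k-1)})| \geq s\epsilon\,|\cK_k(H^{(k-1)})|$, this is exactly the statement that $H^{(k)}$ is $(s\epsilon, d)$-regular with respect to $H^{(k-1)}$, hence $s\epsilon$-regular, as required. (If $|\cK_k(H^{(k-1)})| = 0$ the conclusion is vacuous by the convention in the definition of regularity.)

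There is no real obstacle here; the only point requiring a moment's care is the threshold bookkeeping — one must check that the hypothesis threshold $s\epsilon\,|\cK_k(H^{(k-1)})|$ for sets tested against $H^{(k)}$ is at least the threshold $\epsilon\,|\cK_k(H^{(k-1)})|$ needed to invoke regularity of each $H_i^{(k)}$, which holds since $s \geq 1$. The hypothesis $\epsilon \ll 1/k, 1/s$ is far stronger than needed for this argument (it only really needs $s\epsilon \le 1$ for the conclusion to be meaningful, and $d\le 1$ then follows), but it does no harm; I would simply not use its full strength.
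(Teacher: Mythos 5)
Your proof is correct and is the natural verification of the definition; the paper omits the proof of this lemma (it is listed among results that "follow easily from the definition" in Section 4.1), and your argument supplies exactly the standard calculation — apply $(\epsilon,d_i)$-regularity of each $H_i^{(k)}$ to the same test set $Q^{(k-1)}$, which satisfies the required threshold since $s\epsilon\geq\epsilon$, and sum the estimates using edge-disjointness. The bookkeeping remark (threshold $s\epsilon$ vs.\ $\epsilon$) is the only nontrivial point and you handle it.
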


We will also use the following observation (see for example \cite{RS07}), which can be easily proved using Chernoff's inequality.

\begin{lemma}[Slicing lemma \cite{RS07}]\label{lem: slicing}
Suppose $0< 1/m \ll d, \epsilon , p_0, 1/s$ and $d\geq 2\epsilon$. Suppose that
\begin{itemize}
\item $H^{(k)}$ is an $(\epsilon,d)$-regular $k$-graph with respect to a $(k-1)$-graph $H^{(k-1)}$,
\item $|\cK_{k}(H^{(k-1)})| \geq  m^{k}/\log m$,
\item $p_1,\dots, p_s \geq p_0$ and $\sum_{i=1}^{s} p_i \leq 1$.
 \end{itemize}
 Then 
there exists a partition $\{H^{(k)}_0,H^{(k)}_1,\dots, H^{(k)}_s\}$ of $H^{(k)}$ such that $H^{(k)}_i$ is $(3\epsilon,p_id)$-regular with respect to $H^{(k-1)}$ for every $i\in [s]$, and $H^{(k)}_0$ is $(3\epsilon,(1-\sum p_i)d)$-regular with respect to $H^{(k-1)}$.
\end{lemma}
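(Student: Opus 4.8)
The plan is to prove the slicing lemma by a direct random partitioning argument together with Chernoff's inequality (Lemma~\ref{lem: chernoff}). We are given the $(\epsilon,d)$-regular $k$-graph $H^{(k)}$ with respect to $H^{(k-1)}$, and we wish to split $H^{(k)}$ into pieces whose densities (relative to $H^{(k-1)}$) are prescribed fractions $p_i d$, while preserving regularity up to a loss of a factor of $3$ in the parameter. First I would assign each edge of $H^{(k)}$ independently at random to one of the classes $H^{(k)}_0, H^{(k)}_1,\dots, H^{(k)}_s$, putting it in $H^{(k)}_i$ with probability $p_i$ for $i\in[s]$ and in $H^{(k)}_0$ with the remaining probability $1-\sum_{i=1}^s p_i$.

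The heart of the argument is to verify $(3\epsilon,p_i d)$-regularity of each $H^{(k)}_i$ with high probability. Fix $i\in\{0,1,\dots,s\}$ and fix any $Q^{(k-1)}\subseteq H^{(k-1)}$ with $|\cK_k(Q^{(k-1)})|\geq 3\epsilon|\cK_k(H^{(k-1)})|$. Since $3\epsilon \geq \epsilon$, regularity of $H^{(k)}$ gives $|H^{(k)}\cap \cK_k(Q^{(k-1)})| = (d\pm\epsilon)|\cK_k(Q^{(k-1)})|$. Conditioning on this set of edges, the number landing in $H^{(k)}_i$ is a sum of independent $\{0,1\}$ variables with mean $p_i|H^{(k)}\cap \cK_k(Q^{(k-1)})|$, so by Lemma~\ref{lem: chernoff}, with $t$ a small constant multiple of $\epsilon\,|\cK_k(Q^{(k-1)})|$, the deviation exceeds $t$ with probability at most $2\exp(-c\epsilon^2|\cK_k(Q^{(k-1)})|)$. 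Using $|\cK_k(Q^{(k-1)})|\geq 3\epsilon|\cK_k(H^{(k-1)})|\geq 3\epsilon m^k/\log m$, this probability is superpolynomially small in $m$; meanwhile the number of candidate subhypergraphs $Q^{(k-1)}\subseteq H^{(k-1)}$ is at most $2^{|H^{(k-1)}|}\leq 2^{m^{k-1}}$. Since $1/m \ll d,\epsilon,p_0,1/s$, one checks that $2^{m^{k-1}}\cdot 2\exp(-c\epsilon^2 m^k/\log m) = o(1)$, so a union bound over all $Q^{(k-1)}$ and all $i\in\{0,\dots,s\}$ shows that with positive probability every $H^{(k)}_i$ simultaneously satisfies: for all such $Q^{(k-1)}$, $|H^{(k)}_i\cap\cK_k(Q^{(k-1)})| = p_i|H^{(k)}\cap\cK_k(Q^{(k-1)})| \pm t = p_i d\,|\cK_k(Q^{(k-1)})| \pm (p_i\epsilon + \text{const}\cdot\epsilon)|\cK_k(Q^{(k-1)})|$, which is within $(p_i d \pm 3\epsilon)|\cK_k(Q^{(k-1)})|$ once the constant in $t$ is chosen appropriately (and using $d\geq 2\epsilon$ to absorb slack if needed). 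Fixing one outcome of the random partition for which all these events hold yields the desired partition.

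I expect the main obstacle to be purely bookkeeping: getting the constants in the Chernoff bound and in the triangle-inequality chain to line up so that the final error is genuinely at most $3\epsilon$ rather than, say, $(1+o(1))\epsilon + \epsilon$ on the relative scale, and handling the $i=0$ class with its potentially large probability $1-\sum p_i$ uniformly with the others (here the hypothesis $\sum p_i\leq 1$ is exactly what is needed). One subtlety worth flagging is that the deviation threshold $t$ must be taken proportional to $\epsilon|\cK_k(Q^{(k-1)})|$ rather than a fixed absolute constant, so that the relative error stays $O(\epsilon)$ across the whole range of admissible $Q^{(k-1)}$; the lower bound $|\cK_k(H^{(k-1)})|\geq m^k/\log m$ is precisely what makes the resulting exponential bound beat the $2^{m^{k-1}}$ union-bound factor. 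Since the problem statement says this is easily proved using Chernoff's inequality and the paper omits the proof, I would keep the write-up to roughly the level of detail above.
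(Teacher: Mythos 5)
Your proposal is correct and takes exactly the approach the paper itself indicates for this result (which it cites from R\"odl and Schacht and says is ``easily proved using Chernoff's inequality''); the paper gives no explicit proof, but your random-assignment plus Chernoff plus union-bound argument is the standard one, and it is essentially the same technique the paper does carry out in detail in its proof of Lemma~\ref{lem: improve regularity}. One small bookkeeping note: after conditioning on the $(\epsilon,d)$-regularity of $H^{(k)}$ and taking $t=\epsilon|\cK_k(Q^{(k-1)})|$ in Chernoff, the exponent works out to order $\epsilon^3 m^k/\log m$ (not $\epsilon^2 m^k/\log m$ as you wrote), since the quadratic in $t$ contributes one factor of $\epsilon^2$ and the lower bound $|\cK_k(Q^{(k-1)})|\geq 3\epsilon|\cK_k(H^{(k-1)})|\geq 3\epsilon m^k/\log m$ contributes another factor of $\epsilon$; this still dominates the $2^{O(m^{k-1})}$ union-bound factor by the hierarchy $1/m\ll\epsilon$. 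Also, the total relative error is $p_i\epsilon+\epsilon\leq 2\epsilon$, comfortably within the budget of $3\epsilon$, so the hypothesis $d\geq 2\epsilon$ plays no essential role in your argument (as you correctly hedged); it is there for other applications of the slicing lemma where one needs the resulting densities to be bounded away from zero.
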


\subsection{Counting lemmas}\label{sec:3.2}
Kohayakawa, R\"odl and Skokan proved the following `counting lemma' (Theorem~6.5 in~\cite{KRS02}), which asserts that the number of copies of a given $K^{(k)}_{\ell}$ in an $(\epsilon,\bd)$-regular complex is close to what one could expect in a corresponding random complex. We will deduce several versions of this which suit our needs.

\begin{lemma}[Counting lemma  \cite{KRS02}]\label{lem: counting}
For all $ \gamma, d_0>0$ and  $k, \ell \in \N\sm \{1\}$ with $k\leq \ell$,\COMMENT{We don't use the hierarchy notation here since we refer to $\epsilon_{\ref{lem: counting}}$ in the definition of regularity instance.}
there exist
$\epsilon_0 := \epsilon_{\ref{lem: counting}}(\gamma,d_0,k,\ell)\leq 1$ and $m_0:= n_{\ref{lem: counting}}(\gamma,d_0,k,\ell)$
such that the following holds:
Suppose $0\leq \lambda <1/4$.
Suppose 
$0<\epsilon \leq \epsilon_0$ and $m_0 \leq m$ and
$\bd=(d_2,\dots, d_{k})\in \mathbb{R}^{k-1}$ such that 
$d_j\geq d_0$ for every $j\in [k]\setminus\{1\}$.
Suppose that
$\cH= \{H^{(j)}\}_{j=1}^{k}$ is an $(\epsilon,\bd)$-regular $(m,\ell,k,\lambda)$-complex, 
and $H^{(1)}=\{V_1,\ldots,V_\ell\}$ with $m_i=|V_i|$ for every $i\in [\ell]$.
Then 
$$|\cK_{\ell}(H^{(k)})| = (1\pm \gamma) \prod_{j=2}^{k}d_j^{\binom{\ell}{j}} \cdot \prod_{i=1}^{\ell} m_i.$$
\end{lemma}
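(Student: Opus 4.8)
The statement is the dense hypergraph counting lemma, and the natural plan is induction on the uniformity $k$, with the parameters $\epsilon_0=\epsilon_{\ref{lem: counting}}(\gamma,d_0,k,\ell)$ and $m_0=n_{\ref{lem: counting}}(\gamma,d_0,k,\ell)$ fixed at the end in terms of $\gamma,d_0,k,\ell$ only. For the base case $k=2$ one has $\cH=\{H^{(1)},H^{(2)}\}$ with $H^{(1)}=\{V_1,\dots,V_\ell\}$ and every pair $(V_i,V_j)$ inducing an $(\epsilon,d_2)$-regular graph, and one counts copies of $K_\ell$ by exposing $v_1\in V_1,\dots,v_\ell\in V_\ell$ one at a time, maintaining the common neighbourhood $N_i^{(j)}:=V_j\cap\bigcap_{t\le i}N(v_t)$ in each not-yet-exposed class. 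The defect form of $\epsilon$-regularity (all but at most $\epsilon|V_j|$ vertices of $V_j$ send a $(d_2\pm\epsilon)$-fraction of their edges into any fixed set of size $\ge\epsilon|V_i|$) shows that, at step $i$, all but an $O(\ell\epsilon/d_2^{\ell})$-fraction of the admissible choices $v_{i+1}$ shrink every surviving common neighbourhood by a factor $d_2(1\pm O(\epsilon/d_2^{\ell}))$; since $d_2\ge d_0$ these sets never become smaller than $\epsilon m$, so regularity keeps applying. Multiplying the surviving fractions over all $\binom{\ell}{2}$ pair constraints gives $(1\pm\gamma)d_2^{\binom{\ell}{2}}\prod_i|V_i|$ once $\epsilon_0$ is small in terms of $\gamma,d_0,\ell$ (the slack $\lambda<1/4$ contributes only a harmless $(1\pm O(\lambda))$ factor absorbed into $\gamma$).

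For the inductive step $k\ge 3$ I would run the induction jointly with an \emph{Extension Lemma}: in an $(\epsilon,\bd)$-regular $(m,r,k,\lambda)$-complex on $V_1,\dots,V_r$, for all but a $\gamma$-fraction of copies $L'$ of $K^{(k)}_{r-1}$ on $V_1,\dots,V_{r-1}$, the number of $v\in V_r$ completing $L'$ to a copy of $K^{(k)}_r$ is $(1\pm\gamma)\bigl(\prod_{j=2}^{k}d_j^{\binom{r-1}{j-1}}\bigr)|V_r|$. Granting the Extension Lemma for uniformity $k$, the Counting Lemma for uniformity $k$ follows by iterating it over the extra classes $V_{k+1},\dots,V_\ell$, starting from the case $r=k$, where $|\cK_k(H^{(k)}[\{V_1,\dots,V_k\}])|=|H^{(k)}[\{V_1,\dots,V_k\}]|=(d_k\pm\epsilon)\,|\cK_k(H^{(k-1)}[\{V_1,\dots,V_k\}])|$ is handled by the uniformity-$(k-1)$ Counting Lemma; since adding class $V_r$ multiplies the running count by $(1\pm\gamma/\ell)\bigl(\prod_{j=2}^k d_j^{\binom{r-1}{j-1}}\bigr)m_r$, the density exponents accumulate via Pascal's identity $\binom{r-1}{j}+\binom{r-1}{j-1}=\binom{r}{j}$ to $\binom{\ell}{j}$, as required. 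To prove the Extension Lemma for uniformity $k$, for $v$ in the top class I would pass to the link complex $\cH_v=\{H^{(j)}_v\}_{j=2}^{k}$ on the remaining classes, where $H^{(j)}_v=\{f:f\cup\{v\}\in H^{(j)}\}$ is a $(j-1)$-graph, so that $\cH_v$ is a $(k-1)$-uniform complex; the key point is a \emph{regularity-inheritance} claim that for all but a $\sqrt\epsilon$-fraction of $v$ the complex $\cH_v$ is $(\epsilon^{1/2^k},\bd_v)$-regular with $\bd_v=(d_3,\dots,d_k)$ and with class sizes $(d_2\pm\epsilon)|V_t|$. Combined with the observation that "$v$ completes $L'$" is exactly "$L'$ is a copy of $K^{(k-1)}_{r-1}$ in $\cH_v$", applying the uniformity-$(k-1)$ Counting Lemma to $\cH_v$ and averaging over $v$ yields the extension count (the density $d_2$ entering through $|V_t|$ together with the $d_3,\dots,d_k$ from $\bd_v$ reproduces the exponents $\binom{r-1}{j-1}$).

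The main obstacle is precisely the regularity-inheritance claim — that passing from $\cH$ to the link $\cH_v$ of a typical $v$ costs only a controlled deterioration $\epsilon\mapsto\epsilon^{1/2^k}$ of the regularity parameter while preserving every density — and the bookkeeping of how these deteriorations, compounded across the $k$-fold outer induction and the $\ell-k$ rounds of vertex/class exposure, must still collectively fit inside the single target window $1\pm\gamma$; this is exactly what forces the (tower-type) dependence of $\epsilon_0$ on $\gamma,d_0,k,\ell$. The hypothesis $d_j\ge d_0$ is used everywhere to ensure that every polyad, link, and common neighbourhood encountered has size at least $\epsilon$ times its ambient set so that $\epsilon$-regularity can be invoked, and the mild imbalance $\lambda<1/4$ is carried through all estimates as a negligible multiplicative factor.
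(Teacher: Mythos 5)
The paper does not prove Lemma~\ref{lem: counting} at all; it is quoted verbatim from Theorem~6.5 of Kohayakawa, R\"odl and Skokan~\cite{KRS02} and used as a black box, so there is no in-paper argument to compare against. Evaluating your proposal on its own merits: the overall architecture (joint induction on $k$ coupling a Counting Lemma with an Extension Lemma, then iterating the Extension Lemma over the classes $V_{k+1},\dots,V_\ell$ and accumulating exponents by Pascal's identity) is a sensible organization, and the $k=2$ base case by sequential vertex exposure is standard. The real problem is the ``regularity-inheritance claim''. You assert that for all but a $\sqrt\epsilon$-fraction of $v$ the link complex $\cH_v$ is $(\epsilon^{1/2^k},\bd_v)$-regular with the densities $d_3,\dots,d_k$ preserved; this is not a routine consequence of the definitions, and in particular it does \emph{not} follow by simply feeding the local neighbourhood of $v$ into the regularity of $H^{(j)}$ with respect to $H^{(j-1)}$, because a subpolyad concentrated on a single $v$ has $|\cK_j(\cdot)|$ of order $1/m$ times $|\cK_j(H^{(j-1)})|$ and is therefore far below the $\epsilon$-threshold at which regularity is applicable. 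Bad witnesses $Q^{(j-2)}_v$ can in principle vary with $v$ in a way that averages out over the vertex class, so one cannot package them into a single large $Q^{(j-1)}\subseteq H^{(j-1)}$ to which the hypothesis applies. Establishing link regularity therefore requires a genuine argument (typically a Cauchy--Schwarz/``doubling'' count of octahedra at each level), and the parameter loss is governed by that argument, not by a formula one can write down in advance.

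You flag this as ``the main obstacle,'' which is accurate, but that is precisely the content of the theorem: the regularity-inheritance claim is essentially equivalent in depth to the Counting Lemma itself, so stating it as a claim to be verified later leaves the whole difficulty unresolved. For the record, the route taken in~\cite{KRS02} (and in the parallel treatments of Gowers and of Nagle--R\"odl--Schacht) is not the link route: they argue directly on the complex via discrepancy/box-norm inequalities (or, in the R\"odl--Skokan framework, with an extra intersection parameter $r$ in the regularity definition) rather than by descending to vertex links. If you want to pursue the link approach you must (a) state precisely what ``$(\epsilon',\bd_v)$-regular'' means for the link complex and with respect to which underlying structures, (b) supply the Cauchy--Schwarz argument proving it, and (c) track how the loss $\epsilon\mapsto\epsilon'$ compounds through the double induction so that $\epsilon_0$ still depends only on $\gamma,d_0,k,\ell$; none of these is present in the current write-up.
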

\COMMENT{It is possible to delete the condition $1/m \ll \epsilon$ from all counting lemmas.}

Recall that equitable families of partitions were defined in Section~\ref{sec: partitions of hypergraphs and RAL}. 
Based on the counting lemma, 
it is easy to show that for an equitable family of partitions $\sP$ and an $\ba$-labelling $\mathbf{\Phi}$, 
the maps $\hat{P}^{(j-1)}(\cdot):\hat{A}(j,j-1,\ba)\to\hat{\sP}^{(j-1)}$ and $P^{(j)}(\cdot,\cdot):\hat{A}(j,j-1,\ba)\times [a_j]\to \sP^{(j)}$ defined in Section~\ref{sec: address space} are bijections.
We will frequently make use of this fact in subsequent sections,
often without referring to Lemma~\ref{lem: maps bijections} explicitly.

\begin{lemma}\label{lem: maps bijections}
Suppose that $k,t \in \mathbb{N}\setminus \{1\}$, $0\leq \lambda < 1/4$ and $\epsilon/3 \leq \epsilon_{\ref{def: regularity instance}}(t,k)$ and $\ba = (a_1,\dots, a_{k-1}) \in [t]^{k-1}$ and $|V|=n$ with $1/n \ll 1/t, 1/k$.\COMMENT{In other words,  $R=(\epsilon/3, \ba, d_{\ba,k})$ is a regularity instance for any density function $d_{\ba,k}$.}  If $\sP = \sP(k-1,\ba)$ is a $(1/a_1,  \epsilon,\ba,\lambda)$-equitable family of partitions on $V$, and $\sP$ with an $\ba$-labelling $\mathbf{\Phi}$ defines maps $\hat{P}^{(j-1)}(\cdot)$ and $P^{(j-1)}(\cdot,\cdot)$, then the following hold.
\begin{enumerate}[label={\rm(\roman*)}]
\item For each $j\in [k-1]$, $\hat{P}^{(j)}(\cdot):\hat{A}(j+1,j,\ba)\to\hat{\sP}^{(j)}$ is a bijection and if $j>1$, then $P^{(j)}(\cdot,\cdot):\hat{A}(j,j-1,\ba)\times [a_j]\to \sP^{(j)}$ is also a bijection. In particular, 
$\hat{A}(j,j-1,\ba) = \hat{A}(j,j-1,\ba)_{\neq \emptyset}$.
\item For each $j\in [k-1]\setminus \{1\}$ and $\hat{\bx} \in \hat{A}(j+1,j,\ba)$, 
$\hat{\cP}(\hat{\bx})$ is an $(\epsilon, (1/a_2,\dots, 1/a_{j}))$-regular $(j+1,j,\lambda)$-complex.
\end{enumerate}
\end{lemma}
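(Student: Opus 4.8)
Our plan is to establish (i) and (ii) simultaneously by induction on $j$, the essential point being that the \emph{structural} fact $\hat{A}(i,i-1,\ba)_{\emptyset}=\emptyset$ (every address is realized by a crossing set) and the \emph{regularity} fact (each $\hat{\cP}(\hat{\bx})$ is a regular complex) have to be proved hand in hand. For $j\in[k-1]$ let $\Psi_j$ be the statement: $\hat{A}(j+1,j,\ba)_{\emptyset}=\emptyset$, and, if $j\ge 2$, for every $\hat{\bx}\in\hat{A}(j+1,j,\ba)$ the complex $\hat{\cP}(\hat{\bx})$ from \eqref{eq: complex definition by address} is an $(\epsilon,(1/a_2,\dots,1/a_j))$-regular $(j+1,j,\lambda)$-complex. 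Granting $\Psi_1,\dots,\Psi_{k-1}$, part (ii) is immediate, and part (i) follows from Proposition~\ref{prop: hat relation}(iii) and (ix): the equality $\hat{A}(i,i-1,\ba)_{\emptyset}=\emptyset$ says precisely that $\cK_i(\hat{P}^{(i-1)}(\hat{\bx}))\neq\emptyset$ for every $\hat{\bx}\in\hat{A}(i,i-1,\ba)$, so Proposition~\ref{prop: hat relation}(ix) makes $\hat{P}^{(j)}(\cdot)$ and $P^{(j)}(\cdot,\cdot)$ bijections. The base case $\Psi_1$ is exactly Proposition~\ref{prop: hat relation}(x), and for $k=2$ there is nothing further to prove, so from now on $k\ge 3$.

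For the inductive step, fix $j\ge 2$ and assume $\hat{A}(i,i-1,\ba)_{\emptyset}=\emptyset$ for all $2\le i\le j$. Fix $\hat{\bx}\in\hat{A}(j+1,j,\ba)$. Unwinding the definition of an $(\epsilon,\bd)$-regular complex together with \eqref{eq: larger polyad def} and \eqref{eq: complex definition by address}, to show $\hat{\cP}(\hat{\bx})$ is $(\epsilon,(1/a_2,\dots,1/a_j))$-regular it suffices to prove: for each $i\in\{2,\dots,j\}$ and each $i$-subset $\Lambda\subseteq\bx^{(1)}_{*}$, the part $P^{(i)}(\hat{\bx}|_{\Lambda},\bx^{(i)}_{\Lambda})$ is $(\epsilon,1/a_i)$-regular with respect to $\hat{P}^{(i-1)}(\hat{\bx}|_{\Lambda})$, where $\hat{\bx}|_{\Lambda}\in\hat{A}(i,i-1,\ba)$ is the restriction of $\hat{\bx}$ to $\Lambda$ (the unique $\hat{\by}\le_{i,i-1}\hat{\bx}$ with $\by^{(1)}_{*}=\Lambda$). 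By the inductive hypothesis $\hat{\bx}|_{\Lambda}\in\hat{A}(i,i-1,\ba)_{\ne\emptyset}$; since by Proposition~\ref{prop: hat relation}(ii) $P^{(i)}(\cdot,\cdot)$ restricts to a bijection from $\hat{A}(i,i-1,\ba)_{\ne\emptyset}\times[a_i]$ onto the (nonempty) parts of $\sP^{(i)}$, the $i$-graph $P^{(i)}(\hat{\bx}|_{\Lambda},\bx^{(i)}_{\Lambda})$ is nonempty, so we may pick an $i$-set $I'$ in it; then $I'\in\cK_i(\sP^{(1)})$, $\cl(I')=\Lambda$, $\phi^{(i)}(I')=\bx^{(i)}_{\Lambda}$, and (by \eqref{eq: Pj operator define} and \eqref{eq: hat bx maps}) $\hat{\bx}(I')=\hat{\bx}|_{\Lambda}$. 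As $i\le k-1<k$ and $a_1\ge k$, extend $I'$ to a $k$-set $K\in\cK_k(\sP^{(1)})$. Equitability (Definition~\ref{def: equitable family of partitions}(iii)) gives that $\hat{\cP}(K)$ is $(\epsilon,(1/a_2,\dots,1/a_{k-1}))$-regular, so $\hat{P}^{(i)}(K)$ is $(\epsilon,1/a_i)$-regular with respect to $\hat{P}^{(i-1)}(K)$; restricting to the $i$ vertex classes $\cl(I')$ of $K$ yields regularity with respect to $\hat{P}^{(i-1)}(K)[\cl(I')]=\hat{P}^{(i-1)}(I')$, and since within $\cK_i(\hat{P}^{(i-1)}(I'))$ the cliques of $\hat{P}^{(i)}(K)$ are exactly those of $P^{(i)}(I')$ (every other part of $\sP^{(i)}$ making up $\hat{P}^{(i)}(K)$ lies over a different set of vertex classes), we conclude that $P^{(i)}(I')=P^{(i)}(\hat{\bx}|_{\Lambda},\bx^{(i)}_{\Lambda})$ is $(\epsilon,1/a_i)$-regular with respect to $\hat{P}^{(i-1)}(I')=\hat{P}^{(i-1)}(\hat{\bx}|_{\Lambda})$, as required. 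The vertex class size condition for a $(j+1,j,\lambda)$-complex is inherited from Definition~\ref{def: equitable family of partitions}(ii), and $\hat{\cP}(\hat{\bx})$ is a $(j+1,j)$-complex by \eqref{eq: complex definition by address}. This proves the regularity half of $\Psi_j$.

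To finish $\Psi_j$, apply the counting lemma (Lemma~\ref{lem: counting}) to the complex $\hat{\cP}(\hat{\bx})$ with $\ell=j+1$, uniformity $j$, density vector $(1/a_2,\dots,1/a_j)$ (all entries $\ge 1/t$), and $\gamma=1/2$: the hypothesis $\epsilon/3\le\epsilon_{\ref{def: regularity instance}}(\|\ba\|_\infty,k)$, the bound $\|\ba\|_\infty\le t$, and the (standard, without loss of generality) monotonicity of $\epsilon_{\ref{lem: counting}}$ ensure $\epsilon\le\epsilon_{\ref{lem: counting}}(1/2,1/t,j,j+1)$, while $1/n\ll 1/t,1/k$ makes each $|V_s|$ large enough. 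We obtain $|\cK_{j+1}(\hat{P}^{(j)}(\hat{\bx}))|\ge \tfrac{1}{2}\prod_{i=2}^{j}a_i^{-\binom{j+1}{i}}\prod_{s}|V_s|>0$, hence $\hat{\bx}\in\hat{A}(j+1,j,\ba)_{\ne\emptyset}$; as $\hat{\bx}$ was arbitrary, $\hat{A}(j+1,j,\ba)_{\emptyset}=\emptyset$, completing the induction and the proof.

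The main obstacle is precisely this circularity: proving $\hat{\cP}(\hat{\bx})$ regular for an arbitrary address $\hat{\bx}$ needs each lower-level restriction $\hat{\bx}|_{\Lambda}$ to correspond to an actual crossing set (so that the matching part of $\sP^{(i)}$ is nonempty and sits inside a genuine $k$-set, whose polyad complex is controlled by equitability), while realizing addresses is exactly what the counting lemma delivers --- and the counting lemma itself needs regularity. Inducting on $j$ breaks the loop; the delicate bookkeeping is the passage from ``$\hat{P}^{(i)}(K)$ regular with respect to $\hat{P}^{(i-1)}(K)$'' to ``$P^{(i)}(I')$ regular with respect to $\hat{P}^{(i-1)}(I')$'', which relies on identifying, inside $\cK_i(\hat{P}^{(i-1)}(I'))$, the cliques of the union $\hat{P}^{(i)}(K)$ with those of the single part $P^{(i)}(I')$.
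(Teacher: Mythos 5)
Your proof is correct, and the core regularity calculation (take a witness $j$-set in a nonempty part, extend it to a crossing $k$-set, invoke Definition~\ref{def: equitable family of partitions}(iii), and restrict back down) is exactly the one the paper uses. The overall organization, however, is genuinely different. You run a strong induction on $j$ that establishes simultaneously that all addresses in $\hat{A}(j+1,j,\ba)$ are realized and that the complexes $\hat{\cP}(\hat{\bx})$ are regular, on the grounds that the two facts feed into each other: proving $\hat{\cP}(\hat{\bx})$ regular seems to need the lower-level restrictions $\hat{\bx}|_{\Lambda}$ to be realized so the corresponding parts of $\sP^{(i)}$ are nonempty, while realization is itself what the counting lemma delivers.

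This circularity does not actually arise, which is why the paper's argument needs no induction. For an \emph{unrealized} address $\hat{\bx}\in\hat{A}(j,j-1,\ba)_{\emptyset}$ and any $b\in[a_j]$, the definition \eqref{eq: Pj operator define empty} sets $P^{(j)}(\hat{\bx},b)=\emptyset$, and Proposition~\ref{prop: hat relation}(iii) gives $\cK_j(\hat{P}^{(j-1)}(\hat{\bx}))=\emptyset$, so the assertion ``$P^{(j)}(\hat{\bx},b)$ is $(\epsilon,1/a_j)$-regular with respect to $\hat{P}^{(j-1)}(\hat{\bx})$'' is \emph{vacuously} true under the convention for $(\epsilon,d)$-regularity fixed in Section~\ref{sec: 2 hypergraph regularity}. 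The paper therefore proves, in a single pass with no inductive hypothesis, that $P^{(j)}(\hat{\bx},b)$ is $(\epsilon,1/a_j)$-regular with respect to $\hat{P}^{(j-1)}(\hat{\bx})$ for \emph{every} $\hat{\bx}\in\hat{A}(j,j-1,\ba)$ and $b\in[a_j]$ --- by equitability when $\hat{\bx}$ is realized, vacuously otherwise. Part (ii) then follows by the same identification of restricted levels of $\hat{\cP}(\hat{\bx})$ with these parts and polyads that you carry out, and only afterwards is Lemma~\ref{lem: counting} applied (once, to each $\hat{\cP}(\hat{\bx})$ with $\hat{\bx}\in\hat{A}(j+1,j,\ba)$ and $j\in[k-1]$) to get $\cK_{j+1}(\hat{P}^{(j)}(\hat{\bx}))\neq\emptyset$ and hence (i) via Proposition~\ref{prop: hat relation}(iii) and (ix). Both routes are valid; yours carefully breaks a loop that the vacuous-regularity convention already dissolves, so the paper's is the leaner of the two.
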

\COMMENT{
First, we show that 
\begin{equation}\label{eq: regular regular}
\begin{minipage}[c]{0.9\textwidth}\em
for each $j\in [k-1]\setminus \{1\}$, $\hat{\bx} \in \hat{A}(j,j-1,\ba)$ and $b\in [a_j]$, 
$P^{(j)}(\hat{\bx},b)$ is $(\epsilon, 1/a_{j})$-regular with respect to $\hat{P}^{(j-1)}(\hat{\bx})$.
\end{minipage}
\end{equation}
Indeed, if $\hat{\bx}\in \hat{A}(j,j-1,\ba)_{\neq \emptyset}$ and thus $\hat{P}^{(j-1)}(\hat{\bx})\in \hat{\sP}^{(j-1)}$, then \eqref{eq: regular regular} holds by the definition of a $(1/a_1,  \epsilon,\ba,\lambda)$-equitable family of partitions.
Otherwise, Proposition~\ref{prop: hat relation}(iii) implies that $\cK_j( \hat{P}^{(j-1)}(\hat{\bx}) )=\emptyset$, and thus $P^{(j)}(\hat{\bx},b)$ is $(\epsilon, 1/a_{j})$-regular with respect to $\hat{P}^{(j-1)}(\hat{\bx}) $ by the definition of $(\epsilon, 1/a_{j})$-regularity.\COMMENT{Here, we changed the definition of $(\epsilon, d)$-regularity so that this holds.}\newline
We first show (ii).
By \eqref{eq: complex definition by address} it suffices to show that for each $\hat{\bx}\in \hat{A}(j+1,j,\ba)$, 
each $i\in [j]\setminus\{1\}$ and each $\hat{\by} \leq_{j+1,i}\hat{\bx}$, 
the $(j+1,i,*)$-graph $\hat{P}^{(i)}(\hat{\by})$ is $(\epsilon,1/a_i)$-regular with respect to 
the $(j+1,i-1,*)$-graph $\hat{P}^{(i-1)}(\hat{\bz})$, 
where $\hat{\bz} \in \hat{A}(j+1,i-1,\ba)$ is the unique vector satisfying $\hat{\bz} \leq_{j+1,i-1} \hat{\bx}$ and $\bz^{(1)}_* = \by^{(1)}_*$.\newline
For each $\Lambda \in \binom{\by^{(1)}_*}{i}$, let $\hat{\bw} \in \hat{A}(i,i-1,\ba)$  
be the unique vector such that  $\bw^{(1)}_* = \Lambda$ and $\hat{\bw} \leq_{i,i-1} \hat{\bx}$.
Then $\hat{\bw}\leq_{i,i-1}\hat{\by}$ and $\hat{\bw}\leq_{i,i-1}\hat{\bz}$.
So
$$
\hat{P}^{(i)}(\hat{\by})[ \Lambda ] 
\stackrel{\eqref{eq: larger polyad def}}{=}
\left( \bigcup_{\hat{\bu} \leq_{i,i-1} \hat{\by}} P^{(i)}(\hat{\bu},\hat{\by}^{(i)}_{\bu^{(1)}_*} ) \right)[\Lambda] 
= P^{(i)}(\hat{\bw}, \by^{(i)}_{\Lambda})$$
and
$$\hat{P}^{(i-1)}(\hat{\bz})[ \Lambda ] \stackrel{\eqref{eq: larger polyad def}}{=} \hat{P}^{(i-1)}(\hat{\bw}).$$
Thus \eqref{eq: regular regular} implies that $\hat{P}^{(i)}(\hat{\by})[ \Lambda ] $ is  $(\epsilon,1/a_i)$-regular with respect to $\hat{P}^{(i-1)}(\hat{\bz})[ \Lambda ]$.
Hence $\hat{P}^{(i)}(\hat{\by})$ is $(\epsilon,1/a_i)$-regular with respect to  $\hat{P}^{(i-1)}(\hat{\bz})$, so (ii) holds.\newline
Now we deduce (i).
Together with (ii),
Lemma~\ref{lem: counting} implies that\COMMENT{need $1/n \ll 1/t, 1/k$ in order to apply Lemma~\ref{lem: counting}. }
for any $j\in [k-1]$ and $\hat{\bx} \in \hat{A}(j+1,j,\ba)$,\COMMENT{Here, note that $j=1$ is OK, as we defined $\cK_2( V_i\cup V_j)$ as a complete bipartite graph between $V_i$ and $V_j$, and it has size $|V_i||V_j| \geq(1- 1/2) (1-\lambda)^2 a_1^{-2} n^2$.}
$$|\cK_{j+1}(\hat{P}^{(j)}(\hat{\bx}))| \geq (1- 1/2)(1-\lambda)^{j+1} \prod_{i=1}^{j}a_i^{-\binom{j+1}{i}}  n^{j+1} > 0.$$
Thus $\hat{A}(j+1,j,\ba) = \hat{A}(j+1,j,\ba)_{\neq \emptyset}$ by Proposition~\ref{prop: hat relation}(iii).
Moreover, by Proposition~\ref{prop: hat relation}(ix), for each $j\in [k-1]$, $\hat{P}^{(j)}(\cdot):\hat{A}(j+1,j,\ba)\to\hat{\sP}^{(j)}$ is a bijection, and for each $j\in [k-1]\setminus\{1\}$,
and $P^{(j)}(\cdot,\cdot):\hat{A}(j,j-1,\ba)\times [a_j]\to \sP^{(j)}$ is a bijection. 
}

Let us introduce the necessary notation which will be convenient to count induced copies of a given hypergraph $F$.

Suppose $k,\ell\in \mathbb{N}\sm\{1\}$ such that $\ell\geq k$
and suppose $\ba \in \N^{k-1}$.
Suppose that $d_{\ba,k}:\hat{A}(k,k-1,\ba)\to[0,1]$ is a density function.
Suppose $F$ is a $k$-graph on $\ell$ vertices.
Suppose $\hat{\bx}\in \hat{A}(\ell,k-1,\ba)$
and $\sigma:V(F)\to\bx^{(1)}_*$ is a bijection.
Let $A(F)$ be the size of the automorphism group of $F$.
We now define three functions in terms of the parameters above that will estimate the number of induced copies of $F$ in certain parts of an $\epsilon$-regular $k$-graph.
Let
\begin{align*}
	IC(F,d_{\ba,k},\hat{\bx},\sigma) &:=  \prod_{\substack{\hat{\by}\leq_{k,k-1} \hat{\bx}, \\ \by^{(1)}_*\in \sigma(F)}} d_{\ba,k}(\hat{\by}) \prod_{\substack{\hat{\by}\leq_{k,k-1} \hat{\bx}, \\ \by^{(1)}_*\notin \sigma(F)}}(1-d_{\ba,k}(\hat{\by}))  \prod_{j=2}^{k-1}a_j^{-\binom{\ell}{j}},\\
IC(F,d_{\ba,k},\hat{\bx}) &:=  \frac{1}{A(F)} \sum_{\sigma} IC(F,d_{\ba,k},\hat{\bx},\sigma),\\
IC(F,d_{\ba,k}) &:= \binom{a_1}{\ell}^{-1}\sum_{\hat{\bx} \in \hat{A}(\ell,k-1,\ba)} IC(F,d_{\ba,k},\hat{\bx}).
\end{align*}

We will now show that for a $k$-graph $H$ satisfying a suitable regularity instance $R=(\epsilon,\ba,d_{\ba,k})$,
the value $IC(F,d_{\ba,k})$ is a very accurate estimate for $\mathbf{Pr}(F, H)$ (recall the latter was introduced in Section~\ref{sec: basic notation}). The proof of this lemma is given in \cite{JKKO2}.

\begin{lemma}[Induced counting lemma for general hypergraphs]\label{lem: counting hypergraph}
Suppose $0<1/n\ll \epsilon \ll 1/t, 1/a_1 \ll \gamma,  1/k, 1/\ell$ with $2\leq k \leq \ell$.
Suppose $F$ is an $\ell$-vertex $k$-graph and $\ba\in [t]^{k-1}$.
Suppose $H$ is an $n$-vertex $k$-graph satisfying a regularity instance $R= (\epsilon,\ba,d_{\ba,k})$.
Then 
$$\mathbf{Pr}(F, H)= IC(F,d_{\ba,k}) \pm \gamma.$$
\end{lemma}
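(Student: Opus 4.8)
The plan is to decompose the count $\mathbf{Pr}(F,H)\binom{n}{\ell}$ of induced copies of $F$ according to which polyad structure the $\ell$-set of vertices sits in, then apply Lemma~\ref{lem: counting complex} on each piece, and finally reassemble. Let $\sP=\sP(k-1,\ba)$ be an $(\epsilon,\ba,d_{\ba,k})$-equitable partition of $H$, which exists since $H$ satisfies $R$, and fix an $\ba$-labelling $\mathbf{\Phi}$; since $\epsilon/3\leq\epsilon_{\ref{def: regularity instance}}(t,k)$, Lemma~\ref{lem: maps bijections} applies, so all the relevant polyad operators are bijections and $\hat{A}(j,j-1,\ba)=\hat{A}(j,j-1,\ba)_{\neq\emptyset}$ for all $j$. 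By Proposition~\ref{prop:unique}, every crossing $\ell$-set $L\in\cK_\ell(\sP^{(1)})$ lies in $\cK_\ell(\hat P^{(k-1)}(\hat\bx))$ for a unique $\hat\bx\in\hat A(\ell,k-1,\ba)$, and moreover (by \eqref{eq: hat cP bx is hat cP J is for some J}) the complex $\hat\cP(\hat\bx)$ is $(\epsilon,(1/a_2,\dots,1/a_{k-1}))$-regular at levels $1,\dots,k-1$, while at level $k$ each $H[\Lambda]$ is $(\epsilon,d_{\ba,k}(\hat\by_\Lambda))$-regular with respect to the corresponding $(k-1)$-polyad, where $\hat\by_\Lambda\leq_{k,k-1}\hat\bx$.

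First I would handle the non-crossing sets: by \eqref{eq: eta a1} (with $\eta=1/a_1$), the number of $\ell$-sets that are not crossing is at most (a bounded multiple of) $\binom{\ell}{2}a_1^{-1}\binom{n}{\ell}\leq \gamma\binom{n}{\ell}/3$, using $1/a_1\ll\gamma$; so these contribute a negligible error to $\mathbf{Pr}(F,H)$. Next, for each fixed $\hat\bx\in\hat A(\ell,k-1,\ba)$, the induced copies of $F$ supported exactly on the polyad $\hat P^{(k-1)}(\hat\bx)$ are counted, up to a factor accounting for automorphisms and choices of embedding into the $\ell$ vertex classes indexed by $\bx^{(1)}_*$, by $\sum_\sigma IC_\sigma(F,\hat\cP(\hat\bx))$ where $\sigma$ ranges over bijections $V(F)\to\bx^{(1)}_*$. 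Here I apply Lemma~\ref{lem: counting complex} with $m\approx n/a_1$, with $d_j=1/a_j$ (which lies in $[d_0,1-d_0]$ once we pick $d_0$ small relative to $1/t$), and with $p_\Lambda=d_{\ba,k}(\hat\by_\Lambda)$; this gives
\[
IC_\sigma(F,\hat\cP(\hat\bx))=\Bigl(\prod_{e\in F}p_{\sigma(e)}\prod_{e\notin F,|e|=k}(1-p_{\sigma(e)})\pm\gamma'\Bigr)\prod_{j=2}^{k-1}a_j^{-\binom{\ell}{j}}m^\ell,
\]
which, after translating $p_{\sigma(e)}$ back into $d_{\ba,k}(\hat\by)$ via the indexing conventions of Section~\ref{sec: address space}, is exactly $IC(F,d_{\ba,k},\hat\bx,\sigma)$ times $m^\ell$, up to an additive error $\gamma'\prod_{j\geq 2}a_j^{-\binom{\ell}{j}}m^\ell$.

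Then I sum over all $\sigma$ and over all $\hat\bx\in\hat A(\ell,k-1,\ba)$, dividing by $A(F)$ to correct for automorphisms, and compare with $m^\ell\binom{a_1}{\ell}\cdot IC(F,d_{\ba,k})$ (using $|\hat A(\ell,k-1,\ba)|=\binom{a_1}{\ell}\prod_{j\geq 2}a_j^{\binom{\ell}{j}}$); the main term matches, and the total error is bounded by $|\hat A(\ell,k-1,\ba)|\cdot\ell!\cdot\gamma'\prod_{j\geq 2}a_j^{-\binom{\ell}{j}}m^\ell/A(F)\leq \ell!\binom{a_1}{\ell}\gamma' m^\ell$. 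Finally, $m^\ell\binom{a_1}{\ell}=(1\pm O(1/a_1))\binom{n}{\ell}$ since $|V_i|=n/a_1\pm 1$, so dividing through by $\binom{n}{\ell}$ and choosing $\gamma'$ (and hence $\epsilon$) small enough relative to $\gamma,t,\ell$ yields $\mathbf{Pr}(F,H)=IC(F,d_{\ba,k})\pm\gamma$, as desired.

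The main obstacle I anticipate is purely bookkeeping rather than conceptual: carefully matching the combinatorial factors (the automorphism count $A(F)$, the number $\ell!$ of bijections $\sigma$, and the overcounting when two different $\sigma$'s give the same labelled copy) and verifying that the identification $p_{\sigma(e)}=d_{\ba,k}(\hat\by_{\sigma(e)})$ is consistent with the lexicographic ordering conventions used in the definition of the address space and in $IC(F,d_{\ba,k},\hat\bx,\sigma)$. One must also check that the same $\ell$-element vertex \emph{set} is not counted once for each ordering — this is handled by restricting $\sigma$ to map into the fixed set $\bx^{(1)}_*$ with its fixed increasing order on the first coordinate, so that each crossing $\ell$-set corresponds to exactly one $\hat\bx$ and the $\sigma$'s range only over relabellings of $F$, not of the host vertices. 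Everything else is a routine application of Lemma~\ref{lem: counting complex} together with the error estimates from \eqref{eq: eta a1} and the equipartition condition.
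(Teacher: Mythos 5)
Your proposal is correct and follows essentially the same route as the paper's proof: decompose the count into crossing and non-crossing $\ell$-sets, bound the latter via \eqref{eq: eta a1} using $1/a_1\ll\gamma$, apply Lemma~\ref{lem: counting complex} to the complex over each $\hat{\bx}\in\hat A(\ell,k-1,\ba)$ and each bijection $\sigma:V(F)\to\bx^{(1)}_*$ with $p_{\sigma(e)}=d_{\ba,k}(\hat\by)$, divide by $A(F)$, and reassemble using $|\hat A(\ell,k-1,\ba)|=\binom{a_1}{\ell}\prod_{j\geq2}a_j^{\binom{\ell}{j}}$ together with $\binom{a_1}{\ell}m^\ell=(1\pm O(1/a_1))\binom{n}{\ell}$. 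The bookkeeping concerns you flag at the end are precisely what the paper carries out, with the same resolution (fixing $\bx^{(1)}_*$ in increasing order so that each crossing set corresponds to a unique $\hat\bx$, and choosing $\gamma'=\gamma/(3\ell!)$ in the application of Lemma~\ref{lem: counting complex}).
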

\COMMENT{
\begin{proof}
Since $H$ satisfies the regularity instance $R$, 
there exists a  $(\epsilon,\ba,d_{\ba,k})$-equitable partition $\sP=\sP(k-1,\ba)$ of $H$ (as defined in Section~\ref{sec: subsub density function}).
Let $\sP^{(1)}= \{V_1, \dots , V_{a_1}\}$ and $m:= \lfloor n/a_1 \rfloor.$
We say an induced copy $F'$ of $F$ in $H$ is \emph{crossing-induced} 
if $V(F')\in \cK_{\ell}(\sP^{(1)})$ and \emph{non-crossing-induced} otherwise.
Then by \eqref{eq: eta a1}, 
\begin{equation}\label{eq: non-crossing-induced}
\begin{minipage}[c]{0.8\textwidth}\em
there are at most $\frac{\gamma}{3}\binom{n}{\ell}$ non-crossing-induced copies of $F$.
\end{minipage}
\end{equation}
The strategy of the proof is as follows.
We only consider crossing-induced copies of $F$, as the number of non-crossing-induced copies is negligible.
For each $\hat{\bx} \in \hat{A}(\ell,k-1,\ba)$, 
we fix some bijection $\sigma$ between $V(F)$ and $\bx_*^{(1)}$.
By Lemma~\ref{lem: counting complex}, we can accurately estimate the number of $\sigma$-induced copies of $F$.
By summing over all choices for $\hat{\bx}$ and $\sigma$ and taking in account which copies we counted multiple times,
we can estimate the number of crossing-induced copies of $F$ in $H$.

For each $\hat{\bx} \in \hat{A}(\ell,k-1,\ba)$, 
we consider the $(k-1)$-polyad $\hat{P}^{(k-1)}(\hat{\bx}) = \bigcup_{\hat{\by}\leq_{k,k-1} \hat{\bx}} \hat{P}^{(k-1)}(\hat{\by})$ as defined in \eqref{eq: larger polyad def}.
By Proposition~\ref{prop:unique}, for every crossing-induced copy $F'$ of $F$ in $H$, 
there is a unique $\hat{\bx}\in \hat{A}(\ell,k-1,\ba)$ such that $F'$ is contained in some element of $\cK_{\ell}(\hat{P}^{(k-1)}(\hat{\bx}))$ .

Consider any $\hat{\bx} \in \hat{A}(\ell,k-1,\ba)$ and a 
bijection $\sigma:V(F)\to\bx^{(1)}_*$. Let
$$\cH'(\hat{\bx}):= \bigg\{ \bigcup_{\hat{\bz}\leq_{k,i} \hat{\bx}} \hat{P}^{(i)}(\hat{\bz})\bigg\}_{i\in [k-1]}
\enspace \text{ and } \enspace \cH(\hat{\bx}):= \cH'(\hat{\bx}) \cup  \{H\cap \cK_{k}(\hat{P}^{(k-1)}(\hat{\bx}))\}.$$
Hence $\cH(\hat{\bx})$ is an $(\ell,k)$-complex and $\cH'(\hat{\bx})$ is an $(\ell,k-1)$-complex. 
Note that $\cH'(\hat{\bx}) = \bigcup_{\hat{\by}\leq_{k,k-1}\hat{\bx}}\hat{\cP}(\hat{\by})$, where $\hat{\cP}(\hat{\by})$ is as defined in \eqref{eq: complex definition by address}.

Lemma~\ref{lem: maps bijections} implies that each $\hat{\cP}(\hat{\by}) = \cH'(\hat{\bx})[\by^{(1)}_*]$ is $(\epsilon,(1/a_2,\dots, 1/a_{k-1}))$-regular (if $k\geq 3$).
Furthermore, since $\sP$ is an $(\epsilon,\ba,d_{\ba,k})$-equitable partition of $H$, for each $e\in \binom{V(F)}{k}$, 
the $k$-graph $H[\sigma(e)]$ is $(\epsilon,d_{\ba,k}(\hat{\by}))$-regular with respect to 
$\hat{P}^{(k-1)}(\hat{\by})= \hat{P}^{(k-1)}(\hat{\bx})[\bigcup_{ i\in \by^{(1)}_* } V_i]$, where $\hat{\by}$ is the unique vector satisfying $\hat{\by}\leq_{k,k-1} \hat{\bx}$ and $\by^{(1)}_* = \sigma(e)$.

Thus, by applying Lemma~\ref{lem: counting complex} with $\cH(\hat{\bx}), a_i^{-1},\gamma/(3\ell!), d_{\ba,k}(\hat{\by})$ playing the roles of $\cH, d_i,\gamma, p_{\by^{(1)}_*}$, we conclude that\COMMENT{$IC_{\sigma}(F,\cH(\hat{\bx}))$ is the number of crossing-induced copies $F'$ of $F$ in $H$ which are also $\sigma$-induced and for which $F'$ lies in some element of $\cK_{\ell}(\hat{P}^{(k-1)}(\hat{\bx}))$.} 
(with $IC_\sigma(F,\cH(\hat{\bx}))$ defined as in Lemma~\ref{lem: counting complex})
\begin{align*}
IC_{\sigma}(F,\cH(\hat{\bx})) &= \left(\prod_{\substack{\hat{\by}\leq_{k,k-1} \hat{\bx}, \\ \by^{(1)}_*\in \sigma( F)}} d_{\ba,k}(\hat{\by}) 
\prod_{\substack{\hat{\by}\leq_{k,k-1}\hat{\bx}, \\\by^{(1)}_*\notin \sigma(F)}}(1-d_{\ba,k}(\hat{\by}))  \pm \frac{\gamma}{3\ell!} \right)
\prod_{j=2}^{k-1}a_j^{-\binom{\ell}{j}} \cdot m^{\ell} \\ 
&= \left(IC(F,d_{\ba,k},\hat{\bx},\sigma)\pm \frac{\gamma}{3\ell!} \prod_{j=2}^{k-1}a_j^{-\binom{\ell}{j}}\right) m^{\ell} .
\end{align*}
Next we want to estimate the number of all crossing-induced copies of $F$ in $H$ which lie in some element of $\cK_{\ell}(\hat{P}^{(k-1)}(\hat{\bx}))$.
Observe that we count every copy of $F$ exactly $A(F)$ times if we sum over all possible bijections $\sigma$.
Therefore, the number of crossing-induced copies of $F$ in $H$ which lie in some element of $\cK_{\ell}(\hat{P}^{(k-1)}(\hat{\bx}))$ is
\begin{align*}
	\frac{1}{A(F)} \sum_{\sigma} IC_{\sigma}(F,\cH(\hat{\bx})) &=
	\frac{1}{A(F)} \sum_{\sigma} \left(IC(F,d_{\ba,k},\hat{\bx},\sigma)\pm \frac{\gamma}{3\ell!} \prod_{j=2}^{k-1}a_j^{-\binom{\ell}{j}}\right) m^{\ell} \\
	&= \left(IC(F,d_{\ba,k},\hat{\bx}) \pm \frac{\gamma}{3} \prod_{j=2}^{k-1}a_j^{-\binom{\ell}{j}}\right) m^{\ell}. 
\end{align*}
Note that $|\hat{A}(\ell,k-1,\ba)| = \binom{a_1}{\ell}\prod_{j=2}^{k-1}a_j^{\binom{\ell}{j}}$
and $\binom{a_1}{\ell} m^{\ell} = (1\pm \gamma/10)\binom{n}{\ell}$, because $1/a_1 \ll \gamma, 1/\ell$. 
Hence the number of crossing-induced copies of $F$ in $H$ is
\begin{align*}
&\sum_{\hat{\bx}\in \hat{A}(\ell,k-1,\ba)} \left(IC(F,d_{\ba,k},\hat{\bx}) \pm \frac{\gamma}{3} \prod_{j=2}^{k-1}a_j^{-\binom{\ell}{j}}\right) m^{\ell} \\
&\qquad\qquad=\left(\sum_{\hat{\bx}\in \hat{A}(\ell,k-1,\ba)} IC(F,d_{\ba,k},\hat{\bx})\right) m^{\ell} \pm \frac{\gamma}{3} \prod_{j=2}^{k-1}a_j^{-\binom{\ell}{j}}|\hat{A}(\ell,k-1,\ba)|m^{\ell}\\
&\qquad\qquad=  (IC(F,d_{\ba,k})\pm \gamma/2) \binom{n}{\ell}.
\end{align*}
This together with \eqref{eq: non-crossing-induced} implies the desired statement.
\end{proof}
}

In the previous lemma we counted the number of induced copies of a single $k$-graph $F$ in $H$.
It is not difficult to extend this approach to a finite family of $k$-graphs.
For a finite family $\cF$ of $k$-graphs, 
we define 
\begin{align}\label{eq: def IC cF d a k}
IC(\cF, d_{\ba,k}):= \sum_{F\in \cF} IC(F,d_{\ba,k}).
\end{align}

\begin{corollary}\label{cor: counting collection}
Suppose $0<1/n\ll \epsilon \ll 1/t, 1/a_1 \ll \gamma,  1/k, 1/\ell$ with $2\leq k \leq \ell$.
Let $\cF$ be a collection of $k$-graphs on $\ell$ vertices.
Suppose $H$ is an $n$-vertex $k$-graph satisfying a regularity instance $R= (\epsilon,\ba,d_{\ba,k})$ where $\ba\in [ t]^{k-1}$. Then 
$$\mathbf{Pr}(\cF, H)=IC(\cF, d_{\ba,k}) \pm \gamma.$$
\end{corollary}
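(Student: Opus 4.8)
The plan is to reduce Corollary~\ref{cor: counting collection} to the single-hypergraph case, Lemma~\ref{lem: counting hypergraph}, by linearity. First I would record the trivial but crucial bound $|\cF|\le 2^{\binom{\ell}{k}}=:M$, since there are only $2^{\binom{\ell}{k}}$ distinct $k$-graphs on a fixed $\ell$-element vertex set; in particular $M$ depends only on $k$ and $\ell$. We may assume the members of $\cF$ are pairwise non-isomorphic, so that every $\ell$-subset $S\subseteq V(H)$ with $H[S]$ isomorphic to a member of $\cF$ is accounted for by exactly one $F\in\cF$; this gives $\mathbf{Pr}(\cF,H)=\sum_{F\in\cF}\mathbf{Pr}(F,H)$, while $IC(\cF,d_{\ba,k})=\sum_{F\in\cF}IC(F,d_{\ba,k})$ holds by definition~\eqref{eq: def IC cF d a k}.

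The one point requiring (minor) care is the bookkeeping of constants. I would apply Lemma~\ref{lem: counting hypergraph} to each $F\in\cF$ separately, but with error parameter $\gamma/M$ in place of $\gamma$. This is legitimate because $M=M(k,\ell)$: the hierarchy $1/n\ll\epsilon\ll 1/t,1/a_1\ll\gamma,1/k,1/\ell$ assumed in the corollary can be read so that the implicit functions bounding $1/t$ and $1/a_1$ from above (in terms of $\gamma$, $1/k$, $1/\ell$) are small enough to also satisfy the requirements of Lemma~\ref{lem: counting hypergraph} with $\gamma/M$ in the role of $\gamma$, since $\gamma/M$ is itself a function of $\gamma$, $k$ and $\ell$. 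Thus for every $F\in\cF$ we obtain $\mathbf{Pr}(F,H)=IC(F,d_{\ba,k})\pm\gamma/M$.

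Finally I would sum these $|\cF|\le M$ estimates and use the two displayed identities above to get
\[
\mathbf{Pr}(\cF,H)=\sum_{F\in\cF}\mathbf{Pr}(F,H)=\sum_{F\in\cF}\bigl(IC(F,d_{\ba,k})\pm\gamma/M\bigr)=IC(\cF,d_{\ba,k})\pm\gamma,
\]
which is the claim. There is no genuine obstacle here beyond the routine constant-absorption in the previous paragraph; once one is comfortable rescaling the error parameter in Lemma~\ref{lem: counting hypergraph} by the $(k,\ell)$-dependent factor $M$, the rest is immediate from the definitions of $\mathbf{Pr}(\cF,H)$ and $IC(\cF,d_{\ba,k})$.
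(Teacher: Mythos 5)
Your proof is correct and matches the paper's argument exactly: apply Lemma~\ref{lem: counting hypergraph} to each $F\in\cF$ with error parameter $\gamma/2^{\binom{\ell}{k}}$ and sum, using $|\cF|\leq 2^{\binom{\ell}{k}}$. The paper states this in two lines; you have merely spelled out the constant-absorption in the hierarchy and the (implicit) assumption that $\cF$ consists of pairwise non-isomorphic $k$-graphs.
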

\begin{proof}
For each $F\in \cF$, we apply Lemma~\ref{lem: counting hypergraph} with $\gamma/2^{\binom{\ell}{k}}$ playing the role of $\gamma$.
As $|\cF|\leq 2^{\binom{\ell}{k}}$, this completes the proof.
\end{proof}

\subsection{Close hypergraphs with better regularity}\label{sec:3.3}

In this subsection we present several results that show in their simplest form that for an $(\epsilon+\delta)$-regular $k$-graph $H$,
there is a $k$-graph $G$ on the same vertex set such that $|H\triangle G|$ is small and $G$ is $\epsilon$-regular.
The key point is that we seek an additive improvement in the regularity parameter.
Our approach in the following sections relies heavily on this (see the proof of Lemmas~\ref{lem:7.1} and~\ref{lem:7.2}).

\begin{lemma}\label{lem: improve regularity}
Suppose $0< 1/m\ll \delta \ll \nu \ll \epsilon, 1/k, 1/s, d_0 \leq 1/2$. 
Suppose that $d_i\geq d_0$ for all $i\in [s]$ and $\sum_{i\in [s]}d_i=1$.
Suppose that
\begin{itemize}
	\item $H^{(k-1)}$ is an $(m,k,k-1)$-graph such that $|\cK_{k}(H^{(k-1)})|\geq  \epsilon m^{k}$,
	\item $H^{(k)}_1,\dots, H^{(k)}_s$ are $(m,k,k)$-graphs that form a partition of $\cK_{k}(H^{(k-1)})$, and that
	\item $H^{(k)}_i$ is $(\epsilon+\delta,d_i)$-regular with respect to $H^{(k-1)}$ for each $i\in [s]$. 
\end{itemize}
Then there exist $(m,k,k)$-graphs $G^{(k)}_1,\dots, G^{(k)}_s$ such that 
\begin{itemize}
\item[{\rm (G1)$_{\ref{lem: improve regularity}}$}] $G^{(k)}_1,\dots, G^{(k)}_s$ form a partition of $\cK_{k}(H^{(k-1)})$,
\item[{\rm (G2)$_{\ref{lem: improve regularity}}$}] $G^{(k)}_i$ is $(\epsilon,d_i)$-regular with respect to $H^{(k-1)}$ for each $i\in [s]$, and
\item[{\rm (G3)$_{\ref{lem: improve regularity}}$}] $|G^{(k)}_i\triangle H^{(k)}_i| \leq \nu m^{k}$ for each $i\in [s]$.
\end{itemize}
\end{lemma}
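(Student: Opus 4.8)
The plan is to obtain $G^{(k)}_1,\dots,G^{(k)}_s$ from $H^{(k)}_1,\dots,H^{(k)}_s$ by a small \emph{random recolouring} of the edge set $\cK_k(H^{(k-1)})$: the role of the randomness is to ``average out'' the deviations of the $H^{(k)}_i$, pushing the regularity parameter from $\epsilon+\delta$ down to $\epsilon$, while only disturbing an $O(\mu_0)$-fraction of the edges (for a parameter $\mu_0$ with $\delta\ll\mu_0\ll\nu$ to be chosen after $\nu,\epsilon$ but before $m$ is made large). Write $N:=|\cK_k(H^{(k-1)})|$ and regard the given partition as a colouring $\phi\colon\cK_k(H^{(k-1)})\to[s]$ with $\phi(e)=i$ iff $e\in H^{(k)}_i$. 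I would define a random colouring $\psi$ by: independently for each $e\in\cK_k(H^{(k-1)})$, set $\psi(e):=\phi(e)$ with probability $1-\mu_0$, and with probability $\mu_0$ resample $\psi(e)$, choosing $j\in[s]$ with probability $d_j$ (a genuine distribution since $\sum_j d_j=1$). Put $G^{(k)}_i:=\psi^{-1}(i)$. Then $G^{(k)}_1,\dots,G^{(k)}_s$ partition $\cK_k(H^{(k-1)})$, so (G1)$_{\ref{lem: improve regularity}}$ holds automatically, and the task is to verify (G2)$_{\ref{lem: improve regularity}}$ and (G3)$_{\ref{lem: improve regularity}}$ with positive probability.

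Before the probabilistic estimates I would establish a deterministic strengthening of the hypothesis: although $H^{(k)}_i$ is only assumed $(\epsilon+\delta,d_i)$-regular with respect to $H^{(k-1)}$, it is in fact almost $(\epsilon,d_i)$-regular at the \emph{smaller} threshold $\epsilon$. Precisely, I would show there is $\delta'=O(\delta/\epsilon)$ such that for every $i\in[s]$ and every $Q^{(k-1)}\subseteq H^{(k-1)}$ with $|\cK_k(Q^{(k-1)})|\ge\epsilon N$ one has $|H^{(k)}_i\cap\cK_k(Q^{(k-1)})|=(d_i\pm(\epsilon+\delta'))|\cK_k(Q^{(k-1)})|$. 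For $Q^{(k-1)}$ with $|\cK_k(Q^{(k-1)})|\ge(\epsilon+\delta)N$ this is just the hypothesis. Otherwise I would enlarge $Q^{(k-1)}$, adding the edges of $H^{(k-1)}\setminus Q^{(k-1)}$ one at a time, until $|\cK_k(\cdot)|$ first lands in $[(\epsilon+\delta)N,(\epsilon+\delta)N+m+1]$; this is possible since $|\cK_k(\cdot)|$ is monotone under the addition of edges and increases by at most $m+1$ per step (a transversal $(k-1)$-set is contained in at most $m+1$ transversal $k$-sets), and since $|\cK_k(H^{(k-1)})|=N\ge(\epsilon+\delta)N$. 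Calling the resulting $(k-1)$-graph $Q'$, one has $\cK_k(Q^{(k-1)})\subseteq\cK_k(Q')$ with $|\cK_k(Q')\setminus\cK_k(Q^{(k-1)})|\le\delta N+m+1$; applying $(\epsilon+\delta,d_i)$-regularity to $Q'$ and using $N\ge\epsilon m^k$ together with $1/m\ll\delta\ll\epsilon$ then gives the claim with $\delta'$ of order $\delta/\epsilon$ after a short computation.

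Next I would condition on the high-probability event (supplied by Lemma~\ref{lem: chernoff}, applied to the relevant independent $\{0,1\}$-valued indicators, together with a union bound over the at most $s\cdot 2^{|H^{(k-1)}|}$ pairs $(i,Q^{(k-1)})$ with $|\cK_k(Q^{(k-1)})|\ge\epsilon N$; each term fails with probability at most $2\exp(-2\eta^2\epsilon N)\le 2\exp(-2\eta^2\epsilon^2 m^k)$, and since $|H^{(k-1)}|=O(m^{k-1})$ this sum is $o(1)$ as $m\to\infty$, provided $1/m\ll\delta$) that the following hold for all $i\in[s]$ and all $Q^{(k-1)}\subseteq H^{(k-1)}$ with $|\cK_k(Q^{(k-1)})|\ge\epsilon N$:
\begin{align*}
|G^{(k)}_i\cap\cK_k(Q^{(k-1)})| = (1-\mu_0)\,|H^{(k)}_i\cap\cK_k(Q^{(k-1)})| + \mu_0 d_i\,|\cK_k(Q^{(k-1)})| \pm \eta\,|\cK_k(Q^{(k-1)})|,
\end{align*}
and moreover $|G^{(k)}_i\triangle H^{(k)}_i|\le 3\mu_0 N$. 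Here I would set $\eta:=\delta'/2$ and $\mu_0:=2\delta'/\epsilon$, so that $\delta\ll\mu_0\ll\nu$ and $(1-\mu_0)(\epsilon+\delta')+\eta\le\epsilon$. Since $\mu_0\ll\nu$ and $N\le(1+o(1))m^k$, the bound on the symmetric difference gives (G3)$_{\ref{lem: improve regularity}}$. For (G2)$_{\ref{lem: improve regularity}}$, write $|H^{(k)}_i\cap\cK_k(Q^{(k-1)})|=(d_i+\theta)|\cK_k(Q^{(k-1)})|$ with $|\theta|\le\epsilon+\delta'$ (by the deterministic step); substituting into the displayed identity gives $|G^{(k)}_i\cap\cK_k(Q^{(k-1)})|=(d_i+(1-\mu_0)\theta\pm\eta)|\cK_k(Q^{(k-1)})|$, and $(1-\mu_0)(\epsilon+\delta')+\eta\le\epsilon$ then shows $G^{(k)}_i$ is $(\epsilon,d_i)$-regular with respect to $H^{(k-1)}$, as required.

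The step I expect to be the crux is the deterministic strengthening. A purely random perturbation cannot on its own repair a subset $Q^{(k-1)}$ of relative size strictly between $\epsilon$ and $\epsilon+\delta$, on which $H^{(k)}_i$ might a priori be far from density $d_i$; the point is that this cannot happen because $(\epsilon+\delta)$-regularity already upgrades to near-$(\epsilon)$-regularity via the monotone-extension argument above, and this upgrade only costs $O(\delta/\epsilon)$ because $\delta$ is much smaller than $\epsilon$ (hence than $\nu$). Everything else is routine bookkeeping with the hierarchy $\delta\ll\mu_0,\eta\ll\nu\ll\epsilon$ and standard concentration.
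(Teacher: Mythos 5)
Your proposal is correct and takes essentially the same approach as the paper's proof: first the monotone-extension argument (enlarging $Q^{(k-1)}$ to some $Q'$ with $|\cK_k(Q')|$ just above $(\epsilon+\delta)|\cK_k(H^{(k-1)})|$) to upgrade $(\epsilon+\delta)$-regularity to near-$\epsilon$-regularity, then an independent random resampling of a small fraction of the edge colours followed by Chernoff plus a union bound over all $(i,Q^{(k-1)})$. The only differences are cosmetic parameter choices (the paper resamples with probability $\delta^{1/3}$ and gets error $\delta^{1/2}$ in the deterministic step, whereas you use $\mu_0=O(\delta/\epsilon^2)$ and $\delta'=O(\delta/\epsilon)$), and the paper phrases the resampling via sets $\cA,\cB_1,\dots,\cB_s$ rather than a recolouring map $\psi$.
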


Roughly speaking, the idea of the proof is to construct $G_i^\kk$ from $H_i^\kk$ by randomly redistributing a small proportion of the $k$-edges of each $H_i^\kk$.

\begin{proof}[Proof of Lemma~\ref{lem: improve regularity}]
\COMMENT{`$1/m\ll \delta $' is unnecessary condition here.
We can just put $0< 1/m,\delta \ll \nu$ in the statement and add the following sentence at the beginning of the proof.
`By increasing the value of $\delta$ if necessary, we may assume that we have
$0< 1/m \ll \delta \ll \nu$.'
Note that $H^{(k)}_i$ is $(\epsilon+\delta,d_i)$-regular with respect to $H^{(k-1)}$ even after increasing the value of $\delta$.
}
First we claim that for  $Q^{(k-1)}\sub H^{(k-1)}$ with $|\cK_k(Q^{(k-1)})| \geq \epsilon |\cK_k(H^{(k-1)})|$, we have
\begin{eqnarray}\label{eq: Q density}
 d(H^{(k)}_i\mid Q^{(k-1)}) = d_i\pm (\epsilon+\delta^{1/2}).
 \end{eqnarray}
Observe that if $|\cK_k(Q^{(k-1)}) |\geq (\epsilon+\delta) |\cK_k(H^{(k-1)})|$, 
then this follows directly from the fact that $H_i^\kk$ is $(\epsilon+\delta,d_i)$-regular with respect to $H^{(k-1)}$.

So suppose that $|\cK_{k}(Q^{(k-1)})| < ( \epsilon+\delta) |\cK_{k}(H^{(k-1)})|$.
Choose a $(k-1)$-graph $Q'^{(k-1)}$ such that $Q^{(k-1)}\subseteq Q'^{(k-1)} \subseteq H^{(k-1)}$ as well as
$$|\cK_{k}(Q'^{(k-1)})| \geq (\epsilon+\delta) |\cK_{k}(H^{(k-1)})| \text{ and } |\cK_{k}(Q'^{(k-1)})\setminus \cK_{k}(Q^{(k-1)})|\leq 2\delta |\cK_{k}(H^{(k-1)})|.$$
Then for each $i\in [s]$, we conclude
\begin{align*}
|H^{(k)}_i\cap \cK_{k}(Q^{(k-1)})| = |H^{(k)}_i\cap \cK_{k}(Q'^{(k-1)})|\pm 2\delta|\cK_{k}(H^{(k-1)})|.
\end{align*}
The fact that $H^{(k)}_i$ is $(\epsilon+\delta,d_i)$-regular with respect to $H^{(k-1)}$ implies that 
\begin{align*}
|H^{(k)}_i\cap \cK_{k}(Q'^{(k-1)})| = (d_i\pm (\epsilon +\delta))|\cK_{k}(Q'^{(k-1)})|.
\end{align*}
From this we obtain \eqref{eq: Q density}.\COMMENT{
\begin{eqnarray*}
 d(H^{(k)}_i\mid Q^{(k-1)}) &=& \frac{ |H^{(k)}_i\cap \cK_{k}(Q^{(k-1)})|}{ |\cK_{k}(Q^{(k-1)})|} 
= \frac{ |H^{(k)}_i\cap \cK_{k}(Q'^{(k-1)})|\pm 2\delta|\cK_{k}(H^{(k-1)})| }{ |\cK_{k}(Q'^{(k-1)})| \pm 2\delta|\cK_{k}(H^{(k-1)})| }\nonumber\\
&= &\frac{ (d_i\pm (\epsilon +\delta))|\cK_{k}(Q'^{(k-1)})| \pm 2\delta|\cK_{k}(H^{(k-1)})| }{|\cK_{k}(Q'^{(k-1)})| \pm 2\delta|\cK_{k}(H^{(k-1)})|}\nonumber\\
&=& d_i\pm (\epsilon+\delta^{1/2}).
\end{eqnarray*}
We obtain the final equality since $\delta \ll \epsilon \leq |\cK_{k}(Q'^{(k-1)})|/|\cK_{k}(H^{(k-1)})|$.
}
\noindent Our next step is to construct suitable random sets $\cA,\cB_1,\ldots, \cB_s\sub \cK_k(H^{(k-1)})$.
We define $G_i^\kk$ based on these sets and show that (G1)$_{\ref{lem: improve regularity}}$--(G3)$_{\ref{lem: improve regularity}}$ hold with positive probability.

We assign each $e\in \cK_{k}(H^{(k-1)})$ to be in $\cA:=\bigcup_{i\in [s]}\cB_i$ independently with probability $\delta^{1/3}$
and assign every edge in $\cA$ independently to one $\cB_i$  with probability $d_i$.
For each $i\in [s]$, we define
$$G^{(k)}_i:= (H^{(k)}_i\setminus \cA)\cup \cB_i.$$
Observe that (G1)$_{\ref{lem: improve regularity}}$ holds by construction.
Moreover,
$$\mathbb{E}[|H^{(k)}_i\cap \cA|] = \delta^{1/3} |H^{(k)}_i |
\enspace\text{ and }\enspace \mathbb{E}[|\cB_i| ] = \delta^{1/3} d_i |\cK_{k}(H^{(k-1)})|. $$
Thus Lemma~\ref{lem: chernoff} with the fact that $|H^{(k)}_i| \leq m^{k}$ implies that  
$$\mathbb{P}[|H^{(k)}_i\cap \cA| \leq 2\delta^{1/3} m^{k}] \geq 1 - 2e^{ -\delta^{2/3} m^{k}} \geq 1 - 1/(6s),$$
and
$$ \mathbb{P}[|\cB_i| \leq 2\delta^{1/3} m^{k}] \geq 1 - 2e^{ -\delta^{2/3} m^{k}} \geq 1 - 1/(6s).$$
Hence with probability at least $2/3$,
we have $|H^{(k)}_i\cap \cA| \leq 2\delta^{1/3} m^{k}$ for all $i\in [s]$ and $|\cB_i|\leq 2\delta^{1/3} m^{k}$.
This implies 
\begin{align*}
|G^{(k)}_i\triangle H^{(k)}_i| \leq  |H^{(k)}_i\cap \cA|+ |\cB_i| \leq \nu m^{k}.
\end{align*}
Thus (G3)$_{\ref{lem: improve regularity}}$ holds with probability at least $2/3$.

Furthermore, 
for each $i\in [s]$ and $Q^{(k-1)}\subseteq H^{(k-1)}$ with $|\cK_{k}(Q^{(k-1)})| \geq \epsilon |\cK_{k}(H^{(k-1)})| \geq \epsilon^2 m^k$,
we obtain
\begin{eqnarray}
\mathbb{E}[|G^{(k)}_i\cap \cK_{k}(Q^{(k-1)})| ] 
&=& \mathbb{E}[| ( H^{(k)}_i\cap \cK_{k}(Q^{(k-1)}) ) \setminus \cA|]  + \mathbb{E}[|\cB_i \cap \cK_{k}(Q^{(k-1)})|] \nonumber\\
&\stackrel{\eqref{eq: Q density}}{=}& \left((1-\delta^{1/3})(d_i\pm (\epsilon+\delta^{1/2})) + \delta^{1/3} d_i  \right)|\cK_{k}(Q^{(k-1)})| \nonumber\\
&=
& (d_i \pm (\epsilon - \delta^{1/2}))|\cK_{k}(Q^{(k-1)})|.
\end{eqnarray}\COMMENT{ $ \left((1-\delta^{1/3})(d_i\pm (\epsilon+\delta^{1/2})) + \delta^{1/3} d_i  \right) = d_i - \delta^{1/3} d_i \pm (1-\delta^{1/3})(\epsilon+\delta^{1/2}) + \delta^{1/3}d_i = d_i \pm  (1-\delta^{1/3})(\epsilon+\delta^{1/2}) = d_i \pm (\epsilon - \delta^{1/3}\epsilon + \delta^{1/2} - \delta^{5/6}) = d_i \pm (\epsilon - \delta^{1/2})$ since $\delta \ll \epsilon$.  }
Thus Lemma~\ref{lem: chernoff} implies that
$$\mathbb{P}[ |G^{(k)}_i\cap \cK_{k}(Q^{(k-1)})|
 = (d_i\pm \epsilon)|\cK_{k}(Q^{(k-1)}| ] 
\geq 1 - 2e^{-\delta |\cK_{k}(Q^{(k-1)})|^2/ m^{k}} 
\geq 1 - e^{-\delta^2 m^{k} }.$$
Since there are at most $2^{|H^{(k-1)}|} \leq 2^{m^{k-1}}$ distinct choices of $Q^{(k-1)}$, 
using a union bound, 
we conclude that with probability at least $1- s 2^{m^{k-1}} e^{-\delta^2 m^{k} } \geq 2/3$, 
for all $i\in [s]$ and $Q^{(k-1)}\subseteq H^{(k-1)}$ with $\cK_{k}(Q^{(k-1)})\geq \epsilon |\cK_{k}(H^{(k-1)})|$, 
we have
$$d(G^{(k)}_i \mid Q^{(k-1)}) 
=\frac{(d_i \pm \epsilon)|\cK_{k}(Q^{(k-1)})|}{|\cK_{k}(Q^{(k-1)})|} 
= d_i \pm \epsilon.$$
This implies that $G^{(k)}_i$ is $(\epsilon,d_i)$-regular with respect to $H^{(k-1)}$, and so (G2)$_{\ref{lem: improve regularity}}$ holds with probability at least $2/3$.
Hence $G^{(k)}_1,\dots, G^{(k)}_s$ satisfy (G1)$_{\ref{lem: improve regularity}}$--(G3)$_{\ref{lem: improve regularity}}$ with probability at least $1/3$.
\end{proof}

We now generalize Lemma~\ref{lem: improve regularity} to the setting where
 we consider a family of partitions instead of only $k$-graphs with one common underlying $(k-1)$-graph.

\begin{lemma}\label{lem: improve regularity partition}
Suppose $0 < 1/n \ll \delta \ll \nu \ll \epsilon \leq 1/2 $\COMMENT{
$R$ being regularity instance already contains information that $\epsilon \leq \norm{\ba}^{-4^k}\leq 2^{-4^k}$ and thus it follows that $\nu\ll 1/k$.
} and $R=(\epsilon/3,\ba,d_{\ba,k})$\COMMENT{ The $\epsilon/3$ is needed in the proof of Lemma~\ref{lem:7.2}.} is a regularity instance.\COMMENT{Note that $R$ being regularity instance implies that $\epsilon/2  < \epsilon_{\ref{def: regularity instance}}(\|\ba \|_{\infty},k)$. So we don't need to write $\epsilon \ll 1/k,\|\ba \|_{\infty}^{-1}$ here. }
Suppose $H^{(k)}$ is an $n$-vertex  $k$-graph on vertex set $V$.
Suppose that $\sP=\sP(k-1,\ba)$ is an $(\epsilon+\delta,\ba,d_{\ba,k})$-equitable partition of $H^{(k)}$.
Then there exists a family of partitions $\sQ=\sQ(k-1,\ba)$ and a $k$-graph $G^{(k)}$ on $V$ such that 
\begin{itemize}
\item[{\rm (G1)$_{\ref{lem: improve regularity partition}}$}] $\sQ$ is an $(\epsilon,\ba,d_{\ba,k})$-equitable partition of $G^{(k)}$ and 
\item[{\rm (G2)$_{\ref{lem: improve regularity partition}}$}]  $|G^{(k)}\triangle H^{(k)}|\leq \nu \binom{n}{k}$. 
\end{itemize}
\end{lemma}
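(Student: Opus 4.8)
The plan is to prove the lemma by sharpening the regularity of $\sP$ one level at a time, from the bottom upwards, at each level invoking Lemma~\ref{lem: improve regularity} to push the regularity parameter down from roughly $\epsilon+\delta$ to $\epsilon$ at the cost of a small perturbation. The key observation is that only the perturbation made at the top level enters $|G^{(k)}\triangle H^{(k)}|$, so we are free to modify the lower partitions $\sP^{(2)},\dots,\sP^{(k-1)}$ substantially, as long as the cumulative change there stays small enough to keep the cascade of regularity statements under control. Concretely, I would fix auxiliary constants $\delta\ll\nu_2\ll\nu_3\ll\dots\ll\nu_{k-1}\ll\nu'\le\nu$, all $\ll\epsilon$ (legitimate since $k$ and $\|\ba\|_\infty$ are bounded and $1/n\ll\delta\ll\nu$), write $m:=\lfloor n/a_1\rfloor$, set $\delta_1:=\delta$, and put $\sQ^{(1)}:=\sP^{(1)}$.

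For $j=2,\dots,k-1$ in turn I would maintain the invariant that partitions $Q^{(i)}(\hat{\bx},b)$ with induced polyads $\hat{Q}^{(i)}(\hat{\bx})$ (as in \eqref{eq:hatPconsistsofP(x,b)}) have been constructed for $i<j$ so that $\{\sQ^{(i)}\}_{i=1}^{j-1}$ is a genuine family of partitions, is $(1/a_1,\epsilon,(a_1,\dots,a_{j-1}))$-equitable, and satisfies $|\hat{Q}^{(j-1)}(\hat{\bx})\triangle\hat{P}^{(j-1)}(\hat{\bx})|=O(\nu_{j-1})m^{j-1}$ for every $\hat{\bx}$; by Lemma~\ref{lem: simple facts 2} this forces the original level-$j$ partition $\{P^{(j)}(\hat{\bx},b)\}_b$ to be $(\epsilon+\delta_{j-1},1/a_j)$-regular with respect to $\hat{Q}^{(j-1)}(\hat{\bx})$, where $\delta_{j-1}:=\delta+O(\nu_{j-1}^{1/3})\ll\nu_j$ (for $j=2$ one has $\hat{Q}^{(1)}=\hat{P}^{(1)}$). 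Throughout, the counting lemma via Lemma~\ref{lem: maps bijections} guarantees $|\cK_j(\hat{Q}^{(j-1)}(\hat{\bx}))|=\Theta(m^j)$, so all hypotheses below are met. For each $\hat{\bx}\in\hat{A}(j,j-1,\ba)$ I truncate $\{P^{(j)}(\hat{\bx},b)\}_b$ to $\cK_j(\hat{Q}^{(j-1)}(\hat{\bx}))$ (absorbing the residual set $\cK_j(\hat{Q}^{(j-1)}(\hat{\bx}))\setminus\cK_j(\hat{P}^{(j-1)}(\hat{\bx}))$, of size $O(\nu_{j-1})m^j$, into one part) and apply Lemma~\ref{lem: improve regularity} with $s=a_j$, all densities $1/a_j$, and $\hat{Q}^{(j-1)}(\hat{\bx})$ in the role of $H^{(k-1)}$; this produces $Q^{(j)}(\hat{\bx},b)$ partitioning $\cK_j(\hat{Q}^{(j-1)}(\hat{\bx}))$, $(\epsilon,1/a_j)$-regular with respect to $\hat{Q}^{(j-1)}(\hat{\bx})$, with $\sum_b|Q^{(j)}(\hat{\bx},b)\triangle P^{(j)}(\hat{\bx},b)|=O(\nu_j)m^j$. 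Lemma~\ref{lem: family of partitions construction} (checking (FP1)--(FP3), using Proposition~\ref{prop: hat relation}(vi) for $\{\sQ^{(i)}\}_{i=1}^{j-1}$ and that density $1/a_j>0$ forces nonemptiness) then certifies that $\{\sQ^{(i)}\}_{i=1}^{j}$ is a family of partitions; it is $(1/a_1,\epsilon,(a_1,\dots,a_j))$-equitable by construction, and $|\hat{Q}^{(j)}(\hat{\bx})\triangle\hat{P}^{(j)}(\hat{\bx})|=O(\nu_j)m^j$, so the invariant holds at level $j+1$.

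After level $k-1$, $\sQ:=\{\sQ^{(i)}\}_{i=1}^{k-1}$ is $(1/a_1,\epsilon,\ba)$-equitable and $H^{(k)}$ is $(\epsilon+\delta_{k-1},d_{\ba,k}(\hat{\bx}))$-regular with respect to $\hat{Q}^{(k-1)}(\hat{\bx})$ for every $\hat{\bx}\in\hat{A}(k,k-1,\ba)$, with $\delta_{k-1}\ll\nu'$. I would then, for each $\hat{\bx}$, apply Lemma~\ref{lem: improve regularity} with $s=2$ to the pair $H^{(k)}\cap\cK_k(\hat{Q}^{(k-1)}(\hat{\bx}))$ and $\cK_k(\hat{Q}^{(k-1)}(\hat{\bx}))\setminus H^{(k)}$ (the latter $(\epsilon+\delta_{k-1},1-d_{\ba,k}(\hat{\bx}))$-regular with respect to $\hat{Q}^{(k-1)}(\hat{\bx})$ by Lemma~\ref{lem: simple facts 1}(i)), with densities $d_{\ba,k}(\hat{\bx}),1-d_{\ba,k}(\hat{\bx})$; if $d_{\ba,k}(\hat{\bx})\in\{0,1\}$ the hypothesis $d_i\ge d_0$ fails formally, but the random-redistribution argument in the proof of Lemma~\ref{lem: improve regularity} works verbatim there, as $d_i\ge d_0$ is used only to keep $s$ bounded. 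This yields a $k$-graph on $\cK_k(\hat{Q}^{(k-1)}(\hat{\bx}))$ that is $(\epsilon,d_{\ba,k}(\hat{\bx}))$-regular with respect to $\hat{Q}^{(k-1)}(\hat{\bx})$ and differs from $H^{(k)}\cap\cK_k(\hat{Q}^{(k-1)}(\hat{\bx}))$ in at most $\nu' m^k$ edges. Let $G^{(k)}$ agree with $H^{(k)}$ on $\binom{V}{k}\setminus\cK_k(\sQ^{(1)})$ and with this $k$-graph on each $\cK_k(\hat{Q}^{(k-1)}(\hat{\bx}))$. Since the sets $\cK_k(\hat{Q}^{(k-1)}(\hat{\bx}))$ partition $\cK_k(\sQ^{(1)})$ (Proposition~\ref{prop: hat relation}(vi)) and each has size $\Theta(m^k)$, the total change is $O(\nu')\binom{n}{k}$, so $|G^{(k)}\triangle H^{(k)}|\le\nu\binom{n}{k}$ once $\nu'$ is chosen small enough in terms of $\ba$ and $k$ --- this is (G2)$_{\ref{lem: improve regularity partition}}$. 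Combined with the $(1/a_1,\epsilon,\ba)$-equitability of $\sQ$ and the level-$k$ regularity just arranged, $\sQ$ is an $(\epsilon,\ba,d_{\ba,k})$-equitable partition of $G^{(k)}$, which is (G1)$_{\ref{lem: improve regularity partition}}$.

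The step I expect to be the main obstacle is the bookkeeping for the cascade: each time a level is modified, its polyads change, and hence the clique sets $\cK_{i+1}(\hat{Q}^{(i)})$ on which the next level's partition is supposed to live; so at every level one must re-restrict that partition to the new polyad's cliques (with a controlled error), re-verify via Lemma~\ref{lem: family of partitions construction} that a genuine family of partitions results, and re-establish, via Lemma~\ref{lem: simple facts 2} and the counting lemma (Lemma~\ref{lem: counting}, through Lemma~\ref{lem: maps bijections}), that the next level remains regular up to a slightly larger additive constant. Arranging the hierarchy $\delta\ll\nu_2\ll\dots\ll\nu_{k-1}\ll\nu$ fine enough that each $\delta_j\ll\nu_{j+1}$ is precisely what makes the iteration terminate with the required parameters.
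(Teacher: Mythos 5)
Your proposal is correct and follows essentially the same route as the paper's proof: a bottom-up induction over levels $j=2,\dots,k$, at each level truncating $\{P^{(j)}(\hat{\bx},b)\}_b$ to $\cK_j(\hat{Q}^{(j-1)}(\hat{\bx}))$, invoking Lemma~\ref{lem: simple facts 2} to bound the degradation, applying Lemma~\ref{lem: improve regularity} to sharpen the regularity back to $\epsilon$, and handling the top level with $s=2$ on the edge/non-edge pair, setting $a_k:=2$. Your explicit observation that the hypothesis $d_i\ge d_0$ in Lemma~\ref{lem: improve regularity} is vacuous for the degenerate values $d_{\ba,k}(\hat{\bx})\in\{0,1\}$ is a fair and correct technical remark (the random-redistribution argument indeed goes through since $\cB_i=\emptyset$ when $d_i=0$, and the concentration bounds do not depend on $d_i$), and it applies equally to the paper's own application of that lemma.
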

\begin{proof}\COMMENT{`$1/n\ll \delta $' is unnecessary condition here.
We can just put $0< 1/n,\delta \ll \nu$ in the statement and add the following sentence at the beginning of the proof.
`By increasing the value of $\delta$ if necessary, we may assume that we have
$0< 1/n \ll \delta \ll \nu$.'
Note that $\sP$ is an $(\epsilon+\delta,\ba,d_{\ba,k})$-equitable partition of $H^{(k)}$ even after increasing the value of $\delta$.}
We define $m:= \lfloor n/a_1 \rfloor$.
Consider $j\in  [k]\setminus\{1\}$ and $\hat{\bx}\in \hat{A}(j,j-1,\ba)$.
We claim that 
\begin{align}\label{eq: polyad big}
|\cK_{j}(\hat{P}^{(j-1)}(\hat{\bx}))|\geq \frac{1}{2}\prod_{i=2}^{j-1}a_i^{-\binom{j}{i}} m^j \geq \epsilon^{1/2} m^{j}.
\end{align}
Indeed, this holds if $j=2$, so suppose that $j>2$.
As $R$ is a regularity instance, we have $\epsilon/3 \leq \|\ba\|_\infty^{-4^k}\epsilon_{\ref{lem: counting}}(\|\ba\|_\infty^{-1},\|\ba\|_\infty^{-1},j-1,j)$\COMMENT{Note that $\epsilon_{\ref{def: regularity instance}}(x,\cdot)$ is a decreasing function, 
so we can have $j-1,j$ here instead of $k-1,k$. 
Note that $\epsilon_{\ref{lem: counting}}$ doesn't have to be decreasing function in terms of third and fourth variables as long as $\epsilon_{\ref{def: regularity instance}}(x,\cdot)$ is a decreasing function.} and Lemma~\ref{lem: maps bijections}(ii) implies that $\hat{\cP}(\hat{\bx})$ is an $(\epsilon, (1/a_2,\dots, 1/a_{j-1}))$-regular $(m,j,j-1)$-complex.
Thus we can apply Lemma~\ref{lem: counting} with $\hat{\cP}(\hat{\bx})$ playing the role of $\cH$ to show \eqref{eq: polyad big}.

Note that Lemma~\ref{lem: maps bijections}(i) implies that $\hat{P}^{(j-1)}(\cdot):\hat{A}(j,j-1,\ba)\to \hat{\sP}^{(j-1)}$ and
$P^{(j)}(\cdot,\cdot):\hat{A}(j,j-1,\ba)\times [a_j]\to \sP^{(j)}$ are bijections 
(except in the case when $j=k$ for the latter).

We choose $\delta \ll \nu_2 \ll \nu_3 \ll \dots \ll \nu_{k+1}:=\nu^{2}$.
For each $\hat{\bx} \in \hat{A}(k,k-1,\ba)$, let
\begin{align}\label{eq: Pk1 Pk2 hat bx}
P^{(k)}(\hat{\bx},1):= H^{(k)}\cap \cK_{k}(\hat{P}^{(k-1)}(\hat{\bx})) \text{ and }P^{(k)}(\hat{\bx},2):=  \cK_{k}(\hat{P}^{(k-1)}(\hat{\bx}))\setminus H^{(k)}.
\end{align}
In addition, we set $a_k:=2$.

We proceed in an inductive manner.
Let $\sQ^{(1)}:=\sP^{(1)}$, and for each $\hat{\bx}\in \hat{A}(2,1,\ba)$, 
let $\hat{Q}^{(1)}(\hat{\bx}):= \hat{P}^{(1)}(\hat{\bx})$.
Assume that for $j\in [k]\sm\{1\}$, we have defined $\{\sQ^{(i)}\}_{i=1}^{j-1}$ such that
\begin{itemize}
\item[(Q1)$^j_{\ref{lem: improve regularity partition}}$] $\{\sQ^{(i)}\}_{i=1}^{j-1}$ is a $(1/a_1,\epsilon,(a_1,\dots, a_{j-1}))$-equitable family of partitions and
\item[(Q2)$^j_{\ref{lem: improve regularity partition}}$] $|\hat{P}^{(i-1)}(\hat{\bx})\triangle \hat{Q}^{(i-1)}(\hat{\bx})|\leq \nu_i^{1/2}  m^{i-1}$
for all $i\in [j]\setminus\{1\}$ and $\hat{\bx} \in \hat{A}(i,i-1,\ba)$.
\end{itemize}\vspace{0.3cm}
Note that this holds for $j=2$.
Suppose $\hat{\bx} \in \hat{A}(j,j-1,\ba)$.
If $j < k$, then for each $\hat{\bx} \in \hat{A}(j,j-1,\ba)$ and $b\in [a_j]$, we define
\begin{align*}
&Q'^{(j)}(\hat{\bx},b):= 
\left\{\begin{array}{ll}
P^{(j)}(\hat{\bx},b) \cap \cK_{j}(\hat{Q}^{(j-1)}(\hat{\bx})) &\text{ if } b\in [a_j-1],\\
\cK_{j}(\hat{Q}^{(j-1)}(\hat{\bx})) \setminus \bigcup_{b\in [a_j-1]} Q'^{(j)}(\hat{\bx},b) &\text{ if } b\in a_j.
\end{array} \right.
\end{align*}
If $j=k$, then we define
$$Q'^{(j)}(\hat{\bx},1):= H^{(k)}\cap \cK_{k}(\hat{Q}^{(j-1)}(\hat{\bx})),\text{ and } Q'^{(j)}(\hat{\bx},2):=  \cK_{k}(\hat{Q}^{(j-1)}(\hat{\bx}))\setminus H^{(k)}.$$
So  for each $j\in [k]\sm \{1\}$ and $b\in [a_j]$, we obtain\COMMENT{
If $b <a_j-1$ use lhs $\leq |P^{(j)}(\hat{\bx},b) \sm \cK_{j}(\hat{Q}^{(j-1)}(\hat{\bx}))|$.
If $b=a_j$ use $Q'^{(j)}(\hat{\bx},b)= (P^{(j)}(\hat{\bx},b) \cap \cK_{j}(\hat{Q}^{(j-1)}(\hat{\bx}))) \cup 
(\cK_{j}(\hat{Q}^{(j-1)}(\hat{\bx})) \setminus \bigcup_{b\in [a_j]} P^{(j)}(\hat{\bx},b))$.
}
\begin{eqnarray}\label{eq: P Q' diff}
|Q'^{(j)} (\hat{\bx},b) \triangle P^{(j)}(\hat{\bx},b)|\leq |\cK_{j}(\hat{P}^{(j-1)}(\hat{\bx}))\triangle \cK_j(\hat{Q}^{(j-1)}(\hat{\bx}))| 
\stackrel{{\rm(Q2)}^j_{\ref{lem: improve regularity partition}}}{\leq} 2\nu_j^{1/2} m^{j} .
\end{eqnarray}

Next, we will apply Lemma~\ref{lem: improve regularity} to find a partition $\sQ^{(j)}$ of $\cK_j(\sQ^{(1)})$. Fix $\hat{\bx} \in \hat{A}(j,j-1,\ba)$.
If $j \in[k-1]\sm\{1\}$, then for every $b\in [a_j]$ we define $d_b:=1/a_j$.
If $j=k$, then let $d_1:=d_{\ba,k}(\hat{\bx})$ and $d_2:=1-d_1$.

 Since $\sP$ is an $(\epsilon+\delta,\ba,d_{\ba,k})$-equitable partition of $H^{(k)}$, it follows from Definition~\ref{def: equitable family of partitions} that 
$P^{(j)}(\hat{\bx},b)$ is $(\epsilon+\delta,d_b)$-regular with respect to $\hat{P}^{(j-1)}(\hat{\bx})$ for each $b\in [a_j]$. (Here, we use Lemma~\ref{lem: simple facts 1}(i) in the case when $j=k$.)
Thus Lemma~\ref{lem: simple facts 2} together with \eqref{eq: polyad big}, \eqref{eq: P Q' diff} and (Q2)$^j_{\ref{lem: improve regularity partition}}$ implies that $Q'^{(j)}(\hat{\bx},b)$ is $(\epsilon+\delta+\nu_j^{1/9},d_b)$-regular with respect to $\hat{Q}^{(j-1)}(\hat{\bx})$. 

By using Lemma~\ref{lem: improve regularity} with $\hat{Q}^{(j-1)}(\hat{\bx}), Q'^{(j)}(\hat{\bx},1),\dots,Q'^{(j)}(\hat{\bx},a_j), a_j,d_b,\epsilon,\delta+\nu_j^{1/9},\nu_{j+1}/2$ playing the roles of $H^{(k-1)}, H^{(k)}_1,\dots, H^{(k)}_s, s,d_i, \epsilon,\delta,\nu$, respectively, 
we obtain 
$Q^{(j)}(\hat{\bx},1),$ $\dots,Q^{(j)}(\hat{\bx},a_j)$ forming a partition of $\cK_j(\hat{Q}^{(j-1)}(\hat{\bx}))$ such that 
$Q^{(j)}(\hat{\bx},b)$ is $(\epsilon,d_b)$-regular with respect to $\hat{Q}^{(j-1)}(\hat{\bx})$ and $|Q'^{(j)}(\hat{\bx},b)\triangle Q^{(j)}(\hat{\bx},b)|\leq \nu_{j+1} m^{j}/2$ for all $b\in [a_j]$.
(A similar argument as for~\eqref{eq: polyad big} shows that $\hat{Q}^{(j-1)}(\hat{\bx})$ satisfies the requirements of Lemma~\ref{lem: improve regularity}.)
By \eqref{eq: P Q' diff} we have that
\begin{align}\label{eq: P Q b diff}
|P^{(j)}(\hat{\bx},b)\triangle Q^{(j)}(\hat{\bx},b)|\leq 
 |Q'^{(j)} (\hat{\bx},b) \triangle P^{(j)}(\hat{\bx},b)| + |Q'^{(j)}(\hat{\bx},b)\triangle Q^{(j)}(\hat{\bx},b)| \leq \nu_{j+1}m^{j}.
 \end{align}
 
Suppose first that $j\in [ k-1]\sm\{1\}$. Define $$\sQ^{(j)}:= \{Q^{(j)}(\hat{\bx},b) : \hat{\bx}\in \hat{A}(j,j-1,\ba), b\in [a_j]\}.$$
Thus $\{\sQ^{(i)}\}_{i=1}^{j}$ forms a $(1/a_1, \epsilon, (a_1,\dots, a_{j}))$-equitable family of partitions since $d_b=1/a_j$ for all $b\in [a_j]$. Hence (Q1)$^{j+1}_{\ref{lem: improve regularity partition}}$ holds.

Furthermore, for each $\hat{\bx}\in \hat{A}(j+1,j,\ba)$ we obtain a polyad
$$\hat{Q}^{(j)}(\hat{\bx}):= \bigcup_{\hat{\by}\leq_{j,j-1} \hat{\bx}} Q^{(j)}(\hat{\by},\bx^{(j)}_{\hat{\by}^{(1)}_*}).$$
Then
$$|\hat{P}^{(j)}(\hat{\bx})\triangle \hat{Q}^{(j)}(\hat{\bx})|
\leq \sum_{\hat{\by}\leq_{j,j-1} \hat{\bx}}|P^{(j)}(\hat{\by},\bx^{(j)}_{\hat{\by}^{(1)}_*})\triangle Q^{(j)}(\hat{\by},\bx^{(j)}_{\hat{\by}^{(1)}_*}) | \stackrel{\eqref{eq: P Q b diff}}{\leq} (j+1) \nu_{j+1}m^{j} 
\leq \nu_{j+1}^{1/2}  m^{j},$$
and so (Q2)$^{j+1}_{\ref{lem: improve regularity partition}}$ holds.

Now suppose $j=k$. 
Let $$G^{(k)}:= \left(H^{(k)}\setminus \cK_{k}( \sQ^{(1)})\right) \cup
\bigcup_{\hat{\bx}\in \hat{A}(k,k-1,\ba)} Q^{(k)}(\hat{\bx},1).$$
By definition of $d_1,d_2$, 
we obtain that $\{\sQ^{(i)}\}_{i=1}^{k}$ is an $(\epsilon,\ba,d_{\ba,k})$-equitable partition of $G^{(k)}$.
Moreover, since $\sP^{(1)} = \sQ^{(1)}$, we have that 
\begin{eqnarray*}
H^{(k)} &=& \left(H^{(k)}\setminus \cK_{k}( \sQ^{(1)})\right) \cup \left( H^{(k)}\cap \bigcup_{\hat{\bx}\in \hat{A}(k,k-1,\ba)} \cK_{k}(\hat{P}^{(k-1)}(\hat{\bx} ))\right) \\
&\stackrel{\eqref{eq: Pk1 Pk2 hat bx}}{=}& \left(H^{(k)}\setminus \cK_k(\sQ^{(1)})\right) \cup \bigcup_{\hat{\bx}\in \hat{A}(k,k-1,\ba)} P^{(k)}(\hat{\bx},1).
\end{eqnarray*}
 Thus 
$$|H^{(k)}\triangle G^{(k)}|\leq \sum_{\hat{\bx} \in \hat{A}(k,k-1,\ba)} |P^{(k)}(\hat{\bx},1)\triangle Q^{(k)}(\hat{\bx},1)| \stackrel{\eqref{eq: P Q b diff}}{\leq} |\hat{A}(k,k-1,\ba)| \nu_{k+1} m^{k} \leq \nu \binom{n}{k}.$$
Indeed, the final inequality holds since $\nu_{k+1} = \nu^2$, 
since $|\hat{A}(k,k-1,\ba)|\leq \|\ba\|_{\infty}^{2^{k}}$ by Proposition~\ref{prop: hat relation}(viii), 
and the definition of a regularity instance implies that $\nu \ll \epsilon < \|\ba\|_{\infty}^{-4^k}$.
\end{proof}

\subsection{Refining a partition}\label{sec:3.4}
In this subsection we make two simple observations regarding refinements of a given partition.
The first one of these shows that we can refine a family of partitions 
without significantly affecting the regularity parameters.

\begin{lemma}\label{lem: partition refinement}
Suppose $0< 1/n \ll\epsilon \ll 1/t ,1/k$ with $k, t\in \N\sm \{1\}$, $0<\eta <1$, and $\ba\in \N^{k-1}$.
Suppose $\sP=\sP(k-1,\ba)$ is an $(\eta,\epsilon,\ba)$-equitable family of partitions on $V$ with $|V|=n$. 
Suppose $\bb\in [t]^{k-1}$  and $a_i \mid b_i$ for all $i\in [k-1]$.
Then there exists a family of partitions $\sQ=\sQ(k-1,\bb)$ on $V$ which is $(\eta,\epsilon^{1/3},\bb)$-equitable and $\sQ \prec \sP$.
\end{lemma}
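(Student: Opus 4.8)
The plan is to build $\sQ=\sQ(k-1,\bb)$ level by level, refining $\sP$ one level at a time, and then to verify via Lemma~\ref{lem: family of partitions construction} that what we have constructed really is a family of partitions with the desired properties. Note first that $a_i\mid b_i$ and $\bb\in[t]^{k-1}$ force $\ba\in[t]^{k-1}$, and that $a_1\ge\eta^{-1}$ makes $\sP$ in particular $(1/a_1,\epsilon,\ba)$-equitable, so Lemma~\ref{lem: maps bijections} applies to $\sP$ (using $\epsilon\ll 1/t,1/k$).

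At the vertex level I would split each class $V_i$ of $\sP^{(1)}$ into $b_1/a_1$ sets as evenly as possible. Since $\sP$ is $(\eta,\epsilon,\ba)$-equitable we have $|V_i|\in\{\lfloor n/a_1\rfloor,\lfloor n/a_1\rfloor+1\}$, and a short arithmetic check then shows that the resulting $b_1$ classes all have size in $\{\lfloor n/b_1\rfloor,\lfloor n/b_1\rfloor+1\}$; together with $b_1\ge a_1\ge\eta^{-1}$ this gives conditions (i) and (ii) of Definition~\ref{def: equitable family of partitions}. This defines $\sQ^{(1)}\prec\sP^{(1)}$. For the inductive step, suppose $\{\sQ^{(i)}\}_{i=1}^{j-1}$ has been constructed, forms an $(\eta,\epsilon^{1/3},(b_1,\dots,b_{j-1}))$-equitable family of partitions, and refines $\sP$ up to level $j-1$ (each part of $\sQ^{(i)}$ lies inside a part of $\sP^{(i)}$ or inside $\cK_i(\sQ^{(1)})\setminus\cK_i(\sP^{(1)})$). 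I now partition $\cK_j(\hat{Q}^{(j-1)})$ into exactly $b_j$ parts for each $(j-1)$-polyad $\hat{Q}^{(j-1)}$ of $\sQ$, distinguishing two cases. If the vertex classes of $\hat{Q}^{(j-1)}$ lie in $j$ distinct classes of $\sP^{(1)}$, then $\hat{Q}^{(j-1)}$ is contained in a unique $\hat{P}^{(j-1)}\in\hat{\sP}^{(j-1)}$; the $a_j$ parts $P^{(j)}$ of $\sP^{(j)}$ inside $\cK_j(\hat{P}^{(j-1)})$ are $(\epsilon,1/a_j)$-regular with respect to $\hat{P}^{(j-1)}$ by Lemma~\ref{lem: maps bijections}, so each $P^{(j)}\cap\cK_j(\hat{Q}^{(j-1)})$ is $(\epsilon/\alpha,1/a_j)$-regular with respect to $\hat{Q}^{(j-1)}$ by Lemma~\ref{lem: simple facts 1}(ii), where $\alpha=|\cK_j(\hat{Q}^{(j-1)})|/|\cK_j(\hat{P}^{(j-1)})|$; I then apply the slicing lemma (Lemma~\ref{lem: slicing}) to cut each of these into $b_j/a_j$ pieces of relative density $1/b_j$, giving $b_j$ parts in total, each $(3\epsilon/\alpha,1/b_j)$-regular with respect to $\hat{Q}^{(j-1)}$. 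If instead two vertex classes of $\hat{Q}^{(j-1)}$ lie in a common class of $\sP^{(1)}$, then $\cK_j(\hat{Q}^{(j-1)})\subseteq\cK_j(\sQ^{(1)})\setminus\cK_j(\sP^{(1)})$, and since $\cK_j(\hat{Q}^{(j-1)})$ is perfectly $\epsilon$-regular (density $1$) with respect to $\hat{Q}^{(j-1)}$, the slicing lemma cuts it directly into $b_j$ parts of density $1/b_j$, each $(3\epsilon,1/b_j)$-regular. In both cases I absorb the (density-$0$) remainder class of the slicing lemma into one of the parts, using Lemma~\ref{lem: union regularity}; this yields $\sQ^{(j)}$, a partition of $\cK_j(\sQ^{(1)})$ into nonempty $j$-graphs refining $\sP^{(j)}$ with exactly $b_j$ parts per $(j-1)$-polyad of $\sQ$.

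The size estimates needed to run the slicing lemma, namely $|\cK_j(\hat{Q}^{(j-1)})|\ge m^j/\log m$ with $m=\lfloor n/b_1\rfloor$, together with the bound $\alpha\ge\tfrac12\prod_{i=1}^{j-1}(a_i/b_i)^{\binom{j}{i}}\ge\tfrac12 t^{-2^k}$, both follow from the counting lemma (Lemma~\ref{lem: counting}) applied to $\hat{\cQ}(\hat{\bx})$ and $\hat{\cP}(\hat{\bx})$, which are $\epsilon^{1/3}$- resp.\ $\epsilon$-regular complexes all of whose densities are at least $1/t$ (Lemma~\ref{lem: maps bijections}); here $1/n\ll\epsilon$ ensures $n$ is large enough, and the hypotheses $d\ge2\epsilon'$ etc.\ of Lemma~\ref{lem: slicing} hold because $1/b_j=(1/a_j)(a_j/b_j)\ge 2\epsilon/\alpha$. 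Since $\alpha\ge\tfrac12 t^{-2^k}$, the regularity parameter of $\sQ^{(j)}$ is at most $6t^{2^k}\epsilon$, and since $\epsilon\ll 1/t,1/k$ we may assume $6t^{2^k}\epsilon\le\epsilon^{1/3}$; this maintains the inductive hypothesis (the losses do not compound across levels, as the parameter of $\sQ^{(j)}$ is derived from that of $\sP^{(j)}$, not from $\sQ^{(j-1)}$). Finally, I feed the explicitly constructed $P^{(j)}(\cdot,\cdot)$ and $\hat{P}^{(j)}(\cdot)$ into Lemma~\ref{lem: family of partitions construction} to conclude that $\sQ=\{\sQ^{(i)}\}_{i=1}^{k-1}$ is a family of partitions; by construction $\sQ\prec\sP$ and $\sQ$ is $(\eta,\epsilon^{1/3},\bb)$-equitable. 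The main obstacle is bookkeeping rather than any new idea: correctly treating the crossing $j$-sets of $\sQ^{(1)}$ that are not crossing $j$-sets of $\sP^{(1)}$ (which have no counterpart in $\sP^{(j)}$ and must be partitioned from scratch), and ensuring that the two sources of loss in the regularity parameter — restricting a part of $\sP^{(j)}$ to a smaller polyad, and slicing — are both swallowed by the jump from $\epsilon$ to $\epsilon^{1/3}$.
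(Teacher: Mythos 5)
Your proof is correct and takes essentially the same approach as the paper's: build $\sQ$ level by level, restricting the parts of $\sP^{(j)}$ to each polyad of the already-constructed $\{\sQ^{(i)}\}_{i<j}$ via Lemma~\ref{lem: simple facts 1}(ii) and then slicing via Lemma~\ref{lem: slicing}, with the counting lemma (Lemma~\ref{lem: counting}) supplying the size estimates and $\epsilon\ll 1/t,1/k$ absorbing the resulting factor of roughly $t^{2^k}$ into the jump from $\epsilon$ to $\epsilon^{1/3}$. Your explicit treatment of the $\sQ$-polyads whose clique-sets lie in $\cK_j(\sQ^{(1)})\setminus\cK_j(\sP^{(1)})$ (and so must be sliced from scratch rather than inherited from $\sP^{(j)}$) is a detail the paper's omitted proof glosses over, but it does not change the method.
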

It is easy to prove this by induction on $k$ via an appropriate application of the slicing lemma (Lemma~\ref{lem: slicing}.)
We omit the details.
\COMMENT{
We prove the statement by induction on $k$. 
Suppose $k=2$.
As $1/n\ll 1/\|\bb\|_\infty$, 
we can choose an equipartition of each set in $\sP^{(1)}$ into $b_1/a_1$ parts.
This is the desired statement.
Suppose now that $k\geq 3$ and the statement holds for $k-1$. 
We use the induction hypothesis to obtain an $(\eta,\epsilon^{1/3},(b_1,\dots,b_{k-2}))$-equitable family of partitions $\{\sQ^{(i)}\}_{i=1}^{k-2}$.
Since $\sQ^{(k-2)}\prec \sP^{(k-2)}$, 
for each $\hat{\bx}\in \hat{A}(k-1,k-2,(b_1,\dots, b_{k-2}))$, 
there exists $\hat{\by} \in \hat{A}(k-1,k-2,(a_1,\dots, a_{k-2}))$ such that $\hat{Q}^{(k-2)}(\hat{\bx}) \subseteq \hat{P}^{(k-2)}(\hat{\by})$. (Here, we also use Lemma~\ref{lem: maps bijections}(i).)
Recall that $\cK_{k-1}(\hat{P}^{(k-2)}(\hat{\by}))$ is partitioned into $(k-1)$-graphs $P^{(k-1)}(\hat{\by},1),\dots, P^{(k-1)}(\hat{\by},a_{k-1})$.
Lemma~\ref{lem: counting} and the fact that $\epsilon \ll 1/t$ and $\|\ba\|_{\infty}\leq \|\bb\|_{\infty}\leq t$ together imply that 
$$|\cK_{k-1}(\hat{Q}^{(k-2)}(\hat{\bx}))| = (1\pm 1/t)\prod_{i=1}^{k-2} b_i^{-\binom{k-1}{i}} n^{k-1} \geq t^{-4^k} |\cK_{k-1}(\hat{P}^{(k-2)})(\hat{\by})|.$$
Since $\sP$ is an $(\eta,\epsilon,\ba)$-equitable family of partitions, $P^{(k-1)}(\hat{\by},a)$ is $(\epsilon, 1/a_{k-1})$-regular with respect to $\hat{P}^{(k-2)}(\hat{\by})$ for each $a\in [a_{k-1}]$.
Thus Lemma~\ref{lem: simple facts 1}(ii) implies that for each $a\in [a_{k-1}]$, 
the $(k-1)$-graph
$\cK_{k-1}(\hat{Q}^{(k-2)}(\hat{\bx}))\cap P^{(k-1)}(\hat{\by},a)$ is $(\epsilon^{2/3},1/a_{k-1})$-regular with respect to $\hat{Q}^{(k-2)}(\hat{\bx})$.
We use Lemma~\ref{lem: slicing} to partition $\cK_{k-1}(\hat{Q}^{(k-2)}(\hat{\bx}))\cap P^{(k-1)}(\hat{\by},a)$ into  $b_{k-1}/a_{k-1}$ many $(k-1)$-graphs
$$Q(\hat{\bx}, (a-1) b_{k-1}/a_{k-1} + 1), \dots, Q(\hat{\bx}, a b_{k-1}/a_{k-1})$$ such that each of these are $(3\epsilon^{2/3}, 1/b_{k-1})$-regular with respect to $\hat{Q}^{(k-2)}(\hat{\bx})$.
We carry out this procedure for all $\hat{\bx}\in \hat{A}(k-1,k-2,(b_1,\dots, b_{k-2}))$ and obtain 
$$	\sQ^{(k-1)} := \{ Q(\hat{\bx},b) :\hat{\bx} \in \hat{A}(k-1,k-2,(b_1,\dots,b_{k-2})), b\in [b_{k-1}]\}.$$
Hence $\sQ := \{ \sQ^{(i)}\}_{i=1}^{k-1}$ satisfies $\sQ^{(k-1)} \prec \sP^{(k-1)}$ and each $Q^{(k-1)}(\hat{\bx},b) \in \sQ^{(k-1)}$ is $(\epsilon^{1/3},1/b_{k-1})$-regular with respect to $\hat{Q}^{(k-2)}(\hat{\bx})$ for all $\hat{\bx}\in \hat{A}(k,k-1,\bb)$.
In addition, we have $\sQ^{(k-1)}\prec \{\cK_{k-1}(\hat{Q}^{(k-2)}(\hat{\bx})):\hat{\bx}\in\hat{A}(k,k-1,\bb)\}$ by definition.
Thus $\{\sQ^{(j)}\}_{j=1}^{k-1}$ is the desired family of partitions.
}

The next observation shows that if $\sQ$ is an equitable partition of $H$ and $\sP\prec \sQ$,
then we can modify $H$ slightly to obtain $G$ so that $\sP$ is a equitable partition of $G$ (where the relevant densities are inherited from $\sQ$ and $H$).

\begin{proposition}\label{eq: sP prec sQ then sP is good partition}
Suppose that $0<1/n\ll \epsilon\ll \epsilon' \ll 1/T, 1/a^{\sQ}_1 \ll \nu\ll 1/k$ with $k\in \mathbb{N}\setminus\{1\}$, and $\ba^{\sP} \in [T]^{k-1}$.
Suppose that $\sP=\sP(k-1,\ba^{\sP})$ is a $(1/a_1^{\sP},\epsilon',\ba^{\sP})$-equitable family of partitions on $V$, that $\sQ=\sQ(k-1,\ba^{\sQ})$ is an $(\epsilon,\ba^{\sQ},d_{\ba^{\sQ},k})$-equitable partition of an $n$-vertex $k$-graph $H^{(k)}$ on $V$, and that $\sP\prec \sQ$. 
Let $d_{\ba^{\sP},k}$ be the density function defined by  
$$d_{\ba^{\sP},k}(\hat{\by}):= \left\{ \begin{array}{ll}
d_{\ba^{\sQ},k}(\hat{\bx}) & \text{ if } \enspace \exists \hat{\bx}\in \hat{A}(k,k-1,\ba^{\sQ}): \cK_k(\hat{P}^{(k-1)}(\hat{\by}))\subseteq \cK_k(\hat{Q}^{(k-1)}(\hat{\bx})), \\
0 & \text{ if }\enspace \cK_k(\hat{P}^{(k-1)}(\hat{\by}))\cap \cK_k(\sQ^{(1)}) = \emptyset.
\end{array}\right.$$
Let $G^{(k)}:= H^{(k)}\cap \cK_k(\sQ^{(1)})$.
Then
\begin{itemize}
\item[\rm (i)] $\sP$ is an $(\epsilon',\ba^{\sP},d_{\ba^{\sP},k})$-equitable partition of $G^\kk$,
\item[\rm (ii)] $|H^{(k)}\triangle G^{(k)}| \leq \nu \binom{n}{k}.$
\end{itemize}
\end{proposition}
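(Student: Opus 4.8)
The plan is to deal with (ii) in one line and then to prove (i) by transferring the $\epsilon$-regularity of $H^{(k)}$ to $G^{(k)}$ via Lemma~\ref{lem: simple facts 1}(ii), the only quantitative input being a lower bound on the number of $k$-cliques of a $(k-1)$-polyad of $\sP$ coming from the counting lemma. For (ii): since $G^{(k)} = H^{(k)}\cap \cK_k(\sQ^{(1)})\subseteq H^{(k)}$ we have $H^{(k)}\triangle G^{(k)} = H^{(k)}\setminus \cK_k(\sQ^{(1)})$, so $|H^{(k)}\triangle G^{(k)}| \le |\binom{V}{k}\setminus \cK_k(\sQ^{(1)})|$. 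As $\sQ$ is in particular a $(1/a_1^{\sQ},\epsilon,\ba^{\sQ})$-equitable family of partitions, \eqref{eq: eta a1} bounds the right-hand side by $k^2\binom{n}{k}/a_1^{\sQ}\le \nu\binom{n}{k}$, using $1/a_1^{\sQ}\ll \nu$.

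For (i), since $\sP$ is $(1/a_1^{\sP},\epsilon',\ba^{\sP})$-equitable by hypothesis, it remains to check that $G^{(k)}$ is $(\epsilon',d_{\ba^{\sP},k}(\hat{\bx}))$-regular with respect to $\hat{P}^{(k-1)}(\hat{\bx})$ for every $\hat{\bx}\in \hat{A}(k,k-1,\ba^{\sP})$. First I would record that by Lemma~\ref{lem: maps bijections}(i) (applicable as $\epsilon'\ll 1/T$) we have $\hat{A}(k,k-1,\ba^{\sP}) = \hat{A}(k,k-1,\ba^{\sP})_{\neq\emptyset}$, and that by Proposition~\ref{prop: hat relation}(vi) the sets $\cK_k(\hat{P}^{(k-1)}(\hat{\by}))$ partition $\cK_k(\sP^{(1)})$, the sets $\cK_k(\hat{Q}^{(k-1)}(\hat{\bx}))$ partition $\cK_k(\sQ^{(1)})\subseteq \cK_k(\sP^{(1)})$, and by Proposition~\ref{prop: hat relation}(xi) the former partition refines the latter. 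Hence each $\hat{\bx}$ falls into exactly one of the two cases in the definition of $d_{\ba^{\sP},k}$, so this density function is well defined. If $\cK_k(\hat{P}^{(k-1)}(\hat{\bx}))\cap \cK_k(\sQ^{(1)})=\emptyset$, then $G^{(k)}\cap \cK_k(\hat{P}^{(k-1)}(\hat{\bx}))=\emptyset$ and $d_{\ba^{\sP},k}(\hat{\bx})=0$, so $G^{(k)}$ is trivially $(\epsilon',0)$-regular with respect to $\hat{P}^{(k-1)}(\hat{\bx})$.

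In the remaining case, let $\hat{\bx}'$ be the unique element of $\hat{A}(k,k-1,\ba^{\sQ})$ with $\cK_k(\hat{P}^{(k-1)}(\hat{\bx}))\subseteq \cK_k(\hat{Q}^{(k-1)}(\hat{\bx}'))$, so $d_{\ba^{\sP},k}(\hat{\bx}) = d_{\ba^{\sQ},k}(\hat{\bx}')$. By Lemma~\ref{lem: maps bijections}(ii) the complex $\hat{\cP}(\hat{\bx})$ is $(\epsilon',(1/a_2^{\sP},\dots,1/a_{k-1}^{\sP}))$-regular, so the counting lemma (Lemma~\ref{lem: counting}, exactly as for the bound~\eqref{eq: polyad big}) gives $|\cK_k(\hat{P}^{(k-1)}(\hat{\bx}))|\ge \alpha_0 n^k$ for some constant $\alpha_0=\alpha_0(T,k)>0$, using $\ba^{\sP}\in [T]^{k-1}$ and that $\sP$ is equitable. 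Since $\cK_k(\hat{P}^{(k-1)}(\hat{\bx}))\neq\emptyset$, the computation in the proof of Proposition~\ref{prop: hat relation}(xi) gives $\hat{P}^{(k-1)}(\hat{\bx})\subseteq \hat{Q}^{(k-1)}(\hat{\bx}')$ as subgraphs. As $\sQ$ is an $(\epsilon,\ba^{\sQ},d_{\ba^{\sQ},k})$-equitable partition of $H^{(k)}$, the $k$-graph $H^{(k)}$ is $(\epsilon,d_{\ba^{\sQ},k}(\hat{\bx}'))$-regular with respect to $\hat{Q}^{(k-1)}(\hat{\bx}')$, so putting $\alpha := |\cK_k(\hat{P}^{(k-1)}(\hat{\bx}))|/|\cK_k(\hat{Q}^{(k-1)}(\hat{\bx}'))| \ge \alpha_0 n^k/n^k = \alpha_0$ and applying Lemma~\ref{lem: simple facts 1}(ii) (valid since $\epsilon\le \alpha_0^2$) shows $H^{(k)}$ is $(\epsilon/\alpha,d_{\ba^{\sQ},k}(\hat{\bx}'))$-regular with respect to $\hat{P}^{(k-1)}(\hat{\bx})$. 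Finally $\cK_k(\hat{P}^{(k-1)}(\hat{\bx}))\subseteq \cK_k(\hat{Q}^{(k-1)}(\hat{\bx}'))\subseteq \cK_k(\sQ^{(1)})$, so $G^{(k)}$ agrees with $H^{(k)}$ on $\cK_k(Q^{(k-1)})$ for every $Q^{(k-1)}\subseteq \hat{P}^{(k-1)}(\hat{\bx})$; hence $G^{(k)}$ is $(\epsilon/\alpha,d_{\ba^{\sP},k}(\hat{\bx}))$-regular with respect to $\hat{P}^{(k-1)}(\hat{\bx})$, and since $\epsilon/\alpha\le \epsilon/\alpha_0\le \epsilon'$ (choosing $\epsilon$ small in terms of $\epsilon'$ and $T$) we obtain (i).

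I expect the only point needing care to be ensuring that $\alpha$ is bounded below by a positive quantity depending only on $T$ and $k$, so that the regularity loss $\epsilon\mapsto \epsilon/\alpha$ in Lemma~\ref{lem: simple facts 1}(ii) is absorbed by the gap $\epsilon\ll\epsilon'$; this is exactly what the counting-lemma estimate on $|\cK_k(\hat{P}^{(k-1)}(\hat{\bx}))|$ delivers. Everything else is bookkeeping about which $k$-cliques lie inside $\cK_k(\sQ^{(1)})$ and are therefore unaffected by passing from $H^{(k)}$ to $G^{(k)}$.
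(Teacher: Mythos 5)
Your proof is correct and follows essentially the same route as the paper's: part (ii) from \eqref{eq: eta a1}; for part (i), the dichotomy from Proposition~\ref{prop: hat relation}(vi),(xi), a counting-lemma lower bound on $|\cK_k(\hat{P}^{(k-1)}(\hat{\by}))|$, then Lemma~\ref{lem: simple facts 1}(ii) together with $G^{(k)}\cap\cK_k(\hat{P}^{(k-1)}(\hat{\by})) = H^{(k)}\cap\cK_k(\hat{P}^{(k-1)}(\hat{\by}))$, with the empty-intersection case being trivial. The only cosmetic difference is that you package the clique-ratio lower bound as a constant $\alpha_0(T,k)$ and absorb $\epsilon/\alpha_0\le\epsilon'$ via the hierarchy, whereas the paper takes $\epsilon^{1/2}$ as the ratio lower bound so that Lemma~\ref{lem: simple facts 1}(ii) directly yields $(\epsilon^{1/2},d)$-regularity and the final step is just $\epsilon^{1/2}\le\epsilon'$; both are valid under the stated hierarchy.
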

\begin{proof}
Note that (ii) follows from \eqref{eq: eta a1}. 
We now verify (i).
Since $\sP\prec\sQ$, by Proposition~\ref{prop: hat relation}(vi)~and~(xi),
for each $\hat{\by}\in \hat{A}(k,k-1,\ba^{\sP})$, 
either there exists a unique $\hat{\bx}\in \hat{A}(k,k-1,\ba^{\sQ})$ 
such that $\cK_k(\hat{P}^{(k-1)}(\hat{\by}))\subseteq \cK_k(\hat{Q}^{(k-1)}(\hat{\bx}))$
or $\cK_k(\hat{P}^{(k-1)}(\hat{\by}))\cap \cK_k(\sQ^{(1)}) = \emptyset$.
Suppose first that there exists a unique $\hat{\bx}\in \hat{A}(k,k-1,\ba^{\sQ})$ 
such that $\cK_k(\hat{P}^{(k-1)}(\hat{\by}))\subseteq \cK_k(\hat{Q}^{(k-1)}(\hat{\bx}))$.
Then Lemma~\ref{lem: counting} implies that 
\begin{align*}
	|\cK_{k}(\hat{P}^{(k-1)}(\hat{\by}))| 
	\geq  (1- 1/4) \prod_{i=1}^{k-1} (a_i^{\sP})^{-\binom{k}{i}} n^{k}  
	\geq  \epsilon^{1/2} |\cK_{k}(\hat{Q}^{(k-1)}(\hat{\bx}))|.
\end{align*}
Together with Lemma~\ref{lem: simple facts 1}(ii) and the fact that $d_{\ba^{\sP},k}(\hat{\by}) = d_{\ba^{\sQ},k}(\hat{\bx})$ and $G^{(k)} \cap \cK_k(\hat{P}^{(k-1)}(\hat{\by})) = H^{(k)} \cap \cK_k(\hat{P}^{(k-1)}(\hat{\by}))$, this implies that $G^{(k)}$ is $(\epsilon^{1/2}, d_{\ba^{\sP},k}(\hat{\by}))$-regular with respect to $\hat{P}^{(k-1)}(\hat{\by})$.

Now suppose that $ \cK_k(\hat{P}^{(k-1)}(\hat{\by}))\cap \cK_k(\sQ^{(1)})=\emptyset$. 
Since $G^{(k)} \subseteq  \cK_k(\sQ^{(1)})$, we have $ \cK_k(\hat{P}^{(k-1)}(\hat{\by}))\cap G^{(k)} =\emptyset$.
Thus $G^{(k)}$ is also $(\epsilon^{1/2}, d_{\ba^{\sP},k}(\hat{\by}))$-regular with respect to $\hat{P}^{(k-1)}(\hat{\by})$ since $d_{\ba^{\sP},k}(\hat{\by})=0$.
Since $\epsilon^{1/2} \leq \epsilon'$, altogether this shows that
$\sP$ is an $(\epsilon',\ba^{\sP},d_{\ba^{\sP},k})$-equitable partition of $G^{(k)}$.
\end{proof}

\subsection{Small perturbations of partitions}\label{sec:3.5}
Here we consider the effect of small changes in a partition on the resulting parameters.
In particular, the next lemma implies that for any equitable family of partitions $\sP$,
every family of partitions that is close to $\sP$ in distance is an equitable family of partitions with almost the same parameters.

\begin{lemma}\label{lem: slightly different partition regularity}
Suppose $k\in \N\sm\{1\}$, $0<1/n \ll \nu \ll \epsilon$, and $0\leq \lambda \leq 1/4$.
Suppose $R=(\epsilon/3,\ba,d_{\ba,k})$ is a regularity instance.
Suppose $V$ is a vertex set of size $n$ and suppose $G^\kk,H^\kk$ are $k$-graphs on $V$ with $|G^{(k)}\triangle H^{(k)}|\leq \nu \binom{n}{k}$.
Suppose $\sP=\sP(k-1,\ba)$ is a $(1/a_1,\epsilon,\ba,\lambda)$-equitable family of partitions on $V$ which is an $(\epsilon,d_{\ba,k})$-partition of $H^{(k)}$.
Suppose $\sQ=\sQ(k-1,\ba)$ is a family of partitions on $V$ such that
for any $j\in [k-1]$, $\hat{\bx}\in \hat{A}(j,j-1,\ba)$, and $b\in [a_j]$, we have
\begin{align}\label{eq: P triangle Q leq nu}
	|P^{(j)}(\hat{\bx},b) \triangle Q^{(j)}(\hat{\bx},b)| \leq \nu \binom{n}{j}.
\end{align}
Then $\sQ$ is a $(1/a_1,\epsilon+ \nu^{1/6},\ba,\lambda+\nu^{1/6})$-equitable family of partitions which is an $(\epsilon+\nu^{1/6},d_{\ba,k})$-partition of $G^{(k)}$.
\end{lemma}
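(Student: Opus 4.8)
The plan is to transfer, level by level, the regularity properties that $\sP$ has with respect to $H^{(k)}$ into the corresponding properties that $\sQ$ should have with respect to $G^{(k)}$, the workhorse being Lemma~\ref{lem: simple facts 2}. Set $m:=\lfloor n/a_1\rfloor$. Since $R$ is a regularity instance, $\|\ba\|_\infty$ is bounded above by a function of $\epsilon$ and $\epsilon<1/100$; together with $1/n\ll\nu\ll\epsilon$ this makes valid all the crude numerical inequalities I will use, in particular $\nu\binom{n}{j}\le\nu^{1/2}m^j$ for $1\le j\le k$, $\nu^{1/2}\le\epsilon^{10}$, and $\nu a_1\le\nu^{1/6}$. (Note that $\sP^{(1)}$ and $\sQ^{(1)}$ need not coincide, but by the $j=1$ case of~\eqref{eq: P triangle Q leq nu} they differ in at most $\nu n$ vertices per class, which affects none of the estimates below; when applying Lemma~\ref{lem: simple facts 2} one regards the restricted polyads as graphs with respect to a common vertex partition, whose classes still have sizes in $[m/2,3m/2]$, which is harmless as $\epsilon$-regularity depends only on edge sets.)

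I would first record two observations. \emph{(1)} Since $R$ is a regularity instance, Lemma~\ref{lem: maps bijections} applies to $\sP$: the operators $\hat P^{(j)}(\cdot)$ and $P^{(j)}(\cdot,\cdot)$ are bijections, $\hat A(j,j-1,\ba)=\hat A(j,j-1,\ba)_{\neq\emptyset}$, and each complex $\hat{\cP}(\hat\bx)$ (for $\hat\bx\in\hat A(j+1,j,\ba)$, $j\ge2$) is $(\epsilon,(1/a_2,\dots,1/a_j))$-regular; arguing exactly as for~\eqref{eq: polyad big} via the counting lemma (Lemma~\ref{lem: counting}), it follows that $|\cK_j(\hat P^{(j-1)}(\hat\bx))|\ge\epsilon^{1/2}m^j$ for all $j\in\{2,\dots,k\}$ and $\hat\bx\in\hat A(j,j-1,\ba)$ (the case $j=2$ being trivial). \emph{(2)} Since $\sQ$ is a family of partitions,~\eqref{eq: larger polyad def} applies to both $\sP$ and $\sQ$: for $\hat\bx\in\hat A(k,k-1,\ba)$ and $j\in[k-1]$, both $\hat P^{(j)}(\hat\bx)$ and $\hat Q^{(j)}(\hat\bx)$ are the union of the $\binom{k}{j}$ parts $P^{(j)}(\hat\bz,\bx^{(j)}_{\bz^{(1)}_*})$, respectively $Q^{(j)}(\hat\bz,\bx^{(j)}_{\bz^{(1)}_*})$, taken over the \emph{same} collection of restrictions $\hat\bz\le_{j,j-1}\hat\bx$, so~\eqref{eq: P triangle Q leq nu} yields
\[
|\hat P^{(j)}(\hat\bx)\,\triangle\,\hat Q^{(j)}(\hat\bx)|\ \le\ \binom{k}{j}\nu\binom{n}{j}\ \le\ \nu^{1/2}m^j ,
\]
and similarly $|V_c\triangle Q^{(1)}(c,c)|\le\nu n\le\nu^{1/2}m$ at the vertex level.

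Next I would verify that $\sQ$ is $(1/a_1,\epsilon+\nu^{1/6},\ba,\lambda+\nu^{1/6})$-equitable. Condition~(i) of Definition~\ref{def: equitable family of partitions} is immediate, and~(ii) follows from $|V_c|=(1\pm\lambda)n/a_1$ and the vertex-level estimate in~(2). For~(iii) (when $k\ge3$) fix $\hat\bx\in\hat A(k,k-1,\ba)$, $i\in\{2,\dots,k-1\}$ and an $i$-element subset $\Lambda$ of $\bx^{(1)}_*$. By~(1) and the equitability of $\sP$, $\hat P^{(i)}(\hat\bx)[\Lambda]$ is $(\epsilon,1/a_i)$-regular with respect to the $(i,i-1,*)$-graph $\hat P^{(i-1)}(\hat\bx)[\Lambda]$, and $|\cK_i(\hat P^{(i-1)}(\hat\bx)[\Lambda])|\ge\epsilon^{1/2}m^i$; by~(2) the graphs $\hat P^{(i)}(\hat\bx)[\Lambda]$, $\hat Q^{(i)}(\hat\bx)[\Lambda]$ differ in at most $\nu^{1/2}m^i$ edges and $\hat P^{(i-1)}(\hat\bx)[\Lambda]$, $\hat Q^{(i-1)}(\hat\bx)[\Lambda]$ in at most $\nu^{1/2}m^{i-1}$ edges. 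Lemma~\ref{lem: simple facts 2}, applied with $i$, $\nu^{1/2}$, $1/a_i$ in place of $k$, $\nu$, $d$, then gives that $\hat Q^{(i)}(\hat\bx)[\Lambda]$ is $(\epsilon+\nu^{1/6},1/a_i)$-regular with respect to $\hat Q^{(i-1)}(\hat\bx)[\Lambda]$. Since $\Lambda$ was arbitrary, $\hat Q^{(i)}(\hat\bx)$ is $(\epsilon+\nu^{1/6},1/a_i)$-regular with respect to $\hat Q^{(i-1)}(\hat\bx)$, so $\hat{\cQ}(\hat\bx)$ is an $(\epsilon+\nu^{1/6},(1/a_2,\dots,1/a_{k-1}))$-regular $(k,k-1,*)$-complex. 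Finally, Proposition~\ref{prop:unique} and~\eqref{eq: hat cP bx is hat cP J is for some J}, applied to $\sQ$, show that every $k$-set $K\in\cK_k(\sQ^{(1)})$ satisfies $\hat{\cQ}(K)=\hat{\cQ}(\hat\bx)$ for some $\hat\bx\in\hat A(k,k-1,\ba)$, which gives~(iii).

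For the remaining claim that $\sQ$ is an $(\epsilon+\nu^{1/6},d_{\ba,k})$-partition of $G^{(k)}$, I would fix $\hat\bx\in\hat A(k,k-1,\ba)$ and apply Lemma~\ref{lem: simple facts 2} one last time at the top level: since $\sP$ is an $(\epsilon,d_{\ba,k})$-partition of $H^{(k)}$, the restriction $H^{(k)}[\bx^{(1)}_*]$ is $(\epsilon,d_{\ba,k}(\hat\bx))$-regular with respect to $\hat P^{(k-1)}(\hat\bx)$, and by~(1),~(2) we have $|\cK_k(\hat P^{(k-1)}(\hat\bx))|\ge\epsilon^{1/2}m^k$, $|\hat P^{(k-1)}(\hat\bx)\triangle\hat Q^{(k-1)}(\hat\bx)|\le\nu^{1/2}m^{k-1}$ and $|H^{(k)}[\bx^{(1)}_*]\triangle G^{(k)}[\bx^{(1)}_*]|\le|H^{(k)}\triangle G^{(k)}|\le\nu\binom{n}{k}\le\nu^{1/2}m^k$; hence $G^{(k)}[\bx^{(1)}_*]$, and therefore $G^{(k)}$, is $(\epsilon+\nu^{1/6},d_{\ba,k}(\hat\bx))$-regular with respect to $\hat Q^{(k-1)}(\hat\bx)$, as required. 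The step I expect to need the most care is observation~(2): one must check that $\hat P^{(j)}(\hat\bx)$ and $\hat Q^{(j)}(\hat\bx)$, although built from different families of partitions sharing only the parameter vector $\ba$, decompose along the \emph{same} address vectors (so that~\eqref{eq: P triangle Q leq nu} genuinely controls their symmetric difference), and that passing to the restriction $[\Lambda]$ to $i$ classes turns each level-$i$ regularity statement into a statement about $(i,i,*)$- and $(i,i-1,*)$-graphs to which Lemma~\ref{lem: simple facts 2} applies directly; the rest is routine numerical bookkeeping.
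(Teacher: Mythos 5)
Your proposal is correct and follows essentially the same strategy as the paper's own proof: establish a lower bound $|\cK_j(\hat P^{(j-1)}(\hat\bx))|\ge\epsilon^{1/2}m^j$ via the counting lemma (Lemma~\ref{lem: counting}) together with Lemma~\ref{lem: maps bijections}, use the hypothesis~\eqref{eq: P triangle Q leq nu} and the decomposition~\eqref{eq:hatPconsistsofP(x,b)} to bound the symmetric differences of the corresponding $\sP$- and $\sQ$-polyads, and then invoke Lemma~\ref{lem: simple facts 2} at each level to transfer regularity from $\sP$ (for $H^{(k)}$) to $\sQ$ (for $G^{(k)}$). The only cosmetic difference is organizational: the paper extends the partitions to a $k$-th level with $a_k:=2$ (treating $H^{(k)}$ and its complement within each polyad) and runs a single uniform loop over $j\in[k]\setminus\{1\}$, whereas you handle the intermediate levels $i\le k-1$ (via restrictions $[\Lambda]$ of a fixed top-level address vector, which unfold to the same objects) and the top level $j=k$ separately; both amount to the same computation.
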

\begin{proof}
Let $t:=\|\ba\|_\infty$ and $m:= \lfloor n/a_1 \rfloor$.
Note that since $R=(\epsilon/3,\ba,d_{\ba,k})$ is a regularity instance, 
we have $\epsilon\leq 3\epsilon_{\ref{def: regularity instance}}(t,k) \leq t^{-4^k}\epsilon_{\ref{lem: counting}}(1/t, 1/t,k-1,k)$. 
Thus Lemma~\ref{lem: counting} (together with Lemma~\ref{lem: maps bijections}(ii)) implies for any $j\in [k-1]\sm\{1\}$ and $\hat{\bx}\in \hat{A}(j,j-1,\ba)$ that
\begin{align}\label{eq: hat polyad is not too much small...}
|\cK_{j}(\hat{P}^{(j-1)}(\hat{\bx}))| 
\geq   (1-1/t)\prod_{i=2}^{j-1}a_i^{-\binom{j}{i}} ((1-\lambda)m)^{j} \geq \epsilon^{1/2} m^{j}.
\end{align}
Since $\sP$ is a $(1/a_1, \epsilon,\ba,\lambda)$-equitable family of partitions, for each $j\in [k-1]\sm\{1\}$, $\hat{\bx}\in \hat{A}(j,j-1,\ba)$, and $b\in [a_j]$, 
the $j$-graph $P^{(j)}(\hat{\bx},b)$ is $(\epsilon,1/a_{j})$-regular with respect to $\hat{P}^{(j-1)}(\hat{\bx})$. 
Hence
$$|P^{(j)}(\hat{\bx},b)|\geq (1/a_j - \epsilon) |\cK_{j}(\hat{P}^{(j-1)}(\hat{\bx}))| \geq
\epsilon^{2/3} m^{j}.$$

Observe that $\sP$ does not provide a partition of $\cK_k(\sP^{(1)})$.
We define such a partition with $a_k:=2$ by setting, for each $\hat{\bx}\in \hat{A}(k,k-1,\ba)$,
$$P^{(k)}(\hat{\bx},1):= \cK_{k}(\hat{P}^{(k-1)}(\hat{\bx}))\cap H^{(k)} \text{ and } P^{(k)}(\hat{\bx},2):= \cK_{k}(\hat{P}^{(k-1)}(\hat{\bx}))\setminus H^{(k)}.$$
Similarly, let
$$Q^{(k)}(\hat{\bx},1):= \cK_{k}(\hat{Q}^{(k-1)}(\hat{\bx}))\cap G^{(k)} \text{ and } Q^{(k)}(\hat{\bx},2):= \cK_{k}(\hat{Q}^{(k-1)}(\hat{\bx}))\setminus G^{(k)}.$$
Fix some $j\in [k]\setminus\{1\}$, $\hat{\bx}\in \hat{A}(j,j-1,\ba)$, and $b\in [a_j]$. 
Depending on fixed choices of $j,\hat{\bx}$ and $b$, let 
$$d:= \left\{ \begin{array}{ll} 1/a_j  & \text{ if }j\in [k-1],\\
d_{\ba,k}(\hat{\bx}) & \text{ if } j=k , b=1, \text{ and} \\
1-d_{\ba,k}(\hat{\bx}) & \text{ if }j=k, b=2. \end{array}\right.$$
Hence $P^{(j)}(\hat{\bx},b)$ is $(\epsilon,d)$-regular with respect to $\hat{P}^{(j-1)}$. (Here we use Lemma~\ref{lem: simple facts 1}(i) if $j=k$ and $b=2$.)
Recall that \eqref{eq:hatPconsistsofP(x,b)} holds for both $\sP$ and $\sQ$. Together with \eqref{eq: P triangle Q leq nu} this implies that 
$$|\hat{P}^{(j-1)}(\hat{\bx})\triangle \hat{Q}^{(j-1)}(\hat{\bx})|
\leq \nu j\binom{n}{j-1} \leq \nu^{1/2} m^{j-1}.$$
Moreover, \eqref{eq: P triangle Q leq nu} also implies that $ |P^{(j)}(\hat{\bx},b) \triangle Q^{(j)}(\hat{\bx},b)| \leq \nu \binom{n}{j} \leq \nu^{1/2} m^{j}$.
Thus, by \eqref{eq: hat polyad is not too much small...}, we can use Lemma~\ref{lem: simple facts 2}\COMMENT{ Note that we have $\epsilon <1/100$ since $(\epsilon/3,\ba,d_{\ba,k})$ is a regularity instance. 
Note that $\| \ba\|_{\infty}\geq 2$, $k\geq 2$, thus $ \| \ba\|_{\infty}^{-4^k} \leq 2^{-16} < 1/100$ and $\epsilon \leq \| \ba\|_{\infty}^{-4^k} < 1/100$ from the definition of regularity instance.
} with $P^{(j)}(\hat{\bx},b), \hat{P}^{(j-1)}(\hat{\bx}),Q^{(j)}(\hat{\bx},b), \hat{Q}^{(j-1)}(\hat{\bx}), \nu^{1/2}$ playing the roles of $H^{(k)}, H^{(k-1)}, G^{(k)}, G^{(k-1)}, \nu$ to conclude that
$Q^{(j)}(\hat{\bx},b)$ is $(\epsilon+\nu^{1/6},d)$-regular with respect to $\hat{Q}^{(j-1)}(\hat{\bx},b)$.
Furthermore, \eqref{eq: P triangle Q leq nu} for $j=1$ implies that each part of $\sQ^{(1)}$ has size $(1\pm \lambda \pm \nu^{1/2}) n/a_1$. 
Thus $\sQ$ is an $(1/a_1,\epsilon+ \nu^{1/6},\ba,\lambda+\nu^{1/6})$-equitable family of partitions on $V$. 
Moreover, since $Q^{(k)}(\hat{\bx},1)= \cK_{k}(\hat{Q}^{(k-1)}(\hat{\bx}))\cap G^{(k)}$, we know that $\sQ$ is also an $(\epsilon+\nu^{1/6},d_{\ba,k})$-partition of $G^{(k)}$.
\end{proof}

\COMMENT{
The following lemma 
shows that for every equitable family of partitions $\sP$ whose vertex partition $\sP^{(1)}$ is an almost equipartition,
there is an equitable family of partitions with almost the same parameters whose vertex partition is an equipartition.
\begin{lemma}\label{lem: removing lambda}
Suppose $0< 1/n \ll \lambda   \ll \epsilon \leq 1$, $k\in \N\sm\{1\}$, and $R=(\epsilon/3,\ba,d_{\ba,k})$ is a regularity instance.
Suppose $\sP=\sP(k-1,\ba)$ is a $(1/a_1,\epsilon,\ba,\lambda)$-equitable family of partitions and an $(\epsilon,d_{\ba,k})$-partition of an $n$-vertex $k$-graph $H^{(k)}$.
Then there exists a family of partitions $\sQ=\sQ(k-1,\ba)$ which is an $(\epsilon+\lambda^{1/10},\ba,d_{\ba,k})$-equitable partition of $H^{(k)}$. 
\end{lemma}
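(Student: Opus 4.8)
The plan is to eliminate the parameter $\lambda$ by moving a small number of vertices between the classes of $\sP^{(1)}$ so as to turn the near‑equipartition into an exact equipartition, and then invoke Lemma~\ref{lem: slightly different partition regularity} to control the effect of this surgery on all the regularity parameters. More precisely, first I would fix $t:=\|\ba\|_\infty$ and $m:=\lfloor n/a_1\rfloor$, and choose an auxiliary constant $\nu$ with $\lambda\ll\nu\ll\epsilon$ (say $\nu:=\lambda^{1/2}$). Since $\sP$ is $(1/a_1,\epsilon,\ba,\lambda)$‑equitable, each class $V_i$ of $\sP^{(1)}$ has size $(1\pm\lambda)n/a_1$, so the total number of vertices that have to be relocated to make $\sP^{(1)}$ into an equipartition $\sQ^{(1)}$ (with $|V_i'|\in\{m,m+1\}$) is at most $2\lambda n\le \nu n$. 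Let $Z\subseteq V$ be this set of moved vertices, $|Z|\le\nu n$, and let $\sQ^{(1)}=\{V_1',\dots,V_{a_1}'\}$ be the resulting equipartition.

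Next I would build the higher levels of $\sQ$ from those of $\sP$ in the obvious way: for $j\in[k-1]\setminus\{1\}$ and $(\hat{\bx},b)\in\hat{A}(j,j-1,\ba)\times[a_j]$, set $Q^{(j)}(\hat{\bx},b):=\big(P^{(j)}(\hat{\bx},b)\setminus\binom{V}{j}_Z\big)\cap\cK_j(\sQ^{(1)})$ together with the induced polyads, where $\binom{V}{j}_Z$ denotes the $j$‑sets meeting $Z$; then redistribute the (few) $j$‑sets that now cross $\sQ^{(1)}$ but were not previously crossing arbitrarily among the $a_j$ relevant parts so as to obtain an honest family of partitions $\sQ=\sQ(k-1,\ba)$. (One should check via Lemma~\ref{lem: family of partitions construction} that \ref{item:FP1}--\ref{item:FP3} hold; the non‑emptiness of parts follows from the lower bounds on $|P^{(j)}(\hat{\bx},b)|$ coming from $\epsilon$‑regularity and Lemma~\ref{lem: counting}, exactly as in the proof of Lemma~\ref{lem: slightly different partition regularity}, since only $O(\nu)$ fraction of each part is disturbed.) The key quantitative point is that every $j$‑set in $P^{(j)}(\hat{\bx},b)\triangle Q^{(j)}(\hat{\bx},b)$ either meets $Z$ or fails to cross one of the two partitions $\sP^{(1)},\sQ^{(1)}$; in either case it is counted, so
\[
|P^{(j)}(\hat{\bx},b)\triangle Q^{(j)}(\hat{\bx},b)|\le j|Z|\binom{n}{j-1}+\Big|\cK_j(\sP^{(1)})\triangle\cK_j(\sQ^{(1)})\Big|\le \nu\binom{n}{j},
\]
using $|Z|\le\nu n$ and $\lambda\ll\nu$.

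Having established \eqref{eq: P triangle Q leq nu} with this $\nu$, I would apply Lemma~\ref{lem: slightly different partition regularity} with $G^{(k)}:=H^{(k)}$ (so $|G^{(k)}\triangle H^{(k)}|=0\le\nu\binom{n}{k}$), with $\sP$, $\sQ$ as above, and with $\lambda$ in the role of $\lambda$ there. Its conclusion is that $\sQ$ is a $(1/a_1,\epsilon+\nu^{1/6},\ba,\lambda+\nu^{1/6})$‑equitable family of partitions which is an $(\epsilon+\nu^{1/6},d_{\ba,k})$‑partition of $H^{(k)}$. But by construction $\sQ^{(1)}$ is an exact equipartition, so $\sQ$ is in fact $(1/a_1,\epsilon+\nu^{1/6},\ba)$‑equitable with the $\lambda$‑parameter dropped, hence an $(\epsilon+\nu^{1/6},\ba,d_{\ba,k})$‑equitable partition of $H^{(k)}$. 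Since $\nu=\lambda^{1/2}$ gives $\nu^{1/6}=\lambda^{1/12}\le\lambda^{1/10}$ for $\lambda$ small, this is the desired statement. The only mild obstacle is the bookkeeping in the second step—verifying that the truncate‑and‑redistribute construction really yields a valid family of partitions with non‑empty parts and that the symmetric‑difference bound \eqref{eq: P triangle Q leq nu} holds at every level—but this is entirely analogous to arguments already carried out in the proofs of Lemmas~\ref{lem: slightly different partition regularity} and~\ref{lem: improve regularity partition}, and needs $1/n\ll\lambda$ only to guarantee that the equipartition and the redistribution can be performed exactly.
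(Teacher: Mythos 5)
Your overall strategy coincides with the paper's: first fix $\sQ^{(1)}$ to be an exact equipartition by relocating $O(\lambda n)$ vertices, then inductively build $\sQ^{(j)}$ from $\sP^{(j)}$ with only $O(\lambda n^j)$ symmetric difference, verify via Lemma~\ref{lem: family of partitions construction} that the result is a family of partitions, and finish by feeding the symmetric-difference bounds into Lemma~\ref{lem: slightly different partition regularity} with $G^{(k)}=H^{(k)}$. The paper defines $Q^{(j)}(\hat{\bx},b)$ by intersecting $P^{(j)}(\hat{\bx},b)$ with $\cK_j(\hat{Q}^{(j-1)}(\hat{\bx}))$ (with the leftover dumped into the $a_j$-th part), which automatically produces a partition of $\cK_j(\hat{Q}^{(j-1)}(\hat{\bx}))$; your ``delete the $j$-sets meeting $Z$ and redistribute the rest'' formulation is looser, and you would need to make the redistribution respect each polyad $\hat{Q}^{(j-1)}(\hat{\bx})$, not merely the vertex partition $\sQ^{(1)}$, but this is a presentational rather than substantive difference.

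The one concrete error is the final numerical step. You choose $\nu=\lambda^{1/2}$ and then assert $\nu^{1/6}=\lambda^{1/12}\le\lambda^{1/10}$ ``for $\lambda$ small.'' This is backwards: for $\lambda\in(0,1)$ and $a<b$ we have $\lambda^{a}>\lambda^{b}$, so $\lambda^{1/12}>\lambda^{1/10}$, and your argument only delivers an $(\epsilon+\lambda^{1/12},\ba,d_{\ba,k})$-equitable partition, which is strictly weaker than claimed. The symmetric-difference bound you establish is of order $\lambda n^j$, so you are free to take $\nu$ much closer to $\lambda$; the paper takes $\nu=\lambda^{9/10}$, which gives $\nu^{1/6}=\lambda^{3/20}$, and since $3/20>1/10$ one indeed has $\lambda^{3/20}\le\lambda^{1/10}$. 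Replace $\nu=\lambda^{1/2}$ by $\nu=\lambda^{9/10}$ (or anything in $[\lambda^{1-o(1)},\lambda^{3/5}]$) and the calculation closes.
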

\begin{proof}
Let $m:=\lfloor n/a_1 \rfloor$.
We write $\sP^{(1)}=\{V_1,\dots ,V_{a_1}\}$.
Since $\sP$ is a $(1/a_1,\epsilon,\ba,\lambda)$-equitable family of partitions,
we have $|V_i| = (1\pm \lambda)m$ for all $i\in [a_1]$, and Lemma~\ref{lem: maps bijections} implies that for each $j\in [k-1]$, 
the function $\hat{P}^{(j)}(\cdot):\hat{A}(j+1,j,\ba)\to \hat{\sP}^{(j)}$ is a bijection and 
for each $j\in [k-1]\setminus \{1\}$, the function
$P^{(j)}(\cdot,\cdot):\hat{A}(j,j-1,\ba)\times [a_j]\to \sP^{(j)}$ is also a bijection.
Next, we fix the size of the parts in the new equitable partition $\sQ$ of $V:=V_1\cup \dots\cup V_{a_1}$.
For each $i\in [a_1]$, let $m_i:= \lfloor (n+i-1)/a_1\rfloor$.
Thus $m_i\in \{m,m+1\}$.
Choose $U'_i\sub V_i$ of size $\max\{|V_i|, m_i\}$ and let $U'_0:= \bigcup_{i\in [a_1]} V_i\setminus U'_i$.
We partition $U'_0$ into $U''_1,\dots, U''_{a_1}$ in an arbitrary manner such that $|U''_i| = m_i - |U'_i|$.
For each $i\in [a_1]$, let 
\begin{align*}
	U_i := U'_i\cup U''_i \text{ and }\sQ^{(1)}:=\{U_1,\dots, U_{a_1}\}.
\end{align*}
Moreover, let $Q^{(1)}(b,b) := U_b$ for each $b\in [a_1]$,
and $Q^{(1)}(b,b'):=\es$ for all distinct $b,b'\in[a_1]$.
For each $i\in [a_1]$, we have $$|U_i\triangle V_i|\leq | (1\pm \lambda)m-m_i| \leq \lambda m+1.$$
For each $\hat{\bx}=(\alpha_1,\alpha_2) \in \hat{A}(2,1,\ba)$, let 
$\hat{Q}^{(1)}(\hat{\bx}):= U_{\alpha_1} \cup U_{\alpha_2}$.
Note that $\{\cK_{2}(\hat{Q}^{(1)}(\hat{\bx})):\hat{\bx} \in \hat{A}(2,1,\ba)\}$ forms a partition of $\cK_2(\sQ^{(1)})$.

Now, we inductively construct $\sQ^{(2)},\dots, \sQ^{(k-1)}$ in this order.
Assume that for some $j\in [k]\sm\{1\}$, we have already defined  $\{\sQ^{(i)}\}_{i=1}^{j-1}$ with $\sQ^{(i)}=\{Q^{(i)}(\hat{\bx},b): \hat{\bx}\in \hat{A}(i,i-1,\ba), b\in [a_i]\}$ and $\hat{\sQ}^{(i)}=\{\hat{Q}^{(i)}(\hat{\bx}) :\hat{\bx}\in \hat{A}(i+1,i,\ba)\}$ for each $i\in [j-1]$ such that the following hold.

\begin{itemize}
\item[(Q1)$_{j-1}$] For each $i\in [j-1]$, $\hat{\bx}\in \hat{A}(i,i-1,\ba)$ and $b\in [a_i]$, 
we have $|P^{(i)}(\hat{\bx},b)\triangle Q^{(i)}(\hat{\bx},b)| \leq 2^{i}i! \lambda n^{i}$.
\item[(Q2)$_{j-1}$] For each $i\in [j-1]\setminus\{1\}$ and $\hat{\bx}\in \hat{A}(i,i-1,\ba)$, the collection  
$\{Q^{(i)}(\hat{\bx},b) : b\in [a_i]\}$ forms a partition of $\cK_{i}(\hat{Q}^{(i-1)}(\hat{\bx}))$.
\item[(Q3)$_{j-1}$]  For each $i\in [j-1]$ and $\hat{\bx}\in \hat{A}(i+1,i,\ba)$, 
we have $\hat{Q}^{(i)}(\hat{\bx}) =\bigcup_{\hat{\by}\leq_{i,i-1} \hat{\bx}} Q^{(i)}(\hat{\by},\bx^{(i)}_{\by^{(1)}_* })$.
\end{itemize}\vspace{0.2cm}
Note that $\sQ^{(1)}$ satisfies (Q1)$_{1}$--(Q3)$_{1}$. Suppose first that $j\leq k-1$. In this case we will define $\sQ^{(j)}$ satisfying (Q1)$_{j}$--(Q3)$_{j}$.
For each $\hat{\bx}\in \hat{A}(j,j-1,\ba)$ and $b\in [a_j]$, we define
\begin{align*}
	Q^{(j)}(\hat{\bx},b):=\left\{ \begin{array}{ll}
	P^{(j)}(\hat{\bx},b) \cap\cK_{j}(\hat{Q}^{(j-1)}(\hat{\bx})) & \text{ if } b\in [a_j-1], \\
	\cK_{j}(\hat{Q}^{(j-1)}(\hat{\bx}))\setminus \bigcup_{b\in [a_j-1]} P^{(j)}(\hat{\bx},b) & \text{ otherwise.}
	\end{array}\right. 
\end{align*}
Let $$\sQ^{(j)}:= \{	Q^{(j)}(\hat{\bx},b):\hat{\bx}\in \hat{A}(j,j-1,\ba), b\in [a_j]\}.$$
Then for any fixed $\hat{\bx}\in \hat{A}(j,j-1,\ba)$, it is obvious that 
$Q^{(j)}(\hat{\bx},1),\dots, Q^{(j)}(\hat{\bx},a_j)$ forms a partition of $\cK_{j}(\hat{Q}^{(j-1)}(\hat{\bx}))$. Thus (Q2)$_{j}$ holds.

For each $\hat{\bz} \in \hat{A}(j+1,j,\ba)$, let
\begin{align*}
\hat{Q}^{(j)}(\hat{\bz}) :=\bigcup_{\hat{\by}\leq_{j,j-1} \hat{\bz}} Q^{(j)}(\hat{\by},\bz^{(j)}_{\by^{(1)}_* }).
\end{align*}
Then (Q3)$_{j}$ also holds.%
\COMMENT{For each $\hat{\bx}\in \hat{A}(j,j-1,\ba)$ and $b\in [a_j]$, the definition of $Q^{(j)}(\hat{\bx},b)$ implies that 
$$P^{(j)}(\hat{\bx},b)\triangle Q^{(j)}(\hat{\bx},b) \subseteq 
\cK_{j}(\hat{P}^{(j-1)}(\hat{\bx}))\triangle \cK_{j}(\hat{Q}^{(j-1)}(\hat{\bx})).$$
}

Note that for any fixed $(j-1)$-set $J'\in \hat{P}^{(j-1)}(\hat{\bx})\triangle \hat{Q}^{(j-1)}(\hat{\bx})$, there are at most $(1+\lambda)m$ distinct $j$-sets in $\cK_{j}(\hat{P}^{(j-1)}(\hat{\bx}))\triangle \cK_{j}(\hat{Q}^{(j-1)}(\hat{\bx}))$ containing $J'$.
Thus for $\hat{\bx}\in \hat{A}(j,j-1,\ba)$ and $b\in [a_j]$, we obtain
\begin{eqnarray*}
|P^{(j)}(\hat{\bx},b)\triangle Q^{(j)}(\hat{\bx},b)|
&\leq &|\cK_{j}(\hat{P}^{(j-1)}(\hat{\bx}))\triangle \cK_{j}(\hat{Q}^{(j-1)}(\hat{\bx}))|\\ &\leq& (1+\lambda) m |\hat{P}^{(j-1)}(\hat{\bx})\triangle \hat{Q}^{(j-1)}(\hat{\bx})| \\
&\stackrel{\eqref{eq:hatPconsistsofP(x,b)}, {\rm (Q3)}_{j-1} }{\leq}&
 \sum_{\hat{\by}\leq_{j-1,j-2} \hat{\bx}} 2m | P^{(j-1)}(\hat{\by},\bx^{(j)}_{\by^{(1)}_* }) \triangle Q^{(j-1)}(\hat{\by},\bx^{(j)}_{\by^{(1)}_* })|  \\
 & \stackrel{{\rm (Q1)}_{j-1}}{\leq}& 2^{j} j! \lambda n^{j}. 
 \end{eqnarray*}
 \COMMENT{Here, \eqref{eq:hatPconsistsofP(x,b)} indicate the fact that $\sP$ satisfies \eqref{eq:hatPconsistsofP(x,b)}.
 Also \eqref{eq: I subset J then leq} implies that there are at most $j$ vectors $\hat{\by}$ such that $\hat{\by} \leq_{j-1,j-2} \hat{\bx}$. Thus we get the final inequality. }
Thus (Q1)$_{j}$ holds and 
we obtain $\{\sQ^{(i)}\}_{i=1}^{j}$ satisfying (Q1)$_{j}$--(Q3)$_{j}$. 
Inductively we obtain $\sQ= \{\sQ^{(i)}\}_{i=1}^{k-1}$ satisfying (Q1)$_{k-1}$--(Q3)$_{k-1}$.

Note that since $R=(\epsilon/3,\ba,d_{\ba,k})$ is a regularity instance, we have $\epsilon\leq \|\ba\|_\infty^{-4^k} \epsilon_{\ref{lem: counting}}(\|\ba\|^{-1}_\infty, \|\ba\|^{-1}_\infty,k-1,k)$. 
Thus Lemma~\ref{lem: counting} (together with Lemma~\ref{lem: maps bijections}(ii)) implies for any $j \in [k-1]\sm\{1\}$ and $(\hat{\bx},b)\in \hat{A}(j,j-1,\ba)\times[a_j]$ that
$|P^{(j)}(\hat{\bx},b)|  \geq \epsilon^{1/2} n^{j}.$%
\COMMENT{
$$|P^{(j)}(\hat{\bx},b)| \geq \frac{1}{2a_j} | \cK_j(\hat{P}^{(j-1)}(\hat{\bx}))|  \geq \frac{1}{4} \prod_{i=2}^{j}a_i^{-\binom{j}{i}} ((1-\lambda)m)^{j}  \geq \epsilon^{1/2} n^{j}.$$}
This with (Q1)$_{k-1}$ shows that  for any $j \in [k-1]\sm\{1\}$ and $(\hat{\bx},b)\in \hat{A}(j,j-1,\ba)\times[a_j]$, the $j$-graph $Q^{(j)}(\hat{\bx},b)$ is nonempty.
Together with properties (Q2)$_{k-1}$ and (Q3)$_{k-1}$ this in turn ensures that we can apply Lemma~\ref{lem: family of partitions construction} to show that $\sQ$ is a family of partitions.

By (Q1)$_{k-1}$ and the assumption that $R=(\epsilon/3,\ba,d_{\ba,k})$ is a regularity instance, we can apply Lemma~\ref{lem: slightly different partition regularity} with $\sP, \sQ, H^{(k)}, H^{(k)}, \lambda, \lambda^{9/10}$\COMMENT{ $2^{j}j! \lambda n^{j} \leq \lambda^{9/10} \binom{n}{j}.$} playing the roles of $\sP, \sQ, H^{(k)}, G^{(k)}, \lambda, \nu$ to obtain that  $\sQ=\sQ(k-1,\ba)$ is an $(\epsilon+\lambda^{1/10},\ba,d_{\ba,k})$-equitable partition of $H^{(k)}$.
\end{proof}
}

\subsection{Distance of hypergraphs and density functions}\label{sec:3.6}
Recall that the distance between two density functions was defined in Section~\ref{sec: subsub density function}.
In this subsection we present two results that relate the distance between two hypergraphs to the distance between density functions of equitable families of partitions of these hypergraphs.
The first result shows that the distance of the density functions provides a lower bound on the distance between the two hypergraphs (we will use this in the proof of Theorem~\ref{thm: estimable}).
On the other hand,
Lemma~\ref{lem: density close then hypergraph close} shows that the lower bound given in Lemma~\ref{lem: hypergraph close then density close} is essentially tight (this will also be applied in the proof of Theorem~\ref{thm: estimable}).

\begin{lemma}\label{lem: hypergraph close then density close}
Suppose $0< 1/n \ll \epsilon  \ll \nu, 1/t, 1/k$ and $k\in \N\sm\{1\}$.
Suppose $\ba\in [t]^{k-1}$.
Suppose that $G^{(k)}$ and $H^{(k)}$ are $k$-graphs on vertex set $V$ with $|V|=n$.
Suppose that $\sP$ is an $(\epsilon,\ba,d^{G}_{\ba,k})$-equitable partition of $G^{(k)}$ and an $(\epsilon,\ba,d^{H}_{\ba,k})$-equitable partition of $H^{(k)}$. 
Then 
\begin{align*}
	\dist(d^{H}_{\ba,k},d^{G}_{\ba,k}) \leq |H^{(k)}\triangle G^{(k)}|\binom{n}{k}^{-1} + \nu.
\end{align*}
\end{lemma}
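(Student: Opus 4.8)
The plan is to express both the measure $|H^{(k)} \triangle G^{(k)}|$ and the distance $\dist(d^H_{\ba,k}, d^G_{\ba,k})$ in terms of contributions from the individual polyads $\hat{P}^{(k-1)}(\hat{\bx})$ for $\hat{\bx} \in \hat{A}(k,k-1,\ba)$, and then compare them term by term. First I would set $m := \lfloor n/a_1 \rfloor$ and $t := \norm{\ba}$. Since $\sP$ is $(\epsilon,\ba)$-equitable, Lemma~\ref{lem: maps bijections} applies (with $1/n \ll \epsilon \ll 1/t$), so $\hat{P}^{(k-1)}(\cdot)$ is a bijection onto $\hat{\sP}^{(k-1)}$ and the complexes $\hat{\cP}(\hat{\bx})$ are $(\epsilon,(1/a_2,\dots,1/a_{k-1}))$-regular; hence by Lemma~\ref{lem: counting} (applied with $\gamma$ a small constant depending only on $k,t$), each $|\cK_k(\hat{P}^{(k-1)}(\hat{\bx}))| = (1\pm \gamma)\prod_{i=1}^{k-1}a_i^{-\binom{k}{i}} n^k$. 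Also, by Proposition~\ref{prop: hat relation}(vi), $\{\cK_k(\hat{P}^{(k-1)}(\hat{\bx})) : \hat{\bx}\}$ partitions $\cK_k(\sP^{(1)})$, and by~\eqref{eq: eta a1} the complement $\binom{V}{k}\setminus \cK_k(\sP^{(1)})$ has size at most $k^2(1/a_1)\binom{n}{k} \le \epsilon\binom{n}{k}$ (using $a_1 \ge 1/\epsilon$... actually $a_1\ge \eta^{-1}$; here the relevant bound is simply that this is negligible compared to $\nu$).

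The key computation: for each $\hat{\bx}$, since $\sP$ is an $(\epsilon, d^H_{\ba,k}(\hat{\bx}))$-regular partition of $H^{(k)}$ with respect to $\hat{P}^{(k-1)}(\hat{\bx})$, and $\hat{P}^{(k-1)}(\hat{\bx}) \ne \es$, we get $d(H^{(k)} \mid \hat{P}^{(k-1)}(\hat{\bx})) = d^H_{\ba,k}(\hat{\bx}) \pm \epsilon$, so $|H^{(k)} \cap \cK_k(\hat{P}^{(k-1)}(\hat{\bx}))| = (d^H_{\ba,k}(\hat{\bx}) \pm \epsilon)|\cK_k(\hat{P}^{(k-1)}(\hat{\bx}))|$; likewise for $G^{(k)}$ with $d^G_{\ba,k}(\hat{\bx})$. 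Therefore
\[
|H^{(k)} \triangle G^{(k)}| \ge \sum_{\hat{\bx}} \bigl| |H^{(k)}\cap \cK_k(\hat{P}^{(k-1)}(\hat{\bx}))| - |G^{(k)}\cap \cK_k(\hat{P}^{(k-1)}(\hat{\bx}))| \bigr|
\]
because the sets $\cK_k(\hat{P}^{(k-1)}(\hat{\bx}))$ are disjoint and on each one the symmetric difference is at least the difference of the two intersection sizes (here I use that the terms of $\cK_k(\sP^{(1)})$ exhaust all crossing $k$-sets, and we simply discard the non-crossing part, which only helps the lower bound). Each summand is at least $(|d^H_{\ba,k}(\hat{\bx}) - d^G_{\ba,k}(\hat{\bx})| - 2\epsilon)|\cK_k(\hat{P}^{(k-1)}(\hat{\bx}))|$. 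Plugging in the counting estimate for $|\cK_k(\hat{P}^{(k-1)}(\hat{\bx}))|$ and summing over all $\binom{a_1}{k}\prod_{i=2}^{k-1}a_i^{\binom{k}{i}} = |\hat{A}(k,k-1,\ba)|$ choices of $\hat{\bx}$, I would obtain
\[
|H^{(k)} \triangle G^{(k)}| \ge (1-\gamma)\prod_{i=1}^{k-1}a_i^{-\binom{k}{i}} n^k \sum_{\hat{\bx}} |d^H_{\ba,k}(\hat{\bx}) - d^G_{\ba,k}(\hat{\bx})| \;-\; 2\epsilon |\hat{A}(k,k-1,\ba)| \prod_{i=1}^{k-1}a_i^{-\binom{k}{i}} n^k.
\]
Recalling that $\binom{a_1}{k} = (1\pm\gamma)a_1^k/k!$ and hence $\prod_{i=1}^{k-1}a_i^{-\binom{k}{i}} n^k = (1\pm O(\gamma)) \frac{k!}{\binom{a_1}{k}} \prod_{i=1}^{k-1} a_i^{-\binom{k}{i}}\cdots$ — more directly, $\binom{n}{k}^{-1} \cdot \prod_{i=1}^{k-1}a_i^{-\binom{k}{i}}n^k = (1\pm O(\gamma)) k! \prod_{i=1}^{k-1} a_i^{-\binom{k}{i}}$, which is exactly the normalizing constant in the definition of $\dist$. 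Dividing through by $\binom{n}{k}$ and using $|\hat{A}(k,k-1,\ba)| \le t^{2^k}$ together with $\epsilon \ll \nu, 1/t, 1/k$ to absorb the error terms $\gamma$ and $2\epsilon t^{2^k} k!$ into $\nu$, I arrive at $\dist(d^H_{\ba,k}, d^G_{\ba,k}) \le |H^{(k)}\triangle G^{(k)}|\binom{n}{k}^{-1} + \nu$, as desired.

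I do not expect a serious obstacle here; the proof is essentially a careful bookkeeping exercise. The one point requiring mild care is the direction of the key inequality $|A \triangle B| \ge \big| |A\cap C| - |B\cap C|\big|$ when restricted to a block $C$ and then summed over disjoint blocks — this is immediate since $|(A\triangle B)\cap C| = |A\cap C| + |B\cap C| - 2|A\cap B\cap C| \ge |A\cap C| - |B\cap C|$ and symmetrically, and the blocks $C = \cK_k(\hat{P}^{(k-1)}(\hat{\bx}))$ are pairwise disjoint. The only genuinely "quantitative" step is ensuring the multiplicative $(1\pm\gamma)$ errors from the counting lemma and the $\binom{n}{k}$ versus $n^k/k!$ conversion, plus the additive $\epsilon$-regularity slack, all collectively stay below $\nu$; this is guaranteed by the hierarchy $1/n \ll \epsilon \ll \nu, 1/t, 1/k$ after choosing $\gamma = \gamma(k,t)$ small enough at the outset (say $\gamma := \nu/(10 t^{2^k} k!)$), which is legitimate since $\epsilon$ is chosen after $\gamma$ in the hierarchy.
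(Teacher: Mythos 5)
Your proposal is correct and follows essentially the same route as the paper's proof: use $\epsilon$-regularity to relate $|H^{(k)}\cap\cK_k(\hat{P}^{(k-1)}(\hat{\bx}))|$ and $|G^{(k)}\cap\cK_k(\hat{P}^{(k-1)}(\hat{\bx}))|$ to the density function values, apply the counting lemma (via Lemma~\ref{lem: maps bijections}) to estimate $|\cK_k(\hat{P}^{(k-1)}(\hat{\bx}))|$, use the fact that these clique sets partition $\cK_k(\sP^{(1)})$ to sum the per-polyad lower bounds $(|d^H_{\ba,k}(\hat{\bx})-d^G_{\ba,k}(\hat{\bx})|-2\epsilon)|\cK_k(\hat{P}^{(k-1)}(\hat{\bx}))| \le |(H^{(k)}\triangle G^{(k)})\cap\cK_k(\hat{P}^{(k-1)}(\hat{\bx}))|$, and then absorb all the multiplicative $(1\pm\gamma)$ and additive $O(\epsilon)$ errors into $\nu$ using the hierarchy of constants.
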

\begin{proof}
\COMMENT{`$1/n\ll \epsilon $' is unnecessary condition here.
We can just put $0< 1/n,\epsilon \ll \nu$ in the statement and add the following sentence at the beginning of the proof.
`By increasing the value of $\epsilon$ if necessary, we may assume that we have
$0< 1/m \ll \epsilon \ll \nu$.'
Note that  $\sP$ is an $(\epsilon,\ba,d^{G}_{\ba,k})$-equitable partition  even after increasing the value of $\epsilon$.
}
For each $\hat{\bx}\in \hat{A}(k,k-1,\ba)$, 
by definition, 
$G^{(k)}$ is $(\epsilon, d^{G}_{k,\ba}(\hat{\bx}))$-regular with respect to $\hat{P}^{(k-1)}(\hat{\bx})$ and $H^{(k)}$ is $(\epsilon, d^{H}_{k,\ba}(\hat{\bx}))$-regular with respect to $\hat{P}^{(k-1)}(\hat{\bx})$.
Thus
\begin{align*}
|G^{(k)}\cap  \cK_{k}(\hat{P}^{(k-1)}(\hat{\bx}))| &= (d^{G}_{k,\ba}(\hat{\bx})\pm \epsilon)|\cK_{k}(\hat{P}^{(k-1)}(\hat{\bx}))| \text{ and}\\
|H^{(k)}\cap  \cK_{k}(\hat{P}^{(k-1)}(\hat{\bx}))| &= (d^{H}_{k,\ba}(\hat{\bx})\pm \epsilon)|\cK_{k}(\hat{P}^{(k-1)}(\hat{\bx}))|.
\end{align*}
Hence
\begin{align}\label{eq: HG diff}
\left(|d^{G}_{\ba,k}(\hat{\bx})- d^{H}_{\ba,k}(\hat{\bx})|- 2\epsilon\right)|\cK_{k}(\hat{P}^{(k-1)}(\hat{\bx}))| 
\leq |(H^{(k)}\triangle G^{(k)})\cap \cK_{k}(\hat{P}^{(k-1)}(\hat{\bx}))|.
\end{align}
Since $1/n \ll \epsilon \ll \nu, 1/t,1/k$, Lemma~\ref{lem: counting} (together with Lemma~\ref{lem: maps bijections}(ii)) implies that 
\begin{align}\label{eq:cliques1}
	|\cK_{k}(\hat{P}^{(k-1)}(\hat{\bx}))| = \left(1\pm \frac{\nu}{2}\right) \prod_{i=1}^{k-1} a_i^{-\binom{k}{i}} n^{k}.
\end{align}
As $\{ \cK_{k}(\hat{P}^{(k-1)}(\hat{\bx})) :\hat{\bx}\in \hat{A}(k,k-1,\ba)\}$ partitions $\cK_{k}(\sP^{(1)})$ (by Proposition~\ref{prop: hat relation}(vi)),
we obtain
\begin{eqnarray*}
\dist(d^{G}_{\ba,k},d^{H}_{\ba,k})\hspace{-0.2cm}  &=& \hspace{-0.2cm} 
k! \prod_{i=1}^{k-1} a_i^{-\binom{k}{i}}\hspace{-0.4cm}  \sum_{\hat{\bx}\in \hat{A}(k,k-1,\ba)} \hspace{-0.4cm}|d^{G}_{\ba,k}(\hat{\bx})- d^{H}_{\ba,k}(\hat{\bx})|  \nonumber \\ 
\hspace{-0.2cm} &\stackrel{\eqref{eq: HG diff}}{\leq}& \hspace{-0.2cm} 
k! \prod_{i=1}^{k-1} a_i^{-\binom{k}{i}} \hspace{-0.4cm}
\sum_{\hat{\bx}\in \hat{A}(k,k-1,\ba)}  \hspace{-0.4cm}
\frac{|(H^{(k)}\triangle G^{(k)})\cap \cK_{k}(\hat{P}^{(k-1)}(\hat{\bx}))| + 2\epsilon |\cK_{k}(\hat{P}^{(k-1)}(\hat{\bx}))| }{|\cK_{k}(\hat{P}^{(k-1)}(\hat{\bx}))|} \nonumber \\
\hspace{-0.2cm} &\stackrel{(\ref{eq:cliques1})}{\leq}&\hspace{-0.2cm}  \sum_{\hat{\bx}\in \hat{A}(k,k-1,\ba)} \hspace{-0.4cm}\frac{k! |(H^{(k)}\triangle G^{(k)})\cap \cK_{k}(\hat{P}^{(k-1)}(\hat{\bx}))| +\epsilon^{1/2} n^{k}}{ (1- \nu/2) n^{k}}  \nonumber \\
\hspace{-0.2cm} &\leq& \hspace{-0.2cm}  \frac{ k! |H^{(k)}\triangle G^{(k)}|}{n^{k}} + \frac{2\nu}{3}  \leq |H^{(k)}\triangle G^{(k)}|\binom{n}{k}^{-1} + \nu,
\end{eqnarray*}
which completes the proof.
\end{proof}

\begin{lemma}\label{lem: density close then hypergraph close}
Suppose $0< 1/n\ll \epsilon \ll \nu , 1/t, 1/k$ and $k\in \N\sm\{1\}$.
Suppose $\ba\in [t]^{k-1}$.
Suppose that $H^{(k)}$ is an $n$-vertex $k$-graph and
suppose that $\sP$ is an $(\epsilon,\ba,d^{H}_{\ba,k})$-equitable partition of $H^{(k)}$.
Suppose $d^{G}_{\ba,k}$ is a density function of $\hat{A}(k,k-1,\ba)$. 
Then there exists a $k$-graph $G^{(k)}$ such that $\sP$ is a $(3\epsilon,\ba,d^{G}_{\ba,k})$-equitable partition of $G^{(k)}$ and 
\begin{align*}
	|H^{(k)}\triangle G^{(k)}|\leq (\dist(d^{H}_{\ba,k},d^{G}_{\ba,k})+\nu)\binom{n}{k}.
\end{align*}
\end{lemma}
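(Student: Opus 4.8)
The plan is to construct $G^{(k)}$ polyad-by-polyad, redistributing edges within each $\cK_k(\hat{P}^{(k-1)}(\hat{\bx}))$ so that the density with respect to $\hat{P}^{(k-1)}(\hat{\bx})$ moves from $d^{H}_{\ba,k}(\hat{\bx})$ to $d^{G}_{\ba,k}(\hat{\bx})$, while controlling the total symmetric difference. First I would set $m:=\lfloor n/a_1\rfloor$ and, since $R$ is implicitly a regularity instance (as $\ba\in[t]^{k-1}$ and $\epsilon$ is tiny relative to $1/t$ — in fact I would note $\epsilon\leq\epsilon_{\ref{def: regularity instance}}(t,k)$ after possibly shrinking $\epsilon$), apply Lemma~\ref{lem: maps bijections} to conclude that $\hat{P}^{(k-1)}(\cdot):\hat{A}(k,k-1,\ba)\to\hat{\sP}^{(k-1)}$ is a bijection and that each polyad complex $\hat{\cP}(\hat{\bx})$ is $(\epsilon,(1/a_2,\dots,1/a_{k-1}))$-regular. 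Then Lemma~\ref{lem: counting} gives
$$|\cK_k(\hat{P}^{(k-1)}(\hat{\bx}))| = (1\pm \nu/10)\prod_{i=1}^{k-1}a_i^{-\binom{k}{i}}n^k$$
for every $\hat{\bx}\in\hat{A}(k,k-1,\ba)$; in particular these clique sets are all of size $\geq\epsilon^{1/2}m^k$, so the regularity hypotheses of the slicing-type arguments below are met.

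Next, fix $\hat{\bx}\in\hat{A}(k,k-1,\ba)$ and write $d_H:=d^{H}_{\ba,k}(\hat{\bx})$, $d_G:=d^{G}_{\ba,k}(\hat{\bx})$. If $d_G\geq d_H$, I would take a random subset of $\cK_k(\hat{P}^{(k-1)}(\hat{\bx}))\setminus H^{(k)}$ including each edge independently with an appropriate probability $p$ chosen so that the expected new density is exactly $d_G$ (using that $\cK_k(\hat{P}^{(k-1)}(\hat{\bx}))\setminus H^{(k)}$ is $(\epsilon,1-d_H)$-regular with respect to $\hat{P}^{(k-1)}(\hat{\bx})$ by Lemma~\ref{lem: simple facts 1}(i), so $p=(d_G-d_H)/(1-d_H)$); if $d_G<d_H$, I would instead randomly delete edges from $H^{(k)}\cap\cK_k(\hat{P}^{(k-1)}(\hat{\bx}))$ with probability $(d_H-d_G)/d_H$. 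Either way, a Chernoff bound (Lemma~\ref{lem: chernoff}) over all $\hat{P}^{(k-1)}$-definable test sets $Q^{(k-1)}$ — of which there are at most $2^{|\hat{P}^{(k-1)}(\hat{\bx})|}\leq 2^{n^{k-1}}$ — shows that with positive probability the resulting $k$-graph $G^{(k)}$ restricted to $\cK_k(\hat{P}^{(k-1)}(\hat{\bx}))$ is $(3\epsilon,d_G)$-regular with respect to $\hat{P}^{(k-1)}(\hat{\bx})$ (the loss from $\epsilon$ to $3\epsilon$ absorbs both the $\epsilon$-fluctuation of the clique counts of $Q^{(k-1)}$ and the concentration error), and the number of edges changed inside this polyad is at most $(|d_H-d_G|+\epsilon^{1/4})|\cK_k(\hat{P}^{(k-1)}(\hat{\bx}))|$. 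Setting $G^{(k)}$ to agree with $H^{(k)}$ outside $\cK_k(\sP^{(1)})$ and applying these changes independently across the (disjointly supported) polyads, $\sP$ becomes a $(3\epsilon,d_G^{H})$... no — a $(3\epsilon,\ba,d^{G}_{\ba,k})$-equitable partition of $G^{(k)}$, since the vertex partition and all lower-level partitions are untouched.

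Finally, for the distance bound I would sum over $\hat{\bx}$: using $\{\cK_k(\hat{P}^{(k-1)}(\hat{\bx}))\}$ partitions $\cK_k(\sP^{(1)})$ (Proposition~\ref{prop: hat relation}(vi)) and the clique-count estimate above,
$$|H^{(k)}\triangle G^{(k)}| \leq \sum_{\hat{\bx}}(|d^{H}_{\ba,k}(\hat{\bx})-d^{G}_{\ba,k}(\hat{\bx})|+\epsilon^{1/4})|\cK_k(\hat{P}^{(k-1)}(\hat{\bx}))| \leq \Big(\dist(d^{H}_{\ba,k},d^{G}_{\ba,k})+\tfrac{\nu}{2}\Big)\binom{n}{k},$$
where the last step uses $|\hat{A}(k,k-1,\ba)|\cdot\epsilon^{1/4}\leq t^{2^k}\epsilon^{1/4}\ll\nu$ and the definition of $\dist$ as $k!\prod a_i^{-\binom{k}{i}}\sum|d^H-d^G|$, matching $k!\prod a_i^{-\binom{k}{i}}\binom{n}{k}^{-1}\approx n^{-k}$ against the clique sizes. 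The main obstacle I expect is bookkeeping the exact relationship between the normalization constant in $\dist$, the clique-count estimate $(1\pm\nu/10)\prod a_i^{-\binom{k}{i}}n^k$, and the factor $k!$ hidden in $\binom{n}{k}$ — i.e. making sure the additive slacks really collapse into the single $\nu$ — together with verifying that a single random redistribution step genuinely yields $(3\epsilon,\cdot)$-regularity rather than needing the more elaborate iterative machinery of Lemma~\ref{lem: improve regularity partition}; here the point is that we are not trying to \emph{improve} regularity, only to move the density while keeping it at the (slightly worse) level $3\epsilon$, which is exactly what a one-shot probabilistic argument delivers.
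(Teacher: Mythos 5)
Your proposal is correct and achieves the same conclusion as the paper, but by a genuinely different route: you replace the paper's invocation of the slicing lemma (Lemma~\ref{lem: slicing}) with an inline one-shot probabilistic construction (add each missing edge of the polyad independently with probability $(d_G-d_H)/(1-d_H)$, or delete each present edge with probability $(d_H-d_G)/d_H$), followed by Chernoff and a union bound over the at most $2^{|\hat{P}^{(k-1)}(\hat{\bx})|}$ test sets $Q^{(k-1)}\subseteq\hat P^{(k-1)}(\hat{\bx})$. Since the slicing lemma is itself proved by precisely this Chernoff-plus-union-bound mechanism, the two arguments are morally the same engine; what you lose in cleanliness (you re-derive the concentration step rather than citing a black box) you gain back in uniformity: the paper needs a three-way case split because the slicing lemma requires the sliced density to be at least $2\epsilon$ and the proportions $p_i$ bounded below, so the near-equal case $|d^H_{\ba,k}(\hat{\bx})-d^G_{\ba,k}(\hat{\bx})|\leq 2\epsilon$ must be treated separately; your Chernoff estimate $2\exp(-2t^2/N)$ is insensitive to the inclusion probability $p$, so your argument absorbs that degenerate case automatically (with $p\to 0$ simply yielding $G^{(k)}\cap\cK_k(\hat P^{(k-1)}(\hat{\bx}))\approx H^{(k)}\cap\cK_k(\hat P^{(k-1)}(\hat{\bx}))$, which is already $(3\epsilon,d_G)$-regular). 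Your final observation is also on target: the iterative machinery of Lemma~\ref{lem: improve regularity partition} is unnecessary here, because you are not improving the regularity parameter, only relocating the density while tolerating a degradation from $\epsilon$ to $3\epsilon$. The bookkeeping at the end — matching the normalization $k!\prod_i a_i^{-\binom{k}{i}}$ in $\dist$ against the clique-count estimate $(1\pm\nu/10)\prod_i a_i^{-\binom{k}{i}}n^k$ and against $\binom{n}{k}\sim n^k/k!$, and burying $|\hat A(k,k-1,\ba)|\epsilon^{1/4}\leq t^{2^k}\epsilon^{1/4}$ inside the $\nu$ slack — works as you describe, and the union bound is comfortable since $\epsilon^3 t^{-2^k}n^k\gg n^{k-1}$. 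So this is a valid, self-contained alternative to the paper's slicing-lemma proof.
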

\begin{proof}
\COMMENT{`$1/n\ll \epsilon $' is unnecessary condition here.
We can just put $0< 1/n,\epsilon \ll \nu$ in the statement and add the following sentence at the beginning of the proof.
`By increasing the value of $\epsilon$ if necessary, we may assume that we have
$0< 1/m \ll \epsilon \ll \nu$.'
Note that  $\sP$ is an $(\epsilon,\ba,d^{H}_{\ba,k})$-equitable partition  even after increasing the value of $\epsilon$.
}
Suppose $\hat{\bx}\in \hat{A}(k,k-1,\ba)$.
By Lemmas~\ref{lem: counting} and~\ref{lem: maps bijections}(ii), and the assumption $1/n, \epsilon \ll \nu, 1/t, 1/k$, we obtain
\begin{align}\label{eq: complex not small}
|\cK_k(\hat{P}^{(k-1)}(\hat{\bx}))|= \left(1\pm\frac{\nu}{2}\right)  \prod_{i=1}^{k-1} a_i^{-\binom{k}{i}} n^{k}.
\end{align}
We will distinguish the following cases depending on the values of $d^{H}_{\ba,k}(\hat{\bx})$ and $d^{G}_{\ba,k}(\hat{\bx})$.

\noindent
\textbf{Case 1:} $|d^{H}_{\ba,k}(\hat{\bx})-d^{G}_{\ba,k}(\hat{\bx})|\leq 2\epsilon $.
Let  $G^{(k)}(\hat{\bx}) := H^{(k)}(\hat{\bx})$.

\noindent
\textbf{Case 2:} $d^{H}_{\ba,k}(\hat{\bx})> d^{G}_{\ba,k}(\hat{\bx})+2\epsilon$. 
Let 
$$p_1:= \max\left\{ \frac{d^{G}_{\ba,k}(\hat{\bx})}{d^{H}_{\ba,k}(\hat{\bx})}, 1-\frac{d^{G}_{\ba,k}(\hat{\bx})}{d^{H}_{\ba,k}(\hat{\bx})} \right\}.$$
This definition implies that $1/2 \leq p_1 \leq 1$.
Note $d^{H}_{\ba,k}(\hat{\bx})\geq 2\epsilon$.
Because of this and \eqref{eq: complex not small}, we can apply the slicing lemma (Lemma~\ref{lem: slicing}) to $H^{(k)}\cap \cK_{k}(\hat{P}^{(k-1)}(\hat{\bx}))$
to%
\COMMENT{
{\small
\begin{tabular}{c|c|c|c|c}
object/parameter & $H^{(k)}\cap \cK_{k}(\hat{P}^{(k-1)}(\hat{\bx}))$ & $\hat{P}^{(k-1)}(\hat{\bx})$ & $d^{H}_{\ba,k}(\hat{\bx})$ & $p_1$ \\ \hline
playing the role of & $H^{(k)}$& $ H^{(k-1)}$ & $d$ &  $p_1$\\ 
\end{tabular}
}
}
obtain two $k$-graphs 
$F^{(k)}_0, F^{(k)}_1\subseteq H^{(k)}\cap \cK_{k}(\hat{P}^{(k-1)}(\hat{\bx}))$ such that 
\begin{itemize}
\item $F^{(k)}_1$ is $(3\epsilon,d^{H}_{\ba,k}(\hat{\bx})p_1)$-regular with respect to $\hat{P}^{(k-1)}(\hat{\bx})$, and
\item $F^{(k)}_0$ is $(3\epsilon,d^{H}_{\ba,k}(\hat{\bx})(1-p_1))$-regular with respect to $\hat{P}^{(k-1)}(\hat{\bx})$.
\end{itemize}
Let 
$$G^{(k)}(\hat{\bx}) :=\left\{\begin{array}{ll} F^{(k)}_1 & \text{ if } \frac{d^{G}_{\ba,k}(\hat{\bx})}{d^{H}_{\ba,k}(\hat{\bx})}\geq 1/2, \\
F^{(k)}_0 & \text{ otherwise.}\end{array} \right.$$
By construction, we have
$G^{(k)}(\hat{\bx})\subseteq H^{(k)}\cap \cK_{k}(\hat{P}^{(k-1)}(\hat{\bx})).$

\noindent
\textbf{Case 3:} $d^{H}_{\ba,k}(\hat{\bx})<  d^{G}_{\ba,k}(\hat{\bx})-2\epsilon$.
Let
$$p'_1:= \max\left\{ \frac{1-d^{G}_{\ba,k}(\hat{\bx})}{1-d^{H}_{\ba,k}(\hat{\bx})}, 1-\frac{1-d^{G}_{\ba,k}(\hat{\bx})}{1-d^{H}_{\ba,k}(\hat{\bx})} \right\}.$$
Similarly as before, $1/2 \leq p'_1 \leq 1$.
Note $1-d^{H}_{\ba,k}(\hat{\bx})\geq 2\epsilon$.
Because of this, Lemma~\ref{lem: simple facts 1}(i) and \eqref{eq: complex not small}, we can apply Lemma~\ref{lem: slicing} with $\cK_{k}(\hat{P}^{(k-1)}(\hat{\bx}))\setminus H^{(k)}$, $\hat{P}^{(k-1)}(\hat{\bx})$, $1-d^{H}_{\ba,k}(\hat{\bx})$ and $p'_1$ playing the roles of $H^{(k)}$, $ H^{(k-1)}$, $d$ and  $p_1$.
Then we obtain two $k$-graphs 
$F^{(k)}_0, F^{(k)}_1\subseteq \cK_{k}(\hat{P}^{(k-1)}(\hat{\bx}))\setminus H^{(k)}$ such that
\begin{itemize}
\item $F^{(k)}_1$ is $(3\epsilon,(1-d^{H}_{\ba,k}(\hat{\bx}))p'_1)$-regular with respect to $\hat{P}^{(k-1)}(\hat{\bx})$, and 
\item $F^{(k)}_0$ is $(3\epsilon,(1-d^{H}_{\ba,k}(\hat{\bx}))(1-p'_1))$-regular with respect to $\hat{P}^{(k-1)}(\hat{\bx})$.
\end{itemize}
We define
\begin{align*}
	G^{(k)}(\hat{\bx}) 
	:=\left\{\begin{array}{ll} \cK_{k}(\hat{P}^{(k-1)}(\hat{\bx}))\setminus F^{(k)}_1& \text{ if } \frac{1-d^{G}_{\ba,k}(\hat{\bx})}{1-d^{H}_{\ba,k}(\hat{\bx})}\geq 1/2,\\
	\cK_{k}(\hat{P}^{(k-1)}(\hat{\bx}))\setminus F^{(k)}_0 & \text{ otherwise.}\end{array} \right.
\end{align*}
Hence
$$H^{(k)}\cap \cK_{k}(\hat{P}^{(k-1)}(\hat{\bx})) \subseteq G^{(k)}(\hat{\bx}) \subseteq \cK_{k}(\hat{P}^{(k-1)}(\hat{\bx})).$$ 
In addition, $G^{(k)}(\hat{\bx})$ is $(3\epsilon,d^{G}_{\ba,k}(\hat{\bx}))$-regular with respect to $\hat{P}^{(k-1)}(\hat{\bx})$,
by Lemma~\ref{lem: simple facts 1}(i).

We can now combine the graphs from the above three cases and let
\begin{align*}
	G^\kk:= \left( \bigcup_{\hat{\bx}\in \hat{A}(k,k-1,\ba)}G^{(k)}(\hat{\bx})\right) \cup (H^\kk\sm \cK_k(\sP^{(1)})).
\end{align*}
By construction, $\sP$ is a $(3\epsilon,\ba,d^{G}_{\ba,k})$-equitable partition of $G^{(k)}$. 
In all three cases,
$$|(H^{(k)}\triangle G^{(k)})\cap \cK_{k}(\hat{P}^{(k-1)}(\hat{\bx}))| 
\leq (|d^{H}_{\ba,k}(\hat{\bx})- d^{G}_{\ba,k}(\hat{\bx})|+ 4\epsilon)|\cK_{k}(\hat{P}^{(k-1)}(\hat{\bx}))|.$$
Therefore, we conclude
\begin{eqnarray*}
|H^{(k)}\triangle G^{(k)}|&=& 
\sum_{\hat{\bx}\in \hat{A}(k,k-1,\ba)} |(H^{(k)}\triangle G^{(k)})\cap \cK_{k}(\hat{P}^{(k-1)}(\hat{\bx}))| \nonumber \\
&\leq& \sum_{\hat{\bx}\in \hat{A}(k,k-1,\ba)}(|d^{H}_{\ba,k}(\hat{\bx})- d^{G}_{\ba,k}(\hat{\bx})|+ 4\epsilon)|\cK_{k}(\hat{P}^{(k-1)}(\hat{\bx}))|\nonumber\\
&\stackrel{\eqref{eq: complex not small}}{\leq}& \left(1+ \frac{\nu}{2}\right)\sum_{\hat{\bx}\in \hat{A}(k,k-1,\ba)}  (|d^{H}_{\ba,k}(\hat{\bx})- d^{G}_{\ba,k}(\hat{\bx})|+ 4\epsilon) \prod_{i=1}^{k-1} a_i^{-\binom{k}{i}} n^{k} \\
&\leq& (\dist( d^{H}_{\ba,k},d^{G}_{\ba,k})  +\nu) \binom{n}{k}.
\end{eqnarray*}
\COMMENT{
We could add this as a second last line:
$\left(1+ \frac{\nu}{2}\right) (\dist( d^{H}_{\ba,k},d^{G}_{\ba,k})+ \epsilon^{1/2}  ) \frac{n^k}{k!}$.
}
\end{proof}

\section{Testable properties are regular reducible}\label{sec:testred}

In this section we show one direction of our main result (Lemma~\ref{lem: main 1}).
It states that every testable $k$-graph property can be (approximately) described by suitable regularity instances. (Recall that the formal definition of being regular reducible is given in Definition~\ref{def: regular reducible}.)

\begin{lemma}\label{lem: main 1}
If a $k$-graph property is testable, then it is regular reducible. 
\end{lemma}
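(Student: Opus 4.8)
The strategy follows the outline in Section~\ref{sec:proofsketch}, combining the reduction to canonical testers (Goldreich--Trevisan) with the induced counting lemma (Corollary~\ref{cor: counting collection}) and the regular approximation lemma (Theorem~\ref{thm: RAL}). Fix a testable $k$-graph property $\bP$ with query complexity function $q_k$. Given $\beta>0$, I would first invoke the Goldreich--Trevisan reduction to assume that for each $n$ and suitable $\alpha$ the tester is canonical: it samples a set $Q$ of $q\le q_k(\alpha)$ vertices uniformly at random, looks at $H[Q]$, and accepts iff $H[Q]$ is isomorphic to some member of a fixed family $\cQ$ of $q$-vertex $k$-graphs. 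Writing $\bPr(\cQ,H)$ for the probability that $H[Q]\in\cQ$, testability gives $\bPr(\cQ,H)\ge 2/3$ when $H$ satisfies $\bP$ and $\bPr(\cQ,H)\le 1/3$ when $H$ is $\alpha$-far from $\bP$; the gap $1/3$ between these will provide the slack we need.

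\textbf{Discretizing regularity instances.} Next I would choose parameters $\gamma\ll\beta$ and an appropriate $\epsilon$ and bound $t$ (depending on $\beta$, $q$, $k$ via Corollary~\ref{cor: counting collection} and the constraints in Definition~\ref{def: regularity instance}), and define a \emph{finite} set $\bI=\bI(n,\beta,\bP)$ of candidate regularity instances $(\epsilon,\ba,d_{\ba,k})$: range $\ba$ over all admissible vectors in $[t]^{k-1}$, and let each value $d_{\ba,k}(\hat\bx)$ range over a fixed finite grid (say multiples of $\gamma$) in $[0,1]$. This makes $|\bI|$ bounded by a function of $\beta$ and $k$ only, and each instance has complexity $1/\epsilon$ bounded in terms of $\beta$ and $k$. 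Then set
$$\cR=\cR(n,\beta,\bP):=\{\, R=(\epsilon,\ba,d_{\ba,k})\in\bI : IC(\cQ,d_{\ba,k})\ge 1/2 \,\},$$
where $IC(\cQ,d_{\ba,k})$ is as in~\eqref{eq: def IC cF d a k}. By Corollary~\ref{cor: counting collection}, any $n$-vertex $k$-graph $G$ satisfying some $R\in\bI$ has $\bPr(\cQ,G)=IC(\cQ,d_{\ba,k})\pm\gamma$, so membership in $\cR$ roughly says ``$k$-graphs satisfying $R$ are accepted by the tester.''

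\textbf{Verifying the two conditions of Definition~\ref{def: regular reducible}.} For the first bullet, suppose $H$ satisfies $\bP$. Apply Theorem~\ref{thm: RAL} to $H$ (with $\eta,\nu$ small and $\epsilon(\cdot)$ chosen to beat $\|\ba\|_\infty^{-1}$ as the final bullet of Definition~\ref{def: regularity instance} demands) to obtain $G$ with $|G\triangle H|\le\nu\binom nk$ and a family $\sP=\sP(k-1,\ba^\sP)$ which is equitable and with respect to which $G$ is perfectly $\epsilon(\ba^\sP)$-regular; by~\eqref{eq: perfectly regular is regular} this yields a genuine density function $d^G_{\ba,k}$, hence a regularity instance $R^G$ satisfied by $G$. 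Rounding $d^G_{\ba,k}$ to the nearest grid point changes $G$ by at most $(\gamma+o(1))\binom nk$ (using Lemma~\ref{lem: density close then hypergraph close}) and produces an instance $R\in\bI$; by Proposition~\ref{prop: mathbf Pr doesn't change much} and Corollary~\ref{cor: counting collection}, $IC(\cQ,d_{\ba,k})=\bPr(\cQ,G')\pm\gamma\ge\bPr(\cQ,H)-O(\gamma+\nu)\ge 2/3-O(\gamma+\nu)\ge 1/2$, so $R\in\cR$, and $H$ is $\beta$-close to $R$ by the triangle inequality on the edit distances. For the second bullet, suppose some $R=(\epsilon,\ba,d_{\ba,k})\in\cR$ has $H$ $(\alpha-\beta)$-close to $R$; then there is $G$ with $|G\triangle H|\le(\alpha-\beta)\binom nk$ satisfying $R$, so by Corollary~\ref{cor: counting collection} $\bPr(\cQ,G)=IC(\cQ,d_{\ba,k})\pm\gamma\ge 1/2-\gamma$, and by Proposition~\ref{prop: mathbf Pr doesn't change much} $\bPr(\cQ,H)\ge 1/2-\gamma-q^k(\alpha-\beta)$; for $\alpha-\beta$ small this exceeds $1/3$, contradicting $\alpha$-farness unless $H$ is not $\alpha$-far from $\bP$ — one has to be slightly careful here and instead argue the contrapositive, deducing that if $H$ is $\alpha$-far from $\bP$ then it cannot be $(\alpha-\beta)$-close to any $R\in\cR$. (Since the definition only requires this for all $\alpha>\beta$, and the tester works for all $\alpha$, the bookkeeping closes.)

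\textbf{Main obstacle.} The delicate point is the uniform control of all the error parameters: the counting-lemma error $\gamma$, the RAL perturbation $\nu$, the discretization grid, and the quantities $\epsilon$ and $t$ all have to be chosen so that $\epsilon\le\epsilon_{\ref{def: regularity instance}}(t,k)$ (so that the elements of $\bI$ are bona fide regularity instances and Corollary~\ref{cor: counting collection} applies), while simultaneously $\gamma,\nu$ are small enough that the $2/3$ vs.\ $1/3$ gap survives all the transfers between $H$, $G$, the rounded $G'$, and $H[Q]$. The subtlety is that the admissible range of $\ba$ (hence $t$, hence $\epsilon$) depends on $q_k(\alpha)$, which in turn must be fixed \emph{before} $\alpha$ is quantified in Definition~\ref{def: regular reducible}; this is handled because Definition~\ref{def: regular reducible} fixes $\cR$ depending only on $\beta$ and then quantifies over $\alpha>\beta$, and the canonical tester's query complexity is bounded by $q_k(\alpha)\le q_k(\beta')$ for an appropriate threshold. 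Managing this hierarchy cleanly — rather than any single estimate — is where the real work lies.
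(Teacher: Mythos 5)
Your overall plan matches the paper's: reduce to canonical testers, encode the accepting family $\cQ$ via $IC(\cQ,d_{\ba,k})$, discretize regularity instances, and use the regular approximation lemma plus the induced counting lemma to relate $\bPr(\cQ,\cdot)$ to $IC(\cQ,\cdot)$. However, there is a genuine error in your verification of the second bullet of Definition~\ref{def: regular reducible}, and it is not repaired by the remark that one should ``argue the contrapositive.''

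You write that from $\bPr(\cQ,G)\ge 1/2-\gamma$ (where $G$ satisfies $R\in\cR$ and $|G\triangle H|\le(\alpha-\beta)\binom nk$), Proposition~\ref{prop: mathbf Pr doesn't change much} yields $\bPr(\cQ,H)\ge 1/2-\gamma-q^k(\alpha-\beta)$, and that ``for $\alpha-\beta$ small this exceeds $1/3$.'' But $\alpha-\beta$ is not small: in Definition~\ref{def: regular reducible} the quantity $\alpha$ ranges over \emph{all} reals in $(\beta,1)$, so $\alpha-\beta$ can be close to $1$, and the additive term $q^k(\alpha-\beta)$ destroys the bound. Transferring $\bPr(\cQ,\cdot)$ from $G$ to $H$ is hopeless when they can be nearly $\binom nk$ edges apart. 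The missing idea is to apply the tester's rejection criterion to $G$ itself, not to $H$: since $\bPr(\cQ,G)>1/3$ and $\bT$ is a canonical $(n,\beta)$-tester, $G$ cannot be $\beta$-far from $\bP$, i.e.\ there is $F'$ satisfying $\bP$ with $|F'\triangle G|\le\beta\binom nk$. Then the triangle inequality on edit distance gives $|H\triangle F'|\le(\alpha-\beta)\binom nk+\beta\binom nk=\alpha\binom nk$, contradicting $\alpha$-farness of $H$. This chain through $G$ and the structural (not counting-based) conclusion about $G$ is what makes the argument close; your version stops one step short.

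Two smaller points. First, Definition~\ref{def: regular reducible} requires a family $\cR(n,\beta,\bP)$ for \emph{every} $n\ge k$, but the regular approximation lemma only applies for $n\ge n_0$, where $n_0$ depends on the hierarchy. Your construction is silent on small $n$; the paper handles this with a separate family $\bI'(\ell)$ of degenerate instances with $\ba=(\ell,1,\dots,1)$, $\{0,1\}$-valued density functions, and a tiny regularity parameter $\epsilon_*\ll 1/n_0$, forcing the partition into singletons so that satisfying such an $R$ is equivalent to being isomorphic to a specific graph. Second, your rounding step via Lemma~\ref{lem: density close then hypergraph close} is unnecessary and costs you an extra graph $G'$ and an extra application of Proposition~\ref{prop: mathbf Pr doesn't change much}; the paper instead observes that if $G$ is $(\epsilon',d)$-regular w.r.t.\ a polyad then it is automatically $(\epsilon'',d')$-regular for a grid value $d'$ and a grid value $\epsilon''\in[2\epsilon',3\epsilon']$ absorbing the rounding error, so $G$ itself satisfies a discretized instance $R_G\in\bI$ without modification. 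Your route can be made to work but is a more roundabout version of the same idea; the gap above is the substantive one.
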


Goldreich and Trevisan~\cite{GT03} proved that every testable graph property
is also testable in some canonical way.
It is an easy exercise to translate their results to the hypergraph setting,
as their arguments also work in this case.
Thus in the proof of Lemma~\ref{lem: main 1} we may restrict ourselves to such canonical testers.

To be precise, an $(n,\alpha)$-tester $\bT=\bT(n,\alpha)$ is \emph{canonical}
if, given an $n$-vertex $k$-graph $H$, it chooses
a set $Q$ of $q_k = q_k(n,\alpha)$ vertices of $H$ uniformly at random, 
queries all $k$-sets in $Q$,
and then accepts or rejects $H$ (deterministically) according to (the isomorphism class of) $H[Q]$. In particular, $\bT$ has query complexity $\binom{q_k}{k}$. Moreover, every canonical tester is non-adaptive.

\begin{lemma}[Goldreich and Trevisan~\cite{GT03}]\label{lem: canonical}
Suppose that $\bP$ is a $k$-graph property which is testable with query complexity at most $q_k= q_k(\alpha)$. Then for all $n\in \mathbb{N}$ and $\alpha\in (0,1)$, there is a canonical $(n,\alpha)$-tester $\bT=\bT(n,\alpha)$ for $\bP$ with query complexity at most $\binom{9kq_k}{k}$.
\end{lemma}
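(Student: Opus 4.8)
The plan is to adapt the canonicalization argument of Goldreich and Trevisan~\cite{GT03} to $k$-graphs; each step goes through with only obvious bookkeeping changes. Fix $n$ and $\alpha$, write $q:=q_k(\alpha)$ and $q':=9kq$. If $n\le q'$ we are immediately done, since the tester that queries all at most $\binom{q'}{k}$ edges of $H$ and decides according to the isomorphism class of $H$ is exact and hence a canonical $(n,\alpha)$-tester. So assume $n>q'$, and let $\bT_0$ be the given $(n,\alpha)$-tester of query complexity at most $q$. First I would amplify $\bT_0$: run it nine times with independent coins and output the majority vote, obtaining a tester $\bT_1$. The probability that a majority of nine answers, each correct with probability at least $2/3$, is wrong equals $\mathbb{P}[\mathrm{Bin}(9,1/3)\ge 5]<1/6$, so $\bT_1$ accepts each $H$ satisfying $\bP$ with probability $>5/6$ and rejects each $H$ that is $\alpha$-far from $\bP$ with probability $>5/6$; moreover $\bT_1$ makes at most $9q$ queries, each a $k$-set, so it accesses at most $q'=9kq$ vertices of $H$ in total.

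Next I would pass to a vertex-set form, exactly as in~\cite{GT03}: the vertices that $\bT_1$ ever touches may be regarded as the first coordinates of a uniformly random ordered $q'$-tuple. Define $\bT_2$ to sample such a tuple $(v_1,\dots,v_{q'})$, query \emph{all} $k$-subsets of $Q:=\{v_1,\dots,v_{q'}\}$, and then simulate $\bT_1$, supplying $v_1,v_2,\dots$ in turn whenever it requests a fresh random vertex and answering each of its edge queries from the already-revealed information (legitimate, since every such query is a $k$-subset of $Q$). Because a prefix of a uniform random tuple is distributed as a sequence of fresh uniform random vertices, $\bT_2$ accepts every $H$ with exactly the same probability as $\bT_1$; it is non-adaptive, has query complexity $\binom{q'}{k}$, and its output depends only on the coins of $\bT_1$ and on the labelled induced $k$-graph $H[Q]$. (The replacement-versus-no-replacement point is handled by $n>q'$ and a standard coupling, or by building the model on random vertex sets from the outset.)

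Finally I would remove the remaining randomness. For a $q'$-set $Q$ let $p(H[Q])$ denote the probability that $\bT_2$ accepts conditioned on its sampled set being $Q$; averaging over the ordering of $Q$ and over the coins, this depends only on the isomorphism class of $H[Q]$, since $\bT_0$ only ever sees yes/no answers to $k$-set queries. Let $\bT$ be the tester that samples a uniformly random $q'$-set $Q$, queries all $k$-subsets of $Q$, and accepts if and only if $p(H[Q])\ge 1/2$; it is canonical by construction, with query complexity $\binom{q'}{k}=\binom{9kq_k}{k}$. For correctness one uses $\mathbb{E}_Q[p(H[Q])]=\mathbb{P}[\bT_2\text{ accepts }H]=\mathbb{P}[\bT_1\text{ accepts }H]$: if $H$ satisfies $\bP$ this expectation exceeds $5/6$, and since $p(\cdot)\le 1$ this forces $\mathbb{P}_Q[p(H[Q])<1/2]<1/3$, so $\bT$ accepts with probability $>2/3$; if $H$ is $\alpha$-far from $\bP$ the expectation is below $1/6$, whence $\tfrac12\,\mathbb{P}_Q[p(H[Q])\ge 1/2]<1/6$ and $\bT$ rejects with probability $>2/3$.

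The delicate point is the interplay between amplification and derandomization. One cannot threshold $\bT_0$ directly, because a Markov-type argument only yields $\mathbb{P}[p\ge 1/2]\ge 1/3$ from $\mathbb{E}[p]\ge 2/3$, which is useless; amplifying first creates precisely the slack needed to absorb this loss, and the constant $9$ is chosen so that nine repetitions simultaneously push the error below $1/6$ \emph{and} keep the number of sampled vertices at $9kq_k$. The only genuinely $k$-graph-specific ingredient is the trivial remark that $q$ queries involve at most $kq$ vertices; everything else is verbatim~\cite{GT03}.
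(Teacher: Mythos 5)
Your argument is correct and matches the paper's intended approach: amplify the given tester by nine independent repetitions and majority vote to push the error below $1/6$ (touching at most $9kq_k$ vertices), then derandomize à la Goldreich--Trevisan by thresholding the sample-conditional acceptance probability at $1/2$, which doubles the error back to at most $1/3$. The paper itself simply cites~\cite{GT03} (Lemma~4.4 there together with the same nine-fold amplification), and your write-up reconstructs exactly that argument, including the observation that the only $k$-graph-specific point is that $q$ queries involve at most $kq$ vertices.
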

\COMMENT{
Lemma~4.4 in \cite{GT03} says that if we have a tester with error probability $1/6$, 
then we can get rid of its adaptiveness by increasing error probability by a factor of $2$. 
In order for this, we need to guarantees that initial tester has error probability less than $1/6$. 
To do this, we simply repeat the original tester $9$ times and accept or reject based on majority vote. 
Thus we can just choose $9kq$ vertices uniformly at random and check nine $kq$-vertex subgraphs and accept or reject based on majority vote. 
Take this tester and apply Lemma~4.4 to get rid of adaptivity, then we get canonical tester which choose $9kq_k$ vertices at random, and query complexity at most $\binom{9kq_k}{k}$.
}
To prove Lemma~\ref{lem: main 1}, we let $\cQ$ be the set of all $k$-graphs which are accepted by a canonical tester $\bT$ for $\bP$.
We then construct a (bounded size) set of regularity instances $\cR$
where the `induced density' of $\cQ$ is large for each $R\in \cR$.
We then apply  the counting lemma for induced graphs (Corollary~\ref{cor: counting collection}) to show that $\cR$ satisfies the requirements of Definition~\ref{def: regular reducible}.
\begin{proof}[Proof of Lemma~\ref{lem: main 1}]
Let $\bP$ be a testable $k$-graph property. Thus there exists a function $q'_k:(0,1)\rightarrow \mathbb{N}$ such that for every $n\in \mathbb{N}$ and $\beta \in (0,1)$,
there exists an $(n,\beta)$-tester $\bT = \bT(n,\beta)$ for $\bP$ with query complexity at most $q'_k(\beta)$. By Lemma~\ref{lem: canonical}, we may assume (by increasing $q'_k(\beta)$ if necessary) that $\bT$ is canonical.  
Since $\bT$ is canonical, there exists $q = q(n,\beta)$ with $\binom{q(n,\beta)}{k} \leq q'_k(\beta)$ such that 
given any $k$-graph $H$ on $n$ vertices $\bT$ samples a set $Q$ of $q$ vertices, 
considers $H[Q]$, and then deterministically accepts or rejects $H$ based on $H[Q]$.

Let $\cQ$ be the set of all the $k$-graphs on $q$ vertices such that $\bT$ accepts $H$ if and only if there is $Q'\in \cQ$ that is isomorphic to $H[Q]$.

In order to show that $\bP$ is regular reducible, it suffices to consider the case when $0<\beta <1/100$.
We fix some function $\overline{\epsilon}:\mathbb{N}^{k-1}\rightarrow (0,1)$ such that $\overline{\epsilon}(\ba) \ll \norm{\ba}^{-k}$
for all $\ba=(a_1,\dots,a_{k-1})\in \N^{k-1}$.
We choose constants $ \epsilon_*, \epsilon, \eta$, and $n_0, T\in \mathbb{N}$ such that $0< \epsilon_* \ll 1/n_0\ll \epsilon \ll 1/T \ll \eta \ll 1/q, \beta, 1/k$.\COMMENT{Formally, $q'_k$ would be better as $q$ depends on $n$ but $q$ seems clearer.}
 In particular, we have $n_0\geq n_{\ref{thm: RAL}}(\eta,\beta q^{-k},\overline{\epsilon})$, $T \geq t_{\ref{thm: RAL}}(\eta  ,\beta q^{-k},\overline{\epsilon})$ and  $\epsilon\ll \overline{\epsilon}(\ba)$ for any $\ba\in [T]^{k-1}$.

For each $\ell\in [n_0]\sm [k-1]$, 
we let $\cQ'(\ell)$ be the collection of $\ell$-vertex $k$-graphs satisfying the property $\bP$ and we let $\bI'(\ell)$ be the collection of regularity instances $R=( \epsilon_*, \ba,d_{\ba,k})$ 
such that
\begin{itemize}
\item[(R0)$_{\ref{lem: main 1}}$]  $\ba = (\ell,1,\dots, 1) \in \mathbb{N}^{k-1}$ and $d_{\ba,k}(\hat{\bx}) \in \{0,1\}$ for every $\hat{\bx}\in \hat{A}(k,k-1,\ba)$.
\end{itemize}

Let $\bI$ be the collection of regularity instances $R=(\epsilon'',\ba, d_{\ba,k})$ such that
\begin{itemize}
\item[(R1)$_{\ref{lem: main 1}}$] $\epsilon''\in \{\epsilon, 2\epsilon, \dots, \lceil(\overline{\epsilon}(\ba))^{1/2}\epsilon^{-1}\rceil \epsilon\}$,
\item[(R2)$_{\ref{lem: main 1}}$]  $\ba\in [T]^{k-1}$ and $a_1 > \eta^{-1}$, and
\item[(R3)$_{\ref{lem: main 1}}$]  $d_{\ba,k}(\hat{\bx}) \in \{0,\epsilon^2, 2\epsilon^2,\dots, 1\}$ for every $\hat{\bx}\in \hat{A}(k,k-1,\ba)$.
\end{itemize}

Observe that by construction
$|\bI|$ and $|\bI'(\ell)|$ are bounded by a function of $\beta$, $k$ and $q_k'(\beta)$ for every $\ell\in [n_0]\sm [k-1]$. 
Recall that $IC(\cQ, d_{\ba,k})$ was defined in \eqref{eq: def IC cF d a k}.
We define
$$\cR(n,\beta) := \left\{\begin{array}{ll} 
\{ R \in \bI'(n) :   R= (\epsilon_*,\ba,d_{\ba,k}) \text{ with } IC(\cQ'(n),d_{\ba,k}) >0 \} & \text{ if } n\leq n_0,\\
\{ R \in \bI :   R= (\epsilon'',\ba,d_{\ba,k}) \text{ with } IC(\cQ,d_{\ba,k}) \geq 1/2\} & \text{ if } n>n_0.
\end{array}\right.
$$

In order to show that the property $\bP$ is regular reducible, 
we need to show that for every $\alpha>\beta$ the following holds: 
every $n$-vertex $k$-graph $H$ that satisfies $\bP$ is $\beta$-close to satisfying $R$ for at least one $R\in \cR(n,\beta)$,
and every $n$-vertex $k$-graph $H$ that is $\alpha$-far from satisfying $\bP$ is $(\alpha-\beta)$-far from satisfying $R$ for all $R\in \cR(n,\beta)$.

First of all, we consider the case that $n \in  [n_0]\sm[k-1]$. 
Note that the only way for $H$ to satisfy $R\in \cR(n,\beta)$ is to have a partition $\sP^{(1)}$ of $V(H)$ into $n$ singleton clusters and the natural $(n,1,\dots,1)$-equitable partition $\sP$ arising from $\sP^{(1)}$.%
\COMMENT{Because, for each $\hat{\bx}\in \hat{A}(j+1,j,\ba)$ with $j\in [k-1]$, $|\cK_j(\hat{P}^{(k-1)}(\hat{\bx})|=1$. Thus in order $P^{(k-1)}(\hat{\bx},b)$ to be $\epsilon_*$-regular with respect to $\hat{P}^{(k-1)}(\hat{\bx})$, we must have $P^{(k-1)}(\hat{\bx},b) =\cK_j(\hat{P}^{(k-1)}(\hat{\bx})$. In this case, $IC(Q,d_{\ba,k})>0$ for an $n$-vertex graph $Q$ implies that there exists an isomorphism from $Q$ to $H$. Thus  $IC(\cQ'(n),d_{\ba,k}) >0$ if and only if $H \in \cQ'(n).$} 
In this case, it is easy to see that $IC(\cQ'(n),d_{\ba,k}) >0$ if and only if $H \in \cQ'(n).$ Thus we conclude that
every $n$-vertex $k$-graph $H$ that satisfies $\bP$ satisfies $R$ for at least one $R\in \cR(n,\beta)$, and every $k$-graph $H$ that is $\alpha$-far from satisfying $\bP$ is $\alpha$-far from satisfying $R$ for all $R\in \cR(n,\beta)$.

Now we suppose that $n>n_0$ and we let $\cR:= \cR(n,\beta)$.\COMMENT{Note that for $n > n_0$ and $\beta<1/100$, the set $\cR(n,\beta)$ is independent of $n$.} First, suppose that a $k$-graph $H$ satisfies $\bP$, and thus $\bT$ accepts $H$ with probability at least $2/3$. 
Hence
\begin{align}\label{eq: Pr 2/3}
\mathbf{Pr}(\cQ,H)\geq 2/3.
\end{align}

Since $|V(H)|\geq n_0$, by applying the regular approximation lemma (Theorem~\ref{thm: RAL}) with $H, \eta  ,\beta q^{-k},\overline{\epsilon}$ playing the roles of $H,\eta,\nu,\epsilon$, 
we obtain a $k$-graph $G$ and a family of partitions $\sP= \sP(k-1,\ba^{\sP})$ such that
\begin{itemize}
\item[(A1)$_{\ref{lem: main 1}}$] $\sP$ is $(\eta,\overline{\epsilon}(\ba^{\sP}),\ba^{\sP})$-equitable for some $\ba^{\sP} \in [T]^{k-1}$, 
\item[(A2)$_{\ref{lem: main 1}}$] $G$ is perfectly $\overline{\epsilon}(\ba^{\sP})$-regular with respect to $\sP$, and 
\item[(A3)$_{\ref{lem: main 1}}$] $|G\triangle H|\leq \beta q^{-k} \binom{n}{k}$.
\end{itemize}

Let $\epsilon':= \overline{\epsilon}(\ba^{\sP})$.
By the choice of $\overline{\epsilon}$ and $\eta$, we conclude that
$0<\epsilon' \ll 1/\norm{\ba^{\sP}}\leq  1/a_1^{\sP} \ll 1/q, \beta, 1/k$ and
by the choice of $\epsilon$, we obtain  $\epsilon \ll \epsilon'$.
Note that if a $k$-graph $F$ is $(\epsilon',d)$-regular with respect to a $(k-1)$-graph $F'$, 
then $F$ is $(\epsilon'',d')$-regular with respect to $F'$ 
for some $d'\in \{0,\epsilon^2, 2\epsilon^2,\dots, 1\}$ and $\epsilon''\in \{\epsilon, 2\epsilon,\dots,  \lceil \epsilon'^{1/2}\epsilon^{-1}\rceil \epsilon\}\cap [2\epsilon',3\epsilon']$.
Thus there exists 
\begin{align}\label{eq: RG in bIa}
R_G=(\epsilon'',\ba^{\sP} ,d^{G}_{\ba^{\sP},k})\in \bI
\end{align} 
such that $G$ satisfies $R_G$.

Proposition~\ref{prop: mathbf Pr doesn't change much} with (A3)$_{\ref{lem: main 1}}$ implies that 
\begin{align}\label{eq: Pr - beta}
\mathbf{Pr}(\cQ, G) \geq \mathbf{Pr}(\cQ, H) -\beta.
\end{align}
Since $0<\epsilon' \ll 1/\norm{\ba^{\sP}}\leq 1/a_1^{\sP} \ll 1/q,\beta, 1/k$, Corollary~\ref{cor: counting collection} implies that 
\begin{eqnarray*}
IC(\cQ,d^{G}_{\ba^{\sP},k} ) & \geq& \mathbf{Pr}(\cQ,G) - \beta 
\stackrel{\eqref{eq: Pr - beta}}{\geq}  \mathbf{Pr}(\cQ, H) -2\beta
\stackrel{\eqref{eq: Pr 2/3}}{\geq} 2/3 - 2\beta > 1/2.
\end{eqnarray*}
By the definition of $\cR$ and \eqref{eq: RG in bIa}, 
this implies that $R_G\in \cR$ and 
so $H$ is indeed $\beta$-close to a graph $G$ satisfying $R_{G}$, 
one of the regularity instances of $\cR$.\COMMENT{Here $\epsilon''\ll 1/\norm{\ba^\sP}$, thus it is legitimate regularity-instance. }

Suppose now that $H$ is $\alpha$-far from satisfying $\bP$.
Since $\bT$ is a canonical $(n,\beta)$-tester, we conclude
\begin{equation}\label{eq: Pr 1/3}
\begin{minipage}[c]{0.8\textwidth}\em
if a $k$-graph $H'$ is $\beta$-far from satisfying $\bP$,
then $\mathbf{Pr}(\cQ,H')\leq 1/3$.
\end{minipage}
\end{equation}
Assume for a contradiction that $H$ is $(\alpha-\beta)$-close to satisfying some regularity instance $R_F=(\epsilon'',\bb,d^{F}_{\bb,k})\in \cR$.
Thus there exists a $k$-graph $F$ which satisfies $R_F$ and which is $(\alpha-\beta)$-close to $H$. 
Since $R_F\in \cR$, we obtain
\begin{align*}
IC(\cQ, d_{\bb,k}^F) \geq 1/2.
\end{align*}

Then (R1)$_{\ref{lem: main 1}}$ and (R2)$_{\ref{lem: main 1}}$ guarantee that 
we can apply Corollary~\ref{cor: counting collection} with $\cQ, R_F, T, 1/10$ playing the roles of $\cF, R, t, \gamma$ to conclude that
$$\mathbf{Pr}(\cQ, F) \geq IC(\cQ, d_{\bb,k}^F) - 1/10 \geq 1/2 - 1/10 > 1/3.$$
Together with \eqref{eq: Pr 1/3} this implies that $F$ is $\beta$-close to satisfying $\bP$.
Thus there exists a $k$-graph $F'$ satisfying $\bP$ which is $\beta$-close to $F$. 
Since $H$ is $(\alpha-\beta)$-close to $F$, we conclude that $F'$ is $\alpha$-close to $H$, which contradicts the assumption that $H$ is $\alpha$-far from satisfying $\bP$.  This completes the proof that $\bP$ is regular reducible.
\end{proof}

\section{Satisfying a regularity instance is testable}\label{sec:regtest}

In this section we deduce from Lemma~\ref{lem: random choice2} that the property of satisfying a particular regularity instance is testable.
Suppose $H$ is a $k$-graph and $Q$ is a subset of the vertices chosen uniformly at random.
Lemma~\ref{lem:7.1} shows that if $H$ satisfies a regularity instance $R$,
then with high probability $H[Q]$ is close to satisfying $R$.
Lemma~\ref{lem:7.2} gives the converse:
if $H$ is far from satisfying $R$,
then with high probability $H[Q]$ is also far from satisfying $R$.

Lemmas~\ref{lem:7.1} and~\ref{lem:7.2} follow from Lemma~\ref{lem: random choice2}.
It implies that a family of partitions not only transfers from a hypergraph to its random samples with high probability,
but also vice versa.
Crucially, in both directions these transfer results allow only a small additive increase in the regularity parameters (which can then be eliminated via Lemma~\ref{lem: improve regularity partition}).
The proof of Lemma~\ref{lem: random choice2} is given in \cite{JKKO2}.

\begin{lemma}\label{lem: random choice2}
Suppose $0< 1/n< 1/q \ll c \ll \delta \ll \epsilon_0 \leq 1$ and $k\in \N\sm \{1\}$.
Suppose $R=(2\epsilon_0/3, \ba, d_{\ba,k})$ is a regularity instance.
Suppose $H$ is a $k$-graph on vertex set $V$ with $|V|=n$. 
Let $Q \in \binom{V}{q}$ be chosen uniformly at random. 
Then with probability at least $ 1 - e^{-cq}$ the following hold.
\begin{itemize}
\item[{\rm (Q1)$_{\ref{lem: random choice2}}$}] If there exists an $(\epsilon_0,\ba,d_{\ba,k})$-equitable partition $\sO_1$ of $H$, 
then there exists an $(\epsilon_0+\delta,\ba,d_{\ba,k})$-equitable partition $\sO_2$ of $H[Q]$.
\item[{\rm(Q2)$_{\ref{lem: random choice2}}$}] If there exists an $(\epsilon_0,\ba,d_{\ba,k})$-equitable partition $\sO_2$ of $H[Q]$, 
then there exists  an $(\epsilon_0+\delta,\ba,d_{\ba,k})$-equitable partition $\sO_1$ of $H$.
\end{itemize}
\end{lemma}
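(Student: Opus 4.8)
\textbf{Proof proposal for Lemma~\ref{lem: random choice2}.}
The plan is to prove the two statements (Q1)$_{\ref{lem: random choice2}}$ and (Q2)$_{\ref{lem: random choice2}}$ in parallel, since they are essentially symmetric, using the key ingredient Lemma~\ref{lem: similar} with the roles of $H_1,H_2$ played by $H$ and $H[Q]$ (in one order for (Q1) and the other for (Q2)). The main point is that \emph{both} $H$ and $H[Q]$ admit \emph{some} high-quality regularity partition with the \emph{same} parameters; once this is established, Lemma~\ref{lem: similar} transfers a given low-quality equitable partition from one to the other with only an additive $\delta$-loss in the regularity parameter. To produce the common high-quality partitions, first apply the regular approximation lemma (Theorem~\ref{thm: RAL}) to $H$ with a sufficiently fast-decreasing function $\epsilon(\cdot)$ and small error parameter $\nu_0$ (chosen so that $1/q\ll \nu_0 \ll c$), obtaining a $k$-graph $G$ with $|G\triangle H|\le \nu_0\binom{n}{k}$ and a $t_0$-bounded $(\eta,\epsilon(\ba^{\sQ}),\ba^{\sQ})$-equitable family of partitions $\sQ$ such that $G$ is perfectly $\epsilon(\ba^{\sQ})$-regular with respect to $\sQ$; this gives a density function $d_{\ba^{\sQ},k}$ so that $\sQ$ is an $(\epsilon(\ba^{\sQ}),\ba^{\sQ},d_{\ba^{\sQ},k})$-equitable partition of $G$. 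This is the `high quality' regularity partition $\sQ_1$ of (the slight perturbation $G$ of) $H$ required in the hypothesis of Lemma~\ref{lem: similar}.

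Next I would pass this structure to the random sample. The key transfer facts for $\sQ$ itself — that the vertex partition $\sQ^{(1)}$ restricted to $Q$ is still an almost-equipartition, that the polyad clique counts $|\cK_j(\hat{Q}^{(j-1)}(\hat{\bx}))|$ are preserved up to a $(1\pm o(1))$-factor (via Lemma~\ref{lem: random subset edge size} applied on each polyad, together with the counting lemma Lemma~\ref{lem: counting} to control the denominators), and that perfect $\epsilon$-regularity is inherited up to an additive error — should be proved via Chernoff/Azuma (Lemmas~\ref{lem: chernoff} and~\ref{lem: random subset edge size}) and a union bound over the bounded number of polyads, all of which succeeds with probability at least $1-e^{-cq}$ provided $1/q\ll c$. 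Combining this with Lemma~\ref{lem: random subset edge size} applied to $H\triangle G$ to guarantee $|G[Q]\triangle H[Q]|\le \nu_1\binom{q}{k}$ for suitable $\nu_1$, we obtain that $G[Q]$ has a high-quality $(\ldots,\ba^{\sQ},d_{\ba^{\sQ},k})$-equitable partition $\sQ_2$; applying Lemma~\ref{lem: simple facts 2} (or rather Lemma~\ref{lem: slightly different partition regularity}) lets us pass from $G[Q]$ back to $H[Q]$ so that $H[Q]$ also has such a partition. Thus both $H$ and $H[Q]$ (after the harmless perturbations $G$, $G[Q]$, which we reabsorb at the end using Lemma~\ref{lem: improve regularity partition} and Lemma~\ref{lem: slightly different partition regularity}) satisfy the same regularity instance with the same address parameters $\ba^{\sQ}$ and density function. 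Note that $\ba^{\sQ}$ need not equal the $\ba$ in the statement — but that is fine, because Lemma~\ref{lem: similar} only requires that $H_1$ and $H_2$ share a high-quality partition with \emph{some} common parameters; the `low quality' partition whose transfer we care about is the $(\epsilon_0,\ba,d_{\ba,k})$-equitable partition $\sO_i$ from the hypothesis, with the \emph{given} $\ba$.

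With this in place, the deduction is short. For (Q1)$_{\ref{lem: random choice2}}$: given an $(\epsilon_0,\ba,d_{\ba,k})$-equitable partition $\sO_1$ of $H$, apply Lemma~\ref{lem: similar} with $H_1:=H$ (or its perturbation, handled by Lemma~\ref{lem: slightly different partition regularity}), $H_2:=H[Q]$, the common high-quality partitions $\sQ_1,\sQ_2$ just constructed, and $\sO_1$ in the role of the low-quality partition, with $\delta$ split into two halves (one half spent on the $G$-to-$H$ perturbations, one half inside Lemma~\ref{lem: similar}); this yields an $(\epsilon_0+\delta,\ba,d_{\ba,k})$-equitable partition $\sO_2$ of $H[Q]$. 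Statement (Q2)$_{\ref{lem: random choice2}}$ is identical with $H$ and $H[Q]$ interchanged — crucially Lemma~\ref{lem: random choice2}'s event `both high-quality partitions exist and the perturbation bounds hold' is a single high-probability event covering both directions. The hypothesis that $R=(2\epsilon_0/3,\ba,d_{\ba,k})$ (rather than $(\epsilon_0,\ba,d_{\ba,k})$) is a regularity instance gives the slack needed so that $(\epsilon_0+\delta,\ba,d_{\ba,k})$ still makes sense and so that the various $\epsilon/3$-type hypotheses in Lemmas~\ref{lem: slightly different partition regularity},~\ref{lem: maps bijections} etc.\ are met after the additive increases.

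\textbf{Main obstacle.} The technical heart — and the step I expect to be hardest — is showing that the high-quality partition $\sQ$ of $G$ \emph{restricts} to a high-quality partition $\sQ_2$ of $G[Q]$ with the \emph{same} density function, rather than merely `some' partition. This requires uniform control, with probability $1-e^{-cq}$ simultaneously over the (bounded but possibly large) collection of all polyads $\hat{P}^{(j-1)}(\hat{\bx})$ and all their sub-polyads $Q^{(j-1)}\subseteq \hat{P}^{(j-1)}(\hat{\bx})$, of the quantities governing $\epsilon$-regularity — i.e.\ a `regularity is inherited by random subsamples' statement for complexes. Because the number of candidate sub-polyads $Q^{(j-1)}$ is exponential in the polyad size, a naive union bound fails; the resolution (as in the R\"odl–Schacht and Austin–Tao frameworks) is to first reduce, via Lemma~\ref{lem: counting}, to checking regularity only against sub-polyads that are themselves unions of partition classes at one level down, of which there are only boundedly many, and only then apply Chernoff. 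Carrying this reduction through all $k-1$ levels of the complex, keeping the accumulated additive errors below $\delta$, is where essentially all the work lies, and this is precisely what Sections~\ref{sec: Regular approximations of partitions and hypergraphs} and~\ref{sec: Sampling a regular partition} are devoted to.
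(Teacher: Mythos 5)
Your proposal is essentially the paper's proof, and it is correct. You correctly identify the chain: apply Theorem~\ref{thm: RAL} to $H$ to obtain $G$ with a high-quality $(\overline{\epsilon}(\ba^{\sP}),\ba^{\sP},d_{\ba^{\sP},k})$-equitable partition $\sP_1$; transfer this partition to $G[Q]$ to get $\sP_2$; observe that the parameters $\ba^{\sP}$ need not match the given $\ba$, since Lemma~\ref{lem: similar} only needs the two graphs to share \emph{some} common high-quality partition; condition on a single high-probability event ($|G[Q]\triangle H[Q]|$ small and $\sP_2$ existing) that serves both (Q1) and (Q2); then apply Lemma~\ref{lem: similar} in both directions, with Lemma~\ref{lem: slightly different partition regularity} absorbing the small $H\leftrightarrow G$ and $H[Q]\leftrightarrow G[Q]$ perturbations and the $\delta$ budget split accordingly (the paper uses thirds; halves would work too). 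You also correctly flag that the $2\epsilon_0/3$ hypothesis provides the slack to keep all intermediate parameters inside the regularity-instance regime.

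The one place where your account diverges from the paper's economy is the ``main obstacle'' paragraph. The paper does \emph{not} re-derive the fact that $\epsilon$-regularity of a complex is inherited by random subsamples; it takes this directly from Czygrinow and Nagle as Lemma~\ref{lem: grabbing}, and Lemma~\ref{lem: random choice} is a short bookkeeping wrapper around it (conditioning on the cluster sizes being $(1\pm\epsilon)q/a_1$, applying Lemma~\ref{lem: grabbing} to each of the boundedly many $(k,k)$-complexes $\hat{\cP}'(\hat{\bx})$, and cleaning up with Lemmas~\ref{lem: slightly different partition regularity} and~\ref{lem: removing lambda}). So the exponential-union-bound difficulty you describe is real, but it is already resolved inside a cited black box rather than being the content of Sections~\ref{sec: Regular approximations of partitions and hypergraphs} and~\ref{sec: Sampling a regular partition}; those sections are instead devoted to Lemma~\ref{lem:refreg} (the refined regular approximation lemma) and to Lemma~\ref{lem: similar} itself, which is where essentially all the genuinely new work in the proof of Lemma~\ref{lem: random choice2} lives. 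If you cite Lemma~\ref{lem: random choice} for the sampling step, your argument coincides with the paper's.
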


\begin{lemma}\label{lem:7.1}
Suppose that $0< 1/n < 1/q \ll c \ll \nu \ll \epsilon$ and $k\in \N\sm\{1\}$.
Suppose $R=(\epsilon,\ba,d_{\ba,k})$ is a regularity instance and
$H$ is an $n$-vertex $k$-graph on vertex set $V$ that satisfies $R$.
Suppose $Q\in \binom{V}{q}$ is chosen uniformly at random.
Then $H[Q]$ is $\nu$-close to satisfying $R$ with probability at least $1- e^{-c q}$.
\end{lemma}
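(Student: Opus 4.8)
\textbf{Proof plan for Lemma~\ref{lem:7.1}.}
The plan is to deduce the statement from Lemma~\ref{lem: random choice2} together with Lemma~\ref{lem: improve regularity partition}. First I would set up the hierarchy so that it is compatible with both auxiliary lemmas: choose an intermediate constant $\epsilon_0$ and auxiliary constants $c', \delta$ with $1/q \ll c \ll c' \ll \nu \ll \delta \ll \epsilon_0 \leq \epsilon$, and in fact I want $\epsilon_0$ and the parameters chosen so that $R' := (2\epsilon_0/3,\ba,d_{\ba,k})$ is again a regularity instance (this is possible since $\epsilon_{\ref{def: regularity instance}}$ only depends on $\|\ba\|_\infty$ and $k$, and we may take $\epsilon_0 \le \epsilon$, so $2\epsilon_0/3 \le \epsilon \le \epsilon_{\ref{def: regularity instance}}(\|\ba\|_\infty,k)$). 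Since $H$ satisfies $R=(\epsilon,\ba,d_{\ba,k})$, there is an $(\epsilon,\ba,d_{\ba,k})$-equitable partition of $H$; as $\epsilon \le \epsilon_0$ (after possibly shrinking $\epsilon_0$ — actually I should instead take $\epsilon_0$ with $\epsilon \le \epsilon_0$, reorganising the hierarchy so that $\nu \ll \delta \ll \epsilon \le \epsilon_0$ and $R'=(2\epsilon_0/3,\ba,d_{\ba,k})$ is still a regularity instance, which holds as long as $\epsilon_0$ is not too large relative to $\|\ba\|_\infty$), this is also an $(\epsilon_0,\ba,d_{\ba,k})$-equitable partition of $H$. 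Hence the hypothesis of (Q1)$_{\ref{lem: random choice2}}$ is met.

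Next I would apply Lemma~\ref{lem: random choice2} with parameters chosen so that $1/n < 1/q \ll c \ll c' \ll \delta \ll \epsilon_0$. With probability at least $1 - e^{-c'q} \ge 1 - \tfrac12 e^{-cq}$, conclusion (Q1)$_{\ref{lem: random choice2}}$ yields an $(\epsilon_0+\delta,\ba,d_{\ba,k})$-equitable partition $\sO_2$ of $H[Q]$. Now I want to remove the additive slack $\delta$ in the regularity parameter. For this I invoke Lemma~\ref{lem: improve regularity partition} with $H[Q]$ (on $q$ vertices) playing the role of $H^{(k)}$, with $\sO_2$ playing the role of $\sP$, and with $\nu$ playing the role of $\nu$, provided $(\epsilon_0/3 + \text{something},\ba,d_{\ba,k})$ is a regularity instance — more precisely Lemma~\ref{lem: improve regularity partition} needs $(\epsilon'/3,\ba,d_{\ba,k})$ to be a regularity instance where $\epsilon'$ is its regularity parameter; setting $\epsilon' := \epsilon_0$ and noting $\delta \ll \nu \ll \epsilon_0$, one checks $(\epsilon_0+\delta,\ba,d_{\ba,k})$-equitable equals $(\epsilon'+\delta,\ba,d_{\ba,k})$-equitable and $R'=(2\epsilon_0/3,\ba,d_{\ba,k})$ being a regularity instance gives what is required (one may need to rename constants so the `$\epsilon/3$' bookkeeping of Lemma~\ref{lem: improve regularity partition} matches). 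This produces a family of partitions $\sQ$ and a $k$-graph $G^{(k)}$ on $V(H[Q])$ with $\sQ$ an $(\epsilon_0,\ba,d_{\ba,k})$-equitable partition of $G^{(k)}$ and $|G^{(k)} \triangle H[Q]| \le \nu\binom{q}{k}$. Since $\epsilon_0 \le \epsilon$, the partition $\sQ$ is also $(\epsilon,\ba,d_{\ba,k})$-equitable, so $G^{(k)}$ satisfies $R$, and therefore $H[Q]$ is $\nu$-close to satisfying $R$.

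The probability bound $1 - e^{-cq}$ then follows directly, since the only probabilistic step is the single application of Lemma~\ref{lem: random choice2}, which succeeds with probability at least $1 - e^{-c'q} \ge 1 - e^{-cq}$ (using $c \ll c'$ and $q$ large). I would close by noting that everything else is deterministic.

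The main obstacle I anticipate is purely one of \emph{bookkeeping the constant hierarchy}: Lemma~\ref{lem: random choice2} is stated with the parameter $2\epsilon_0/3$ in the regularity instance and produces an $(\epsilon_0+\delta)$-equitable partition, whereas Lemma~\ref{lem: improve regularity partition} is stated with $\epsilon/3$ in the regularity instance and consumes an $(\epsilon+\delta)$-equitable partition; I need to pick the intermediate regularity parameter $\epsilon_0$ and the slack $\delta$ so that both lemmas apply simultaneously and so that the final partition is genuinely $(\epsilon,\ba,d_{\ba,k})$-equitable (i.e.\ $\epsilon_0 \le \epsilon$) while $R'=(2\epsilon_0/3,\ba,d_{\ba,k})$ remains a valid regularity instance. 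Since $\epsilon_{\ref{def: regularity instance}}$ depends only on $\|\ba\|_\infty$ and $k$ and these are fixed throughout, this is just a matter of choosing $\epsilon_0$ small enough in terms of $\|\ba\|_\infty, k$ but with $\epsilon \le \epsilon_0$ — which forces me to assume, harmlessly, that $\epsilon$ itself is small enough to be a legitimate regularity-instance parameter, which it is by the definition of $R$. There are no substantive mathematical difficulties beyond this.
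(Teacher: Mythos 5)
Your proof takes essentially the same route as the paper: obtain an $(\epsilon,\ba,d_{\ba,k})$-equitable partition of $H$, use (Q1)$_{\ref{lem: random choice2}}$ of Lemma~\ref{lem: random choice2} to transfer it (with slack $\delta$) to $H[Q]$, and then remove the slack with Lemma~\ref{lem: improve regularity partition}. That is exactly the paper's argument, so the strategy is sound.

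However, the write-up contains two internal contradictions in the constant bookkeeping, both of which would make the argument break down as literally stated. First, you initially declare $1/q \ll c \ll c' \ll \nu \ll \delta \ll \epsilon_0$, but Lemma~\ref{lem: improve regularity partition} is stated with the hierarchy $1/n \ll \delta \ll \nu \ll \epsilon$; the slack parameter $\delta$ coming out of Lemma~\ref{lem: random choice2} must be much \emph{smaller} than the target distance $\nu$, not larger. You in fact acknowledge the correct ordering later (``noting $\delta \ll \nu \ll \epsilon_0$''), so you are being inconsistent rather than unaware, but the initial hierarchy is wrong. Second, on the relationship between $\epsilon_0$ and $\epsilon$: you first say $\epsilon_0 \le \epsilon$, then correct to $\epsilon \le \epsilon_0$ (needed so that the $(\epsilon,\ba,d_{\ba,k})$-equitable partition $\sO_1$ of $H$ is also $(\epsilon_0,\ba,d_{\ba,k})$-equitable, which requires $\epsilon \le \epsilon_0$), and then at the end invoke ``since $\epsilon_0 \le \epsilon$'' to conclude that the final partition $\sQ$ is $(\epsilon,\ba,d_{\ba,k})$-equitable. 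These two requirements force $\epsilon_0 = \epsilon$.

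The clean resolution, which is what the paper does, is to dispense with the extra constants entirely: take $\epsilon_0 := \epsilon$, drop $c'$, and choose $\delta$ with $c \ll \delta \ll \nu$. Then $(2\epsilon/3,\ba,d_{\ba,k})$ and $(\epsilon/3,\ba,d_{\ba,k})$ are both regularity instances automatically (since $R=(\epsilon,\ba,d_{\ba,k})$ is and $\epsilon_{\ref{def: regularity instance}}$ only depends on $\|\ba\|_\infty$ and $k$), Lemma~\ref{lem: random choice2} gives an $(\epsilon+\delta,\ba,d_{\ba,k})$-equitable partition of $H[Q]$ with probability $\ge 1-e^{-cq}$, and Lemma~\ref{lem: improve regularity partition} (with $q$ playing the role of $n$) then gives a $k$-graph $G$ on $Q$ with an $(\epsilon,\ba,d_{\ba,k})$-equitable partition and $|G\triangle H[Q]|\le\nu\binom{q}{k}$. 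Your worry about whether the $\epsilon/3$-bookkeeping of Lemma~\ref{lem: improve regularity partition} matches is not a genuine obstacle once $\epsilon_0 = \epsilon$ is fixed, for the reason just stated.
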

\begin{proof}
Choose $\delta$ such that $ c \ll \delta \ll \nu.$
Since $H$ satisfies $R$, 
there exists an $(\epsilon,\ba,d_{\ba,k})$-equitable partition $\sO_1$ of $H$.  
Thus Lemma~\ref{lem: random choice2} implies that with probability at least $1- e^{-c q}$
there exists an $(\epsilon+\delta,\ba,d_{\ba,k})$-equitable partition $\sO_2$ of $H[Q]$.

Thus Lemma~\ref{lem: improve regularity partition}, with $H[Q], Q, q$ playing the roles of $H^{(k)}, V, n$ respectively, 
implies\COMMENT{Note that we can apply Lemma~\ref{lem: improve regularity partition} since if $R=(\epsilon,\ba,d_{\ba,k})$ is a regularity instance, 
then $R'=((\epsilon+\delta)/2,\ba,d_{\ba,k})$ is also a regularity instance.} that there exists a family of partitions $\sQ$ and a $k$-graph $G$ on vertex set $Q$ such that 
$\sQ$ is an $(\epsilon,\ba,d_{\ba,k})$-equitable partition of $G$ and $|G\triangle H[Q]|\leq \nu \binom{q}{k}$. 
Therefore, $G$ satisfies $R$ and $G[Q]$ is $\nu$-close to $H[Q]$.

Thus with probability at least $1- e^{-c q}$
the $k$-graph $H[Q]$ is $\nu$-close to satisfying $R$.
\end{proof}

\begin{lemma}\label{lem:7.2}
Suppose that $0< 1/n < 1/q \ll c \ll \nu \ll \epsilon, \alpha$ and $k\in \N\sm\{1\}$.
Suppose $R=(\epsilon,\ba,d_{\ba,k})$ is a regularity instance.
Suppose an $n$-vertex $k$-graph $H$ on vertex set $V$ is $\alpha$-far from satisfying $R$, 
and $Q\in \binom{V}{q}$ is chosen uniformly at random.
Then $H[Q]$ is $\nu$-far from satisfying $R$ with probability at least $1- e^{-c q}$.
\end{lemma}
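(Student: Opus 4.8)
The plan is to prove the contrapositive-style statement by transferring a regularity partition from the random sample $H[Q]$ back to $H$ and then using it to contradict the assumption that $H$ is $\alpha$-far from satisfying $R$. So suppose, aiming to show the claim fails with small probability, that $H[Q]$ is $\nu$-close to satisfying $R$. Then there is a $k$-graph $G$ on vertex set $Q$ with $|G\triangle H[Q]|\le \nu\binom{q}{k}$ and an $(\epsilon,\ba,d_{\ba,k})$-equitable partition of $G$. The first step is to pass from $G$ back to $H[Q]$ at the cost of a slightly worse regularity parameter: apply Lemma~\ref{lem: slightly different partition regularity} (with $H[Q],G$ playing the roles of $G^{(k)},H^{(k)}$, after replacing $\epsilon$ by a slightly larger value so that $R'=(\epsilon/3,\ba,d_{\ba,k})$-type hypotheses hold) to obtain an $(\epsilon+\nu^{1/6},\ba,d_{\ba,k})$-equitable partition of $H[Q]$ itself. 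Thus with probability $\ge 1-e^{-cq}$ one may assume $H[Q]$ genuinely carries an $(\epsilon_0,\ba,d_{\ba,k})$-equitable partition for a suitable $\epsilon_0$ slightly larger than $\epsilon$.

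Next I would invoke Lemma~\ref{lem: random choice2}, specifically part (Q2)$_{\ref{lem: random choice2}}$: choosing auxiliary constants $c\ll\delta\ll\epsilon_0$ with $R=(2\epsilon_0/3,\ba,d_{\ba,k})$ a regularity instance (which we can arrange by taking $\epsilon_0$ appropriately between $\epsilon$ and $\tfrac{3}{2}\epsilon$, using $\nu\ll\epsilon$ so the additive $\nu^{1/6}$ loss is absorbed), we get that with probability at least $1-e^{-cq}$, an $(\epsilon_0,\ba,d_{\ba,k})$-equitable partition of $H[Q]$ yields an $(\epsilon_0+\delta,\ba,d_{\ba,k})$-equitable partition of $H$. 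Combining the two failure events (the one from Lemma~\ref{lem: slightly different partition regularity}'s hypotheses being met with high probability via Lemma~\ref{lem: random subset edge size}, and the $e^{-cq}$ from Lemma~\ref{lem: random choice2}), the total failure probability is still at most $e^{-cq}$ after adjusting $c$.

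Finally, I would remove the additive slack in the regularity parameter. Applying Lemma~\ref{lem: improve regularity partition} to $H$ with its $(\epsilon_0+\delta,\ba,d_{\ba,k})$-equitable partition (choosing constants so that $(\,(\epsilon_0+\delta)/3,\ba,d_{\ba,k})$, or rather the appropriately scaled triple, is a regularity instance and $\delta\ll\nu'\ll\epsilon$), we obtain a $k$-graph $G'$ on $V(H)$ and an $(\epsilon,\ba,d_{\ba,k})$-equitable partition of $G'$ with $|G'\triangle H|\le \nu'\binom{n}{k}$ for some $\nu'\le\alpha$. Hence $G'$ satisfies $R$ and is $\alpha$-close to $H$, contradicting that $H$ is $\alpha$-far from satisfying $R$. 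Therefore the event ``$H[Q]$ is $\nu$-close to satisfying $R$'' has probability at most $e^{-cq}$, which is exactly the claim.

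The main obstacle I anticipate is bookkeeping the hierarchy of constants so that every application is legitimate: each of Lemmas~\ref{lem: slightly different partition regularity}, \ref{lem: random choice2} and \ref{lem: improve regularity partition} requires that a certain scaled version of $(\epsilon,\ba,d_{\ba,k})$ still be a regularity instance (e.g.\ with $\epsilon/3$ or $2\epsilon_0/3$ in place of $\epsilon$), and the additive losses $\nu^{1/6}$, $\delta$ must each be dominated by the next parameter up while everything stays below $\epsilon$ and below $\alpha$. One must also check that the high-probability hypotheses needed to invoke Lemma~\ref{lem: slightly different partition regularity} — essentially that $|H[Q]\triangle G|$ is controlled, which here is automatic since $G$ is given close to $H[Q]$ — and the event in Lemma~\ref{lem: random choice2} can be taken simultaneously, so that a single union bound over two exponentially small events suffices. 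None of this is deep, but it requires care to present cleanly.
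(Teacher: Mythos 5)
Your proposal is correct and follows essentially the same route as the paper: start from the assumption that $H[Q]$ is $\nu$-close to satisfying $R$, transfer the equitable partition from the close graph to $H[Q]$ itself via Lemma~\ref{lem: slightly different partition regularity}, use Lemma~\ref{lem: random choice2}(Q2) to pull it back to $H$ with a small additive loss, and then remove the slack with Lemma~\ref{lem: improve regularity partition} to contradict that $H$ is $\alpha$-far from $R$. The only cosmetic difference is that the paper first defines a single deterministic event $\cE$ (the implication in Lemma~\ref{lem: random choice2}(Q2)) and shows $\Pro[\cE]\ge 1-e^{-cq}$, then runs the entire contradiction argument conditional on $\cE$; this cleanly confines the randomness to one place, whereas your write-up interleaves it (and momentarily flirts with a second random event via Lemma~\ref{lem: random subset edge size} before correctly noting it is not needed, since the closeness $|G\triangle H[Q]|\le\nu\binom{q}{k}$ is given, not random).
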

We say that $R=(\epsilon',\ba,d_{\ba,k})$ is a \emph{relaxed regularity instance} if $(2\epsilon'/3,\ba,d_{\ba,k})$ is a regularity instance. 
In other words, $\epsilon'$ may exceed $\epsilon_{\ref{def: regularity instance}}(\|\ba\|_\infty, k)$ but is at most $\tfrac{3}{2} \cdot \epsilon_{\ref{def: regularity instance}}(\|\ba\|_\infty, k)$. Thus, if $R = (\epsilon', \ba, d_{\ba,k})$ is a relaxed regularity instance, then for any $\delta < \epsilon'$, the triple $\left((\epsilon' + \delta)/3, \ba, d_{\ba,k}\right)$ is a regularity instance. Consequently, we can apply certain lemmas—such as Lemma~\ref{lem: improve regularity partition}—to $(\epsilon' + \delta, \ba, d_{\ba,k})$-equitable partitions, provided the lemma only requires that $(\epsilon/3, \ba, d_{\ba,k})$ is a regularity instance.

\begin{proof}[Proof of Lemma~\ref{lem:7.2}]
Choose $\delta$ and $\alpha^*$ such that $c \ll \delta \ll \nu \ll \alpha^* \ll \epsilon, \alpha.$
Note that $2(\epsilon+\nu^{1/6}+\delta)/3\leq \epsilon$, thus 
$( 2(\epsilon+\nu^{1/6}+\delta)/3, \ba, d_{\ba,k})$ is a regularity instance.
Let $\cE$ be the event that the following holds:
\begin{equation}\label{eq: implication R' to R''}
\begin{minipage}[c]{0.8\textwidth}
\textit{If $H[Q]$ satisfies the relaxed regularity instance $R':=(\epsilon+\nu^{1/6},\ba,d_{\ba,k})$, 
then $H$ satisfies the relaxed regularity instance $R'':=(\epsilon+\nu^{1/6}+\delta,\ba,d_{\ba,k})$.}
\end{minipage}
\end{equation}

By Lemma~\ref{lem: random choice2},
we have 
$\mathbb{P}[\cE] \geq 1- e^{-c q}.$
We claim that if $\cE$ occurs and $H$ is $\alpha$-far from satisfying $R$, 
then $H[Q]$ is $\nu$-far from satisfying $R$. 
This clearly implies the lemma. 

Suppose $\cE$ holds and $H[Q]$ is $\nu$-close to satisfying $R$. 
Then there exists a $k$-graph $L$ on vertex set $Q$ which satisfies $R$ and $|H[Q]\triangle L|\leq \nu \binom{q}{k}$. This implies that there exists an $(\epsilon,\ba,d_{\ba,k})$-equitable partition $\sO_1$ of $L$.
Thus Lemma~\ref{lem: slightly different partition regularity} with $\sO_1$ playing the roles of both $\sP$ and $\sQ$ as well as $L, H[Q]$ playing the roles of $H^{(k)}$, $G^{(k)}$, respectively implies that $H[Q]$ satisfies the relaxed regularity instance $R'=(\epsilon+\nu^{1/6},\ba,d_{\ba,k})$.

By our assumption \eqref{eq: implication R' to R''}, this means that $H$ satisfies the relaxed regularity instance $R''=(\epsilon+\nu^{1/6}+\delta,\ba,d_{\ba,k})$. Thus Lemma~\ref{lem: improve regularity partition} with $\nu^{1/6}+\delta, \alpha^*$ playing the roles of $\delta, \nu$ respectively implies that there exists a family of partitions $\sQ$ and a $k$-graph $F$ on vertex set $V$ 
such that $\sQ$ is an $(\epsilon,\ba,d_{\ba,k})$-equitable partition of $F$ and  $|F\triangle H|\leq \alpha^* \binom{n}{k} \leq \alpha \binom{n}{k}$.
So $F$ satisfies the regularity instance $R$ and $F$ is $\alpha$-close to $H$. 
Thus $H$ is $\alpha$-close to satisfying $R$, 
a contradiction to our assumption.
Therefore, $H[Q]$ is $\nu$-far from satisfying $R$ with probability at least $\mathbb{P}[\cE] \geq 1- e^{-c q}$.
\end{proof}

\begin{theorem}\label{thm: regularity instance is testable}
For all $k\in \mathbb{N}\setminus\{1\}$ and
all regularity instances $R=(\epsilon,\ba,d_{\ba,k})$, 
the property of satisfying $R$ is testable.
\end{theorem}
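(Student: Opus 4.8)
\textbf{Proof proposal for Theorem~\ref{thm: regularity instance is testable}.}
The plan is to build a canonical tester that, given an $n$-vertex $k$-graph $H$ and parameters $\alpha$ (and implicitly the target error), samples a random vertex subset $Q\in\binom{V(H)}{q}$ of appropriate size $q=q(\alpha,R)$, looks at $H[Q]$, and \emph{accepts} if and only if $H[Q]$ is $\nu$-close to satisfying $R$, for a suitably chosen threshold $\nu$ with $\nu\ll\alpha,\epsilon$. The correctness of this tester is precisely what Lemmas~\ref{lem:7.1} and~\ref{lem:7.2} provide. First I would fix the constant hierarchy: given $\alpha\in(0,1)$, choose $c,\nu$ with $1/q\ll c\ll\nu\ll\epsilon,\alpha$, so that both lemmas apply with this $q$ and $c$, and note $q$ depends only on $\alpha$ and $R$ (in particular on $1/\epsilon$), not on $n$; this keeps the query complexity $\binom{q}{k}$ bounded by a function of $\alpha$ as required by Definition~\ref{def: testable property}. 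One should also handle the degenerate small-$n$ case $n<q$ (or $n$ below the threshold where the hierarchy kicks in) by brute force: query all of $\binom{V}{k}$ and decide deterministically whether $H$ satisfies $R$, which is legitimate since then the query complexity $\binom{n}{k}$ is still bounded in terms of $\alpha$.

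For the two acceptance/rejection guarantees: if $H$ satisfies $R$, then by Lemma~\ref{lem:7.1} (applied with our choice of $c,\nu$) the sampled $H[Q]$ is $\nu$-close to satisfying $R$ with probability at least $1-e^{-cq}\geq 2/3$, so the tester accepts with probability at least $2/3$. Conversely, if $H$ is $\alpha$-far from satisfying $R$, then by Lemma~\ref{lem:7.2} the sampled $H[Q]$ is $\nu$-far from satisfying $R$ with probability at least $1-e^{-cq}\geq 2/3$, so the tester rejects with probability at least $2/3$. This is exactly conditions (i) and (ii) of Definition~\ref{def: testable property}, with query complexity $q'_k(n,\alpha)=\binom{q}{k}\leq q_k(\alpha)$ for the function $q_k(\alpha):=\binom{q(\alpha,R)}{k}$.

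A minor point to be careful about is that the tester as described needs to \emph{compute} whether $H[Q]$ is $\nu$-close to satisfying $R$; since $Q$ is of bounded size, $H[Q]$ is a bounded object, and closeness to $R$ is a decidable property of a bounded hypergraph (one checks over all families of partitions $\sP(k-1,\ba)$ on $q$ vertices and all $k$-graphs within distance $\nu\binom{q}{k}$ whether some such $k$-graph has $\sP$ as an $(\epsilon,\ba,d_{\ba,k})$-equitable partition), so the tester is well-defined; and it is canonical and non-adaptive by construction. The main obstacle is not in this deduction — which is essentially bookkeeping of the hierarchy and invoking Lemmas~\ref{lem:7.1} and~\ref{lem:7.2} — but rather lies entirely in those two lemmas, which in turn rest on Lemma~\ref{lem: random choice2} (the two-way transfer of equitable partitions between $H$ and $H[Q]$ with only an additive loss in the regularity parameter), whose proof is deferred to Sections~\ref{sec: Regular approximations of partitions and hypergraphs} and~\ref{sec: Sampling a regular partition}. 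So for the purposes of this section the proof is short: set up constants, split into the large-$n$ and small-$n$ cases, define the sampling tester, and quote Lemmas~\ref{lem:7.1} and~\ref{lem:7.2}.
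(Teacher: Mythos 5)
Your proposal is correct and matches the paper's own proof essentially verbatim: fix $1/q\ll c\ll\nu\ll\epsilon,\alpha$, sample a random $q$-set $Q$, accept iff $H[Q]$ is $\nu$-close to satisfying $R$, and invoke Lemmas~\ref{lem:7.1} and~\ref{lem:7.2}. The extra remarks you add on the small-$n$ case and on the decidability of $\nu$-closeness to $R$ for a bounded hypergraph are sound and slightly more careful than the paper's terse ``without loss of generality, $n>q$.''
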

\begin{proof}
Consider $\alpha\in (0,1)$ and $n\in \mathbb{N}$.
We choose $q= q(\alpha,\epsilon) \in \mathbb{N}$ and constants $\nu = \nu(\alpha,\epsilon),c = c(\alpha,\epsilon)>0$ such that $1/q \ll c \ll \nu \ll \epsilon, \alpha$.
Let $H$ be a $k$-graph on $n$ vertices. 
Without loss of generality, we assume that $n>q$.\COMMENT{Otherwise, we can test entire $k$-graph.}
We choose a set $Q$ of $q$ vertices of $H$ uniformly at random and accept $H$ if and only if $H[Q]$ is $\nu$-close to satisfying $R$. 
Then Lemmas~\ref{lem:7.1} and~\ref{lem:7.2} imply that with probability at least $ 1- e^{-cq} \geq 2/3$,
this algorithm distinguishes between the case that $H$ satisfies $R$ and that $H$ is $\alpha$-far from satisfying $R$.
This completes the proof.
\end{proof}

\section{Testable hypergraph properties are estimable}\label{sec:testest}

The notion of testability is similar to the notion of estimability.
In fact, we prove that for $k$-graphs these notions are equivalent.
This generalizes a result in \cite{FN07} where this is proved for graph properties.
The special case when $\bP$ is the property of satisfying a given regularity instance $R$ will be an important ingredient (together with Theorem~\ref{thm: regularity instance is testable}) in the proof of Theorem~\ref{thm:main} to show that every regular reducible property is testable.

\begin{theorem}\label{thm: estimable}
For every $k\in \N\sm\{1\}$,
a $k$-graph property $\bP$ is testable if and only if it is estimable. 
\end{theorem}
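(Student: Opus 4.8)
The plan is to prove the two directions separately, with the easy direction being that estimability implies testability. Indeed, if $\bP$ is estimable with query complexity $q_k(\alpha,\beta)$, then given $\alpha\in(0,1)$ we simply run the $(n,\alpha,\alpha/2)$-estimator: it accepts $k$-graphs that are $(\alpha/2)$-close to $\bP$ (in particular those satisfying $\bP$) with probability at least $2/3$, and rejects $k$-graphs that are $\alpha$-far from $\bP$ with probability at least $2/3$. This is precisely an $(n,\alpha)$-tester for $\bP$ with query complexity at most $q_k(\alpha,\alpha/2)$, so $\bP$ is testable.

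The substantial direction is that testability implies estimability. First I would invoke the chain of implications already available in the paper: by Lemma~\ref{lem: main 1}, a testable property $\bP$ is regular reducible, so for every $\beta>0$ there is a bounded family $\cR=\cR(n,\beta,\bP)$ of regularity instances, each of complexity at most $r=r_{\ref{def: regular reducible}}(\beta,\bP)$, satisfying the two conditions of Definition~\ref{def: regular reducible}. The key step is then to show that \emph{satisfying a given regularity instance $R$ is estimable}; granting this, one estimates the distance of $H$ to $\bP$ by running, for a suitable small $\beta$ depending on the target parameters $\alpha,\beta'$, each of the $|\cR|$ estimators for the individual instances $R\in\cR$ with an appropriate accuracy and combining the outputs (accept iff at least one of them reports `close'), exactly as in the final-step discussion of Section~\ref{sec:regredtest}. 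The error probabilities are controlled by running each estimator a bounded number of times and taking majority votes, since $|\cR|$ is bounded in terms of $\beta$ alone.

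So the heart of the matter is to upgrade Theorem~\ref{thm: regularity instance is testable} (satisfying $R$ is testable) to: satisfying $R=(\epsilon,\ba,d_{\ba,k})$ is estimable. Here I would use Lemma~\ref{lem: random choice2} as before, but now in a quantitative form that tracks distances rather than just membership. Given $H$ and a random sample $Q\in\binom{V}{q}$, Lemma~\ref{lem: random choice2} transfers equitable partitions between $H$ and $H[Q]$ with only an additive $\delta$ loss in the regularity parameter; combined with Lemma~\ref{lem: slightly different partition regularity} and Lemma~\ref{lem: improve regularity partition} one shows that the distance of $H[Q]$ to satisfying $R$ is, with probability at least $1-e^{-cq}$, within an additive error of the distance of $H$ to satisfying $R$. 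The two inequalities needed are: if $H$ is $(\alpha-\beta')$-close to satisfying $R$ then (with high probability) $H[Q]$ is $(\alpha-\beta'/2)$-close (apply Lemma~\ref{lem:7.1}-type reasoning to a near-optimal $k$-graph $L$ close to $H$ and use Proposition~\ref{prop: mathbf Pr doesn't change much}-style counting / Lemma~\ref{lem: random subset edge size} to control $|H[Q]\triangle L[Q]|$); and if $H$ is $\alpha$-far from satisfying $R$ then (with high probability) $H[Q]$ is $(\alpha-\beta'/2)$-far, which is the contrapositive and follows from the same argument run in the reverse direction via (Q2)$_{\ref{lem: random choice2}}$ and Lemma~\ref{lem: improve regularity partition}, as in the proof of Lemma~\ref{lem:7.2}. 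The estimator then samples $Q$, computes (by brute force on the bounded-size $H[Q]$) the distance of $H[Q]$ to satisfying $R$, and accepts iff this distance is at most $\alpha-3\beta'/4$.

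The main obstacle I anticipate is the quantitative distance-transfer statement in the last paragraph: Lemma~\ref{lem: random choice2} as stated is about existence of equitable partitions, not about the edit distance to satisfying $R$, and one must be careful that the additive errors incurred when (i) passing from $H$ to an optimal nearby $L$, (ii) sampling to $L[Q]$, (iii) absorbing the $\delta$-increase in the regularity parameter via Lemma~\ref{lem: improve regularity partition}, and (iv) going back from $H[Q]$ to $H$, all compose into a single additive error that can be made smaller than $\beta'/4$. This requires setting up the hierarchy $1/q\ll c\ll\delta\ll\nu\ll\beta'\ll\epsilon,\alpha$ and checking that each lemma is applied with parameters in its valid range (in particular that $(2\epsilon_0/3,\ba,d_{\ba,k})$-type relaxed instances remain regularity instances after the small perturbations). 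None of these steps is conceptually new — they are exactly the ingredients already assembled for Lemmas~\ref{lem:7.1} and~\ref{lem:7.2} — but the bookkeeping of the two-sided additive error is where the care is needed.
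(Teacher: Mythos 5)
Your easy direction (estimable $\Rightarrow$ testable) is fine and matches what the paper implicitly uses. For the hard direction, however, you take a different route from the paper, and the key step of that route has a genuine gap.

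The paper does \emph{not} prove Theorem~\ref{thm: estimable} by first passing through regular reducibility and then establishing ``satisfying $R$ is estimable''. Instead it works directly with the canonical tester: it defines a bounded family $\bF$ of regularity instances with $IC(\cF,d_{\ba,k})\geq 1/2$ (where $\cF$ is the accepting family of the canonical $(n,\beta/4)$-tester), and its estimator accepts iff a random sample $H[Q]$ is $(\alpha-\beta/2)$-close to satisfying some $R\in\bF$. Your route would be an alternative if it were complete, and it is not circular as stated — but it shifts all the difficulty onto the claim that ``satisfying $R$ is estimable'', and your proposed proof of that claim fails.

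The problem is the reverse direction (your Direction B: if $H[Q]$ is $(\alpha-\beta'/2)$-close to $R$ then $H$ is roughly $\alpha$-close to $R$). You claim this ``follows from the same argument run in the reverse direction via (Q2)$_{\ref{lem: random choice2}}$ and Lemma~\ref{lem: improve regularity partition}, as in the proof of Lemma~\ref{lem:7.2}'', and that ``none of these steps is conceptually new''. This is where the argument breaks. The mechanism in Lemma~\ref{lem:7.2} is: $H[Q]$ is $\nu$-close to some $L'$ on $Q$ satisfying $R$; because $\nu$ is tiny relative to $\epsilon$, Lemma~\ref{lem: slightly different partition regularity} lets you absorb the perturbation and conclude that \emph{$H[Q]$ itself} satisfies a relaxed instance $R'=(\epsilon+\nu^{1/6},\ba,d_{\ba,k})$; only then can you feed $H[Q]$ into (Q2)$_{\ref{lem: random choice2}}$. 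In the estimability setting the distance budget is $\alpha-\beta'/2$, which is a fixed constant and far larger than $\epsilon$. Lemma~\ref{lem: slightly different partition regularity} cannot absorb a constant-size perturbation into the regularity parameter, so $H[Q]$ need not satisfy any relaxed version of $R$, and (Q2)$_{\ref{lem: random choice2}}$ — which explicitly requires an $(\epsilon_0,\ba,d_{\ba,k})$-equitable partition \emph{of $H[Q]$} — simply does not apply. In short, the equitable partition you have is a partition of $L'$, not of $H[Q]$, and there is no lemma in the toolbox that lifts a partition of an arbitrary $k$-graph close to $H[Q]$ back to one of a $k$-graph close to $H$.

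To repair this you would need precisely the machinery of the paper's Case~2: run the partition version of the regular approximation lemma (Lemma~\ref{RAL(k)}) on $L'$ relative to a sampled-down version of a high-quality partition $\sQ$ of a $k$-graph $G$ that approximates $H$, project $G[Q]$ and $G$ onto the resulting common refinement (Proposition~\ref{eq: sP prec sQ then sP is good partition}, Lemma~\ref{lem: partition refinement}), compare density functions (Lemma~\ref{lem: hypergraph close then density close}), and rebuild a $k$-graph $M$ on $V$ with the matched density function (Lemma~\ref{lem: density close then hypergraph close}). This is not bookkeeping on top of Lemmas~\ref{lem:7.1}/\ref{lem:7.2}; it is a substantially longer argument, and once you carry it out you will have essentially reproduced the paper's direct proof anyway — at which point routing through Lemma~\ref{lem: main 1} and the combination of $|\cR|$ estimators adds work rather than saving it.
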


To prove Theorem~\ref{thm: estimable},
it suffices to show that for $k$-graphs every testable property $\bP$ is estimable.
As in Section~\ref{sec:testred} we can assume the existence of a family $\cF$ of bounded size
and a canonical tester $\bT$ for $\bP$
which accepts its input $H$ if and only if $\bT$ samples some $k$-graph in $\cF$.

We then consider the set $\bF$ of regularity instances which correspond to a high density of copies of the $k$-graphs in $\cF$.
Our estimator $\bT_E$ accepts $H$ if a random sample $H[Q]$ is sufficiently close to satisfying some $R\in \bF$ (see~\eqref{eq: algorithm}).

Now suppose $H$ is close to satisfying $\bP$.
Then there is a $k$-graph $F$ which satisfies $\bP$ and is close to $H$.
Via the partition version of the regular approximation lemma (Lemma~\ref{RAL(k)})
we can associate a high quality regularity partition $\sP$
with $F$ (actually we consider some $F'$ which is close to $F$) together with a suitable density function $d_{\ba^\sP,k}^F$. Since $F$ satisfies $\bP$, $d^{F}_{\ba^\sP,k}$ will give rise to a regularity instance $R_F \in \bF$ which is satisfied by $F$.
Via the regular approximation lemma and some additional arguments,
it also turns out that instead of $H$ we can actually consider some $G_*$
to which we can associate the same regularity partition $\sP$ but with a density function $d_{\ba^\sP,k}^G$ 
(which will also give rise to a regularity instance $R_{G_*}'$).
By Lemma~\ref{lem: random choice2}(i),
$\sP$ and $d^G_{\ba^\sP,k}$ are inherited by $G_*[Q]$ with high probability.
We can also construct $J$ on $Q$ which is close to $G_*[Q]$ and inherits $\sP$ and $d^F_{\ba^\sP,k}$ from  $F$, and also satisfies $R_F \in \bF$.
But this means that the estimator $\bT_E$ will indeed accept $H$.
See Figure~\ref{fig:estimable} for an illustration.

If $H$ is far from satisfying $\bP$,
then we argue via Lemma~\ref{lem: random choice2}(ii) rather than Lemma~\ref{lem: random choice2}(i).

\begin{proof}[Proof of Theorem~\ref{thm: estimable}]
Clearly, every estimable property is also testable.
Thus it suffices to show that given any $n\in \mathbb{N}$ and $\alpha>\beta>0$ and any testable $k$-graph property $\bP$, we can construct an $(n,\alpha,\beta)$-estimator.

Assume that $\bP$ is a testable $k$-graph property.
By Lemma~\ref{lem: canonical}, there exists a function $q_k: (0,1)\rightarrow \mathbb{N}$ such that the following holds: For any $n\in \mathbb{N}$,
there exist a canonical $(n,\beta/4)$-tester $\bT$ and an integer $q'=q'(n,\beta)\leq q_k(\beta/4)$ such that, given any $n$-vertex $k$-graph $H$, 
the tester $\bT$ chooses a random subset $Q'$ of $q'$ vertices of $H$ and (deterministically) accepts or rejects $H$ based on the isomorphism class of $H[Q']$.
Let $\cF$ be the collection of $q'$-vertex $k$-graphs 
such that $\bT$ accepts $H$ if and only if $H[Q']$ induces one of the $k$-graphs in $\cF$. 
Since $\bT$ is a $(n,\beta/4)$-tester, we conclude the following:
\begin{equation}\label{eq: Pr cF}
\begin{minipage}[c]{0.8\textwidth}
\textit{If $H$ satisfies $\bP$, 
then $\mathbf{Pr}(\cF,H)\geq 2/3$, and \\ 
if $H$ is $(\beta/4)$-far from satisfying $\bP$, 
then $\mathbf{Pr}(\cF,H)\leq 1/3$.}
\end{minipage}
\end{equation}

Let $\overline{\epsilon}:\mathbb{N}^{k-1}\rightarrow (0,1]$ be a function such that 
$\overline{\epsilon}(\ba)\ll \norm{\ba}^{-k}$ for every $\ba\in \N^{k-1}$.
We choose constants $\eta,\nu$ such that
\begin{align}\label{eq: def eta nu q' beta k const}
0< \eta \ll \nu \ll 1/q', \beta, 1/k.
\end{align}
Let $\mu:\mathbb{N}^{k-1}\rightarrow (0,1]$ be a function such that for any $\ba\in \mathbb{N}^{k-1}$, 
we have $\mu(\ba)\ll \norm{\ba}^{-k}, \eta, \nu$ and
\begin{align}\label{def: mu ba ll}
\mu(\ba) &\ll  \mu_{\ref{RAL(k)}}(k, \norm{\ba}, \norm{\ba}^{4^{k}},\eta,\nu,\overline{\epsilon}), 1/t_{\ref{RAL(k)}}(k, \norm{\ba}, \norm{\ba}^{4^{k}},\eta,\nu,\overline{\epsilon}),  \epsilon_{\ref{def: regularity instance}}(\norm{\ba},k).
\end{align}
In particular, we assume that 
\begin{align}\label{eq:defb}
	\mu(\ba)\ll \overline{\epsilon}(\bb) \text{ for all } \ba\in \mathbb{N}^{k-1} \text{ and } \bb\in [t_{\ref{RAL(k)}}(k, \norm{\ba}, \norm{\ba}^{4^{k}},\eta,\nu,\overline{\epsilon})]^{k-1}.
\end{align}
Let $T , \mu_*, c, q, n_0$ be numbers such that
\begin{align}\label{eq: def T mu* q}
&T:=t_{\ref{thm: RAL}}(\eta,\nu/2,\mu), \enspace\enspace
 \mu_* := \min_{\ba\in [T]^{k-1}} \mu(\ba), \\
&1/n_0\ll 1/q \ll c \ll \mu_*,\nu \enspace \text{ and }\enspace
n_0\geq n_{\ref{thm: RAL}}(\eta,\nu/2,\mu), n_{\ref{RAL(k)}}(k,T,T^{4^k},\eta,\nu,\overline{\epsilon}).\notag
\end{align}
Let $\bR$ be the collection of all regularity instances $R=(\epsilon,\ba,d_{\ba,k})$ satisfying the following.
\begin{itemize}
\item[(R1)$_{\ref{thm: estimable}}$] $\epsilon \in \{ \mu_*, 2\mu_*, \dots, 1\}$ and $\epsilon \leq \overline{\epsilon}(\ba)^{1/2}$,
\item[(R2)$_{\ref{thm: estimable}}$] $d_{\ba,k}(\hat{\bx}) \in \{0,\mu_*^2, 2\mu_*^2, \dots, 1\}$ for each $\hat{\bx}\in \hat{A}(k,k-1,\ba)$.
\end{itemize} \COMMENT{Note that since $R$ is a regularity instance, $\epsilon$ automatically satisfies the restriction in Definition~\ref{def: regularity instance} but this is not enough to apply Corollary~\ref{cor: counting collection} later on in the proof. For this reason we add $\epsilon \leq \overline{\epsilon}(\ba)^{1/2}$ to (R1)$_{\ref{thm: estimable}}$.}
Recall that $IC(\cF,d_{\ba,k})$ was defined in \eqref{eq: def IC cF d a k}. Let 
$$\bF:= \{ R=(\epsilon,\ba,d_{\ba,k}) \in \bR: IC(\cF,d_{\ba,k} ) \geq 1/2 \text{ and } a_1 \geq \eta^{-1}\}.$$
Note that $|\bF|$ is bounded by a function only depending on $\beta$, $k$, and $\bP$.\COMMENT{
The definition of regularity instance implies that $\norm{\ba}\leq 1/\epsilon\leq 1/\mu_*$.}

Fix $\alpha>\beta$.
As a next step we describe the algorithm $\bT_E(n,\alpha,\beta)$
which receives an $n$-vertex $k$-graph $H$ as an input:
\begin{equation}\label{eq: algorithm}
\begin{minipage}[c]{0.8\textwidth}
\textit{If $n< n_0$, 
then $\bT_E(n,\alpha,\beta)$ considers the entire $k$-graph $H$ and determines how close $H$ is to satisfying $\bP$. 
If $n\geq n_0$, then $\bT_E(n,\alpha,\beta)$ chooses a subset $Q$ of $q$ vertices of $H$ uniformly at random. 
If $H[Q]$ is $(\alpha-\beta/2)$-close to some $k$-graph which satisfies a regularity instance $R\in \bF$, 
then $\bT_E(n,\alpha,\beta)$ accepts $H$, otherwise it rejects $H$.
}\end{minipage}
\end{equation}
Recall that $q$ and $|\bF|$ are both bounded by a function of $\beta$, $k$, and $\bP$, so the query complexity of $\bT_E(n,\alpha,\beta)$ is also bounded by a function depending only on $\alpha, \beta$ and $k$.

In the following we verify that $\bT_E(n,\alpha,\beta)$ distinguishes $k$-graphs which are $(\alpha-\beta)$-close to satisfying $\bP$ and $k$-graphs which are $\alpha$-far from satisfying $\bP$
with probability at least $2/3$. If $n<n_0$, then this is trivial to verify since we consider the entire $k$-graph $H$. Thus we assume that $n\geq n_0$.

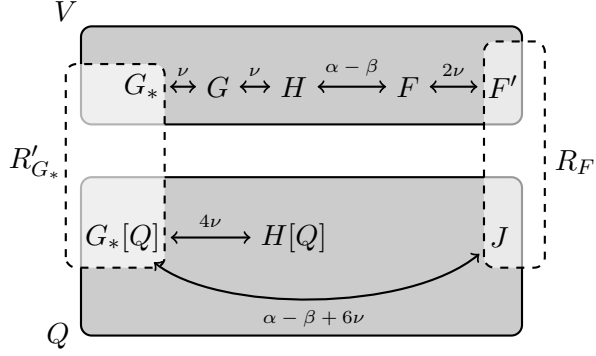
\begin{figure}[t]
\centering
\begin{tikzpicture}

\draw[rounded corners,thick,fill=gray!40] (-2.8,2.5) rectangle (3,3.8);
\draw[rounded corners,thick,fill=gray!40] (-2.8,-0.3) rectangle (3,1.8);

\node (b1) at (-3,4) {$V$};
\node (b2) at (-3.1,-0.3) {$Q$};

\draw[rounded corners,dashed,thick,fill opacity=0.6,fill=white!100] (2.5,0.6) rectangle (3.3,3.6);
\draw[rounded corners,dashed,thick,fill opacity=0.7,fill=white!100] (-3,0.6) rectangle (-1.7,3.3)
;

\node (R1) at (3.7,2) {$R_F$};
\node (R2) at (-3.4,2) {$R'_{G_*}$};

\node (v1) at (0,3) {$H$};
\node (v2) at (1.5,3) {$F$};
\node (v3) at (-1,3) {$G$};
\node (v4) at (-2,3) {$G_*$};
\node (v5) at (2.75,3) {$F'$};

\draw[thick,<->](v1)--(v2)
node[midway,above] {\tiny $\alpha-\beta$};
\draw[thick,<->](v1)--(v3)
node[midway,above] {\tiny $\nu$};
\draw[thick,<->](v4)--(v3)
node[midway,above] {\tiny $\nu$};
\draw[thick,<->](v2)--(v5)
node[midway,above] {\tiny $2\nu$};

\node (v6) at (0,1) {$H[Q]$};
\node (v7) at (-2.25,1) {$G_*[Q]$};
\node (v8) at (2.7,1) {$J$};

\draw[thick,<->](v6)--(v7)
node[midway,above] {\tiny $4\nu$};
\draw[thick,<->](v8) .. controls (1.5,0) and (-1,0) ..
node[midway,below] {\tiny $\alpha-\beta+6\nu$} (v7);

\end{tikzpicture}
\caption{The proof strategy for Theorem~\ref{thm: estimable} (Case 1).}\label{fig:estimable}
\end{figure}

Let us fix an $n$-vertex $k$-graph $H$.
By~\eqref{eq: def T mu* q},
 we can apply the regular approximation lemma (Theorem~\ref{thm: RAL}) with $H, \eta, \nu/2, \mu, T$ playing the roles of $H, \eta, \nu, \epsilon, t_0$. 
Hence there exists a $k$-graph $G$ on $V(H)$, $\ba^{\sQ} \in \mathbb{N}^{k-1}$ and a family of partitions $\sQ = \sQ(k-1,\ba^{\sQ})$ satisfying the following.
\begin{enumerate}[label=(G\Alph*)]
\item\label{item:GA} $\sQ$ is $(\eta,\mu(\ba^{\sQ}),\ba^{\sQ})$-equitable and $T$-bounded,
\item\label{item:GB} $G$ is perfectly $\mu(\ba^{\sQ})$-regular with respect to $\sQ$, and
\item\label{item:GC} $|G\triangle H|\leq \nu \binom{n}{k}$.
\end{enumerate}

Property (GB) implies that there exists a density function $d^{G}_{\ba^{\sQ},k}$ such that
for all $\hat{\bx}\in \hat{A}(k,k-1,\ba^{\sQ})$ the $k$-graph
 $G$ is $(\mu(\ba^{\sQ}), d^{G}_{\ba^{\sQ},k}(\hat{\bx}))$-regular with respect to $\hat{Q}^{(k-1)}(\hat{\bx})$.
Thus 
\begin{align}\label{eq:Qpart}
	\text{$\{\sQ^{(j)}\}_{j=1}^{k-1}$ is a $(\mu(\ba^{\sQ}), \ba^{\sQ},d^{G}_{\ba^{\sQ},k})$-equitable partition of $G$.}
\end{align}

\smallskip

\noindent
\textbf{Case 1:} \emph{$H$ is $(\alpha-\beta)$-close to satisfying $\bP$.}

\smallskip

\noindent
Hence there exists a $k$-graph $F$ which
satisfies $\bP$ and which is $(\alpha-\beta)$-close to $H$. 
By \eqref{eq: Pr cF}, we have the following:
\begin{align}\label{eq: Pr cF Fk}
F \text{ is } (\alpha-\beta)\text{-close to } H \text{ and }\mathbf{Pr}(\cF, F) \geq 2/3.
\end{align}

We choose a suitable partition of the edges of $F$ and its complement by setting
\begin{align*}
\{ F^{(k)}_1,\dots, F^{(k)}_s\} 
&:= \{ F\cap \cK_{k}(\hat{Q}^{(k-1)}(\hat{\bx}))  :\hat{\bx}\in \hat{A}(k,k-1,\ba^{\sQ}) \} \sm\{\emptyset\},\\
\{F^{(k)}_{s+1},\dots, F^{(k)}_{s+s'}\} 
&:= \left( \left\{ \cK_{k}(\hat{Q}^{(k-1)}(\hat{\bx}))  \setminus F:\hat{\bx}\in \hat{A}(k,k-1,\ba^{\sQ}) \right\} \right. \\
& \quad\cup \left. \left\{ F\setminus \cK_{k}(\sQ^{(1)}),\enspace\binom{V}{k}\setminus (\cK_{k}(\sQ^{(1)})\cup F )\right\}\right)\setminus \{\emptyset\}.
\end{align*}
Note that by Proposition~\ref{prop: hat relation}(viii), $s+s'\leq \norm{\ba^{\sQ}}^{4^{k}}$.
Let 
\begin{align}\label{eq: T' definition constant T'}
T':=t_{\ref{RAL(k)}}(k, \norm{\ba^{\sQ}},\norm{\ba^{\sQ}}^{4^{k}},\eta,\nu,\overline{\epsilon}).
\end{align}
Let 
$$\sQ^{(k)}:= \{ \cK_{k}(\hat{Q}^{(k-1)}( \hat{\bx})):\hat{\bx}\in \hat{A}(k,k-1,\ba^{\sQ})\}.$$
Clearly, $\{F^{(k)}_1,\dots, F^{(k)}_{s+s'}\} \prec \sQ^{(k)}.$
We apply the `partition version' of the regular approximation lemma (Lemma~\ref{RAL(k)}) with the following objects and parameters. (
Indeed this is possible by \ref{item:GA} and \eqref{def: mu ba ll}.) \newline

{\small
\begin{tabular}{c|c|c|c|c|c|c|c|c|c}
object/parameter & $ \{\sQ^{(j)}\}_{j=1}^{k}$ & $ \{F^{(k)}_1,\dots, F^{(k)}_{s+s'}\}$ & $k$&  $s+s'$ & $\eta$ & $\nu$ & $\overline{\epsilon}$ & $T'$ & $\norm{\ba^{\sQ}}$\\ \hline
playing the role of & $\{\sQ^{(j)}\}_{j=1}^{k}$ & $\sH^{(k)}$ & $k$ &  $s$ & $\eta$ & $\nu$ & $\epsilon$ & $t$ & $o$
 \\ 
\end{tabular}
}\newline \vspace{0.2cm}

\noindent
We obtain a family of partitions $\sP= \sP(k-1,\ba^{\sP})$ and $k$-graphs $\{F'^{(k)}_i\}_{i=1}^{s+s'}$ such that 
\begin{itemize}
\item[(P1)$_{\ref{thm: estimable}}$] $\sP$ is $(\eta,\overline{\epsilon}(\ba^{\sP}),\ba^{\sP})$-equitable and $T'$-bounded,
\item[(P2)$_{\ref{thm: estimable}}$] $\sP \prec \{\sQ^{(j)}\}_{j=1}^{k-1}$,
\item[(P3)$_{\ref{thm: estimable}}$] $F'^{(k)}_i$ is perfectly $\overline{\epsilon}(\ba^{\sP})$-regular with respect to $\sP$, and
\item[(P4)$_{\ref{thm: estimable}}$] $\sum_{i=1}^{s} |F^{(k)}_i\triangle F'^{(k)}_i| \leq \nu \binom{n}{k}$.
\end{itemize}
Note that by \eqref{eq: def eta nu q' beta k const}--\eqref{eq:defb}
\begin{align}\label{eq:muT}
	\mu(\ba^{\sQ})\ll 1/T', \overline{\epsilon}(\ba^{\sP})\enspace\text{ and }\enspace 
	\overline{\epsilon}(\ba^{\sP})\ll \norm{\ba^{\sP}}^{-1} \leq 1/a_1^{\sP}\leq 1/a_1^{\sQ}\leq \eta\ll \nu.
\end{align}
Let $F':=\bigcup_{i=1}^{s} F'^{(k)}_i$. Then by (P4)$_{\ref{thm: estimable}}$
\begin{align}\label{eq: F triangle F'}
|F \triangle F'| \leq \nu \binom{n}{k} + |F\setminus \cK_{k}(\sQ^{(1)})| \stackrel{(\ref{eq: eta a1})}{\leq} (\nu+k^2\eta)\binom{n}{k} \leq 2\nu \binom{n}{k} .
\end{align}
By (P3)$_{\ref{thm: estimable}}$ and Lemma~\ref{lem: union regularity} 
\begin{equation}\label{eq: F' is also s epsilon regular}
\begin{minipage}[c]{0.8\textwidth}\em
$F'$ is perfectly $s\overline{\epsilon}(\ba^{\sP})$-regular with respect to $\sP$.
\end{minipage}
\end{equation}
Together with \eqref{eq: Pr cF Fk} and \eqref{eq: F triangle F'}, Proposition~\ref{prop: mathbf Pr doesn't change much} implies that 
\begin{align}\label{eq: Pr F'}
\mathbf{Pr}(\cF,F')\geq 2/3 - 2q'^{k} \nu \geq 2/3 - \nu^{1/2}.
 \end{align}
Observe that so far we introduced a $k$-graph $G$ that is very close to $H$ and a $k$-graph $F'$ that is very close to $F$,
where $G$ and $F'$ satisfy very strong regularity conditions. 
We now modify $G$ slightly to obtain $G_{*}$ so that $\sP$ is an equitable partition of $G_{*}$.

By \eqref{eq:Qpart}, \eqref{eq:muT}, (P1)$_{\ref{thm: estimable}}$ and (P2)$_{\ref{thm: estimable}}$,
we can apply Proposition~\ref{eq: sP prec sQ then sP is good partition} with 
$\mu(\ba^{\sQ}), \overline{\epsilon}(\ba^{\sP}), \nu$, $\norm{\ba^{\sP}}, \sP$, $\{\sQ^{(j)}\}_{j=1}^{k-1}, G$ and $d^{G}_{\ba^{\sQ},k}$ playing the roles of $\epsilon, \epsilon', \nu, T, \sP, \sQ, H^{(k)}$ and $d_{\ba^{\sQ},k}$ to obtain a density function $d^{G}_{\ba^{\sP},k}:\hat{A}(k,k-1,\ba^{\sP}) \rightarrow [0,1]$ and an $n$-vertex $k$-graph $G_*$ such that 
\begin{enumerate}[label=(G$_*$\arabic*)]
\item\label{item:G*1} $\sP$ is an $(\overline{\epsilon}(\ba^{\sP}),\ba^{\sP},d^{G}_{\ba^{\sP},k})$-equitable partition of $G_*$,
\item\label{item:G*2}  $|G\triangle G_*| \leq \nu \binom{n}{k}.$
\end{enumerate}

Let $\epsilon_*:= \overline{\epsilon}(\ba^{\sP})$.
Since $\norm{\ba^{\sQ}}\leq T$ by \ref{item:GA}, it follows from \eqref{eq: def T mu* q} and \eqref{eq:muT} that $ \mu_*\leq \mu(\ba^{\sQ}) \ll\epsilon_* $.
For each $\hat{\by}\in \hat{A}(k,k-1,\ba^{\sP})$, 
let $d^{F}_{\ba^{\sP},k}(\hat{\by})\in \{0,\mu_*^2, 2\mu_*^2, \dots, 1\} $  such that 
$$d^{F}_{\ba^{\sP},k}(\hat{\by}) = d(F' \mid \hat{P}^{(k-1)}(\hat{\by}))\pm \mu_*^2.$$
Thus $d^{F}_{\ba^{\sP},k}(\hat{\by})$ satisfies (R2)$_{\ref{thm: estimable}}$.

Let $\epsilon^*$ be a number satisfying  (R1)$_{\ref{thm: estimable}}$ with $\ba^{\sP}$ playing the role of $\ba$ and such that $6s\epsilon_* \leq \epsilon^{*}\leq 7s\epsilon_*$.\COMMENT{Note that $s\leq \norm{\ba^{\sQ}}^{4^k}$ and $\epsilon_* \ll \norm{\ba^{\sP}}^{-4^k} \leq \norm{\ba^{\sQ}}^{-4^k}$. Thus $\epsilon_* \ll 1/s$. }
Then by \eqref{eq: F' is also s epsilon regular}, $\sP$ is an $(\epsilon^*, \ba^{\sP},d_{\ba^{\sP},k}^{F})$-equitable partition of $F'$. So 
\begin{equation}\label{eq: F' satisfies regularity instance RF}
\begin{minipage}[c]{0.8\textwidth}\em
$F'$ satisfies the regularity instance $R_F:=(\epsilon^{*},\ba^{\sP}, d^{F}_{\ba^{\sP},k})$ and $R_F \in \bR$.
\end{minipage}
\end{equation}
 Also, \ref{item:G*1} implies that $G_*$ satisfies the regularity instance $R_{G_*}=(\epsilon_{*}, \ba^{\sP},d^{G}_{\ba^{\sP},k})$.
 
Note that  \eqref{eq: Pr cF Fk}, \eqref{eq: F triangle F'}, \ref{item:GC} and \ref{item:G*2} together imply that $G_*$ and $F'$ are $(\alpha - \beta + 4\nu)$-close.
Thus Lemma~\ref{lem: hypergraph close then density close} implies that 
\begin{align}\label{eq: dist close}
\dist(d^{G}_{\ba^{\sP},k}, d^{F}_{\ba^{\sP},k}) \leq \alpha- \beta+5\nu.
\end{align}

Recall that the algorithm $\bT_E(n,\alpha,\beta)$ chooses a random $q$-set $Q$ of $V(H)$. 
Define events $\cE_0$ and $\cE_1$ as follows:
\begin{itemize}
	\item[($\cE_0$)] $|G_*[Q]\triangle H[Q]|= \frac{q^{k}}{n^{k}}|G_*\triangle H| \pm \nu\binom{q}{k}$,
	\item[($\cE_1$)] $G_*[Q]$ satisfies the regularity instance $R'_{G_*}:=( 2\epsilon_{*}, \ba^{\sP},d^{G}_{\ba^{\sP},k})$.
\end{itemize}
\COMMENT{It is a regularity instance even with $2\epsilon_*$ because of the definition of the function $\overline{\epsilon}$.}
Lemma~\ref{lem: random subset edge size} implies
$\mathbb{P}[\cE_0] \geq 1 - e^{-\nu^{3}q},$
and Lemma~\ref{lem: random choice2}\COMMENT{We can apply the lemma since $1/q \ll c\ll \mu_* \leq \mu(\ba^{\sQ}) \ll \overline{\epsilon}(\ba^{\sP}) = \epsilon_*$ by \eqref{eq: def T mu* q} and \eqref{eq:muT}.} implies 
$\mathbb{P}[\cE_1] \geq 1 - e^{-cq}.$
Hence, as $0<c\ll \nu$, 
\begin{align}\label{eq: probability cE0 wedge cE1}
\mathbb{P}[\cE_0\wedge \cE_1 ] \geq 1 - 2e^{-cq}\geq \frac{2}{3}.
\end{align}
It suffices to show that if $\cE_0\wedge \cE_1$ holds, then $\bT_E(n,\alpha,\beta)$ always accepts $H$. Suppose in the following that $\cE_0\wedge \cE_1$ holds.
Then ($\cE_0$), \eqref{eq: dist close} and Lemma~\ref{lem: density close then hypergraph close} with $q, G_*[Q],d^{G}_{\ba^{\sP},k},d^{F}_{\ba^{\sP},k}$ 
playing the roles of $n, H^{(k)},d^{H}_{\ba^{\sP},k},d^{G}_{\ba^{\sP},k}$ imply that there exists a $q$-vertex $k$-graph $J$ on vertex set $Q$ such that 
\begin{itemize}
\item[(J1)$_{\ref{thm: estimable}}$] $J$ is $(\alpha- \beta+6\nu)$-close to $G_*[Q]$, and
\item[(J2)$_{\ref{thm: estimable}}$] $J$ satisfies the regularity instance $R_F = (\epsilon^{*},\ba^{\sP}, d^{F}_{\ba^{\sP},k})$.
\end{itemize}
To obtain (J2)$_{\ref{thm: estimable}}$,  we use that $6s\epsilon_* \leq \epsilon^{*} \leq 7s\epsilon_*$.
Note that $(\cE_0)$, \ref{item:GC} and \ref{item:G*2} together imply 
\begin{align}\label{eq: diff between GQ HQ}
|G_*[Q]\triangle H[Q]| \leq 4\nu \binom{q}{k}.
\end{align}
Together with 
(J1)$_{\ref{thm: estimable}}$ and \eqref{eq: def eta nu q' beta k const}, this gives
\begin{equation*}
\begin{minipage}[c]{0.8\textwidth}\em
$J$ is $(\alpha-\beta/2)$-close to $H[Q]$.
\end{minipage}
\end{equation*}
Moreover, note $\epsilon^*\ll \norm{\ba^{\sP}}^{-1} \leq 1/a_1^{\sP}\ll \nu, 1/q',1/k$ by \eqref{eq: def eta nu q' beta k const} and \eqref{eq:muT}. Thus \eqref{eq: F' satisfies regularity instance RF} together with 
Corollary~\ref{cor: counting collection} implies that 
$$IC(\cF, d^{F}_{\ba^{\sP},k}) 
\geq \mathbf{Pr}(\cF, F') - \nu 
\stackrel{\eqref{eq: Pr F'}}{\geq} 2/3 -2\nu^{1/2} 
\geq 1/2.$$
Since we also have $R_F\in \bR$ by \eqref{eq: F' satisfies regularity instance RF} and $a^{\sP}_1 \geq 1/\eta$ by (P1)$_{\ref{thm: estimable}}$, it follows that $R_F \in \bF $. 
Thus there exists $J$ which is $(\alpha-\beta/2)$-close to $H[Q]$ and which satisfies $R_F\in \bF$. 
Hence \eqref{eq: algorithm} implies that $\bT_E(n,\alpha,\beta)$ accepts $H$.
Together with \eqref{eq: probability cE0 wedge cE1} this shows that whenever $H$ is $(\alpha-\beta)$-close to satisfying $\bP$, 
then  $\bT_E(n,\alpha,\beta)$ accepts $H$ with probability at least $2/3$.\COMMENT{at least $\mathbb{P}[\cE_0\wedge \cE_1] \geq 1 - 2e^{-cq} \geq 2/3.$}

\smallskip
\noindent
\textbf{Case 2:} \emph{$H$ is $\alpha$-far from satisfying $\bP$.}
\smallskip

\noindent
Again, we may use Theorem~\ref{thm: RAL} to show the existence of a family of partitions $\sQ=\sQ(k-1,\ba^{\sQ})$ and a $k$-graph $G$ on $V$ satisfying \ref{item:GA}--\ref{item:GC}. 
Then $G$ satisfies the regularity instance $R_{G} := (\mu(\ba^{\sQ}),\ba^{\sQ}, d^{G}_{\ba^{\sQ},k})$ for some density function $d^{G}_{\ba^{\sQ,k}}$ and \eqref{eq:Qpart} holds.
(By~\eqref{def: mu ba ll}, $R_G$ is indeed a regularity instance.)
Recall that $\bT_E(n,\alpha,\beta)$ chooses a random $q$-set $Q$ of $V(H)$. 
Define events $\cE_0'$ and $\cE'_1$ as follows.
\begin{itemize}
	\item[($\cE_0'$)] $|G[Q]\triangle H[Q]|= \frac{q^{k}}{n^{k}}|G\triangle H| \pm \nu\binom{q}{k}$,
	\item [($\cE_1'$)]$G[Q]$ satisfies the regularity instance $(2\mu(\ba^{\sQ}),\ba^{\sQ}, d^{G}_{\ba^{\sQ},k})$. 
\end{itemize}
Thus, if $\cE'_1$ occurs, 
then there exists a family of partitions $\sQ'=\sQ'(k-1,\ba^{\sQ})$ on $Q$ which is a $(2\mu(\ba^{\sQ}),\ba^{\sQ},d^{G}_{\ba^{\sQ},k})$-equitable partition of $G[Q]$. 
Note that $\sQ'$ is $\norm{\ba^{\sQ}}$-bounded.
Lemma~\ref{lem: random subset edge size} implies
$\mathbb{P}[\cE_0'] \geq 1 - e^{-\nu^{3}q},$
while Lemma~\ref{lem: random choice2} and~\eqref{eq: def T mu* q} imply that $\Pro[\cE'_1]\geq 1-e^{-cq}$.
Thus
\begin{align}\label{eq:probE'01}
	\mathbb{P}[\cE'_0\wedge \cE'_1]
	\geq 1 - 2e^{-cq} \geq 2/3.
\end{align}

 So it suffices to show that if $\cE'_0 \wedge \cE'_1$ holds, 
then $\bT_E(n,\alpha,\beta)$ rejects $H$. 
Assume for a contradiction that $\bT_E(n,\alpha,\beta)$ accepts $H$. 
This implies that there exists a $k$-graph $L$ on $Q$ so that 
\begin{equation}\label{eq: L is close and satisfies regularity instance}
\begin{minipage}[c]{0.8\textwidth}\em
$L$ is $(\alpha-\beta/2)$-close to $H[Q]$ and $L$ satisfies some regularity instance $R_L \in \bF$.
 \end{minipage}
 \end{equation}
Together with (R1)$_{\ref{thm: estimable}}$, \eqref{eq: def eta nu q' beta k const}, and the definition of $\bF$, 
Corollary~\ref{cor: counting collection} implies that 
\begin{align}\label{eq: Pr cF Qk 1/2 nu}
\mathbf{Pr}(\cF, L) \geq 1/2 - \nu.
\end{align}
\COMMENT{We can apply the corollary since the vector $\ba$ and constant $\epsilon_{L}$ `belonging' to $R_L$ satisfy $\epsilon_{L} \leq \overline{\epsilon}(\ba)^{1/2} \ll \norm{\ba}^{-1}\leq 1/a_1 \leq \eta\ll \nu,1/k,1/q'$.}
Our strategy is as follows.
We aim to construct a $k$-graph $M$ on $V$ whose structure is very similar to $L$.
However, this is hard to achieve directly as $L$ may not satisfy sufficiently strong regularity assumptions.
Thus we first approximate $L$ by a suitable $k$-graph $L'$.
Similarly, we also approximate $G$ (and thus $H$) by suitable $k$-graphs $G'$ and $G^*$.
Based on this, we construct $M$ which is $(\alpha-\beta/2+ 10\nu)$-close to $H$;
that is, almost as close as $L$ and $H[Q]$.
Based on \eqref{eq: L is close and satisfies regularity instance} and \eqref{eq: Pr cF Qk 1/2 nu},
we verify that $M$ is $(\beta/4)$-close to satisfying $\bP$, yielding a contradiction to the assumption that $H$ is $\alpha$-far from satisfying $\bP$.

Let 
\begin{align*}
\{ L_1,\dots, L_s\} 
&:= \{ L\cap \cK_{k}(\hat{Q'}^{(k-1)}(\hat{\bx}))  :\hat{\bx}\in \hat{A}(k,k-1,\ba^{\sQ}) \}\sm\{\es\},\\
\{L_{s+1},\dots, L_{s+s'}\}
&:= \left( \left\{ \cK_{k}(\hat{Q'}^{(k-1)}(\hat{\bx}))  \sm L:\hat{\bx}\in \hat{A}(k,k-1,\ba^{\sQ}) \right\} \right.\\
&\quad \cup \left.\left\{ L\setminus \cK_{k}(\sQ'^{(1)}),\enspace \binom{Q}{k}\setminus (\cK_{k}(\sQ'^{(1)})\cup  L )\right\}\right) \setminus\{\emptyset\}.
\end{align*}
Thus $s+s'\leq \norm{\ba^{\sQ}}^{4^{k}}$ by Proposition~\ref{prop: hat relation}(viii).
Let $\sQ'^{(k)}:= \{ \cK_{k}(\hat{Q'}^{(k-1)}( \hat{\bx})):\hat{\bx}\in \hat{A}(k,k-1,\ba^{\sQ})\}$. It follows that 
$\{L_1,\dots, L_{s+s'}\}\prec \sQ'^{(k)}$.
Again, by \eqref{def: mu ba ll} and \eqref{eq: def T mu* q}
we can apply the `partition version' of the regular approximation lemma (Lemma~\ref{RAL(k)}) with the following objects and parameters. \newline

{\small
\begin{tabular}{c|c|c|c|c|c|c|c|c|c|c}
object/parameter & $ \{\sQ'^{(j)}\}_{j=1}^{k}$ & $ \{L_1,\dots, L_{s+s'}\} $ & $k$&$q$&  $s+s'$ & $\eta$ & $\nu$ & $\overline{\epsilon}$ & $T'$  & $\norm{\ba^{\sQ}}$\\ \hline
playing the role of & $\{\sQ^{(j)}\}_{j=1}^{k}$ & $\sH^{(k)}$ &$k$& $n$ &  $s$ & $\eta$ & $\nu$ & $\epsilon$ & $t$ & $o$
 \\ 
\end{tabular}
}\newline \vspace{0.2cm}

\noindent
We obtain a family of partitions $\sP'=\sP'(k-1,\ba^{\sP'})$ and $k$-graphs $\{L'_i\}_{i=1}^{s+s'}$ such that 
\begin{itemize}
\item[(P$'$1)$_{\ref{thm: estimable}}$] $\sP'$ is $(\eta,\overline{\epsilon}(\ba^{\sP'}),\ba^{\sP'})$-equitable and $T'$-bounded, and $a^{\sQ}_i$ divides $a^{\sP'}_i$ for each $i\in [k-1]$,
\item[(P$'$2)$_{\ref{thm: estimable}}$] $\sP' \prec \{\sQ'^{(j)}\}_{j=1}^{k-1}$,
\item[(P$'$3)$_{\ref{thm: estimable}}$] $L'_i$ is perfectly $\overline{\epsilon}(\ba^{\sP'})$-regular with respect to $\sP'$,
\item[(P$'$4)$_{\ref{thm: estimable}}$] $\sum_{i=1}^{s} |L_i\triangle L'_i| \leq \nu \binom{q}{k}$.
\end{itemize}
Let $L' := \bigcup_{i=1}^{s} L'_i$.
Then the argument for \eqref{eq: F triangle F'} shows that
\begin{equation}\label{eq: L L' close}
\begin{minipage}[c]{0.8\textwidth}\em
$L'$ is $2\nu$-close to $L$.
\end{minipage}
\end{equation}
By (P$'$3)$_{\ref{thm: estimable}}$ and Lemma~\ref{lem: union regularity} we have
\begin{equation}\label{eq: L' is also s epsilon regular}
\begin{minipage}[c]{0.8\textwidth}\em
$L'$ is perfectly $s\overline{\epsilon}(\ba^{\sP'})$-regular with respect to $\sP'$.
\end{minipage}
\end{equation}

Thus there exists a density function $d^{L'}_{\ba^{\sP'},k}:\hat{A}(k,k-1,\ba^{\sP'})\to[0,1]$ such that $\sP'$ is an $(s\overline{\epsilon}(\ba^{\sP'}),d^{L'}_{\ba^{\sP'},k})$-partition of $L'$.
Hence
\begin{equation}\label{eq:L'reginstance}
\begin{minipage}[c]{0.8\textwidth}\em
$L'$ satisfies the regularity instance $R_{L'}:=(s\overline{\epsilon}(\ba^{\sP'}),\ba^{\sP'},d^{L'}_{\ba^{\sP'},k})$.
\end{minipage}
\end{equation}
From Proposition~\ref{prop: mathbf Pr doesn't change much}, \eqref{eq: Pr cF Qk 1/2 nu} and \eqref{eq: L L' close} we can conclude that
\begin{align}\label{eq: Pr cF j'}
 \mathbf{Pr}(\cF, L') 
 \geq \mathbf{Pr}(\cF, L) -2q'^{k}\nu  
 \geq 1/2 - \nu^{1/2} > 1/3+2\nu.
\end{align}

By ($\cE_1'$), (P$'$1)$_{\ref{thm: estimable}}$, (P$'$2)$_{\ref{thm: estimable}}$, and the analogue of \eqref{eq:muT},
we can apply Proposition~\ref{eq: sP prec sQ then sP is good partition} with 
$2\mu(\ba^{\sQ})$, $\overline{\epsilon}(\ba^{\sP'})$, $\nu$, $\sP'$, $\{\sQ'^{(j)}\}_{j=1}^{k-1}$, $G[Q]$, $q$ and $d^G_{\ba^{\sQ},k}$ playing the roles of $\epsilon$, $\epsilon'$, $\nu$, $\sP$, $\sQ$, $H^{(k)}$, $n$ and $d_{\ba^{\sQ},k}$ to obtain a $k$-graph $G'$ on $Q$ and a density function $d^{G'}_{\ba^{\sP'},k}:\hat{A}(k,k-1,\ba^{\sP'}) \rightarrow [0,1]$ such that 
\begin{enumerate}[label=(G$'$\arabic*)]
\item\label{item:G'1} $\sP'$ is an $(\overline{\epsilon}(\ba^{\sP'}),d^{G'}_{\ba^{\sP'},k})$-partition of $G'$,
\item\label{item:G'2} $|G' \triangle G[Q]| \leq \nu \binom{q}{k}.$
\item\label{item:G'3} $
d^{G'}_{\ba^{\sP'},k}(\hat{\by})= \left\{ \begin{array}{ll}
d^{G}_{\ba^{\sQ},k}(\hat{\bx}) & \text{ if } \enspace \exists \hat{\bx}\in \hat{A}(k,k-1,\ba^{\sQ}) : \cK_k(\hat{P'}^{(k-1)}(\hat{\by}))\subseteq \cK_k(\hat{Q'}^{(k-1)}(\hat{\bx})), \\
0 & \text{ if }\enspace \cK_k(\hat{P'}^{(k-1)}(\hat{\by}))\cap \cK_k(\sQ'^{(1)}) = \emptyset.
\end{array}\right.$
\end{enumerate}
\COMMENT{Proposition~\ref{prop: hat relation}(xi) implies that this defines $d^{G}_{\ba^{\sP'},k}(\hat{\by})$ for all $\hat{\by}\in \hat{A}(k,k-1,\ba^{\sP'})$.}
For each $\hat{\bx} \in \hat{A}(k,k-1,\ba^{\sQ})$, let 
\begin{align}\label{eq: definition of hat B hat bx}
\hat{B}(\hat{\bx}):= \{ \hat{\by}\in \hat{A}(k,k-1,\ba^{\sP'}) : \cK_{k}(\hat{P'}^{(k-1)}(\hat{\by}))\subseteq \cK_k(\hat{Q'}^{(k-1)}(\hat{\bx}))\}.
\end{align}
Note that \ref{item:GC}, ($\cE_0'$), \eqref{eq: L is close and satisfies regularity instance}, \eqref{eq: L L' close} and \ref{item:G'2} imply that $L'$ is $(\alpha - \beta/2 + 6\nu)$-close to $G'$. Since we also have \ref{item:G'1} we can apply Lemma~\ref{lem: hypergraph close then density close} to see that
\begin{align}\label{eq: G' J' density dist}
\dist(d^{G'}_{\ba^{\sP'},k},d^{L'}_{\ba^{\sP'},k})\leq \alpha-\beta/2 + 7\nu.
\end{align}

Recall that by (P$'$1)$_{\ref{thm: estimable}}$, $a_i^{\sQ}$ divides $a_i^{\sP'}$ for all $i\in [k-1]$,
and that $\sQ$ is an $(\eta, \mu(\ba^{\sQ}),\ba^{\sQ})$-equitable family of partitions on $V$ by \ref{item:GA}.
So (recalling \eqref{def: mu ba ll}) we can apply Lemma~\ref{lem: partition refinement} with $\sQ, \ba^{\sQ}, \ba^{\sP'}, \mu(\ba^{\sQ}), V$ playing the roles of $\sP, \ba, \bb, \epsilon, V$ to obtain  a family of partitions $\sP^*$ on $V$ satisfying the following.
\begin{itemize}
\item[(P$^*$1)$_{\ref{thm: estimable}}$] $\sP^*=\sP^*(k-1,\ba^{\sP'})$ is a $(1/a_1^{\sP'},  \mu(\ba^{\sQ})^{1/3}, \ba^{\sP'})$-equitable family of partitions on~$V$.
\item[(P$^*$2)$_{\ref{thm: estimable}}$] $\sP^* \prec \sQ$.
\end{itemize}
Moreover, we choose an appropriate $\ba^{\sP'}$-labelling so that for all $\hat{\bx} \in \hat{A}(k,k-1,\ba^{\sQ})$ and $\hat{\by}\in \hat{A}(k,k-1,\ba^{\sP'})$, we have
\begin{align}\label{eq: relation of hat B belong}
\cK_k(\hat{P^*}^{(k-1)}(\hat{\by}))\subseteq \cK_k(\hat{Q}^{(k-1)}(\hat{\bx}))
\text{ if and only if } \hat{\by}\in \hat{B}(\hat{\bx}).
\end{align}

By \eqref{eq:Qpart},  (P$^*$1)$_{\ref{thm: estimable}}$ and (P$^*$2)$_{\ref{thm: estimable}}$,
we can apply Proposition~\ref{eq: sP prec sQ then sP is good partition}\COMMENT{
To see that Proposition~\ref{eq: sP prec sQ then sP is good partition} can be applied,
recall that $\sQ$ is a $(\mu(\ba^{\sQ}), \ba^{\sQ},d_{\ba^{\sQ},k})$-equitable partition of $G$.
Moreover, $\mu(\ba^{\sQ})\ll \overline{\epsilon}(\ba^{\sP'})\ll \norm{\ba^{\sP'}}^{-1}\leq 1/a_1^{\sQ}\leq \eta\ll\nu.$ Also by (P$^*$1)$_{\ref{thm: estimable}}$, $\sP^{*}$ is $(1/a_1^{\sP'}, \overline{\epsilon}(\ba^{\sP'}), \ba^{\sP'})$-equitable.
}
with $\mu(\ba^{\sQ})$, $\overline{\epsilon}(\ba^{\sP'})$, $\sP^*$, $\sQ$, $G$ and $d^{G}_{\ba^{\sQ},k}$ 
playing the roles of $\epsilon$, $\epsilon'$, $\sP$, $\sQ$, $H$ and $d_{\ba^{\sQ},k}$ 
to obtain an $n$-vertex $k$-graph $G^*$ on $V$ and density function $d^{G^*}_{\ba^{\sP^*},k}:\hat{A}(k,k-1,\ba^{\sP'}) \rightarrow [0,1]$ such that 

\begin{enumerate}[label=(G$^*$\arabic*)]
\item\label{item:G**1} $\sP^*$ is an $(\overline{\epsilon}(\ba^{\sP'}),\ba^{\sP'},d^{G^*}_{\ba^{\sP'},k})$-equitable partition of $G^*$,
\item\label{item:G**2} $|G \triangle G^*| \leq \nu \binom{n}{k}.$
\item\label{item:G**3} $d^{G^*}_{\ba^{\sP'},k}(\hat{\by})= \left\{ \begin{array}{ll}
d^{G}_{\ba^{\sQ},k}(\hat{\bx}) & \text{ if } \enspace \exists \hat{\bx}\in \hat{A}(k,k-1,\ba^{\sQ}) : \cK_k(\hat{P^*}^{(k-1)}(\hat{\by}))\subseteq \cK_k(\hat{Q}^{(k-1)}(\hat{\bx})), \\
0 & \text{ if }\enspace \cK_k(\hat{P^*}^{(k-1)}(\hat{\by}))\cap \cK_k(\sQ^{(1)}) = \emptyset.
\end{array}\right.$
\end{enumerate}

This together with \ref{item:G'3}, \eqref{eq: definition of hat B hat bx} and \eqref{eq: relation of hat B belong} implies that $d^{G^*}_{\ba^{\sP'},k} = d^{G'}_{\ba^{\sP'},k}$.
Using this with \eqref{eq: G' J' density dist}, \ref{item:G**1}, and Lemma~\ref{lem: density close then hypergraph close} 
(applied with $G^*,\sP^*,\overline{\epsilon}(\ba^{\sP'}),\nu,d_{\ba^{\sP'},k}^{L'}$ playing the roles of $H^{(k)},\sP,\epsilon,\nu,d_{\ba,k}^G$),
we conclude that  there exists an $n$-vertex $k$-graph $M$ on vertex set $V$ such that 
\begin{itemize}
\item[(M1)$_{\ref{thm: estimable}}$] $M$ is $(\alpha-\beta/2 + 8\nu)$-close to $G^*$, and 
\item[(M2)$_{\ref{thm: estimable}}$] $M$ satisfies the regularity instance $R_{M}:=(3\overline{\epsilon}(\ba^{\sP'}),\ba^{\sP'}, d^{L'}_{\ba^{\sP'},k})$.
\end{itemize}
By \ref{item:GC}, \ref{item:G**2} and (M1)$_{\ref{thm: estimable}}$, we conclude that
\begin{equation}\label{eq: M and H are close}
\begin{minipage}[c]{0.8\textwidth}\em
$M$ is $(\alpha-\beta/2+ 10\nu)$-close to $H$.
\end{minipage}
\end{equation}
On the other hand, 
by \eqref{eq:L'reginstance}, (M2)$_{\ref{thm: estimable}}$, and two applications of Corollary~\ref{cor: counting collection}, \COMMENT{We can use this by the definition of $\overline{\epsilon}$. See previous comment for complete hierarchy.}
we have
\begin{eqnarray*}
\mathbf{Pr}(\cF, M) 
\geq IC(\cF, d^{L'}_{\ba^{\sP'},k}) -\nu
\geq \mathbf{Pr}(\cF, L') -2\nu 
\stackrel{\eqref{eq: Pr cF j'}}{>}  1/3.
\end{eqnarray*}
Thus \eqref{eq: Pr cF} implies that $M$ is not $(\beta/4)$-far from satisfying $\bP$;
that is, $M$ is $(\beta/4)$-close to satisfying $\bP$.
Together with \eqref{eq: M and H are close}, 
this implies that
$H$ is $(\alpha-\beta/2 + 10\nu + \beta/4)$-close to satisfying $\bP$.
But $\alpha-\beta/2+ 10\nu +\beta/4 < \alpha$ by \eqref{eq: def eta nu q' beta k const}, 
so $H$ is $\alpha$-close to satisfying $\bP$, a contradiction.
Thus as long as $\cE'_0\wedge \cE'_1$ holds (which, by \eqref{eq:probE'01}, happens with probability at least $2/3$), 
$\bT_E(n,\alpha,\beta)$ rejects $H$.
This completes the proof that $\bP$ is $(n,\alpha,\beta)$-estimable.
\end{proof}

\section{Regular reducible hypergraph properties are testable}\label{sec:regredtest}

In this section we derive our main theorem from results stated and proved so far.
\begin{proof}[Proof of Theorem~\ref{thm:main}]
By Lemma~\ref{lem: main 1}, (a) implies (c) and Theorem~\ref{thm: estimable} implies that (a) and (b) are equivalent. 
It remains to show (c) implies (a).

Suppose that a $k$-graph property $\bP$ is regular reducible. 
Fix $0<\alpha <1$ and $n\geq k$.
We will now introduce an algorithm which distinguishes $n$-vertex $k$-graphs satisfying $\bP$ from $n$-vertex $k$-graphs which are $\alpha$-far from satisfying $\bP$.
Since $\bP$ is a regular reducible property, 
by Definition~\ref{def: regular reducible},
there exist $r:=r_{\ref{def: regular reducible}}(\alpha/4,\bP)$ and a collection $\cR=\cR(n,\alpha/4,\bP)$ of at most $r$ regularity instances each of complexity at most $r$ satisfying the following for every $n$-vertex $k$-graph $H$. 
(Note that we may assume that $r\geq 100$.)
\begin{itemize}
\item[(R1)$_{\ref{thm:main}}$] If $H$ satisfies $\bP$, then $H$ is $\alpha/4$-close to satisfying $R$ for some $R\in \cR$.
\item[(R2)$_{\ref{thm:main}}$] If $H$ is $\alpha$-far from satisfying $\bP$, then $H$ is $3\alpha/4$-far from satisfying $R$ for all $R\in \cR$.
\end{itemize}
By Theorems~\ref{thm: regularity instance is testable}~and~\ref{thm: estimable}, 
for any $R\in \cR$, 
there exist a function $q_k:(0,1)\rightarrow \mathbb{N}$ and an algorithm $\bT_R=\bT(n,\alpha)$ which distinguishes $n$-vertex $k$-graphs which are $\alpha/4$-close to satisfying $R$
from $n$-vertex $k$-graphs which are $3\alpha/4$-far from satisfying $R$ with probability at least $2/3$, 
by making at most $g_k(\alpha)$ queries.

Now we let $\bT'_R$ be an algorithm which independently applies 
the algorithm $\bT_R$ exactly $6r+1$ times on an input $n$-vertex $k$-graph $H$ and accepts or rejects depending on the majority vote.
Let $\bT_{1},\dots, \bT_{6r+1}$ denote these independent repetitions.
Let $$X_i:=\left\{\begin{array}{ll} 1 & \text{ if $\bT_i$ accepts, }\\
0& \text{ if $\bT_i$ rejects}.\end{array}\right.$$
Let $X:=\sum_{i=1}^{6r+1}X_i$.
Suppose first that $H$ is $\alpha/4$-close to satisfying $R$. Then $\mathbb{P}[X_{i}]\geq 2/3$ by the definition of $\bT_R$, and so $\mathbb{E}[X ] \geq 4r$.
Thus by Lemma~\ref{lem: chernoff} and the fact that $r\geq 100$, we obtain
$$\mathbb{P}[ \bT'_R \text{ accepts }H] 
= \mathbb{P}\left[ X \geq 3r+1 \right] 
\geq 1 - 2e^{-\frac{2r^2}{6r+1}} \geq 1 - \frac{1}{3r}.$$\COMMENT{Here, we get the last inequality since $r\geq 100$. Actually $r\geq 11$ is enough.}
Similarly, if $H$ is $3\alpha/4$-far from satisfying $R$, then $\mathbb{E}[X ] \leq 2r+1$ and 
$$\mathbb{P}[ \bT'_R \text{ rejects }H] 
= \mathbb{P}[ X \leq 3r ] 
\geq 1 - 2e^{-\frac{2r^2}{6r+1}} \geq 1 - \frac{1}{3r}.$$
Observe that $\bT'_R$ makes at most $(6r+1)g_k(\alpha)$ queries.
We now describe our tester $\bT=\bT(n,\alpha)$ which receives as an input an integer $n\geq k$, a real $\alpha>0$ and an $n$-vertex $k$-graph $H$.
\begin{equation}\label{eq: Tester}
\begin{minipage}[c]{0.8\textwidth}
\textit{Run $\bT'_R$ on the input $(n, H)$ for every $R\in \cR$. 
If there exists $R\in \cR$ such that $\bT'_R$ accepts $H$, then $\bT$ also accepts $H$, 
and if $\bT'_R$ rejects $H$ for all $R\in \cR$, then $\bT$ also rejects $H$.}
\end{minipage}
\end{equation}
Let us show that $\bT$ is indeed an $(n,\alpha)$-tester for $\bP$. 
First, assume that $H$ satisfies $\bP$.
By (R1)$_{\ref{thm:main}}$, 
there exists $R\in \cR$ such that $H$ is $\alpha/4$-close to $R$.
So $\bT'_R$ accepts $H$ with probability at least $1 - 1/(3r)$
and hence $\bT$ accepts $H$ with probability at least $1 - 1/(3r)\geq 2/3$.

Now assume that $H$ is $\alpha$-far from satisfying $\bP$. 
By (R2)$_{\ref{thm:main}}$, 
the $k$-graph $H$ is $3\alpha/4$-far from satisfying $R$ for every $R\in \cR$. 
Thus for every $R\in \cR$, 
the tester
$\bT'_R$ accepts $H$ with probability at most $1/(3r)$. 
This in turn implies that $\bT$ accepts $H$ with probability at most $1/3$.

Therefore, $\bT$ is an $(n,\alpha)$-tester for $\bP$,
which in particular implies that $\bP$ is testable.
\end{proof}

\section{Applications}\label{sec:application}

In this section we illustrate how Theorem~\ref{thm:main} can be applied,
first to counting subgraphs,
then to the maximum cut problem.

\subsection{Testing the injective homomorphism density}
We first  show how to test the (injective) homomorphism density,
where a homomorphism of a $k$-graph $F$ into a $k$-graph $H$ is a function $f:V(F)\to V(H)$
that maps edges onto edges.
Let $\inj(F,H)$ be the number of (vertex-)injective homomorphisms from $F$ into $H$
and let $t_{\inj}(F,H):=\inj(F,H)/(n)_{|V(F)|}$.

\begin{corollary}\label{cor:hom}
Suppose $p,\delta\in (0,1)$, $k\in \N\sm\{1\}$, and $F$ is a $k$-graph.
Let $\bP$ be the property that a \mbox{$k$-graph~$H$} satisfies $t_{\inj}(F,H)=p\pm \delta$.
Then $\bP$ is testable.
\end{corollary}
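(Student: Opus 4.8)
The plan is to derive Corollary~\ref{cor:hom} from Theorem~\ref{thm:main} by exhibiting the property $\bP$ as regular reducible, using the induced counting machinery of Section~\ref{sec:counting} as the bridge between $t_{\inj}(F,H)$ and the densities of an equitable partition. First I would record the elementary observation that if $\ell:=|V(F)|$ and $n\geq\ell$, then $t_{\inj}(F,H)$ is (up to a negligible $O(\ell^2/n)$ error coming from the difference between $(n)_\ell$ and $\binom{n}{\ell}\ell!$) a fixed linear combination of the quantities $\mathbf{Pr}(F',H)$ over all $\ell$-vertex $k$-graphs $F'$ that admit $F$ as a spanning subgraph: indeed an injective homomorphism from $F$ into $H$ is precisely an ordered $\ell$-tuple of distinct vertices inducing some such $F'$, weighted by the number of copies of $F$ inside $F'$. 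Equivalently, setting $\cF:=\cF_F$ to be a suitable multiset (or a weighted family) of such $F'$, one has $t_{\inj}(F,H)=\mathbf{Pr}(\cF,H)\pm O(\ell^2/n)$, where the `weighted' version of $\mathbf{Pr}$ and of $IC(\cdot,d_{\ba,k})$ from \eqref{eq: def IC cF d a k} is defined in the obvious way; Corollary~\ref{cor: counting collection} applies verbatim to such finite weighted families (apply Lemma~\ref{lem: counting hypergraph} to each $F'\in\cF$ and sum with the appropriate weights, as in the proof of Corollary~\ref{cor: counting collection}).

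Next I would construct the set $\cR$ of regularity instances witnessing regular reducibility, following the template of the proof of Lemma~\ref{lem: main 1} (equivalently of the first part of Theorem~\ref{thm: estimable}). Given $\beta>0$, fix a function $\overline\epsilon:\N^{k-1}\to(0,1]$ with $\overline\epsilon(\ba)\ll\|\ba\|_\infty^{-k}$, choose $\eta\ll\nu\ll\beta,1/\ell,1/k$ and then $T,\epsilon,n_0$ with $\epsilon\ll 1/T\ll\eta$ and $T\geq t_{\ref{thm: RAL}}(\eta,\nu,\overline\epsilon)$, $n_0\geq n_{\ref{thm: RAL}}(\eta,\nu,\overline\epsilon)$. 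For $n\leq n_0$ one handles $H$ by brute force exactly as in Lemma~\ref{lem: main 1}. For $n>n_0$, let $\bI$ be the (bounded) collection of discretized regularity instances $R=(\epsilon',\ba,d_{\ba,k})$ with $\ba\in[T]^{k-1}$, $a_1>\eta^{-1}$, $\epsilon'$ in a discrete grid with $\epsilon'\leq\overline\epsilon(\ba)^{1/2}$, and $d_{\ba,k}$ taking values in a discrete grid of mesh $\epsilon^2$, and set
$$\cR(n,\beta):=\{\,R=(\epsilon',\ba,d_{\ba,k})\in\bI : \; IC(\cF,d_{\ba,k})=p\pm(\delta-\beta/2)\,\}.$$
Then $|\cR|$ is bounded in terms of $\beta,k,F$, and each $R\in\cR$ has complexity bounded in terms of $\beta,k,F$, as required by Definition~\ref{def: regular reducible}.

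To verify the two bullet points of Definition~\ref{def: regular reducible}: if $H$ satisfies $\bP$, i.e.~$t_{\inj}(F,H)=p\pm\delta$, apply the regular approximation lemma (Theorem~\ref{thm: RAL}) to $H$ with parameters $\eta,\nu,\overline\epsilon$ to obtain $G$ with $|G\triangle H|\leq\nu\binom nk$ and a $(\eta,\overline\epsilon(\ba^\sP),\ba^\sP)$-equitable partition $\sP$ of $G$; discretizing the densities of $G$ with respect to the polyads $\hat\sP^{(k-1)}$ yields a regularity instance $R_G=(\epsilon',\ba^\sP,d^G_{\ba^\sP,k})\in\bI$ satisfied by $G$. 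By Proposition~\ref{prop: mathbf Pr doesn't change much} (in its weighted form) $t_{\inj}(F,G)=t_{\inj}(F,H)\pm O(\ell^k\nu)$, and by Corollary~\ref{cor: counting collection} (weighted form) $IC(\cF,d^G_{\ba^\sP,k})=\mathbf{Pr}(\cF,G)\pm\nu=t_{\inj}(F,G)\pm 2\nu=p\pm(\delta+O(\ell^k\nu))$, so after adjusting constants $R_G\in\cR$ and $H$ is $\nu\leq\beta$-close to $G\models R_G$. Conversely, if $H$ is $\alpha$-far from $\bP$ for some $\alpha>\beta$ but were $(\alpha-\beta)$-close to some $R\in\cR$, pick $G'\models R$ with $|G'\triangle H|\leq(\alpha-\beta)\binom nk$; then Corollary~\ref{cor: counting collection} gives $t_{\inj}(F,G')=IC(\cF,d_{\ba,k})\pm\nu=p\pm(\delta-\beta/2+\nu)=p\pm\delta$ (using $\nu\ll\beta$), so $G'$ satisfies $\bP$ and lies within $(\alpha-\beta)\binom nk<\alpha\binom nk$ of $H$, contradicting $\alpha$-farness. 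Hence $\bP$ is regular reducible, and Theorem~\ref{thm:main} yields testability.

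The main obstacle, and the only genuinely new ingredient beyond re-running the Lemma~\ref{lem: main 1} argument, is the first paragraph: one must carefully set up the weighted family $\cF$ (or weighted versions of $\mathbf{Pr}$ and $IC$) so that $t_{\inj}(F,H)$ is expressed exactly, up to $O(\ell^2/n)$, as $\mathbf{Pr}(\cF,H)$, and then check that Lemma~\ref{lem: counting hypergraph}, Corollary~\ref{cor: counting collection} and Proposition~\ref{prop: mathbf Pr doesn't change much} all survive the passage to finite nonnegative weights (they do, since each is linear in the family and the weights are bounded by $|{\rm Aut}(F')|$-type constants depending only on $F$). A clean alternative that avoids weights entirely: observe that $t_{\inj}(F,H)$ is a $\pm O(\ell^2/n)$ perturbation of the (unweighted) quantity $\sum_{F'\supseteq F}\,c_{F'}\,\mathbf{Pr}(F',H)$ with $c_{F'}=\inj(F,F')$ a fixed integer, and note that the entire Section~\ref{sec:counting} goes through for integer-weighted families with the weights absorbed into the constants; this is routine and I would simply remark on it rather than reprove the counting lemmas. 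Everything else — the discretization, the two applications of Theorem~\ref{thm: RAL}, Corollary~\ref{cor: counting collection} and Proposition~\ref{prop: mathbf Pr doesn't change much} — is a direct transcription of the proof of Lemma~\ref{lem: main 1}.
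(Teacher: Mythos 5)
Your overall strategy — express $t_{\inj}(F,\cdot)$ as a fixed linear combination of induced densities $\mathbf{Pr}(J,\cdot)$, apply the regular approximation lemma plus the induced counting lemma, and collect the matching regularity instances into $\cR$ — is exactly the paper's. (One small point: you fret about an $O(\ell^2/n)$ error, but the identity $t_{\inj}(F,H)=\frac{1}{\ell!}\sum_{|V(J)|=\ell}\inj(F,J)\,\mathbf{Pr}(J,H)$ is exact, since $(n)_\ell=\binom{n}{\ell}\ell!$; and the passage through weighted finite families is indeed harmless, as all weights are bounded by $\ell!$.)

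However, there is a genuine gap in your definition of $\cR$, and the two directions of the regular-reducibility check cannot both survive it. You set $\cR$ to consist of the $R\in\bI$ with $IC(\cF,d_{\ba,k})=p\pm(\delta-\beta/2)$. In the forward direction you apply Theorem~\ref{thm: RAL} to $H$ with $t_{\inj}(F,H)=p\pm\delta$ and obtain $G$ with $IC(\cF,d^G_{\ba^\sP,k})=p\pm(\delta+O(\nu))$; since $\delta+O(\nu)>\delta-\beta/2$, there is no way to ``adjust constants'' so that $R_G$ lands in your $\cR$ when $t_{\inj}(F,H)$ is near the boundary of $[p-\delta,p+\delta]$. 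If instead you enlarge the tolerance to $\delta+\beta'$ (as you must for the forward direction), your backward step breaks: given $G'\models R\in\cR$, all Corollary~\ref{cor: counting collection} gives you is $t_{\inj}(F,G')=p\pm(\delta+2\beta')$, which does \emph{not} place $G'$ in $\bP$. You cannot have the tolerance simultaneously strictly larger and strictly smaller than $\delta$.

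The ingredient your proposal is missing is the observation that a graph $G_R$ with $t_{\inj}(F,G_R)$ only slightly outside $[p-\delta,p+\delta]$ is close in \emph{edit distance} to a graph $G$ with $t_{\inj}(F,G)\in[p-\delta,p+\delta]$. This is Proposition~\ref{prop:changehom} in the paper: an averaging argument produces a bounded-vertex subgraph of abnormally high (or low) injective homomorphism density whose edges can be removed (or added) to move $t_{\inj}$ by at least $\nu$, so one can iterate and land exactly in $[p-\delta,p+\delta]$ after changing at most $(2\nu/\min\{\alpha,1-\alpha\})^{1/\ell}\binom{n}{k}$ edges. With that in hand the paper defines $\cR$ with the enlarged tolerance $\delta+\beta'$, which fixes the forward direction, and closes the backward direction by taking $G_R$, noting $t_{\inj}(F,G_R)=p\pm(\delta+2\beta')$, and then invoking Proposition~\ref{prop:changehom} to find $G$ satisfying $\bP$ with $|G\triangle G_R|\leq(\beta/2)\binom{n}{k}$, hence $|H\triangle G|<\alpha\binom{n}{k}$. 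You should add this continuity lemma (and correct the sign in your definition of $\cR$); everything else in your proposal is sound and matches the paper.
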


Before we continue with the proof of Corollary~\ref{cor:hom},
we state a simple proposition.

\begin{proposition}\label{prop:changehom}
Suppose $0< 1/n \ll  \nu , 1/k, 1/\ell$ and $\nu\ll \alpha, 1-\alpha$.
Let $F$ be an $\ell$-vertex $k$-graph and $H$ be an $n$-vertex $k$-graph.
If $t_\inj(F,H) = \alpha \pm \nu$ for some $\alpha \in (0,1)$, 
then there exists an $n$-vertex $k$-graph $G$ with 
$t_\inj(F,G) = \alpha \pm 1/n$ and $|G\triangle H| \leq (\frac{2\nu}{\min\{\alpha,1-\alpha\}})^{1/\ell} \binom{n}{k}$.
\end{proposition}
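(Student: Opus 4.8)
I would first dispose of the degenerate cases: if $F$ has no edge then $t_{\inj}(F,H')=1$ for every $k$-graph $H'$, so the hypothesis forces $\alpha=1\pm\nu$, contradicting $\alpha\in(0,1)$ together with $\nu\ll 1-\alpha$; hence the hypotheses are vacuous and we may assume $e(F)\ge 1$ (so $\ell\ge k$). Put $\mu:=\min\{\alpha,1-\alpha\}$ and $\epsilon:=(2\nu/\mu)^{1/\ell}$, noting $0<\epsilon<1$ because $\nu\ll\mu$, and set $w:=\lceil\epsilon n\rceil+\ell$, so that $w\le n$ and $\tfrac{(w)_\ell}{(n)_\ell}\ge\bigl(\tfrac{w-\ell+1}{n}\bigr)^\ell>\epsilon^\ell=2\nu/\mu$ once $n$ is large. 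The plan is: depending on the sign of $t_{\inj}(F,H)-\alpha$, to add a complete or delete an empty $k$-graph on a suitably chosen $w$-subset $W$ of $V$, obtaining a $k$-graph $H'$ with $|H'\triangle H|\le\binom wk$ on which $t_{\inj}(F,\cdot)$ lies weakly on the opposite side of $\alpha$, and then to interpolate from $H$ to $H'$ one edge at a time.

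For the choice of $W$ I would use a simple averaging argument. Suppose $t_{\inj}(F,H)\le\alpha$ (the other case is symmetric). For a uniformly random $W\in\binom Vw$ and a fixed injective homomorphism $f$ of $F$ into $H$ one has $\Pr[\mathrm{im}(f)\subseteq W]=(w)_\ell/(n)_\ell$, so $\mathbb{E}[\inj(F,H[W])]=t_{\inj}(F,H)\,(w)_\ell$. Since any injective map into $W$ is a homomorphism into $K_W^{(k)}$,
$$\inj(F,H\cup K_W^{(k)})\ \ge\ \inj(F,H)+(w)_\ell-\inj(F,H[W]);$$
choosing $W$ with $\inj(F,H[W])$ no larger than its mean gives
$$t_{\inj}(F,H\cup K_W^{(k)})\ \ge\ t_{\inj}(F,H)+\tfrac{(w)_\ell}{(n)_\ell}\bigl(1-t_{\inj}(F,H)\bigr)\ \ge\ (\alpha-\nu)+\tfrac{(w)_\ell}{(n)_\ell}\,\mu\ >\ \alpha,$$
using $1-t_{\inj}(F,H)\ge 1-\alpha\ge\mu$ and $\tfrac{(w)_\ell}{(n)_\ell}\mu>2\nu$. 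Set $H':=H\cup K_W^{(k)}$. When $t_{\inj}(F,H)>\alpha$ I would instead take $H':=H\setminus\binom Wk$: deleting all edges inside $W$ kills every homomorphism of $F$ into $H$ whose image lies in $W$ (such a homomorphism maps an edge of $F$ onto a deleted $k$-set), so $\inj(F,H')\le\inj(F,H)-\inj(F,H[W])$, and choosing $W$ with $\inj(F,H[W])$ at least its mean gives $t_{\inj}(F,H')\le(\alpha+\nu)-\tfrac{(w)_\ell}{(n)_\ell}\,\alpha<\alpha$ (as $\alpha\ge\mu$). Either way $|H'\triangle H|\le\binom wk$.

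Next I would interpolate: list the $r\le\binom wk$ elements of $H'\triangle H$ as $e_1,\dots,e_r$ and set $H_0:=H$, $H_j:=H_{j-1}\triangle\{e_j\}$, so that $H_r=H'$ and $t_{\inj}(F,\cdot)$ is monotone along $H_0,\dots,H_r$. Adding or removing a single $k$-set changes $\inj(F,\cdot)$ by at most $e(F)\cdot k!\cdot(n-k)_{\ell-k}$, hence changes $t_{\inj}(F,\cdot)$ by at most $e(F)\,k!/(n)_k\le 1/n$ for $n$ large in terms of $k,\ell$. Since $t_{\inj}(F,H_0)\le\alpha<t_{\inj}(F,H_r)$ or $t_{\inj}(F,H_r)<\alpha<t_{\inj}(F,H_0)$, the first index $j^*$ at which $t_{\inj}(F,H_{j^*})$ reaches the $H_r$-side of $\alpha$ satisfies $t_{\inj}(F,H_{j^*})=\alpha\pm 1/n$ (one step cannot overshoot by more than $1/n$). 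Then $G:=H_{j^*}$ works, with $|G\triangle H|=j^*\le\binom wk$.

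The remaining point, and the one I expect to require the most care, is the purely quantitative check that $\binom wk\le\epsilon\binom nk$. With $w=\lceil\epsilon n\rceil+\ell$ one has $\binom wk/\binom nk\le\bigl(w/(n-k)\bigr)^k$, which tends to $\epsilon^k$ as $n\to\infty$; since $\epsilon<1$ is a fixed constant and $k\ge 2$, this stays below $\epsilon$ once $n$ is large enough, and one must confirm that ``large enough'' is compatible with the hierarchy $1/n\ll\nu,1/k,1/\ell$ — here it matters that $\nu\ll\alpha,1-\alpha$, so that $\epsilon$, $w/n$, and all thresholds on $n$ are bounded in terms of $\nu$, $1/k$ and $1/\ell$ only. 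The single genuine idea is the averaging identity $\mathbb{E}[\inj(F,H[W])]=t_{\inj}(F,H)(w)_\ell$ together with monotonicity of $t_{\inj}(F,\cdot)$; everything else is bookkeeping.
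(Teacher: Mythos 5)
Your proof is correct and takes essentially the same approach as the paper: an averaging argument produces a vertex subset $W$ of size roughly $\epsilon n$, $\epsilon=(2\nu/\min\{\alpha,1-\alpha\})^{1/\ell}$, on which the injective homomorphism count is at least (resp.\ at most) its mean, so that deleting (resp.\ adding) the $k$-sets inside $W$ pushes $t_{\inj}(F,\cdot)$ past $\alpha$, and interpolating one $k$-set at a time — each step moving $t_{\inj}$ by at most $1/n$ — lands on $\alpha\pm1/n$ while changing at most $\binom{|W|}{k}\le\epsilon\binom{n}{k}$ edges. Your write-up is in fact a little more careful than the paper's sketch (explicit degenerate case $e(F)=0$, a single uniform choice of $\epsilon$ for both directions, and a correct per-edge bound on the change in $\inj(F,\cdot)$), but the underlying idea is identical.
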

\begin{proof}
Suppose first that $t_\inj(F,H) >\alpha +1/n$.
Let $\epsilon:= (\frac{2 \nu}{\alpha})^{1/\ell}$.
By an averaging argument, 
there exists a subgraph $H'$ of $H$ on $\epsilon n$ vertices such that $t_\inj(F,H') >\alpha +1/n$.
Clearly, $|H'|\leq \epsilon \binom{n}{k}$.
Moreover, after removing all edges contained in $H'$ from $H$,
we reduce the number of injective homomorphisms from $F$ to $H$ by at least $\inj(F,H')\geq \alpha(\epsilon n)_{\ell}\geq \nu (n)_\ell$.
Thus if instead we remove a suitable number of these edges iteratively, 
we can reach a spanning subgraph $G$ of $H$ with $t_\inj(F,G)=\alpha\pm 1/n$ 
as any single edge removal decreases the number of homomorphisms by at most $2 n^{\ell-2}$.
The case $t_\inj(F,H) <\alpha -1/n$ works similarly.%
\COMMENT{
Let $\epsilon:=(\frac{2\nu}{1-\alpha})^{1/\ell}$.
Choose a subgraph $H'$ of $H$ on $\epsilon n$ vertices with $t_\inj(F,H')\leq \alpha -1/n$.
Then $|\overline{H}'|\leq \epsilon \binom{n}{k}$.
Adding all edges of $\overline{H}'$ increases the number of injective homomorphisms by at least $(1-\alpha)(\epsilon n)_\ell\geq \nu(n)_\ell$.
}
\end{proof}


\begin{proof}[Proof of Corollary~\ref{cor:hom}]
Let $\ell:=|V(F)|$.
We may assume that $|F|>0$ as otherwise $t_\inj(F,H)=1$ for every $n$-vertex graph $H$ with $n\geq \ell$.
By Theorem~\ref{thm:main}, it suffices to verify that $\bP$ is regular reducible.

Suppose $\beta>0$. 
We may assume that $\beta \ll p-\delta, 1/\ell$ if $p-\delta>0$ and $\beta \ll 1-(p+\delta), 1/\ell$ if $p+\delta<1$.\COMMENT{This will enable us to apply Proposition~\ref{prop:changehom} at the end of the proof.}
We write $\beta':=\beta^{\ell+1}$ and $\beta'':=2^{-\binom{\ell}{k}}\beta'$.
We fix some function $\overline{\epsilon}:\mathbb{N}^{k-1}\rightarrow (0,1)$ such that $\overline{\epsilon}(\ba) \ll \norm{\ba}^{-k}$
for all $(a_1,\dots,a_{k-1})=\ba\in \N^{k-1}$.
We choose constants $  \epsilon, \eta$, and $n_0, T\in \N$
such that $ 1/n_0\ll \epsilon \ll 1/T \ll \eta \ll \beta, 1/k,1/\ell$.
In particular,
we have $n_0\geq n_{\ref{thm: RAL}}(\eta,\beta''\ell^{-k}/2,\overline{\epsilon})$,
$T\geq t_{\ref{thm: RAL}}(\eta,\beta''\ell^{-k}/2,\overline{\epsilon})$
and $\epsilon\ll \overline{\epsilon}(\ba)$ for all $\ba\in [T]^{k-1}$.
For simplicity, we consider only $n$-vertex $k$-graphs $H$ with $n\geq n_0$.

Let $\bI$ be the collection of regularity instances $R=(\epsilon'',\ba, d_{\ba,k})$ such that
\begin{itemize}
\item[(R1)$_{\ref{cor:hom}}$] $\epsilon''\in \{\epsilon, 2\epsilon, \dots, \lceil(\overline{\epsilon}(\ba))^{1/2}\epsilon^{-1}\rceil \epsilon\}$,
\item[(R2)$_{\ref{cor:hom}}$]  $\ba\in [T]^{k-1}$ and $a_1 > \eta^{-1}$, and
\item[(R3)$_{\ref{cor:hom}}$]  $d_{\ba,k}(\hat{\bx}) \in \{0,\epsilon^2, 2\epsilon^2,\dots, 1\}$ for every $\hat{\bx}\in \hat{A}(k,k-1,\ba)$.
\end{itemize}
Observe that by construction
$|\bI|$ is bounded by a function of $\beta$, $k$ and $\ell$.
We define
$$\cR:= \left\{  (\epsilon'',\ba,d_{\ba,k}) \in \bI :   \sum_{J\colon |V(J)|=\ell }\inj(F,J)\cdot IC(J,d_{\ba,k})/\ell!
=p \pm (\delta + \beta')\right\}.$$
First, suppose that an $n$-vertex $k$-graph $H$ satisfies $\bP$.
Then
\begin{align}\label{eq: Pr F}
\frac{1}{\ell!}\sum_{J\colon |V(J)|=\ell }\inj(F,J)\cdot \bPr(J, H)=t_\inj(F,H)= p \pm \delta.
\end{align}
By applying the regular approximation lemma (Theorem~\ref{thm: RAL}) 
with $H, \eta  ,\beta'' \ell^{-k}/2,\overline{\epsilon}$ playing the roles of $H,\eta,\nu,\epsilon$, 
we obtain a $k$-graph $G$ and a family of partitions $\sP= \sP(k-1,\ba^{\sP})$ such that
\begin{enumerate}[label=(\Roman*)]
\item\label{item:I} $\sP$ is $(\eta,\overline{\epsilon}(\ba^{\sP}),\ba^{\sP})$-equitable for some $\ba^{\sP} \in [T]^{k-1}$, 
\item\label{item:II} $G$ is perfectly $\overline{\epsilon}(\ba^{\sP})$-regular with respect to $\sP$, and 
\item\label{item:III} $|G\triangle H|\leq \beta'' \ell^{-k} \binom{n}{k}/2$.
\end{enumerate}
Let $\epsilon':= \overline{\epsilon}(\ba^{\sP})$.
By the choice of $\overline{\epsilon}$ and $\eta$, we conclude that
$0<\epsilon' \ll 1/\norm{\ba^{\sP}}\leq  1/a_1^{\sP} \ll  \beta, 1/k,1/\ell$ and
by the choice of $\epsilon$, we obtain  $\epsilon\ll \epsilon'$.
Note that if a $k$-graph $J$ is $(\epsilon',d)$-regular with respect to a $(k-1)$-graph $J'$, 
then $J$ is $(\epsilon'',d')$-regular with respect to $J'$ 
for some $d'\in \{0,\epsilon^2, 2\epsilon^2,\dots, 1\}$ and $\epsilon''\in \{\epsilon, 2\epsilon,\dots,  \lceil \epsilon'^{1/2}\epsilon^{-1}\rceil \epsilon\}\cap [2\epsilon',3\epsilon']$.
Thus there exists 
\begin{align}\label{eq: RG in bII}
R_G=(\epsilon'',\ba^{\sP} ,d^{G}_{\ba^{\sP},k})\in \bI
\end{align} 
such that $G$ satisfies $R_G$.

For every $\ell$-vertex $k$-graph $J$,
Proposition~\ref{prop: mathbf Pr doesn't change much} with \ref{item:III} 
and Corollary~\ref{cor: counting collection} imply that 
\begin{align}\label{eq: Pr - beta1}
IC(J,d^{G}_{\ba^{\sP},k}) = \bPr(J, G)\pm  \beta''/2 = \bPr(J, H) \pm \beta''.
\end{align}
Hence
\begin{eqnarray}\notag
	\frac{1}{\ell!}\sum_{J\colon |V(J)|=\ell }\inj(F,J)\cdot IC(J,d_{\ba^{\sP},k}^G)
	&\stackrel{(\ref{eq: Pr - beta1})}{=}&
	\frac{1}{\ell!}\sum_{J\colon |V(J)|=\ell }\inj(F,J)\cdot (\bPr(J, H) \pm \beta'')\\
	&\stackrel{(\ref{eq: Pr F})}{=}&\label{eq:approx}
	p\pm (\delta+\beta').
\end{eqnarray}
By the definition of $\cR$ and~\eqref{eq: RG in bII}, 
this implies that $R_G\in \cR$ and 
so $H$ is indeed $\beta$-close to a graph $G$ satisfying $R_{G}$, 
one of the regularity instances of $\cR$.

Now we show that if $\alpha>\beta$ and $H$ is $\alpha$-far from $\bP$, 
then $H$ is $(\alpha-\beta)$-far from all $R\in \cR$.
We prove this by verifying the following statement: 
if $H$ is $(\alpha-\beta)$-close to some $R \in \cR$, 
then it is $\alpha$-close to~$\bP$.

Suppose $H$ is $(\alpha-\beta)$-close to some $R= (\epsilon'',\ba, d_{\ba,k})\in \cR$.
Then there exists a $k$-graph $G_R$ such that $G_R$ satisfies $R$ and $|H\triangle G_R|\leq (\alpha-\beta)\binom{n}{k}$.
By the definition of $\cR$, we have $\sum_{J\colon |V(J)|=\ell }\inj(F,J)\cdot IC(J,d_{\ba,k})/\ell!=p\pm (\delta + \beta')$.
Similarly to the calculations leading to~\eqref{eq:approx}, we obtain $t_\inj(F,G_R)=p\pm (\delta+2\beta')$.%
\COMMENT{
Note that $IC(J,d_{\ba,k})=\bPr(J, G_R)\pm \beta''$ and so
\begin{align*}
t_\inj(F,G_R)
&=\frac{1}{\ell!}\sum_{J\colon |V(J)|=\ell }\inj(F,J)\cdot \bPr(J, G_R)\\
&\stackrel{(\ref{eq: Pr - beta})}{=}\frac{1}{\ell!}\sum_{J\colon |V(J)|=\ell }\inj(F,J)\cdot (IC(J,d_{\ba,k})\pm \beta'')\\
&=p\pm (\delta + \beta') \pm \beta'' \cdot 2^{\binom{\ell}{k}}\\
&=p\pm (\delta + 2\beta')
\end{align*}
}

By Proposition~\ref{prop:changehom}, there exists a $k$-graph $G$ such that
$t_\inj(F,G)= p \pm \delta $ and $|G\triangle G_R| \leq (\beta/2)\cdot \binom{n}{k}$.
Therefore, $G$ satisfies $\bP$ and 
$|H\triangle G| \leq |H\triangle G_R| + |G_R\triangle G|  < \alpha \binom{n}{k}$
which implies that $H$ is $\alpha$-close to satisfying $\bP$.
Thus, $\bP$ is indeed regular reducible.
\end{proof}

\subsection{Testing the maximum cut size}

We proceed with another corollary of Theorem~\ref{thm:main}.
For a given $n$-vertex $k$-graph $H$, we define  the following parameter measuring the size of a largest $\ell$-partite subgraph:
\begin{align*}
	\maxcut_\ell(H) 
	:=\binom{n}{k}^{-1} \max_{\substack{\{V_1,\dots, V_\ell\} \text{ is a}\\\text{ partition of } V(H)}} \left\{ |\cK_{k}( V_1,\dots, V_{\ell}) \cap H|  \right\}.
\end{align*}
We let
\begin{align*}
c_{\ell,k}(n) := \binom{n}{k}^{-1} \sum_{\Lambda\in \binom{[\ell]}{k}} \prod_{\lambda \in \Lambda} \left\lfloor \frac{n+\lambda-1}{\ell} \right\rfloor.
\end{align*} 
Thus $c_{\ell,k}(n) \binom{n}{k}$ is the number of edges of the complete $\ell$-partite $k$-graph on $n$ vertices whose vertex class sizes are as equal as possible. In particular, any $n$-vertex $k$-graph $H$ satisfies $\maxcut_{\ell}(H) \leq c_{\ell,k}(n)$. 

\begin{corollary}\label{cor:cut}
Suppose $\ell,k \in \N\sm \{1\}$ and $c= c(n)$ is such that $0\leq c\leq c_{\ell,k}(n)$.
Let $\bP$ be the property that an $n$-vertex $k$-graph $H$ satisfies $\maxcut_\ell(H)\geq c$. Then $\bP$ is testable.
\end{corollary}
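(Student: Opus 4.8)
The plan is to apply Theorem~\ref{thm:main} and verify that $\bP$ is regular reducible. The key is to attach to every regularity instance $R=(\epsilon,\ba,d_{\ba,k})$ a number $\mathrm{MC}_\ell(R)\in[0,1]$ playing the role that $IC(\cF,d_{\ba,k})$ plays in the proof of Lemma~\ref{lem: main 1} (and the homomorphism count plays in Corollary~\ref{cor:hom}). Roughly, $\mathrm{MC}_\ell(R)$ is the supremum, over all ``fractional $\ell$-partitions'' $\mathbf{q}=(q_i^j)_{i\in[a_1],\,j\in[\ell]}$ with $q_i^j\ge 0$ and $\sum_{j}q_i^j=1$, of a polynomial expression in $\mathbf{q}$ and the values $d_{\ba,k}(\hat{\bx})$ which estimates, via the counting lemma, the number of edges of a regular instance of $R$ landing in $\cK_k(W_1,\dots,W_\ell)$ when each vertex class $V_i$ is split into pieces of relative sizes $q_i^1,\dots,q_i^\ell$ and $W_j$ is the union of the $j$-th pieces. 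Given $\beta>0$, one then fixes a discretisation $\bI$ of regularity instances exactly as in (R1)$_{\ref{cor:hom}}$--(R3)$_{\ref{cor:hom}}$ (bounded number of classes, $\epsilon$-grid for the regularity parameter, $\epsilon^2$-grid for the densities), picks $\beta'$ much smaller than $\beta$, and sets $\cR=\cR(n,\beta,\bP):=\{R\in\bI:\mathrm{MC}_\ell(R)\ge c(n)-\beta'\}$. Then $|\cR|\le|\bI|$ is bounded in terms of $\beta,k,\ell$ only; the $n$-dependence of $c$ is harmless, since Definition~\ref{def: regular reducible} permits $\cR$ to depend on $n$.

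The heart of the matter is a two-sided ``counting lemma for $\maxcut_\ell$'': if $H$ is an $n$-vertex $k$-graph satisfying $R$ via an $(\epsilon,\ba,d_{\ba,k})$-equitable partition $\sP$, then $\maxcut_\ell(H)=\mathrm{MC}_\ell(R)\pm\gamma$. For the lower bound one takes a near-optimal $\mathbf{q}$, realises it by splitting each class of $\sP^{(1)}$ into $\ell$ pieces of the prescribed sizes, extends this to a family of partitions refining $\sP$ (via the slicing lemma, cf.\ Lemma~\ref{lem: partition refinement}), and applies Corollary~\ref{cor: counting}/Lemma~\ref{lem: counting complex} to count the rainbow edges of $H$ on each $k$-polyad, yielding an $\ell$-cut of the claimed size. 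For the upper bound one is given an \emph{arbitrary} $\ell$-partition $\cW$ of $V(H)$ and must bound $|\cK_k(\cW)\cap H|$; here Lemma~\ref{lem:refreg} is applied with $\sP$ in the role of $\sQ$ and (a padding of) $\cW$ in the role of $\sO$, producing a refinement $\sP'\prec\sP$ whose vertex partition $\nu$-refines $\cW$ and which inherits discretised versions of the densities $d_{\ba,k}$ (in the manner of Proposition~\ref{eq: sP prec sQ then sP is good partition}); then $\cW$ agrees with a union of classes of $\sP'^{(1)}$ up to $\nu n$ vertices, and the counting lemma applied to $\sP'$ gives $|\cK_k(\cW)\cap H|\le(\mathrm{MC}_\ell(R')+\gamma)\binom{n}{k}$, where $R'$ is the instance carried by $\sP'$. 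One finally checks that $\mathrm{MC}_\ell$ is essentially unchanged under passing to a refining instance with inherited densities (a locality property of the functional), so $\mathrm{MC}_\ell(R')\approx\mathrm{MC}_\ell(R)$. The same argument, applied to the output $G$ of the regular approximation lemma together with the trivial fact that an edge edit changes $\maxcut_\ell$ by at most $\binom{n}{k}^{-1}$ (the analogue here of Proposition~\ref{prop: mathbf Pr doesn't change much}), gives $\maxcut_\ell(G)=\mathrm{MC}_\ell(R_G)\pm\gamma$ for the discretised instance $R_G$ associated with $G$.

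With these tools the two conditions of Definition~\ref{def: regular reducible} follow the template of Lemma~\ref{lem: main 1} and Corollary~\ref{cor:hom}. If $H$ satisfies $\bP$, apply Theorem~\ref{thm: RAL} to obtain $G$ that is $\beta/2$-close to $H$ with a high-quality partition and hence a discretised instance $R_G\in\bI$; since $\maxcut_\ell(G)\ge\maxcut_\ell(H)-\beta/2\ge c(n)-\beta/2$ and $\maxcut_\ell(G)=\mathrm{MC}_\ell(R_G)\pm\gamma$, we get $\mathrm{MC}_\ell(R_G)\ge c(n)-\beta'$, so $R_G\in\cR$ and $H$ is $\beta$-close to $R_G$. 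Conversely, suppose $\alpha>\beta$ and, for a contradiction, $H$ is $(\alpha-\beta)$-close to some $R\in\cR$, witnessed by a $k$-graph $G_R$ satisfying $R$. By the lower-bound counting lemma there is an $\ell$-partition $\cW$ of $V(G_R)$ with $|\cK_k(\cW)\cap G_R|\ge(c(n)-\beta'')\binom{n}{k}$, where $\beta''=\beta'+\gamma$ is chosen $\ll\beta^2$. We modify $G_R$ by at most $\beta\binom{n}{k}$ edges to reach $G$ with $\maxcut_\ell(G)\ge c(n)$: if $|\cK_k(\cW)|\ge c(n)\binom{n}{k}$ we merely add at most $\beta''\binom{n}{k}$ missing crossing edges; otherwise we first rebalance $\cW$ to $\cW'$ by moving $O(\sqrt{\beta''}\,n)$ vertices so that $|\cK_k(\cW')|\ge c(n)\binom{n}{k}$ while $|\cK_k(\cW')\cap G_R|$ drops by only $O(\sqrt{\beta''})\binom{n}{k}$, and then add missing crossing edges as before. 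Hence $H$ is $\alpha$-close to $\bP$, a contradiction, so $H$ is $(\alpha-\beta)$-far from $R$. Thus $\bP$ is regular reducible, hence testable.

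The main obstacle is precisely this last step: turning a near-optimal cut of the reduced structure into an exact $\ell$-cut of the concrete hypergraph of the prescribed size $c(n)$ while editing few edges — the role played by Proposition~\ref{prop:changehom} in Corollary~\ref{cor:hom} — and what makes it delicate is the possible imbalance of the vertex classes. The rebalancing estimate rests on the routine but necessary fact that the size function $f(n_1,\dots,n_\ell)=\sum_{\Lambda\in\binom{[\ell]}{k}}\prod_{j\in\Lambda}n_j$ on $\{\sum_j n_j=n\}$ is maximised at the balanced point and is, near its maximum, concave enough that a partition whose $f$-value is within $t\binom{n}{k}$ of the maximum is $O(\sqrt{t}\,n)$-close to balanced in $\ell^\infty$, while a single vertex move towards balance increases $f$ by an amount proportional to the current imbalance; together these yield the claimed $O(\sqrt{\beta''}\,n)$ bound on the number of moves. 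A secondary technical point is to verify that $\mathrm{MC}_\ell$ is well-defined, continuous on instances, and invariant under refinement, which is used both to transfer from $H$ to the regular approximation $G$ and to pass from $R'$ back to $R$ in the upper-bound direction.
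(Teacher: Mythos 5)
Your proposal is essentially correct, but it takes a genuinely different route from the paper. You introduce a \emph{fractional} reduced cut functional $\mathrm{MC}_\ell(R)$, allowing arbitrary subdivisions $q_i^1,\dots,q_i^\ell$ of each class, and build the argument around a \emph{two-sided} counting lemma $\maxcut_\ell(G)=\mathrm{MC}_\ell(R_G)\pm\gamma$; in the forward direction you then apply the plain regular approximation lemma (Theorem~\ref{thm: RAL}). The paper instead works with the coarser functional $\maxcut_\ell(d_{\ba,k})$, which maximises only over partitions $\cL$ of $[a_1]$ into whole classes, and avoids needing a two-sided counting lemma by applying Lemma~\ref{lem:refreg} already in the forward direction with the near-optimal cut $\{V_1,\dots,V_\ell\}$ of $H$ serving as the family $\sO$: this forces $\sP^{(1)}\prec_\nu\sO^{(1)}$, so the optimal cut is (up to $\nu n$ vertices) a union of classes of $\sP^{(1)}$ and the coarse functional already captures it. Consequently the paper only needs the one-sided Proposition~\ref{eq: cut counting edges}, plus Proposition~\ref{lem: cut size small change} (your rebalancing step). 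What your approach buys is conceptual cleanliness --- $\mathrm{MC}_\ell$ is the natural refinement-invariant functional, and once the two-sided counting lemma is proved the rest is mechanical --- but at the cost of two nontrivial verifications that the paper sidesteps: establishing the upper bound $\maxcut_\ell(H)\le\mathrm{MC}_\ell(R)+\gamma$ (which, as you note, also requires an application of Lemma~\ref{lem:refreg}, together with a density-inheritance argument in the spirit of Proposition~\ref{eq: sP prec sQ then sP is good partition}), and proving the invariance $\mathrm{MC}_\ell(R')\approx\mathrm{MC}_\ell(R)$ for a refining instance $R'$ with inherited densities. The latter is true --- for inherited densities one can check that the coarse cut of $R'$ under an $\cL'$ equals the fractional cut of $R$ under the fractional assignment induced by $\cL'$, essentially because the growth of the address space cancels the change in the normalisation $\prod_j a_j^{-\binom{k}{j}}$ --- but it is bookkeeping the paper does not need.

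One small parameter slip: you apply Theorem~\ref{thm: RAL} with error $\beta/2$ and then demand $\mathrm{MC}_\ell(R_G)\ge c(n)-\beta'$ for $R_G\in\cR$, yet you also take $\beta'\ll\beta$ for the rebalancing step in the converse direction; these are inconsistent, since the forward chain then needs $\beta/2+\gamma\le\beta'$. The fix is to apply the regular approximation lemma with an error parameter $\nu\ll\beta'$ (as the paper does), so that $\maxcut_\ell(G)\ge c(n)-\nu$ and hence $\mathrm{MC}_\ell(R_G)\ge c(n)-\nu-\gamma\ge c(n)-\beta'$, while still $|G\triangle H|\le\nu\binom{n}{k}\le\beta\binom{n}{k}$. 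With that adjustment, and carrying out the two verifications just mentioned, your argument goes through.
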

Note that since the property of having a given edge density is trivially testable,
it follows from Corollary~\ref{cor:cut} that the property of being strongly $\ell$-colourable is also testable
(in a strong colouring, we require all vertices of an edge to have distinct colours).

Before we prove Corollary~\ref{cor:cut},
we need to introduce some notation and make a few observations.
For a given vector $\ba \in \mathbb{N}^{k-1}$, 
a density function $d_{\ba,k} : \hat{A}(k,k-1,\ba)\rightarrow [0,1]$, 
and a partition $\cL=\{ \Lambda_1,\dots, \Lambda_{\ell}\}$ of $[a_1]$, we define
\begin{align*}
	\cut(d_{\ba,k}, \cL )
	&:= k! \prod_{i=1}^{k-1} a_i^{-\binom{k}{i}}\sum_{\substack{ \hat{\bx}\in \hat{A}(k,k-1,\ba)\colon \\ \bx^{(1)}_*\in \cK_{k}( \cL) } } d_{\ba,k}(\hat{\bx})\enspace\text{ and }\\
	\maxcut_{\ell}(d_{\ba,k}) 
	&:= \max_{\substack{\cL \text{ is a partition}\\\text{  of } [a_1] \text{ with }|\cL|=\ell } } \cut(d_{\ba,k}, \cL ).
\end{align*}
Recall that if $\sP=\sP(k-1,\ba)$ is a family of partitions, then $P^{(1)}(1,1),\dots, P^{(1)}(a_1,a_1)$ denote the parts of $\sP^{(1)}$.
\begin{proposition}\label{eq: cut counting edges}
Suppose $0<1/n \ll \epsilon \ll \gamma,1/T, 1/k, 1/\ell$.
Suppose that $\ba\in [T]^{k-1}$ and $\sP$ is an $(\epsilon,\ba,d_{\ba,k})$-equitable partition of an $n$-vertex $k$-graph $H$. Let $\cL= \{ \Lambda_1,\dots, \Lambda_{\ell}\}$ be a partition of $[a_1]$ and for each $i\in [\ell]$, let $U_i:= \bigcup_{\lambda\in \Lambda_i} P^{(1)}(\lambda,\lambda).$ Then 
$$| \cK_k(U_1,\dots, U_\ell) \cap H| = (\cut(d_{\ba,k},\cL)  \pm \gamma)\binom{n}{k}.$$
\end{proposition}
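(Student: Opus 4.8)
The plan is to decompose $\cK_k(U_1,\dots,U_\ell)\cap H$ according to the polyads of $\sP$ and then apply hypergraph regularity to each piece. First I would observe that since $\sP^{(1)}=\{V_1,\dots,V_{a_1}\}$ refines $\{U_1,\dots,U_\ell\}$ (each $U_i$ being a union of classes $P^{(1)}(\lambda,\lambda)=V_\lambda$), every $k$-set lying in $\cK_k(U_1,\dots,U_\ell)$ is crossing with respect to $\sP^{(1)}$; hence $\cK_k(U_1,\dots,U_\ell)\subseteq\cK_k(\sP^{(1)})$, so the intersection with $H$ contains no non-crossing $k$-sets. Moreover, for a crossing $k$-set $K\in\cK_k(\sP^{(1)})$ with associated vector $\hat{\bx}(K)\in\hat{A}(k,k-1,\ba)$ as in \eqref{eq: bx J def}, we have $\bx^{(1)}(K)_*=\cl(K)$, and therefore $K\in\cK_k(U_1,\dots,U_\ell)$ exactly when $\bx^{(1)}(K)_*\in\cK_k(\cL)$ (i.e.\ $\cl(K)$ meets each $\Lambda_i$ in at most one index). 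Since $\{\cK_k(\hat{P}^{(k-1)}(\hat{\bx})):\hat{\bx}\in\hat{A}(k,k-1,\ba)\}$ partitions $\cK_k(\sP^{(1)})$ (Proposition~\ref{prop: hat relation}(vi)), this yields the disjoint decomposition
\begin{align*}
\cK_k(U_1,\dots,U_\ell)\cap H=\bigcup_{\substack{\hat{\bx}\in\hat{A}(k,k-1,\ba)\colon\\\bx^{(1)}_*\in\cK_k(\cL)}}\bigl(H\cap\cK_k(\hat{P}^{(k-1)}(\hat{\bx}))\bigr).
\end{align*}

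Next I would estimate each term, introducing an auxiliary constant $\gamma_0$ with $\epsilon\ll\gamma_0\ll\gamma,1/T,1/k,1/\ell$. As $\sP$ is an $(\epsilon,\ba,d_{\ba,k})$-equitable partition of $H$, the $k$-graph $H$ is $(\epsilon,d_{\ba,k}(\hat{\bx}))$-regular with respect to $\hat{P}^{(k-1)}(\hat{\bx})$, so $|H\cap\cK_k(\hat{P}^{(k-1)}(\hat{\bx}))|=(d_{\ba,k}(\hat{\bx})\pm\epsilon)|\cK_k(\hat{P}^{(k-1)}(\hat{\bx}))|$. For $k\ge3$, Lemma~\ref{lem: maps bijections}(ii) shows that $\hat{\cP}(\hat{\bx})$ is an $(\epsilon,(1/a_2,\dots,1/a_{k-1}))$-regular complex, so Lemma~\ref{lem: counting} (applied with $k-1$ and $k$ in the roles of its $k$ and $\ell$, and $1/T$ in the role of $d_0$), together with $|V_i|=(1\pm o(1))n/a_1$, gives $|\cK_k(\hat{P}^{(k-1)}(\hat{\bx}))|=(1\pm\gamma_0)\prod_{i=1}^{k-1}a_i^{-\binom{k}{i}}n^k$; the case $k=2$ is immediate since then $|\cK_2(\hat{P}^{(1)}(\hat{\bx}))|=|V_{\alpha_1}||V_{\alpha_2}|$ for $\bx^{(1)}_*=\{\alpha_1,\alpha_2\}$.

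Summing over the relevant $\hat{\bx}$ and writing $D:=\prod_{i=1}^{k-1}a_i^{-\binom{k}{i}}n^k$, the main term is
\begin{align*}
D\sum_{\substack{\hat{\bx}\colon\bx^{(1)}_*\in\cK_k(\cL)}}d_{\ba,k}(\hat{\bx})=\frac{n^k}{k!}\,\cut(d_{\ba,k},\cL)=(1\pm\gamma_0)\cut(d_{\ba,k},\cL)\binom{n}{k},
\end{align*}
directly from the definition of $\cut(d_{\ba,k},\cL)$. The error terms, arising from the $\pm\epsilon$ in the regularity estimate and the $\pm\gamma_0$ slack of the counting lemma, total at most $(\epsilon+2\gamma_0)\,|\hat{A}(k,k-1,\ba)|\,D$; since $|\hat{A}(k,k-1,\ba)|=\binom{a_1}{k}\prod_{i=2}^{k-1}a_i^{\binom{k}{i}}\le\frac1{k!}\prod_{i=1}^{k-1}a_i^{\binom{k}{i}}$, this is at most $(\epsilon+2\gamma_0)\frac{n^k}{k!}\le\frac{\gamma}{2}\binom{n}{k}$. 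Combining the two bounds and using $\cut(d_{\ba,k},\cL)\le1$ gives the asserted estimate.

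I do not expect a genuine obstacle: the statement is a bookkeeping consequence of regularity and the counting lemma. The only point needing care is to ensure the number $|\hat{A}(k,k-1,\ba)|$ of summands is exactly cancelled by the factor $\prod_{i=1}^{k-1}a_i^{-\binom{k}{i}}$ in $|\cK_k(\hat{P}^{(k-1)}(\hat{\bx}))|$, so that the accumulated error stays a $T$-independent multiple of $\binom{n}{k}$; choosing the auxiliary constant $\gamma_0$ with $\epsilon\ll\gamma_0\ll\gamma,1/T,1/k,1/\ell$ before invoking Lemma~\ref{lem: counting} makes this work cleanly.
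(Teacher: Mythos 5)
Your proposal is correct and follows essentially the same strategy as the paper: decompose $\cK_k(U_1,\dots,U_\ell)\cap H$ over polyads via Proposition~\ref{prop: hat relation}(vi), apply the $(\epsilon,d_{\ba,k}(\hat{\bx}))$-regularity of $H$ with respect to each $\hat{P}^{(k-1)}(\hat{\bx})$, estimate each $|\cK_k(\hat{P}^{(k-1)}(\hat{\bx}))|$ via Lemma~\ref{lem: counting} (using Lemma~\ref{lem: maps bijections}(ii)), and sum. The only cosmetic difference is that you introduce an intermediate constant $\gamma_0$ and spell out the identification $\bx^{(1)}(K)_*=\cl(K)$ and the error bookkeeping, whereas the paper applies the counting lemma directly with $\gamma/2$ and leaves the decomposition implicit.
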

Note that the $|\cK_k(U_1,\dots, U_{\ell}) \cap H|\binom{n}{k}^{-1}$ is a lower bound for $\maxcut_\ell (H)$.
\begin{proof}
For each $\hat{\bx} \in \hat{A}(k,k-1,\ba)$ with $\bx^{(1)}_* \in \cK_k(\cL)$, 
we apply Lemma~\ref{lem: counting} to  $\hat{\cP}(\hat{\bx}) $. (Recall that $\hat{\cP}(\hat{\bx})$ was defined in \eqref{eq: complex definition by address} and is an $(\epsilon, (1/a_2,\dots, 1/a_{k-1}))$-regular complex by Lemma~\ref{lem: maps bijections}(ii).)
Since $ H$ is $(\epsilon,d_{\ba,k} (\hat{\bx}) )$-regular with respect to $\hat{P}^{(k-1)}(\hat{\bx})$, we obtain
\begin{align*}
|\cK_{k}(U_1,\dots, U_{\ell}) \cap H|
& = \sum_{ \substack{ \hat{\bx}\in \hat{A}(k,k-1,\ba)\colon\\ \bx^{(1)}_*\in \cK_{k}( \cL) }}  | H \cap \cK_k(\hat{P}^{(k-1)}(\hat{\bx}))|  \\
&= \sum_{ \substack{ \hat{\bx}\in \hat{A}(k,k-1,\ba)\colon\\ \bx^{(1)}_*\in \cK_{k}( \cL) }}  (d_{\ba,k} (\hat{\bx}) \pm \epsilon) |\cK_k(\hat{P}^{(k-1)}(\hat{\bx}))|  \\
& = \sum_{ \substack{ \hat{\bx}\in \hat{A}(k,k-1,\ba)\colon\\ \bx^{(1)}_*\in \cK_{k}( \cL) }} (d_{\ba,k} (\hat{\bx}) \pm \epsilon) (1\pm \gamma/2 ) \prod_{i=1}^{k-1} a_i^{-\binom{k}{i}} n^{k}\\ 
&= (\cut(d_{\ba,k}, \cL ) \pm \gamma)\binom{n}{k}.
\end{align*}
 We conclude the final equality since $ \cut(d_{\ba,k}, \cL ) \leq 1$.
\end{proof}

\begin{proposition}\label{lem: cut size small change}
Suppose that $0<1/n \ll \nu \ll  \beta, 1/k, 1/\ell$, and $c=c(n)\in [0,c_{\ell,k}(n)]$. 
If $H$ is an $n$-vertex $k$-graph with $\maxcut_{\ell}(H)\geq c-\nu$, then there exists an $n$-vertex $k$-graph $G$ with $\maxcut_{\ell}(G) \geq c$ and $|H\triangle G| \leq \beta \binom{n}{k}$.
\end{proposition}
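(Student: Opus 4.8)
The plan is to obtain $G$ from $H$ by adding edges in at most two rounds: first fill up an almost‑optimal $\ell$‑partite subgraph of $H$, and then, only if that does not already create enough partite edges, gradually rebalance the underlying vertex partition while keeping its partite $k$‑sets inside the graph.

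First I would fix a partition $\{V_1,\dots,V_\ell\}$ of $V(H)$ with $e_0:=|\cK_k(V_1,\dots,V_\ell)\cap H|\ge(c-\nu)\binom{n}{k}$, which exists since $\maxcut_\ell(H)\ge c-\nu$. If $\ell<k$ then $c_{\ell,k}(n)=0$, so $c=0$ and $G:=H$ works; and if $c\le\nu$ I could simply take the partition $\{W_1,\dots,W_\ell\}$ with $|W_\lambda|=\lfloor(n+\lambda-1)/\ell\rfloor$ (so that $|\cK_k(W_1,\dots,W_\ell)|=c_{\ell,k}(n)\binom{n}{k}\ge c\binom{n}{k}$) and add at most $\nu\binom{n}{k}+1$ edges of $\cK_k(W_1,\dots,W_\ell)$ to $H$. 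So assume $\ell\ge k$ and $c>\nu$, and set $N:=|\cK_k(V_1,\dots,V_\ell)|$. If $N\ge c\binom{n}{k}$, then $|\cK_k(V_1,\dots,V_\ell)\setminus H|=N-e_0\ge c\binom{n}{k}-e_0$, so I can add $\max\{0,\lceil c\binom{n}{k}\rceil-e_0\}\le\nu\binom{n}{k}+1$ non‑edges of $H$ inside $\cK_k(V_1,\dots,V_\ell)$ to obtain $G$ with $|\cK_k(V_1,\dots,V_\ell)\cap G|\ge c\binom{n}{k}$, hence $\maxcut_\ell(G)\ge c$, while $|G\triangle H|\le\nu\binom{n}{k}+1\le\beta\binom{n}{k}$ since $1/n\ll\nu\ll\beta$.

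The interesting case is $N<c\binom{n}{k}$. Then $|\cK_k(V_1,\dots,V_\ell)\setminus H|=N-e_0<c\binom{n}{k}-(c-\nu)\binom{n}{k}=\nu\binom{n}{k}$, so after adding all of these fewer than $\nu\binom{n}{k}$ missing edges I obtain $H_1$ with $\cK_k(V_1,\dots,V_\ell)\subseteq H_1$ and $|H_1\triangle H|<\nu\binom{n}{k}$. Since $c\le c_{\ell,k}(n)$, the partition $\{V_1,\dots,V_\ell\}$ is not as balanced as possible, so I then perform \emph{corrective moves}: while the current largest and smallest parts differ in size by at least $2$, move one vertex from a largest part to a smallest part, and add to the current graph every $k$‑set that has become partite for the new partition (at most $\binom{n-1}{k-1}$ of them per move). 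This keeps the complete $\ell$‑partite $k$‑graph of the current partition inside the current graph, while $\sum_i|V_i|^2$ strictly decreases (so the process terminates at a balanced partition) and $|\cK_k(\cdot)|$ never decreases — indeed, moving a vertex between parts of sizes $a>b$ changes the number of partite $k$‑sets by exactly $(a-b-1)$ times the degree‑$(k-2)$ elementary symmetric polynomial in the sizes of the remaining parts, which is $\ge0$. As the balanced partition satisfies $|\cK_k(\cdot)|=c_{\ell,k}(n)\binom{n}{k}\ge c\binom{n}{k}$, there is a first corrective move after which the current partition $\{V_1^*,\dots,V_\ell^*\}$ has $|\cK_k(V_1^*,\dots,V_\ell^*)|\ge c\binom{n}{k}$; I stop there and let $G$ be the current graph, so that $\maxcut_\ell(G)\ge c$.

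The crux is to bound the number $t^*$ of corrective moves actually performed, since the edges added in this second round number at most $t^*\binom{n-1}{k-1}$, and I need this to be at most $(\beta/2)\binom{n}{k}$, i.e.\ $t^*=O(\beta n)$. The estimate I would use rests on two observations: a corrective move increases $|\cK_k(\cdot)|$ by $\Theta(n^{k-2})$ times the current size gap between the extreme parts (when the part sizes are all of order $n/\ell$), and $|\cK_k(\cdot)|$ falls short of its maximum $c_{\ell,k}(n)\binom{n}{k}$ by an amount of order $n^{k-2}$ times the imbalance $I:=\sum_i(|V_i|-n/\ell)^2$. Since I only need to raise $|\cK_k(\cdot)|$ from $\ge(c-\nu)\binom{n}{k}$ to $\ge c\binom{n}{k}$ — an increase of less than $\nu\binom{n}{k}$ — and since a single corrective move from the largest part already decreases $\sum_i|V_i|^2$ by $\Omega(\sqrt I/\ell)$, a short computation gives $t^*=O(\sqrt\nu\,n)$, which is $O(\beta n)$ because $\nu\ll\beta$; hence $|G\triangle H|<\nu\binom{n}{k}+(\beta/2)\binom{n}{k}\le\beta\binom{n}{k}$. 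I expect this imbalance‑versus‑move‑count estimate — in particular disposing of the regime where $\{V_1,\dots,V_\ell\}$ is so unbalanced that the quadratic approximation for $|\cK_k(\cdot)|$ degenerates (which, however, forces $c$ to be small, so the required climb is tiny) — to be the only genuinely technical point; for $k=2$ it reduces to the identity $|\cK_2(V_1,\dots,V_\ell)|=\tfrac12\bigl(n^2-\sum_i|V_i|^2\bigr)$.
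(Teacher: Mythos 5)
Your overall strategy — rebalance the partition while adding the newly crossing $k$-sets as edges — is the same idea the paper uses, but the implementation differs: the paper performs a \emph{single} bulk move of $\nu^{1/5}n$ vertices from one part to another (after first redistributing a few vertices to ensure at least $k-2$ parts of size $\ge\nu^{1/(4k)}n$), and then bounds $|\cK_k(U'_1,\dots,U'_\ell)\cap H|$ from below directly; you instead move vertices one at a time and bound the number of iterations. Both routes are viable, and yours has the charm of a clean monotone quantity, but the paper's one-shot move avoids exactly the place where your argument is loose.

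The gap is in the claim $t^*=O(\sqrt\nu\,n)$ and in the dismissal of the very unbalanced regime. Your estimate ``shortfall $\approx n^{k-2}I$'' is the second-order Taylor expansion of $e_k$ at the balanced point, so it is only valid when every part has size $\Theta(n/\ell)$; when some parts are $o(n/\ell)$ the factor $e_{k-2}(\text{remaining parts})$ is much smaller than $n^{k-2}$ and the moves are proportionally slower. Your parenthetical remark that this ``forces $c$ to be small, so the required climb is tiny'' is not a valid fix: the required climb is always at most $\nu\binom{n}{k}$ (since $|\cK_k(V_1,\dots,V_\ell)|\ge(c-\nu)\binom{n}{k}$), so $c$ being small does not reduce the work; what matters is that the moves are slower. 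Concretely, take $\ell=k$ with part sizes $(n-(k-1)m,m,\dots,m)$ and $m$ very small; then $e_{k-2}\approx m^{k-2}$, and integrating the per-move gain over the rebalancing trajectory shows the process needs on the order of $\nu^{1/(k-1)}n$ moves, which exceeds $\sqrt\nu\,n$ as soon as $k\ge 4$. Fortunately $O_{k,\ell}(\nu^{1/(k-1)}n)$ is still $\le\tfrac{\beta}{2k}n$ under the hierarchy $\nu\ll\beta,1/k,1/\ell$, so your proof can be repaired by replacing the quadratic-approximation argument with an honest two-regime estimate (balanced vs.\ unbalanced), but as written the ``short computation'' does not cover the case you yourself flagged as the crux, and the stated exponent is wrong.
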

\begin{proof}
Since $\maxcut_{\ell}(H)\geq c-\nu$, 
there is a partition $\{U_1,\dots, U_{\ell}\}$ of $V(H)$ such that
\begin{align}\label{eq: UUUUU size}
|\cK_{k}(U_1,\dots, U_{\ell})\cap H| \geq (c-\nu) \binom{n}{k}.
\end{align}
It is easy to see that there exists a partition $\{U'_1,\dots, U'_{\ell}\}$ of $V(H)$ such that
\begin{itemize}
\item[(U$'$1)] $\sum_{i=1}^{\ell} |U_i\triangle U'_i| \leq \nu^{\frac{1}{5k}}n$,  and 
\item[(U$'$2)] $|\cK_{k}(U'_1,\dots, U'_{\ell})| \geq c \binom{n}{k}.$
\end{itemize}
\COMMENT{
Indeed, if $|U_i| \geq \lfloor \frac{n+i-1}{\ell} \rfloor - \ell\nu^{1/5}n$ for all $i\in [\ell]$, then we can redistribute at most $\ell^2 \nu^{1/5}n$ vertices to get a partition $\{U'_1,\dots, U'_{\ell}\}$ of $V(H)$ satisfying (U$'$1) and 
$|\cK_{k}(U'_1,\dots, U'_{\ell})|  = c_{\ell,k}(n) \binom{n}{k}$. 
Since $c\in [0,c_{\ell,k}(n)]$, this suffices for our purpose.
Thus we may assume that there exists $i', j'\in [\ell]$ such that 
$$|U_{i'}| < \lfloor \frac{n+i'-1}{\ell} \rfloor - \ell\nu^{1/5}n \text{ and
} |U_{j'}| > \lfloor \frac{n+j'-1}{\ell} \rfloor + \nu^{1/5}n.$$
(If $|U_i| \leq  \lfloor \frac{n+i-1}{\ell} \rfloor + \nu^{1/5} n$ for all $i\in [\ell]$, then we automatically obtain $|U_i| \geq \lfloor \frac{n+i-1}{\ell} \rfloor - \ell\nu^{1/5}n$ for all $i\in [\ell]$.)
Suppose $k\geq 3$.
By redistributing at most $(k-2) \nu^{\frac{1}{4k}}n$ vertices if necessary we can assume that there are at least $k-2$ indices $i \in [\ell]\setminus\{i',j'\}$ such that $|U_i| \geq \nu^{\frac{1}{4k}} n$.
We let $U'_{i}:= U_{i}$ for $i\in [\ell]\setminus \{i',j'\}$, and move $\nu^{1/5}n$ vertices from $U_{j'}$ to $U_{i'}$ to obtain new sets $U'_{j'}$ and $U'_{i'}$. 
Then it is easy to see that 
\begin{align*}
|\cK_{k}(U'_1,\dots, U'_{\ell})|& \geq |\cK_{k}(U_1,\dots, U_{\ell})| +  ( |\cK_2(U'_{i'},U'_{j'})|-|\cK_2(U_{i'},U_{j'})| )\cK_{k-2}(\{ U'_1,\dots,U'_{\ell} \} \setminus\{U'_{i'}, U'_{j'}\} )\\
& \geq |\cK_{k}(U_1,\dots, U_{\ell})| + \nu^{1/5}n\cdot (\ell \nu^{1/5}n-1) \cdot ( \nu^{\frac{1}{4k}} n)^{k-2}  
 \stackrel{\eqref{eq: UUUUU size}}{\geq} \binom{n}{k}( c- \nu + \nu) \geq c\binom{n}{k}.
 \end{align*}
Thus a partition $\{U'_1,\dots, U'_{\ell}\}$ of $V(H)$ satisfying (U$'$1) and (U$'$2) exists.
Similar argument if $k=2$ works.} 
Since $\nu\ll \beta$, 
we conclude
\begin{eqnarray*}
|\cK_k(U'_1,\dots, U'_{\ell})\cap H| 
\geq |\cK_k(U_1,\dots, U_{\ell})\cap H| - \sum_{i=1}^{\ell}|U_i\triangle U'_i| n^{k-1}
\stackrel{\eqref{eq: UUUUU size},  {\rm (U}'{\rm 1)} }{\geq}   (c-\beta) \binom{n}{k}.
\end{eqnarray*}
Together with (U$'$2), this shows that we can add at most $\beta \binom{n}{k}$ $k$-sets from $\cK_k(U'_1,\dots, U'_{\ell})\setminus H$ 
to $H$ to obtain a $k$-graph $G$ with 
$\maxcut_\ell(G) \geq  c$ and $|H\triangle G| \leq \beta \binom{n}{k}.$
\end{proof}

\begin{proof}[Proof of Corollary~\ref{cor:cut}]
By Theorem~\ref{thm:main}, we only need to show that $\bP$ is regular reducible.  
We assume that $\ell\geq k$, otherwise $\maxcut_{\ell}(H)=0$ for all $k$-graphs $H$. Suppose $\beta>0$. 

 Let $\overline{\epsilon}:\mathbb{N}^{k-1}\rightarrow (0,1)$ be a function such that $\overline{\epsilon}(\ba) \ll \norm{\ba}^{-1}, 1/k, 1/\ell$.
We choose constants $  \epsilon,\eta, \nu$, and $n_0, T\in \mathbb{N}$ such that $0< 1/n_0\ll \epsilon \ll 1/T \ll \eta, \nu \ll  \beta, 1/k, 1/\ell$.
For simplicity, we consider only $n$-vertex $k$-graphs $H$ with $n\geq n_0$.

Let $\bI$ be the collection of regularity instances $R=(\epsilon'',\ba, d_{\ba,k})$ such that
\begin{itemize}
\item[(R1)$_{\ref{cor:cut}}$] $\epsilon''\in \{\epsilon, 2\epsilon, \dots, \lceil(\overline{\epsilon}(\ba))^{1/2}\epsilon^{-1}\rceil \epsilon\}$,
\item[(R2)$_{\ref{cor:cut}}$]  $\ba\in [T]^{k-1}$, and
\item[(R3)$_{\ref{cor:cut}}$]  $d_{\ba,k}(\hat{\bx}) \in \{0,\epsilon^2, 2\epsilon^2,\dots, 1\}$ for every $\hat{\bx}\in \hat{A}(k,k-1,\ba)$.
\end{itemize}
Observe that by construction
$|\bI|$ is bounded by a function of $\beta$, $k$ and $\ell$. We define
$$\cR:= \left\{  (\epsilon'',\ba,d_{\ba,k}) \in \bI :   \maxcut_{\ell}(d_{\ba,k}) \geq c-\nu^{1/2} \right\}.$$
First, suppose that an $n$-vertex $k$-graph $H$ satisfies $\maxcut_{\ell}(H)\geq c$.
Then there exists a partition $\{V_1,\dots, V_{\ell}\}$ of $V(H)$ such that 
\begin{align}\label{eq: good cut V}
|\cK_k( V_1,\dots, V_{\ell}) \cap H| \geq c\binom{n}{k}.
\end{align}
Let 
$$\sO^{(1)}:= \{V_1,\dots, V_{\ell}\}.$$
For each $j\in [k-2]$ and given $\sO^{(j)}$, \eqref{eq:sP} naturally defines $\hat{\sO}^{(j)}$, and 
we define 
$$\sO^{(j+1)} := \{ \cK_{j+1}( \hat{O}^{(j)}) : \hat{O}^{(j)} \in \hat{\sO}^{(j)} \}.$$
By repeating this for each $j\in [k-2]$ in increasing order, we define a family of partitions $\sO:=\sO(k-1,\ba^{\sO}) = \{\sO^{i}\}_{i=1}^{k-1}$ with $\ba^{\sO}=(\ell,1,\dots,1)\in \mathbb{N}^{k-1}$.
Let $\sQ=\sQ(k,\ba^{\sQ})$ be an arbitrary $(1/a_1^{\sQ}, 1/n_0, \ba^{\sQ})$-equitable family of partitions on $V(H)$, where $\ba^{\sQ}=(\ell,1,\dots, 1) \in \mathbb{N}^{k}$. It is easy to see that such $\sQ$ indeed exists.%
\COMMENT{We take an $\ell$-equipartition of $V(H)$, and we let all $j$-graphs are complete with respect to the lower polyad.}
Let 
\begin{align*}
\{H_1,\dots, H_{s}\} &:= \left( \left\{  Q^{(k)} \cap H : Q^{(k)}\in \sQ^{(k)}  \right\} \cup \left\{  H \setminus \cK_{k}(\sQ^{(1)})\right\} \right) \setminus \{\emptyset\},\\
\{H_{s+1},\dots, H_{s'}\} &:= \left( \left\{  Q^{(k)} \setminus H : Q^{(k)}\in \sQ^{(k)}  \right\} \cup \left\{  \binom{V(H)}{k} \setminus (\cK_{k}(\sQ^{(1)}) \cup H) \right\} \right) \setminus \{\emptyset\},\\
\sH&:= \{H_1,\dots, H_{s'}\}.
\end{align*}
Note that $s' \leq 2\binom{\ell}{k}+2$.
Since $|V(H)|\geq n_0$, we can apply~\cite[Lemma~6.1]{JKKO2} with the following objects and parameters. \newline

{\small
\begin{tabular}{c|c|c|c|c|c|c|c|c|c|c}
object/parameter & $V(H)$ & $\sO$ & $ \sH $  & $\sQ$ &
$\ell$ & $s'$ & $\eta$ & $\nu$ & $\overline{\epsilon}$ &$T$ \\ \hline
playing the role of & $V$ & $\sO$ & $\sH^{(k)}$ & $\sQ$ & $o$ & $s$ & $\eta$ & $\nu$ & $\epsilon$ &$t$
 \\ 
\end{tabular}
}\newline \vspace{0.2cm}

Then we obtain $k$-graphs $G_1,\ldots,G_{s'}$ partitioning $\binom{V(H)}{k}$ and a family of partitions $\sP= \sP(k-1,\ba^{\sP})$ such that
\begin{enumerate}[label=(\Roman*)]
\item\label{item2:I} $\sP$ is $(\eta,\overline{\epsilon}(\ba^{\sP}),\ba^{\sP})$-equitable for some $\ba^{\sP} \in [T]^{k-1}$, 
\item\label{item2:II} $\sP^{(1)} \prec_{\nu} \sO^{(1)}$, 
\item\label{item2:III} for each $i\in [s]$, $G_i$ is perfectly $\overline{\epsilon}(\ba^{\sP})$-regular with respect to $\sP$, and 
\item\label{item2:IV} $\sum_{i=1}^{s} |G_i\triangle H_i|\leq \nu \binom{n}{k}$.
\end{enumerate}
Let $\epsilon':= \overline{\epsilon}(\ba^{\sP})$ and $G:= \bigcup_{i=1}^{s} G_i$.
Lemma~\ref{lem: union regularity} together with \ref{item2:III} implies that 
$G$ is perfectly $s\epsilon'$-regular with respect to $\sP$.
Also \ref{item2:IV} implies that 
\begin{align}\label{eq: GH tri nu}
|G\triangle H| \leq  \nu  \binom{n}{k}.
\end{align}
By the choice of $\epsilon$, $\overline{\epsilon}$ and $\eta$, we conclude that
$0<\epsilon\ll\epsilon' \ll 1/\norm{\ba^{\sP}}\leq 1/a_1^{\sP} \leq \eta \ll \beta, 1/k, 1/\ell$.%
\COMMENT{Note that if a $k$-graph $J$ is $(s\epsilon',d)$-regular with respect to a $(k-1)$-graph $J'$, 
then $J$ is $(\epsilon'',d')$-regular with respect to $J'$ 
for some $d'\in \{0,\epsilon^2, 2\epsilon^2,\dots, 1\}$ and $\epsilon''\in \{\epsilon, 2\epsilon,\dots,  \lceil \epsilon'^{1/2}\epsilon^{-1}\rceil \epsilon\}\cap [2s\epsilon',3s\epsilon']$.}
Similarly as in the proof of Corollary~\ref{cor:hom}, this implies that there exists 
\begin{align}\label{eq: RG in bI}
R_G=(\epsilon'',\ba^{\sP} ,d^{G}_{\ba^{\sP},k})\in \bI
\end{align} 
such that $G$ satisfies $R_G$.

Note that \ref{item2:II} implies that there exists a partition $\cL:=\{\Lambda_1,\dots, \Lambda_\ell\}$ of $[a^{\sP}_1]$ such that 
\begin{align}\label{eq: well prec nu}
\sum_{i=1}^{\ell}\sum_{\lambda \in \Lambda_i} |P^{(1)}(\lambda,\lambda) \setminus V_i| \leq \nu n.
\end{align}
For each $i\in [\ell]$, let $U_i:= \bigcup_{\lambda \in \Lambda_i} P^{(1)}(\lambda,\lambda).$ 
Then we obtain
\begin{eqnarray*}
\cut(d^{G}_{\ba^{\sP},k}, \cL) &\hspace{-0.15cm} \stackrel{{\rm Prop.}~\ref{eq: cut counting edges}}{=}& \hspace{-0.3cm} \binom{n}{k}^{-1} |\cK_{k}(U_1,\dots, U_{\ell}) \cap G| \pm  \nu\\
&\hspace{-0.15cm} \stackrel{\eqref{eq: GH tri nu}}{=}&\hspace{-0.3cm}  \binom{n}{k}^{-1} \hspace{-0.1cm} \left( |\cK_{k}(V_1,\dots, V_{\ell}) \cap H| \pm \sum_{i=1}^{\ell} \sum_{\lambda\in \Lambda_i}|P^{(1)}(\lambda,\lambda) \setminus V_i| n^{k-1} \hspace{-0.1cm} \right)  \hspace{-0.1cm} \pm 2 \nu\\
&\hspace{-0.15cm} \stackrel{\eqref{eq: well prec nu}}{=}&\hspace{-0.3cm}   \binom{n}{k}^{-1}|\cK_{k}(V_1,\dots, V_{\ell}) \cap H| \pm \nu^{1/2} \\
&\hspace{-0.15cm} \stackrel{\eqref{eq: good cut V}}{\geq}& \hspace{-0.3cm}  c-\nu^{1/2}.
\end{eqnarray*}
\COMMENT{We may add as a second line $\binom{n}{k}^{-1}\left( |\cK_{k}(V_1,\dots, V_{\ell}) \cap G| \pm \sum_{i=1}^{\ell} |U_i \setminus V_i| n^{k-1} \right) \pm  \nu$}
By the definition of $\cR$ and \eqref{eq: RG in bI}, this implies that $R_G\in \cR$ and so $H$ is indeed $\beta$-close to a graph $G$ satisfying $R_{G}$, one of the regularity instances of $\cR$.

Now we show that if $H$ is $\alpha$-far from satisfying $\bP$, 
then $H$ is $(\alpha-\beta)$-far from all $R\in \cR$. Suppose $H$ is $(\alpha-\beta)$-close to some $R= (\epsilon'',\ba, d_{\ba,k})\in \cR$.
Then there exists a $k$-graph $G_R$ such that $G_R$ satisfies $R$ and $|H\triangle G_R|\leq (\alpha-\beta)\binom{n}{k}$.
Thus there is an $(\epsilon'',\ba,d_{\ba,k})$-equitable partition $\sP'$ of $G_R$. 
By the definition of $\cR$, we have $\maxcut_{\ell}(d_{\ba,k}) \geq c- \nu^{1/2}$.
By applying Proposition~\ref{eq: cut counting edges} with $G_R, c-\nu^{1/2}, \nu^{1/2}, d_{\ba,k}$ playing the roles of $H, c, \gamma, d_{\ba,k}$, we obtain that $\maxcut_{\ell}(G_R) \geq c- 2\nu^{1/2}$.\COMMENT{Here, the lemma gives us a partition $\cL$ of $V(G_R)$. By the definition of $\maxcut_{\ell}$, such partition gives us $\maxcut_{\ell}(G_R)\geq c-2\nu^{1/2}$.}
Since $\nu \ll \beta , 1/k, 1/\ell$ and $c\in [0, c_{\ell,k}(n)]$, we can apply Proposition~\ref{lem: cut size small change} with $G_R, 2\nu^{1/2}, \beta/2, c$ playing the roles of $H, \nu, \beta, c$ to obtain a $k$-graph $G'$ such that $|G'\triangle G_R |\leq (\beta/2)\binom{n}{k}$ and $\maxcut_{\ell}(G) \geq c$.
Then 
$$|H\triangle G'| \leq |H\triangle G_R| + |G'\triangle G_R| \leq (\alpha-\beta + \beta/2)\binom{n}{k} < \alpha\binom{n}{k}.$$
Thus $H$ is $\alpha$-close to satisfying $\bP$.
Therefore, $\bP$ is indeed regular reducible.
\end{proof}

\bibliographystyle{amsplain}
\bibliography{littesting}

\end{document}